\theoremstyle{plain}
\newtheorem{theorem}{Theorem}[section]
\newtheorem{corollary}[theorem]{Corollary}
\newtheorem{lemma}[theorem]{Lemma}
\newtheorem{proposition}[theorem]{Proposition}
\theoremstyle{remark}
\newtheorem{remark}[theorem]{Remark}
\theoremstyle{definition}
\newtheorem{problem}[theorem]{Problem}
\newtheorem{RHP}[theorem]{Riemann-Hilbert Problem}
\newtheorem{DBAR}[theorem]{$\dbar$-Problem}
\newtheorem{DBARHP}{\revisedtext{$\dbar$/Riemann-Hilbert Problem}}
\newtheorem{definition}[theorem]{Definition}
\DeclareMathOperator{\ad}{ad}
\DeclareMathOperator{\real}{Re}
\DeclareMathOperator{\dist}{dist}
\DeclareMathOperator{\imag}{Im}
\DeclareMathOperator{\sgn}{sgn}
\DeclareMathOperator{\Res}{Res}
\DeclareMathOperator{\res}{res}
\begin{document}

%
%
%
%
%

\newcommand{\revisedtext}[1]{#1}
\newcommand{\sidenote}[1]{ }

\title[Soliton Resolution for DNLS]{Soliton Resolution for the Derivative Nonlinear Schr\"{o}dinger Equation}
\author{Robert Jenkins}
\author{Jiaqi Liu}
\author{Peter A. Perry}
\author{Catherine Sulem}
\address[Jenkins]{Department of Mathematics, University of Arizona, Tucson, AZ 85721-0089 USA}
\address[Liu]{Department of Mathematics, University of Toronto, Toronto, Ontario M5S 2E4, Canada}
\address[Perry]{ Department of Mathematics,  University of Kentucky, Lexington, Kentucky 40506--0027}
\address[Sulem]{Department of Mathematics, University of Toronto, Toronto, Ontario M5S 2E4, Canada }
\thanks{Peter Perry supported in part by NSF Grant DMS-1208778 and by Simons Foundation Research and Travel Grant 359431}
\thanks {C. Sulem supported in part by NSERC Grant 46179-13}
\date{\today}

\begin{abstract}
We study the derivative nonlinear Schr\"odinger equation for generic initial data  in a weighted 
Sobolev space that can support bright solitons (but exclude spectral singularities).  Drawing on previous well-posedness results, we give a full description of  the long-time behavior of the solutions in the form of a finite sum of 
localized solitons and a dispersive component.  At leading order and in space-time
cones,  the solution has the form of a multi-soliton whose parameters  are slightly modified  from their initial 
values by  soliton-soliton and soliton-radiation interactions.  Our analysis  provides an explicit expression 
for the correction dispersive term.  We use  the nonlinear steepest descent method  of Deift and Zhou \cite{DZ03}  
revisited by  the $\overline{\partial}$-analysis of {McLaughlin-Miller \cite{MM08} and Dieng-McLaughlin \cite{DM08}}, and complemented by the recent 
work of Borghese-Jenkins-McLaughlin \cite{BJM16} on soliton resolution for the focusing 
nonlinear Schr\"{o}dinger equation. 
Our results imply that $N$-soliton solutions of the derivative nonlinear Schr\"{o}dinger equation are asymptotically stable.
\end{abstract}
\maketitle

\tableofcontents

%
%

%
%

\newcommand{\rarr}{\rightarrow}
\newcommand{\darr}{\downarrow}
\newcommand{\uarr}{\uparrow}

\newcommand{\sig}{\sigma_3}

\newcommand{\dotarg}{\, \cdot \, }

\newcommand{\eps}{\varepsilon}
\newcommand{\lam}{\lambda}
\newcommand{\Lam}{\Lambda}

\newcommand{\dee}{\partial}
\newcommand{\dbar}{\overline{\partial}}

\newcommand{\vect}[1]{\boldsymbol{\mathbf{#1}}}

\newcommand{\dint}{\displaystyle{\int}}

\newcommand{\restrict}[2]{\left. {#1} \right|_{#2}}

%
%

\newcommand{\resp}{resp.\@}
\newcommand{\ifff}{if and only if }
\newcommand{\ie}{i.e.}

%
%

\newcommand{\lp}{\left(}
\newcommand{\rp}{\right)}
\newcommand{\lb}{\left[}
\newcommand{\rb}{\right]}
\newcommand{\lw}{\left<}
\newcommand{\rw}{\right>}

\newcommand{\one}{\bm{1}}

%
%


\newcommand{\C}{\mathbb{C}}
\newcommand{\R}{\mathbb{R}}
\newcommand{\N}{\mathbb{N}}


\newcommand{\calB}{\mathcal{B}}
\newcommand{\calC}{\mathcal{C}}
\newcommand{\calD}{\mathcal{D}}
\newcommand{\calE}{\mathcal{E}}
\newcommand{\calF}{\mathcal{F}}
\newcommand{\calG}{\mathcal{G}}
\newcommand{\calI}{\mathcal{I}}
\newcommand{\calK}{\mathcal{K}}
\newcommand{\calL}{\mathcal{L}}
\newcommand{\calM}{\mathcal{M}}
\newcommand{\calN}{\mathcal{N}}
\newcommand{\calR}{\mathcal{R}}
\newcommand{\calS}{\mathcal{S}}
\newcommand{\calU}{\mathcal{U}}
\newcommand{\calZ}{\mathcal{Z}}


\newcommand{\ba}{\breve{a}}
\newcommand{\bb}{\breve{b}}

\newcommand{\br}{\breve{r}}
\newcommand{\bs}{\breve{s}}

\newcommand{\balpha}{\breve{\alpha}}
\newcommand{\bbeta}{\breve{\beta}}
\newcommand{\brho}{\breve{\rho}}
\newcommand{\bgamma}{\breve{\gamma}}

\newcommand{\bC}{\breve{C}}
\newcommand{\bN}{\breve{N}}


\newcommand{\bfe}{\mathbf{e}}
\newcommand{\bff}{\mathbf{f}}
\newcommand{\bfN}{\mathbf{N}}
\newcommand{\bfM}{\mathbf{M}}


\newcommand{\qbar}{\overline{q}}
\newcommand{\rbar}{\overline{r}}
\newcommand{\wbar}{\overline{w}}
\newcommand{\zbar}{\overline{z}}


\newcommand{\etabar}{\overline{\eta}}
\newcommand{\lambar}{\overline{\lambda}}
\newcommand{\lambdabar}{\overline{\lambda}}
\newcommand{\rhobar}{\overline{\rho}}
\newcommand{\btheta}{\overline{\vartheta}}
\newcommand{\zetabar}{\overline{\zeta}}


\newcommand{\hatphi}{\widehat{\phi}}

%
%
%

\newcommand{\bigo}[1]{\mathcal{O} \left( #1 \right)}
\newcommand{\littleo}[2][ ]{ {o}_{#1} \left( #2 \right) }

\newcommand{\bigO}[2][ ]{\mathcal{O}_{#1} \left( {#2} \right)}
\newcommand{\norm}[2][ ]{\left\| {#2} \right\|_{#1}}

%
%

\newcommand{\medcup}{{\mathsmaller{\bigcup}}}

%
%

\newcommand{\upmat}[1]
{
	\begin{pmatrix}	
	0	&	#1	\\
	0	&	0
	\end{pmatrix}
}
\newcommand{\lowmat}[1]
{
	\begin{pmatrix}
	0	&	0	\\
	#1	&	0
	\end{pmatrix}
}
\newcommand{\upunitmat}[1]
{
	\begin{pmatrix}
	1	&	#1	\\
	0	&	1
	\end{pmatrix}
}
\newcommand{\lowunitmat}[1]
{
	\begin{pmatrix}
	1	&	0	\\
	#1	&	1
	\end{pmatrix}
}

%
%

\newcommand{\La}{ {\mathcal{L} } }
\newcommand{\B}{ {\mathcal{B } } }

\newcommand{\qsol}{q_{\mathrm{sol}}}
\newcommand{\usol}{u_{\mathrm{sol}}}
\newcommand{\tqsol}{\widetilde{q}_{\mathrm{sol}}}

\newcommand{\poles}{\Lambda}
\newcommand{\indicator}{\chi_{_\poles}}
\newcommand{\poledist}{ {\mathrm{d}_{\poles} } }
\newcommand{\coeff}{\mathcal{C}}
\newcommand{\data}{\sigma_d}

\newcommand{\NN}{ {N} }
\newcommand{\nn}{ {n} }
\newcommand{\nk}[2][]{ {{n}_{#1}^{(#2)}} }

\newcommand{\mk}[1]{ m^{(#1)} }
\newcommand{\vk}[1]{ {v^{(#1)}} }
\newcommand{\Sk}[1]{ {\Sigma^{(#1)} } }
\newcommand{\Wk}[2][]{ {W_{#1}^{(#2)} } }

\newcommand{\sgnt}{\eta}
\newcommand{\pospoles}{\Delta_{\xi,\sgnt}^+}
\newcommand{\negpoles}{\Delta_{\xi,\sgnt}^-}
\newcommand{\posnegpoles}{\Delta_{\xi,\sgnt}^\pm}
\newcommand{\posint}{ I^{+}_{\xi,\sgnt}} 
\newcommand{\negint}{ I^{-}_{\xi,\sgnt}} 						

\newcommand{\Nrhp}[1][]{ {{\mathcal{N}}_{#1}^{\mathsc{rhp}} } }
\newcommand{\Nsol}[2][]{ {{\mathcal{N}}_{#1}^{\,\,\mathrm{sol},#2} } }
\newcommand{\Nout}[1][]{ {{\mathcal{N}}_{#1}^{\,\mathrm{out}} } }
\newcommand{\NsolAlt}[1][]{ {{\mathcal{N}}_{#1}^{\negpoles} } }
\newcommand{\NPC}[1][]{ {{\mathcal{N}}_{#1}^{\mathsc{pc}} } }

\newcommand{\Uxi}{ {\mathcal{U}_\xi} }

\newcommand{\error}{ {{\mathcal{E}}} }

\newcommand{\Ske}[1][]{ { \Sigma^{(\error)}_{#1} }  }
\newcommand{\vke}{ \vk{\error} }

\newcommand{\mout}{M^{(\textrm{out})}}
\newcommand{\mxi}{M^{(\xi)}}
\newcommand{\mPC}{M^{(\textsc{pc})}}

\newcommand{\sol}{\mathrm{sol}}

%
%
%
\newcommand{\spacing}[2]
{ 
{	
	\mathrlap{#1}
	\hphantom{#2} 
}
}
%
%
%

\newcommand{\pd}[3][ ]{\frac{\partial^{#1} #2}{\partial #3^{#1} } }
\newcommand{\od}[3][ ]{\frac{\mathrm{d}^{#1} #2}{\mathrm{d} #3^{#1} } }
\newcommand{\vd}[3][ ]{\frac{ \delta^{#1} #2}{ \delta #3^{#1}} } 

%
%

\renewcommand{\Re}{\mathop{ \mathrm{Re}}\nolimits}
\renewcommand{\Im}{\mathop{ \mathrm{Im} }\nolimits}
\newcommand{\im}{\mathrm{i} }

%
%

\newcommand{\triu}[2][1]{\begin{pmatrix} #1 & #2 \\ 0 & #1 \end{pmatrix}}
\newcommand{\tril}[2][1]{\begin{pmatrix} #1 & 0 \\ #2 & #1 \end{pmatrix}}
\newcommand{\diag}[2]{\begin{pmatrix} #1 & 0 \\ 0 & #2 \end{pmatrix}}
\newcommand{\offdiag}[2]{\begin{pmatrix} 0 & #1 \\ #2 & 0 \end{pmatrix}}

\newcommand{\striu}[2][1]{\begin{psmallmatrix} #1 & #2 \\ 0 & #1 \end{psmallmatrix}}
\newcommand{\stril}[2][1]{\begin{psmallmatrix} #1 & 0 \\ #2 & #1 \end{psmallmatrix}}
\newcommand{\sdiag}[2]{\begin{psmallmatrix*}[c] #1 & 0 \\ 0 & #2 \end{psmallmatrix*}}
\newcommand{\soffdiag}[2]{\begin{psmallmatrix*}[r] 0 & #1 \\ #2 & 0 \end{psmallmatrix*}}
\newcommand{\stwomat}[4]{\begin{psmallmatrix*} #1 & #2 \\ #3 & #4 \end{psmallmatrix*}}

%
%

\newcommand{\mathsc}[1]{ {\text{\normalfont\scshape#1}} }

%
%

\newcommand{\oldnorm}[2][]
{
	 {\left\|  #2 \right\|_{#1} } 
}

\newcommand{\twomat}[4]
{
\begin{pmatrix}
	#1	&&	#2	\\
	#3	&&	#4
\end{pmatrix}
}

\newcommand{\Twomat}[4]
{
	\left(
		\begin{array}{ccc}
			#1	&&	#2	\\
			\\
			#3	 &&	#4
		\end{array}
	\right)
}

\newcommand{\twovec}[2]
{
	\left(
		\begin{array}{c}
			#1		\\
			#2
		\end{array}
	\right)
}

\newcommand{\Twovec}[2]
{
	\left(
		\begin{array}{c}
			#1		\\
			\\
			#2
		\end{array}
	\right)
}


%
%

\tikzset{->-/.style={decoration={
  markings,
  mark=at position .55 with {\arrow{triangle 45}} },postaction={decorate}}
}

%
%
\newcommand{\FigPhaseA}[1][0.7]{
\begin{tikzpicture}[scale={#1}]
\path[fill=gray!20,opacity=0.5]	(0,0) rectangle(4,4);
\path[fill=gray!20,opacity=0.5]    (-4,-4) rectangle(0,0);
\path[fill=gray,opacity=0.5]		(-4,0) rectangle (0,4);
\path[fill=gray,opacity=0.5]		(0,-4) rectangle (4,0);
\draw[fill] (0,0) circle[radius=0.075];
\draw[thick,->,>=stealth] 	(0,0) -- (2,0);
\draw	[thick]    	(2,0) -- (4,0);
\draw[thick,->,>=stealth]	(-4,0) -- (-2,0);
\draw[thick]		(-2,0) -- (0,0);
\draw[thin,dashed]	(0,4) -- (0,-4);
\node at (2,2) 	{$e^{2it\theta} \gg 1 $};
\node at (-2,-2)  {$e^{2it\theta} \gg 1 $};
\node at (-2,2)	{$e^{2it\theta} \ll 1 $};
\node at (2,-2)	{$e^{2it\theta} \ll 1 $};
\node[above] at (0,4)	{$ \sgnt = +1 $};
\node[below] at (2,-0.05)		{$\posint$};
\node[below] at (-2,-0.05)		{$\negint$};
\node[below right] at (0,0)		{$\xi$};
\end{tikzpicture}
}

%
%

\newcommand{\FigPhaseB}[1][0.7]{
\begin{tikzpicture}[scale={#1}]
\path[fill=gray,opacity=0.5]	(0,0) rectangle(4,4);
\path[fill=gray,opacity=0.5]    (-4,-4) rectangle(0,0);
\path[fill=gray!20,opacity=0.5]		(-4,0) rectangle (0,4);
\path[fill=gray!20,opacity=0.5]		(0,-4) rectangle (4,0);
\draw[fill] (0,0) circle[radius=0.075];
\draw[thick,->,>=stealth] 	(0,0) -- (2,0);
\draw	[thick]    	(2,0) -- (4,0);
\draw[thick,->,>=stealth]	(-4,0) -- (-2,0);
\draw[thick]		(-2,0) -- (0,0);
\draw[thin,dashed]	(0,4) -- (0,-4);
\node at (2,2) 	{$e^{2it\theta} \ll 1 $};
\node at (-2,-2)  {$e^{2it\theta} \ll 1 $};
\node at (-2,2)	{$e^{2it\theta} \gg 1 $};
\node at (2,-2)	{$e^{2it\theta} \gg 1 $};
\node[above] at (0,4)	{$ \sgnt = -1 $};
\node[below] at (-2,-0.05)		{$\posint$};
\node[below] at (2,-0.05)		{$\negint$};
\node[below right] at (0,0)		{$\xi$};
\end{tikzpicture}
}

%
%

\newcommand{\posDBARcontours}{
\resizebox{0.45\textwidth}{!}{
\begin{tikzpicture}
\path [fill=gray!20] (0,0) -- (-4,4) -- (-4.5,4) -- (-4.5,-4) -- (-4,-4) -- (0,0);
\path [fill=gray!20] (0,0) -- (4,4) -- (4.5,4) -- (4.5,-4) -- (4,-4) -- (0,0);
%
\draw [help lines] (-4.5,0) -- (4.5,0);
\draw [thick][->-] (-4,4) -- (0,0);
\draw [thick][->-] (0,0) -- (4,4);
\draw [thick][->-] (0,0) -- (4,-4);
%
\foreach \pos in { 
		(-1.5,1.4), (-1.5,-1.4), (-3.5,2.5), 
		(-3.5,-2.5), (3,2), (3,-2)}
\draw[color=white, fill=white] \pos circle [radius=.2];
%
\node[left] at (3,3) {$\Sigma_{1}\,$};
\node[right] at (-3,3) {$\Sigma_{2}$};
\node[right] at (-3,-3) {$\,\Sigma_{3}$};
\node[left] at (3,-3) {$\Sigma_{4}\,$};
\draw[fill] (0,0) circle [radius=0.025];
\node[below] at (0,0) {$\xi$};
\node[above] at (0,4) {$\sgnt = +1$};
%
\node at (1,.4) {$\Omega_{1}$};
\node at (0,1.08) {$\Omega_{2}$};
\node at (-1,.4) {$\Omega_{3}$};
\node at (-1,-.4) {$\Omega_{4}$};
\node at (0,-1.08) {$\Omega_{5}$};
\node at (1,-.4) {$\Omega_{6}$};
%
\node[left] at (4.5,0.8) {$\tril{-R_1 e^{-2it \theta}} $ };
\node[right] at (-4.5,0.8) {$\triu{ -R_3 e^{2it \theta}} $ };
\node[right] at (-4.5,-0.8) {$\tril{ R_4 e^{-2it \theta}} $ };
\node[left] at (4.5,-0.8) {$\triu{ R_6 e^{2it \theta }} $};
\node at (0,2.5) {$\diag{\,\, 1\,\,}{\,\,1 \,\,}$};
\node at (0,-2.5) {$\diag{\,\, 1\,\,}{\,\,1 \,\,}$};
\end{tikzpicture}
}
}

%
%

\newcommand{\negDBARcontours}{
\resizebox{0.45\textwidth}{!}{
\begin{tikzpicture}
\path [fill=gray!20] (0,0) -- (-4,4) -- (-4.5,4) -- (-4.5,-4) -- (-4,-4) -- (0,0);
\path [fill=gray!20] (0,0) -- (4,4) -- (4.5,4) -- (4.5,-4) -- (4,-4) -- (0,0);
%
\draw [help lines] (-4.5,0) -- (4.5,0);
\draw [thick][->-] (-4,4) -- (0,0) ;
\draw [thick][->-](-4,-4)--(0,0);
\draw [thick][->-] (0,0) -- (4,4);
\draw [thick][->-] (0,0) -- (4,-4);
%
\foreach \pos in { 
		(-1.5,1.4), (-1.5,-1.4), (-3.5,2.5), 
		(-3.5,-2.5), (3,2), (3,-2)}
\draw[color=white, fill=white] \pos circle [radius=.2];
%
\node[left] at (3,3) {$\Sigma_{2}\,$};
\node[right] at (-3,3) {$\Sigma_{1}$};
\node[right] at (-3,-3) {$\,\Sigma_{4}$};
\node[left] at (3,-3) {$\Sigma_{3}\,$};
\draw[fill] (0,0) circle [radius=0.025];
\node[below] at (0,0) {$\xi$};
\node[above] at (0,4) {$\sgnt = -1$};
%
\node at (1,.4) {$\Omega_{3}$};
\node at (0,1.08) {$\Omega_{2}$};
\node at (-1,.4) {$\Omega_{1}$};
\node at (-1,-.4) {$\Omega_{6}$};
\node at (0,-1.08) {$\Omega_{5}$};
\node at (1,-.4) {$\Omega_{4}$};
%
\node[left] at (4.5,0.8) {$\triu {-R_3 e^{2it \theta}} $ };
\node[right] at (-4.5,0.8) {$\tril{ -R_1 e^{-2it \theta}} $ };
\node[right] at (-4.5,-0.8) {$\triu{ R_6 e^{2it \theta}} $ };
\node[left] at (4.5,-0.8) {$\tril{ R_4 e^{-2it \theta }} $};
\node at (0,2.5) {$\diag{\,\, 1\,\,}{\,\,1 \,\,}$};
\node at (0,-2.5) {$\diag{\,\,1 \,\,}{\,\,1 \,\,}$};
\end{tikzpicture}
}
}

%
%

\newcommand{\FigPCjumps}{
\resizebox{0.45\textwidth}{!}{
\begin{tikzpicture}
\draw [help lines] (-4,0) -- (4,0);

\draw[->-] (-3,3) -- (0,0);
\draw[->-] (-3,-3) -- (0,0);
\draw[->-] (0,0) -- (3,3);
\draw[->-] (0,0) -- (3,-3);

\node[left] at (2,2) {$\Sigma_{1}\,$};
\node[right] at (-2,2) {$\Sigma_{2}$};
\node[right] at (-2,-2) {$\,\Sigma_{3}$};
\node[left] at (2,-2) {$\Sigma_{4}\,$};
\draw[fill] (0,0) circle [radius=0.025];
\node[below] at (0,0) {$\xi$};
\node[above] at (0,3) {$\sgnt = 1$};
%
\node at (1,.4) {$\Omega_{1}$};
\node at (0,1.08) {$\Omega_{2}$};
\node at (-1,.4) {$\Omega_{3}$};
\node at (-1,-.4) {$\Omega_{4}$};
\node at (0,-1.08) {$\Omega_{5}$};
\node at (1,-.4) {$\Omega_{6}$};
%
\node[right] at (2.0,1.5) {$\tril[1] { s_\xi }$}; 
\node[right] at (2.0,-1.5) {$\triu[1]{ r_\xi } $}; 
\node[left] at (-2.0,1.5) {$\triu[1]{ \dfrac{r_\xi}{1+r_\xi s_\xi} } $}; 
\node[left] at (-2.0,-1.5) {$\tril[1] { \dfrac{s_\xi}{1+r_\xi s_\xi} } $}; 
%
\end{tikzpicture}
}}

%
%
\newcommand{\FigPCjumpsB}{
\resizebox{0.45\textwidth}{!}{
\begin{tikzpicture}
\draw [help lines] (-4,0) -- (4,0);

\draw[->-] (-3,3) -- (0,0)  ;
\draw[->-] (-3,-3) -- (0,0);
\draw[->-] (0,0) -- (3,3);
\draw[->-] (0,0) -- (3,-3);

\node[left] at (2,2) {$\Sigma_{2}\,$};
\node[right] at (-2,2) {$\Sigma_{1}$};
\node[right] at (-2,-2) {$\,\Sigma_{4}$};
\node[left] at (2,-2) {$\Sigma_{3}\,$};
\draw[fill] (0,0) circle [radius=0.025];
\node[below] at (0,0) {$\xi$};
\node[above] at (0,3) {$\sgnt = -1$};
%
\node at (1,.4) {$\Omega_{3}$};
\node at (0,1.08) {$\Omega_{2}$};
\node at (-1,.4) {$\Omega_{1}$};
\node at (-1,-.4) {$\Omega_{6}$};
\node at (0,-1.08) {$\Omega_{5}$};
\node at (1,-.4) {$\Omega_{4}$};
%
\node[right] at (2.0,1.5) {$\triu[1]{ \dfrac{r_\xi}{1+r_\xi s_\xi} }$}; 
\node[right] at (2.0,-1.5) {$\tril[1] { \dfrac{s_\xi}{1+r_\xi s_\xi}} $}; 
\node[left] at (-2.0,1.5) {$\tril[1] { s_\xi }$}; 
\node[left] at (-2.0,-1.5) {$\triu[1]{ r_\xi }$}; 
%
\end{tikzpicture}
}}

%
%

%
%

\newcommand{\solfig}{
\hspace*{\stretch{1}}
\begin{tikzpicture}[scale=0.5]						
\coordinate (shift) at (-40:1.5);
\coordinate (x1) at (0.7,0);
\coordinate (x2) at (2.1,0);
\coordinate (topleft) at ($(x1)+(95:6)$);
\coordinate (bottomleft) at ($(x1)+(-120:6)$);
\coordinate (topright) at ($(x2)+(60:6)$);
\coordinate (bottomright) at ($(x2)+(-85:6)$);
\coordinate (C) at ($ (x1)!.5!(x2)+(77.5:.5)$);

\begin{scope}
  \clip (-4,-4) rectangle (4,4);
  \path[name path=top] (-4,4) -- (4,4);
  \path[name path=bottom] (-4,-4) -- (4,-4);
  \path[name path=right] (4,4) -- (4,-4);  	
  \path [fill=gray!15] (x1) -- (topleft) -- (topright) -- 
    (x2) -- (bottomright) -- (bottomleft) -- (x1);
  \draw[thick,name path=leftcone] (bottomleft) -- (x1) -- (topleft);
  \draw[thick,name path=rightcone] (bottomright) -- (x2) -- (topright);
    \draw [help lines, name path=axis][->] (-4,0) -- (3.3,0)
    	node[label=0:$x$] {};
    \draw [help lines][->] (-0.5,-4) -- (-0.5,3.0)
    	node[label=90:$t$] {};
    \path [name intersections={of= leftcone and top, by=TL}];
    \path [name intersections={of= rightcone and right, by=TR}];
    \path [name intersections={of= leftcone and bottom, by=BL}];
    \path [name intersections={of= rightcone and bottom, by=BR}];
\end{scope}
    \node [fill=black, inner sep=1.5pt, circle, label=-70:$x_2$] at (x2) {};
    \node [fill=black, inner sep=1.5pt, circle, label=-177:$x_1$] at (x1) {};
	\node [label=180:${x-v_1 t = x_1}$] at (TL) {};
	\node [label=90:${x-v_2 t = x_2}$] at (TR) {};
    \node [label=170:${x-v_2 t = x_1}$] at (BL) {};
    \node [label=10:${x-v_1 t = x_2}$] at (BR) {};
    \node at (C) {$\mathcal{S}$};
\end{tikzpicture}
\hspace*{\stretch{1}}
\begin{tikzpicture}[scale=0.5]			
	\coordinate (v1) at (0.75,0);
	\coordinate (v2) at (-2,0);
	\path [fill=gray!15] ($(v1)+(0,-1)$) -- ($(v1)+(0,7)$) 
	--($(v2)+(0,7)$) -- ($(v2)+(0,-1)$) --  ($(v1)+(0,-1)$);
	\draw[thick] ($(v1)+(0,-.4)$) -- ($(v1)+(0,7)$);
	\draw[thick] ($(v2)+(0,-.4)$) -- ($(v2)+(0,7)$);
	\draw[help lines][->] (-4,0) -- (4,0) 
		node[label=0:$\Re \lam$] {};
	\node [label=${-v_1/4}$] at (.75,-1.2) {};
	\node [label=${-v_2/4}$] at (-2,-1.2) {};
\node [fill=black, inner sep = 1pt, circle, label=-90:$\lambda_1$] at (3,6) 			{};
\node [fill=black, inner sep = 1pt, circle, label=-90:$\lambda_2$] at (1.5,5) 		{};
\node [fill=black, inner sep = 1pt, circle, label=-90:$\lambda_3$] at (0,5.5) 		{};
\node [fill=black, inner sep = 1pt, circle, label=-90:$\lambda_5$] at (-3.3,4.4) 	{};
\node [fill=black, inner sep = 1pt, circle, label=-90:$\lambda_8$] at (-0.8,0.8) 	{};
\node [fill=black, inner sep = 1pt, circle, label=-90:$\lambda_4$] at (-2.5,6.5) 	{};
\node [fill=black, inner sep = 1pt, circle, label=-90:$\lambda_6$] at (-1.6,3) 		{};
\node [fill=black, inner sep = 1pt, circle, label=-90:$\lambda_9$] at (2,1) 			{};
\node [fill=black, inner sep = 1pt, circle, label=-90:$\lambda_7$] at (-3.8,1.1) 	{};
\node [fill=black, inner sep = 1pt, circle, label=-90:$\lambda_{10}$] at (3.8,2.3) 	{};
\end{tikzpicture}
\hspace*{\stretch{1}}
}


%
%

\section{Introduction}

In this paper, we prove soliton resolution for the derivative nonlinear Schr\"{o}dinger equation (DNLS)
\begin{subequations}
\begin{align}
\label{DNLS1}
i u_t + u_{xx} - i \varepsilon (|u|^2 u)_x &=0\\
\label{data1}
u(x,t=0) &= u_0
\end{align}
\end{subequations}
\sidenote{2}
for initial data in a dense and open subset  of the \revisedtext{weighted Sobolev  space} $H^{2,2}(\R)$ which contains $0$ as well as initial data of arbitrarily large $L^2$-norm.  Here 
$\eps = \pm 1$ and 
$H^{2,2}(\R)$ is the completion of $C_0^\infty(\R)$ in the norm
$$ \norm[H^{2,2}]{u} = \left( \norm[L^2]{(1+(\dotarg))^2 u(\dotarg)}^2 + \norm[L^2]{u''}^2 \right)^{1/2}. $$

Our work builds on three previous papers \cite{LPS15,LPS16,JLPS17b} where we respectively considered global well-posedness in the soliton-free sector, large-time asymptotics in the soliton-free sector, and global well-posedness on the dense and open subset described in what follows. We will refer to these as Papers I, II, and III 
\sidenote{55}{for the remainder of the introduction.}
A more detailed presentation of the material in Paper III and the current paper may be found in the preprint \cite{JLPS17b}.  

Soliton resolution refers to the property that the solution decomposes into the sum of a finite number of separated
\sidenote{3}  solitons and a radiative part as $|t| \to \infty$.  The \revisedtext{limiting soliton parameters} are slightly modulated, due to the soliton-soliton and soliton-radiation interactions. We fully describe the dispersive part which contains two components,  one coming from the continuous  spectrum and another one from the interaction of the discrete and continuous spectrum.

This decomposition is a central feature
in nonlinear wave dynamics and has been the object of many theoretical and numerical studies. It has been established in many {\em{perturbative}} contexts,   that is when the initial condition is
close to a soliton or a multi-soliton. In non-perturbative cases,  this property was proved rigorously for KdV  \cite{ES83},  mKdV \cite{DZ93,S86}  and  for the focussing NLS equation \cite{BJM16} using the inverse scattering approach.
The last result has been conjectured for a long time \cite{ZS72} but rigorously proved only recently.
A  direct consequence  of this result is that   N-soliton solutions are asymptotically stable.

The soliton resolution conjecture is at the heart of current studies in nonlinear waves and extends to solutions that blow up in finite time. 
 In the context of  non-integrable equations, Tao \cite{Tao08} considered the NLS equation with potential in high dimension ($d\ge 11$) and proved the existence of a global  attractor, assuming radial symmetry.
A recent work by Duykaerts, Jia, Kenig and Merle \cite{DJKM16}  concerns  the focusing energy critical wave equation for which they prove that any bounded solution  asymptotically behaves like  a finite sum of modulated solitons, a regular component in the finite time blow-up case or a free radiation in the global case, plus a residue term that vanishes asymptotically in the energy space as time approaches the maximal time of existence  (see  also \cite{DKM13,DKM15} for other cases,   radial and non-radial, in various dimensions).

\subsection{The Inverse Scattering Method}

In this subsection we briefly review the global well-posedness 
result  of \cite{JLPS17a} and describe the spectrally defined, dense open subset of initial data $H^{2,2}(\R)$ for which we will prove soliton resolution. 
Equation \eqref{DNLS1} is gauge-equivalent to the equation
\begin{subequations}
\begin{align}
\label{DNLS2}
iq_t + q_{xx} + i \eps q^2 \bar q_x + \frac{1}{2} |q|^4 q = 0, \\
\label{data}
q(x,t=0) = q_0(x)
\end{align}
\end{subequations}
via the gauge transformation\footnote{  In papers I and II, we use {{a}} slightly different gauge transformation, namely $\exp\left(- i\eps \int_{-\infty}^x |u(y)|^2 \, dy \right) u(x)$. Both
transformations are clearly equivalent, up to the constant phase factor $\exp\left( i\eps \int_{-\infty}^{\infty} |u_0(y)|^2 \, dy \right)$. The current transformation has the advantage
of slightly  simplifying some formulae in the {analysis of long time behavior.}} 
\begin{equation}
\label{G}
q(x)  :=\calG (u)(x) = \exp\left( i\eps \int_x^{\infty} |u(y)|^2 \, dy \right) u(x).
\end{equation}
This nonlinear, invertible mapping is an isometry of $L^2(\R)$, maps soliton solutions to soliton solutions, and maps dense open sets to dense open sets in weighted Sobolev spaces.
Consequently,  global well-posedness for \eqref{DNLS2} on an open and dense set $U$ in $H^{2,2}(\R)$ containing data of arbitrary $L^2$-norm implies 
global well-posedness of \eqref{DNLS1} on a subset $\calG^{-1}(U)$ of $H^{2,2}(\R)$ with the same properties. In \cite{JLPS17b},  global well-posedness is established for \eqref{DNLS2}.

Our analysis exploits the discovery of Kaup and Newell \cite{KN78} that \eqref{DNLS2} has the Lax representation
\begin{equation*}
\begin{aligned}
L	&=	-i\lam \sigma_3 + Q_\lam - \frac{i}{2} \sigma_3 Q^2\\
A	&=	2 \lam L + i \lam (Q_\lam)_x + \frac{1}{2}\left[ Q_x , Q \right] + \frac{i}{4}\sigma_3 Q^4
\end{aligned}
\end{equation*}
where
\begin{equation*}
\sigma_3 = \begin{pmatrix} 1 & 0 \\ 0 & -1 \end{pmatrix}, \quad
Q(x) =  
\begin{pmatrix} 0 & q(x) \\  \eps  \overline{q(x)} & 0 \end{pmatrix},
\quad
Q_\lam(x) = 
\begin{pmatrix} 0 & q(x) \\  \eps \lam \overline{q(x)} & 0 \end{pmatrix}.
\end{equation*}
That is, a
smooth function $q(x,t)$ solves \eqref{DNLS2} if and only if the operator identity
$$ L_t - A_x + [L,A] = 0 $$ 
\sidenote{4}holds for the \revisedtext{ matrices} above with $q=q(x,t)$ (so that both 
\revisedtext{matrices}
depend on $t$).

To exploit the Lax representation, we consider the 
spectral problem
\begin{equation}
\label{LS}
\Psi_x = L\Psi
\end{equation} 
for  $\lam \in \C$ and $2 \times 2$ matrix-valued solutions $\Psi(x;\lam)$.

As shown in \cite{JLPS17b},  \eqref{LS} defines a map $\calR$ from $q \in H^{2,2}(\R)$ to spectral data, and has an inverse $\calI$ defined by a Riemann-Hilbert problem (\revisedtext{Riemann-Hilbert Problem} \ref{RHP2} below) which recovers the potential $q(x)$. Moreover, the spectral data for a solution 
 $q = q(x,t)$ of \eqref{DNLS2} obey a linear law of evolution. Thus the solution 
$\calM$ for the Cauchy problem \eqref{DNLS2}-\eqref{data} is given by
\sidenote{8}
\begin{equation}
\label{sol.op}
\revisedtext{\calM (t) q_0} = \left( \calI \circ\Phi_t \circ \calR \right) q_0 
\end{equation}
where $\Phi_t$  represents  the linear evolution on spectral data. To state the results of Paper III that we will use, we  describe the set $U$ and the maps $\calR$,  $\Phi_t$, and $\calI$ in greater detail. For any $q \in H^{2,2}(\R)$ and $\lam \in \R$; there exist unique 
\emph{Jost solutions} $\Psi^\pm(x,\lam)$ of $\Psi_x = L\Psi$ with
respective asymptotics
$\lim_{x \rarr \pm \infty} \Psi^\pm(x;\lam) e^{i\lam x \sigma_3} =I$, where $I$ denotes the $2\times 2$ identity matrix. The scattering data
are defined via the relation
\begin{equation}
\label{Jost.T}
\Psi^+(x;\lam) = \Psi^-(x;\lam) T(\lam), \quad
T(\lam) \coloneqq
\begin{pmatrix} 
\alpha(\lam) 	& \beta(\lam) \\
\bbeta(\lam) 	&	\balpha(\lam)
\end{pmatrix}
\end{equation}
where, since $\det \Psi^\pm = 1$,
\begin{equation*}
\alpha \balpha - \beta \bbeta = 1.
\end{equation*}
Symmetries of \eqref{LS} imply that, also
\begin{equation}
\label{alpha.beta.sym}
\balpha(\lam) = \overline{\alpha(\lam)}, \quad \bbeta(\lam) = \eps \lam \overline{\beta(\lam)}.
\end{equation}
We introduce the reflection coefficient
\begin{equation} \label{scatt-rho}
  \rho(\lam) = \beta(\lam)/\alpha(\lam) 
 \end{equation}
and note that 
$ 1-\eps \lam |\rho(\lam)|^2 = \dfrac{1}{|\alpha(\lam)|^2} > 0. $
We then define  the set
\begin{equation}
\label{S}
P = \left\{ \rho\in H^{2,2}(\R): 1- \eps \lam |\rho(\lam)|^2 > 0 \right\} .
\end{equation}

\sidenote{11}\revisedtext{The function $\balpha$ may be computed via the
`Wronskian formula'  
$$ \balpha(\lam) = 
		\begin{vmatrix}
			\psi_{11}^-(x,\lam)	&	\psi_{12}^+(x,\lam)	\\
			\psi_{21}^-(x,\lam)	&	\psi_{22}^+(x,\lam)
		\end{vmatrix};
$$
a standard analysis of \eqref{LS} shows that if $q \in L^1(\R) \cap L^2(\R)$,
the vector-valued functions $\psi^-_{(1)}=(\psi_{11}^-,\psi_{21}^-)$ and $\psi^+_{(2)}=(\psi_{12}^+,\psi_{22}^+)$ have analytic continuations to $\imag \lam> 0$ for each $x$,
and decay exponentially fast respectively as $x \rarr \pm \infty$.
Thus $\balpha$ has an analytic continuation to $\C^+$. Zeros $\lam_j$ of $\balpha$ \sidenote{11} are
eigenvalues of the spectral problem  \eqref{LS} and signal the presence of square-integrable solutions. 
\sidenote{5}
We have $\psi_{(1)}^{-}(x,\lam_j)= B_j \psi_{(2)}^{+}(x,\lam_j)$ for nonzero $B_j$ and, if the zero $\lam_j$ is simple, we define the associated \emph{norming constant} $C_j$ as $B_j/\balpha'(\lam_j)$. 
If $q_0 \in H^{2,2}(\R)$,  zeros of  $\balpha$ in $\C^+$  correspond to  \emph{bright solitons}, as our results on large-time asymptotics show. 
}

Zeros of $\balpha$ on $\R$ 
are known to occur and 
 they correspond to spectral singularities \revisedtext{ \cite{Zhou89-b}}. They are excluded from our analysis in the present paper. Their presence  will be addressed in a forthcoming article.

\begin{definition}
\label{def:U}
We denote by $U$ the subset of $H^{2,2}(\R)$ consisting of  functions $q$ for which $\balpha$ has no zeros on $\R$ and at most finitely many simple zeros in $\C^+$.
\end{definition}
Clearly $U = \bigcup_{N=0}^\infty \, U_N$ where $U_N$ consists of those $q$ for which $\balpha$ has exactly $N$ zeros in $\C^+$. If $N \neq 0$, we denote by $\lam_1, \ldots, \lam_N$ the simple zeros of $\balpha$ in $\C^+$.

Associated to each $\lam_j$ is a \emph{norming constant} $C_j \in \C \setminus \{0\} := \C^\times$ which can be described in terms of the Jost solutions. Thus, the spectral problem \eqref{LS} associates to each $q \in U_N$ the spectral data
$$ 
\calD(q) = 
\left( 
	\rho, \{ (\lam_j, C_j )\}_{j=1}^N 
\right) \in 
P \times \left(\C^+ \times \C^\times\right)^N.
$$
The map $q \mapsto \calD(q)$ is called the \emph{direct scattering map}. To describe its range, let 
\begin{equation}
\label{V}
V=\bigcup_{N=0}^\infty V_N, \quad V_N = P \times \left( \C^+ \times \C^\times \right)^N.
\end{equation} 
For an element $\calD$ of $V$, write 
\begin{equation*}
\Lambda = \Lambda^+ \cup \overline{\Lambda^+}, \quad \Lambda^+ = \left\{ \lam_1, \ldots , \lam_N \right\}
\end{equation*}
and define
\begin{equation}
\label{distances}
d_\Lambda = \inf_{\lam, \mu \in \Lam, \, \lam \neq \mu} |\lam - \mu|. 
\end{equation}
Note that for any $\lambda_j \in \Lambda, |\imag \lambda_j|  \geq (1/2) d_\Lambda$.
We say that a subset $V'$ of $V_N$ is \emph{bounded} if:
\begin{itemize}
\item[(i)]		$1 -\varepsilon \lam |\rho(\lam)|^2 \geq c_1>0$ for a fixed strictly positive constant $c_1$,
\item[(ii)]	$\sup_j \left(|C_j| + |\lam_j|\right) \leq C$ for a fixed strictly positive constant $C$,
\item[(iii)]   $d_\Lambda \geq c_2 > 0$ for a fixed strictly positive constant $c_2$.
\end{itemize} 
We say that a subset $U'$ of $U$ is bounded if the set $\{\calD(q): q \in U\}$ is a bounded subset of $V_N'$ and if, also, $U'$ is a bounded subset of $H^{2,2}(\R)$.  

If $q=q(x,t)$ evolves according to \eqref{DNLS2}, the corresponding scattering data evolves linearly:
\begin{equation}
\label{sd.ev}
\dot{\rho}(\lam,t) = -4i \lam^2 \rho(\lam,t), \quad	\dot{\lam_j} = 0, \quad \dot{C_j} = -4i \lam_j^2 C_j.
\end{equation}
We denote by $\Phi_t$ the linear evolution on scattering data
$\left\{ \rho, \{ \lam_j, C_j \}_{j=1}^N \right\}$ induced by \eqref{sd.ev}.

Let us introduce the phase function 
\begin{equation}
\label{phase.lambda} 
\theta(x,t,\lam) = -\left(\frac{x}{t} \lam + 2 \lam^2\right).
\end{equation}
\sidenote{7}\revisedtext{Here and in what follows we write
$$ \ad(\sigma_3) A = \left[ \sigma_3, A \right] $$
for a $2 \times 2$ matrix $A$, so that
$$ e^{i\theta \ad\sigma_3} A = e^{i\theta \sigma_3} A e^{-i\theta \sigma_3}. $$
}
The (time-dependent)  inverse spectral problem is defined by a Riemann-Hilbert problem (RHP) and a reconstruction formula.

\begin{RHP}
\label{RHP2}
Given $x,t \in \R$, $\rho \in P$ and {$\{ (\lam_j, C_j) \}_{j=1}^N \subset \C^{+} \times \C^\times$}, find a matrix-valued function 
$$N(\lam;x,t) : \C \setminus (\R \cup \Lam 
) \rarr SL(2,\C)$$ 
with the following properties:
\medskip
\begin{itemize}
\item[(i)]		$N_{22}(\lam;x,t) = \overline{N_{11}(\lambar;x,t)}$, 
$N_{21}(\lam;x,t) = \eps \lam \overline{N_{12}(\lambar;x,t)}$, 
\smallskip
\item[(ii)]	\sidenote{6}\revisedtext{There exists $p^* \in \C$ so that} $N(\lam;x,t) = \stril{p^*} + \bigO{\dfrac{1}{\lam}}$ as $|\lam| \rarr \infty$, 
\smallskip
\item[(iii)]	$N$ has continuous boundary values $N_\pm$ on $\R$ and
				$$ N_+(\lam;x,t) = N_-(\lam;x,t) e^{it\theta \ad \sigma_3} v(\lam),
				\quad 
				v(\lam) = \begin{pmatrix}
							1-{\eps} \lam |\rho(\lam)|^2 	&	\rho(\lam)	\\
								-\eps \lam \overline{\rho(\lam)}	&	1
							\end{pmatrix}
				$$
\smallskip
\item[(iv)] 	$N(\lam;x,t)$ has simple poles at each point $p \in \Lam$:  
				$$
				\Res_{\lam = p} N(\lam;x,t) 
				= \lim_{\lam \rarr p} N(\lam;x,t) e^{it\theta \ad \sigma_3} v(p) 
				$$ 
				where for each $\lam_j \in \Lam^+$
				$$ v(\lam_j)  = \twomat{0}{0} {\lam_j C_j}{0}, \quad 
				v(\overline{\lam_j}) = \twomat{0}{ \eps  \overline{C_j} }{0}{0}. 
				$$
\end{itemize}
\end{RHP}
Given the solution $N(\lam;x,t)$ of \revisedtext{Riemann-Hilbert} Problem \ref{RHP2}, we can recover the solution
$q(x,t)$ from the reconstruction formula
\begin{equation}
\label{q.lam}
	q(x,t)= \lim_{\lam \rarr \infty}  2i \lam  N_{12}(\lam;x,t).
\end{equation}
We denote by $\calI$ the map from scattering data $\left\{ \rho, \{ \lam_j,C_j\}\right\}$ to $q(x,0)$, so that formally the solution operator for \eqref{DNLS2} is given by 
$$ \calM(t) = \calI \circ \Phi_t \circ \calR. $$

In Paper III, we proved:

\begin{theorem}
For $\eps=\pm 1$, the set $U$ is open and dense in $H^{2,2}(\R)$. Moreover, the direct scattering map 
\begin{equation*}
 \calR: U \rarr V 
\end{equation*}
maps bounded subsets of $U_N$ into bounded subsets of $V_N$ for each $N$,  and is
uniformly Lipschitz continuous on bounded subsets of $U_N$. The inverse scattering map $\calI$ has analogous mapping and Lipschitz properties. Finally, the evolution $\calM$ defines a solution operator for \eqref{DNLS2}
in the sense that 
\begin{itemize}
\sidenote{9}
\item[(i)]		$\revisedtext{q_0} \mapsto \calM(t) q_0$ is locally Lipschitz continuous on $U_N$,
\item[(ii)]	 $\revisedtext{t} \mapsto \revisedtext{\calM}(t)q_0$ defines a continuous curve in $H^{2,2}(\R)$, and
\item[(iii)]	For each $q_0 \in \calS(\R)$, $q(x,t) = \calM(t) q_0$ is a classical solution of \eqref{DNLS2}.
\end{itemize}
\end{theorem}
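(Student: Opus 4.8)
The plan is to assemble the theorem from four largely independent pieces, all driven by the analysis of the Jost solutions of \eqref{LS} through their Volterra integral representations: (a) that $U$ is open and dense, (b) the mapping and Lipschitz properties of $\calR$, (c) the analogous properties of $\calI$, and (d) the three properties of $\calM$. The first task is to pin down the dependence of the scattering coefficient $\balpha$ on the potential. Writing \eqref{LS} for the analytic columns $\psi^-_{(1)}$ and $\psi^+_{(2)}$ as Volterra equations and iterating, one shows that for $q \in H^{2,2}(\R) \subset L^1 \cap L^2$ these solutions are analytic in $\lam \in \C^+$, decay at the appropriate spatial infinities, and depend Lipschitz-continuously on $q$, uniformly on compact sets in $\lam$. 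The Wronskian formula then makes $\balpha$ analytic in $\C^+$, continuous up to $\R$, with $\balpha(\lam) \to 1$ as $|\lam| \to \infty$; hence $q \mapsto \balpha$ is Lipschitz on bounded sets and, away from the degenerate set, the zeros of $\balpha$ are confined to a compact subset of $\C^+$ and finite in number.

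Openness of $U$ follows from these facts together with complex analysis: the condition that $\balpha$ have no real zeros is open because $|\balpha|^{-2} = 1 - \eps\lam|\rho|^2$ is bounded below on compacta and $\balpha \to 1$ at infinity, while the condition of exactly $N$ simple zeros in $\C^+$ is stable by Rouch\'e/Hurwitz applied to the analytic family $\balpha$. Density is obtained by a perturbation argument: any $q$ whose $\balpha$ has a real zero or a coincident (non-simple) zero can be approximated by potentials for which a small deformation pushes zeros off $\R$ and separates them, so simple, real-zero-free spectrum is generic. The direct map $\calR$ then inherits its properties from the integral-equation estimates: $\rho = \beta/\alpha$ lies in $P \subset H^{2,2}(\R)$ with the required positivity, the eigenvalues $\lam_j$ (zeros of $\balpha$) and norming constants $C_j = B_j/\balpha'(\lam_j)$ move Lipschitz-continuously, and the bounds defining boundedness in $V_N$, namely control of $|\lam_j|$, $|C_j|$, $d_\Lam$ and the positivity constant, transfer from bounded subsets of $U_N$.

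For the inverse map I would solve \revisedtext{Riemann-Hilbert} Problem \ref{RHP2} by first trading the residue conditions (iv) for jumps on small circles around each $\lam_j$ and $\overline{\lam_j}$, producing a pole-free problem on an augmented contour, and then setting up the Beals--Coifman singular integral equation $(I - C_w)\mu = I$ on that contour. The crux is a vanishing lemma establishing that $I - C_w$ is invertible on $L^2$: the symmetries (i) and the strict positivity $1 - \eps\lam|\rho|^2 > 0$ (\ie, the absence of spectral singularities guaranteed by Definition \ref{def:U}) permit the energy argument that forces any homogeneous solution to vanish. Because $(I - C_w)^{-1}$ depends continuously on the jump $v$, the solution $\mu$, the matrix $N$, and the potential recovered from the reconstruction formula \eqref{q.lam} all depend Lipschitz-continuously on the scattering data; differentiating the contour data in $x$ and $t$ and tracking the resulting admissible perturbations yields membership in $H^{2,2}(\R)$ together with the bounded-to-bounded and Lipschitz estimates.

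The solution operator $\calM = \calI \circ \Phi_t \circ \calR$ then satisfies (i) by composition, since $\Phi_t$ acts on $(\rho, \{\lam_j, C_j\})$ by the explicit multipliers of \eqref{sd.ev} and preserves $V_N$ boundedly and Lipschitz-continuously. Property (ii) follows because $t \mapsto \Phi_t$ is continuous into $V_N$---the factor $e^{-4i\lam^2 t}\rho$ is continuous in the $H^{2,2}$ norm, the $\lam_j$ are fixed, and the $C_j$ merely rotate---so $\calM(t)q_0$ is a continuous curve. For (iii), Schwartz initial data give rapidly decaying scattering data, whence $N(\lam;x,t)$ is smooth in $(x,t)$; differentiating the RHP in $x$ and $t$ and using the explicit $\lam$-dependence of $v$ shows $N$ solves the Lax-pair equations, so $q = \lim_{\lam\to\infty} 2i\lam N_{12}$ is a classical solution of \eqref{DNLS2}. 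The main obstacle is the third step: proving solvability of the Beals--Coifman equation with uniform control through the vanishing lemma adapted to the nonstandard Kaup--Newell symmetry---the $\lam$-factor in (i) and the lower-triangular normalization $\stril{p^*}$ at infinity---and then upgrading the resulting estimates from $L^2$ to the full $H^{2,2}$ topology, which is exactly where excluding spectral singularities is indispensable.
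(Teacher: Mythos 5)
This theorem is not actually proved in the present paper: it is stated as imported from Paper III (\cite{JLPS17b}), so the only meaningful comparison is with that paper's argument, whose overall architecture your proposal does mirror — Volterra/Jost analysis and the Wronskian formula for the direct map, a Beals--Coifman singular integral equation with a vanishing lemma for $\calI$, and composition with the explicit linear flow $\Phi_t$ for $\calM$. Within that architecture, however, there is one genuine gap: the \emph{density} of $U$, which you dispatch in a single sentence (``a small deformation pushes zeros off $\R$ and separates them'') that assumes exactly what must be proved. For a general $q \in H^{2,2}(\R)\setminus U$, the function $\balpha$ is analytic only in $\C^+$ and merely continuous up to $\R$; its zero set in $\C^+$ may be infinite, accumulating on the real axis, and its real zero set need not consist of isolated points (boundary uniqueness only forces it to have measure zero, since $\balpha \to 1$ at infinity). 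So ``pushing zeros off $\R$'' is not even well defined in this generality, and no finite application of Rouch\'e controls infinitely many zeros. The known route (Beals--Coifman's genericity theorem for first-order systems, adapted to the Kaup--Newell system in Paper III) first approximates $q$ by a Schwartz or compactly supported potential, for which $\balpha$ continues analytically across $\R$ and therefore has only finitely many zeros in $\overline{\C^+}$; only after this reduction can one run a perturbation argument, using an explicit variational formula for the derivative of $\balpha$ with respect to $q$ to kill real zeros and split multiple ones, while the uniform limit $\balpha \to 1$ prevents new zeros from entering from infinity. Without the analytic-continuation reduction and the variational/transversality step, your density claim is an assertion, not an argument.

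A secondary, smaller issue: for the $H^{2,2}$ mapping and Lipschitz properties of $\calI$ (hence of $\calM$), continuity of $(I-C_w)^{-1}$ on $L^2$ of the augmented contour is far from sufficient. Recovering the decay of $q(x)$ as $x \to -\infty$ from a Riemann--Hilbert problem normalized at $x \to +\infty$ does not work directly; the treatment in Papers I and III uses two differently normalized problems (one adapted to each spatial infinity) and differentiates the integral equation in $\lam$ to trade smoothness of the data for decay of the potential. You correctly flag this upgrade as the main obstacle, but the two-normalization device is a missing idea rather than a routine bootstrapping of the $L^2$ theory, and it is precisely where the quantitative hypotheses defining bounded subsets of $V_N$ (the positivity constant $c_1$, the pole separation $d_\Lambda$) enter the estimates.
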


\sidenote{55}
As discussed in Paper III, \revisedtext{RHP} \ref{RHP2} is equivalent to a technically somewhat simpler RHP involving row vector-valued functions.

\begin{RHP}
\label{RHP2.row}
Given $x,t \in \R$, $\rho\in P$, and $\{ (\lam_j, C_j)\}_{j=1}^N$ in $(\C^{+} \times \C^\times)^N$, find a row vector-valued function $n(x,t,z): \C \setminus (\R \cup\Lam)  \rarr \C^2$ with the following properties:
\begin{itemize}
\item[(i)]		$n(\lam;x,t) = \begin{pmatrix} 1 & 0 \end{pmatrix} + \bigO{\dfrac{1}{\lam}}$ as $|z| \rarr \infty$,
\item[(ii)]  $n$ has continuous boundary values $n_\pm$ on $\R$ and
				$$ n_+(\lam;x,t) = n_-(\lam;x,t) e^{it\theta \ad (\sigma_3)} v(\lam),
				\quad 
				v(\lam) = \begin{pmatrix}
								1-\eps\lam |\rho(\lam)|^2 	&	 \rho(\lam)	\\
								-\eps \lam \overline{\rho(\lam)}	&	1
							\end{pmatrix},
				$$
\item[(iii)]	For each $\lam \in \Lam$, 
				$$ \Res_{z = \lam} n(\lam;x,t) = \lim_{z \rarr \lam} n(z;x,t) e^{it\theta \ad (\sigma_3)} V(\lam) $$
				where for each $\lam \in \Lam^+$
				$$ v(\lam)  = \lowmat{\lam C_\lam}, \quad v( \overline{\lam}) = \upmat{\eps \overline{C_\lam}}. $$
\end{itemize} 
\end{RHP}

\subsection{Soliton Resolution}

To deduce large-time asymptotics and soliton resolution for \eqref{DNLS1}, we will first obtain large-time asymptotics for \eqref{DNLS2} and then compute the asymptotics of the phase in \eqref{G} in terms of spectral data. These two results together will yield large-time asymptotics and  a modified form of soliton resolution for \eqref{DNLS1}.

Long-time asymptotic behavior and solution resolution for \eqref{DNLS2}  are established for  initial data $q_0 \in U$ using the Deift-Zhou method of steepest descent applied to \revisedtext{RHP} \ref{RHP2}. Note that the phase function $\theta$ in \eqref{phase.lambda}   has a single critical point at $\xi = -x/4t$.
From the evolution \eqref{sd.ev}  and the symmetry properties of 
\revisedtext{RHP} \ref{RHP2}, 
it suffices to solve \sidenote{10}\revisedtext{Riemann-Hilbert Problem \ref{RHP2.row}}
for the row vector-valued function $n(\lam;x,t) = (N_{11}(\lam;x,t),N_{12}(\lam;x,t))$. 

We choose intervals $[v_1,v_2]$ of velocities and $[x_1,x_2]$ of initial positions and compute the asymptotic behavior of $q(x,t)$ in space-time regions of the form
\begin{equation}
\label{S.cone}
\calS(v_1,v_2,x_1,x_2)	=	
\left\{ (x,t) : x=x_0 + vt \text{ for } v \in [v_1,v_2], \, \, x_0 \in [x_1,x_2] \right\}.
\end{equation}
The explicit formula for one-soliton solutions given in \eqref{1sol}   shows that a soliton associated to eigenvalue $\lambda$ moves with velocity
($-4 {\rm Re}  \lambda$).

In what follows, we set
\sidenote{12}
\begin{equation}
\label{LamI}
\Lambda(I) = \{ \lam \in \Lambda: \real (\lam) \in I \} \revisedtext{= \Lambda^+(I) \cup \overline{\Lambda^+(I)}}
\end{equation}
and \sidenote{12}
\revisedtext{
\begin{equation}
\label{NI}
N(I) = |\Lambda^+(I)|. 
\end{equation}
}
Solitons in $\Lambda([-v_2/4, -v_1/4])$ should be `visible' {within $\calS(v_1,v_2,x_1,x_2)$}, but remaining solitons will 
move either too slowly or too fast to be seen in the moving window.

To state our result, we need the following notation. For $\xi \in \R$ and $\eta \in \{ -1 , +1 \}$ where  $\eta = \sgn t$, let
\begin{equation}
\begin{aligned}
\label{posint}
I_{\xi,\eta}^-	&=	\left\{ \lam \in \C:		
  \imag \lam = 0, \quad -\infty < \eta \Re \lam \leq \eta \xi \right\}, \\
I_{\xi,\eta}^+	&=	\left\{ \lam \in \C:		
  \imag \lam = 0, \quad \eta \xi < \eta \Re \lam < \infty \right\}.
\end{aligned}  
\end{equation}

\begin{figure}[H]
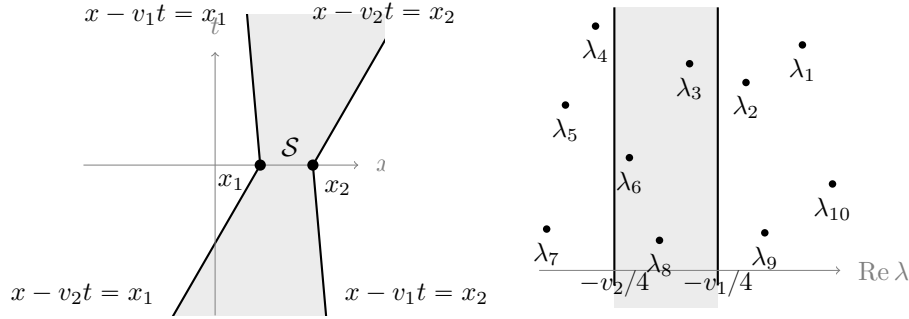

\solfig
\caption{
Given initial data $q_0(x)$ which generates scattering data 
$\left\{ \rho, \{ \lam_k, C_k \}_{k=1}^N \right\}$, then, asymptotically as $|t| \to \infty$ inside the space-time cone $\mathcal{S}(v_1,v_2,x_1,x_2)$ (shaded on left) the solution $q(x,t)$ of \eqref{DNLS2} approaches an $N(I)$-soliton $\qsol(x,t)$ corresponding to the discrete spectra in $\poles(I)$ 
(shaded region on right).  The connection coefficients $\widehat{C}_k$ for $\qsol$ are modulated by the soliton-soliton and soliton-radiation interactions as described in Theorem~\ref{thm:long-time}.
}
\end{figure}
We will prove:

\begin{theorem}
\label{thm:long-time}
Suppose that $q_0 \in U_N$ with scattering data $\left( \rho, \{ (\lam_k,C_k) \}_{k=1}^N \right)$. Fix $x_1, x_2, v_1, v_2 \in \R$ with $v_1 < v_2$, let $I = [-v_2/4,-v_1/4]$
and $ \xi = - x/(4t)$.
Denote by  $\qsol(x,t;\mathcal{D}_I)$ the soliton solution of \eqref{DNLS2} with 
{modulating reflectionless scattering data} \sidenote{12}
$$ \calD_I = \left\{ \rho_I \equiv 0, \{(\lam_k, \widehat{C_k})\}_{\lam_k \in \revisedtext{\Lambda(I^+)}} \right\}$$
where
$$
\widehat{C_k} =  C_k \  
	\prod_{\mathclap{\Re \lam_j \in I_{\xi,\eta}^- \setminus I}}
	\quad \ \left(	\frac{\lam_k - \lam_j}{\lam_k - \overline{\lam_j}} \right)^2 
	\exp \left(  
					\frac{i}{\pi} \int_{I_{\xi,\eta}^-} 
						\frac{\log\left( 1 - \eps \lam |\rho(\lam)|^2 \right)}{\lam-\lam_k}  \, d\lam
			\right).
$$
Then, as $|t| \rarr \infty$ in the cone $\calS(v_1,v_2,x_1,x_2)$, we have
\sidenote{13}
\begin{equation}
\label{q.long-time}
q(x,t) \revisedtext{\, = \, }
\qsol(x,t;\mathcal{D}_I) + t^{-1/2} f(x,t) + \bigO{t^{-3/4}}. 
\end{equation}
Here $f(x,t)$ is given by
\begin{equation}\label{phase.phi}
\begin{aligned}
f(x,t) &=  2^{-1/2} 
	\left[ 
	  A_{12}(\xi,\eta) 
      \left({\mathcal N}^{\mathrm sol}_{11}\right)^2 e^{i\phi(\xi)} 
	  + \eps \xi \overline{A_{12}(\xi,\eta)}  
	  \left({\mathcal N}^{\mathrm sol}_{12}\right)^2 e^{-i\phi(\xi)} 
	\right]
\\
\phi(\xi) &:= {-4} \sum_{\mathclap{\Re \lam_k \in \negint \setminus I}} \arg(\lam_k - \xi)
\end{aligned}
\end{equation}
where $A_{12}(\xi,\eta)$ is given by \eqref{Axi} and ${\mathcal N}^{\mathrm sol} \coloneqq {\mathcal N}^{\mathrm sol}(\xi;x,t \vert \, \calD_I)$ solves RHP~\ref{RHP2} with scattering data $\calD_I$.
\end{theorem}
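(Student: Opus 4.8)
The plan is to analyze \revisedtext{RHP} \ref{RHP2.row} for the row vector $n(\lam;x,t)=(N_{11},N_{12})$ by the Deift--Zhou nonlinear steepest descent method, in the $\dbar$-variant of McLaughlin--Miller and Dieng--McLaughlin together with the soliton-extraction scheme of Borghese--Jenkins--McLaughlin. The phase \eqref{phase.lambda} has a single stationary point at $\lam=\xi=-x/(4t)$, and $\sgn(\imag(it\theta))$ partitions the plane into the quadrant regions of the phase diagrams, with orientation fixed by $\sgnt=\sgn t$. First I would introduce the scalar function
\begin{equation*}
\delta(\lam)=\exp\left(\frac{1}{2\pi i}\int_{\negint}\frac{\log\left(1-\eps s|\rho(s)|^2\right)}{s-\lam}\,ds\right),
\end{equation*}
which solves the scalar RHP on $\negint$ with jump $1-\eps\lam|\rho|^2>0$, and conjugate $n$ by $\delta^{\sig}$. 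This prepares the oscillatory jump $v(\lam)$ for the upper/lower-triangular factorizations required on the two sides of $\xi$, and, evaluated at an eigenvalue $\lam_k$, the factor $\delta^{-2}(\lam_k)$ reproduces exactly the integral term in $\widehat{C_k}$.

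Next I would open lenses along the rays issuing from $\xi$, deforming each triangular factor of $v(\lam)$ off $\R$ into the region where $e^{2it\theta}$ decays. Since $\rho$ is only $H^{2,2}$ and not analytic, the continuation is performed by a $\dbar$-extension rather than an analytic one: the transformed unknown is written as a piecewise-analytic factor solving a pure RHP with jumps only on the deformed rays, multiplied by a $\dbar$-correction whose $\dbar$-derivative is supported in the lens regions and controlled by $\|\rho'\|$. Simultaneously the discrete data are sorted. Eigenvalues with $\real\lam_j\in I$ are retained as genuine poles feeding the visible $N(I)$-soliton, whereas those with $\real\lam_j\in\negint\setminus I$ are removed by local triangular conjugations whose residues renormalize the norming constants by the Blaschke factors $\left((\lam_k-\lam_j)/(\lam_k-\lambdabar_j)\right)^2$; combined with the $\delta$-factor this assembles $\widehat{C_k}$. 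Eigenvalues on the opposite side, $\real\lam_j\in\posint\setminus I$, contribute only exponentially small terms inside the cone.

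The resulting analytic RHP then splits into a global (outer) parametrix $\mout$, built from the reflectionless soliton problem with the modulated data $\calD_I$ and producing $\qsol(x,t;\calD_I)$ through the reconstruction \eqref{q.lam}, and a local parametrix $\mPC$ at $\xi$ assembled from parabolic cylinder functions matching the local jump to leading order. The $1/\lam$ coefficient of $\mPC$ carries the $t^{-1/2}$ scale; undoing the $\delta$- and soliton-conjugations and inserting the result into \eqref{q.lam} yields precisely $f(x,t)$ of \eqref{phase.phi}, with the phase $\phi(\xi)$ emerging from the arguments $\arg(\lam_k-\xi)$ of the extracted eigenvalue factors and the amplitude $A_{12}(\xi,\sgnt)$ from the parabolic-cylinder connection coefficient dressed by $\rho(\xi)$ and $\delta(\xi)$. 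A small-norm estimate for the error quantity $\error$ comparing $n$ with $\mout\mPC$, fed back through \eqref{q.lam}, then produces the $\bigO{t^{-3/4}}$ remainder in \eqref{q.long-time}.

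The main obstacle, as always in the $\dbar$-steepest-descent framework, is the quantitative control of the $\dbar$-correction: one must estimate the solid Cauchy ($\dbar$-Beurling) operator whose kernel is built from $\dbar R_j\,e^{2it\theta}$ over the lens regions and show that its contribution to $q$ through \eqref{q.lam} is $o(t^{-1/2})$, in fact $\bigO{t^{-3/4}}$, uniformly on $\calS(v_1,v_2,x_1,x_2)$. The DNLS-specific factor $\eps\lam$ in the symmetry and residue relations of \revisedtext{RHP} \ref{RHP2} makes both the scalar $\delta$-problem (its jump $1-\eps\lam|\rho|^2$ is $\lam$-dependent) and the parabolic-cylinder matching more delicate than in the NLS case, and tracking these factors carefully is what produces the weight $\eps\xi$ in the second term of $f$ and the correct $\lam$-weighted norming-constant modulation. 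Assembling the separate contributions of the finitely many eigenvalues and of the continuous spectrum, and verifying uniformity in the cone, completes the argument.
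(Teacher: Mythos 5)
Your proposal is correct and follows essentially the same route as the paper's proof: the $\delta^{\sigma_3}$-conjugation, the $\dbar$-lensing for non-analytic $\rho$, the decomposition into an outer reflectionless soliton model plus a parabolic-cylinder local parametrix glued through a small-norm error RHP (whose $1/\lam$ moment produces $t^{-1/2}f(x,t)$ via the reconstruction formula \eqref{q.lam}), and the solid-Cauchy estimate showing the residual $\dbar$-problem contributes only $\bigO{t^{-3/4}}$. The single cosmetic difference is ordering: you trade the poles with $\Re\lam_j \in \negint\setminus I$ for Blaschke factors during the deformation, whereas the paper keeps all $N$ poles (shielded from the $\dbar$-extension by the cutoff $\indicator$) in the outer model and performs that trading in the asymptotic analysis of the outer model in Section 4 and Appendix~B; the resulting $\widehat{C_k}$, $\phi(\xi)$, and $f(x,t)$ are identical.
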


Equation \eqref{q.long-time} expresses soliton resolution in the following sense. First, as described below, the function $q_\sol(x,t)$ is generically asymptotic to a superposition of one-soliton solutions.
Second, the term at order $t^{-1/2}$ represents a dispersive contribution; in the no-soliton case, i.e., if $v_1,v_2$ are chosen in Theorem~\ref{thm:long-time} such that $N(I) = 0$, 
$\mathcal{N}^{\mathrm{sol}} \equiv I$, and $\qsol \equiv 0$,  using \eqref{Axi}, the asymptotic behavior of the  solution reduces to
\begin{align*}
q(x,t) 		&= \frac{1}{\sqrt{2|t|}} \frac{\kappa(\xi)}{ \xi} e^{i\alpha_\pm(\xi) } 
	   					e^{i x^2/(4t) \mp \kappa(\xi) \log|8t| }
	   					+ \bigo{t^{-3/4}}, \quad t \to \pm \infty \\
\intertext{where}
\kappa(s)&= -\frac{1}{2\pi} \ln (1 -\varepsilon s|\rho(s)|^2),  \\
\alpha_\pm(\xi) &= \frac{\pi}{4} - \arg(-\eps \xi \overline{\rho(\xi)})
	  					 \pm \arg \Gamma(i\kappa(\xi)) 
						\mp 2  \int_{\mp \infty}^\xi \log|\xi - \lam| 
							\, d\kappa(\lam)
\end{align*}
which agrees with the dispersive asymptotics obtained,  for example, in \cite{LPS16}. 

Kitaev and Vartanian computed similar asymptotics in the 
no-soliton sector \cite{KV97} and the finite-soliton sector \cite{KV99}, making more stringent assumptions on regularity together with a smallness assumption on the reflection coefficient that we do not require.

Theorem~\ref{thm:long-time} implies, as a special case, the asymptotic separation of the solution $q(x,t)$ into a sum of one-soliton solutions whenever the $\lam_k \in \poles^+$ have distinct real parts. 
 If the initial data $q_0$ generates scattering data $\{ \rho, \{(\lam_k, C_k)\}_{k=1}^N \}$ with $\lam_k = \eta_k + i \tau_k$ then
applying Theorem~\ref{thm:long-time} repeatedly to sets $\mathcal{S}_k$ each of which contains a single soliton speed $v_k=-4\eta_k$ one finds that the solution of \eqref{DNLS2}-\eqref{data} satisfies 
\begin{equation}\label{sol.res.generic}
	q(x,t) = \sum_{k=1}^N 
		\mathcal{Q}_{\textrm{sol}}(x,t; \lam_k, x_k^\pm, \varphi_k^\pm) + \bigo{|t|^{-1/2}}
		\qquad
		t \to \pm \infty	
\end{equation}		
where, setting $u = \real \lambda$,\sidenote{14}
\begin{multline*}
	\mathcal{Q}_{\textrm{sol}}(x,t;\lam,x_0,\varphi_0) =  
	\mathcal{Q}(x - x_0 +4 u t , \lam)  \\
	\times
		\exp i \left\{ 4|\lam|^2 t - 2u(x + 4u t) 
		  - \frac{\eps}{4} \int_{-\infty}^{x- x_0 + 4 u t} \mathcal{Q}(\eta\revisedtext{,\lam})^2 d\eta
		  - \varphi_0 \right\}
\end{multline*}
is the form of a general one-soliton solution of \eqref{DNLS2}. 
Here $\varphi_0$, $x_0$, and $\mathcal{Q}$ are given as in \eqref{1sol}.
The asymptotic phases
are 
\begin{gather}
\label{x phases}
	x^\pm_k = 
		\frac{1}{4\tau_k} \log \left| \frac{\lam_k C_k^2}{4 \tau_k^2} \right|
		+ \frac{1}{2\tau_k} 
		\sum_{\substack{\lam_j \in \poles^+ \\ \mathclap{\pm (\eta_k - \eta_j) > 0}}} 
		  \log \left| \frac{ \lam_k - \lam_j}{\lam_k - \overline{ \lam_j} } \right|
			\mp \int_{\eta_k}^{\mp\infty}
			\frac{\kappa(s)}{(s-\eta_k)^2+\tau_k^2} ds \\
\label{alpha phases}
	\varphi^\pm_k = 
		\arg \lp i \lam_k C_k \rp 
		+ \sum_{\substack{\lam_j \in \poles^+ \\ \mathclap{\pm (\eta_k - \eta_j) > 0}}}
		    \arg \lp \frac{ \lam_k - \lam_j}{\lam_k - \overline{ \lam_j} } \rp
		  \pm 2 \int_{\eta_k}^{\mp \infty} 
		\frac{ (s - \eta_k) \kappa(s) }{(s-\eta_k)^2+\tau_k^2} ds
		 \mod{2\pi}
\end{gather}
so that the total phase shifts of the $k$th soliton, as it interacts both with the other solitons and the radiation component, are
\sidenote{15}
\begin{align*}
	x_k^+ - x_k^- &= 
		\frac{1}{2\tau_k} 
		\sum_{\mathclap{j \neq k }} \sgn(\eta_k - \eta_j) 
		  \log \left| \frac{ \lam_k - \lam_j}{\lam_k - \overline{ \lam_j} } \right|
		 -  \int_{-\infty}^{\infty}
			\frac{ \sgn(s-\eta_k) \revisedtext{\kappa(s) }}{(s-\eta_k)^2+\tau_k^2} ds, \\
	\varphi_k^+ - \varphi_k^- &= 
	\lb
		\sum_{\mathclap{j \neq k}}
		    \sgn(\eta_k - \eta_j) 
		    \arg \lp \frac{ \lam_k - \lam_j}{\lam_k - \overline{ \lam_j} } \rp
	+ 2 \int_{-\infty}^\infty
			 \frac{ |s - \eta_k| \kappa(s) }{(s-\eta_k)^2+\tau_k^2} ds
	\rb\\
&
	 \mod{2\pi}	~.    
\end{align*} 
In the non-generic case in which two or more $\lam_j \in \poles^+$ have the same real part one still observes a form of soliton resolution akin to \eqref{sol.res.generic}. In this case, the one-solitons in \eqref{sol.res.generic} corresponding to spectral values with the same real part coalesce to form higher-order solitons called breathers; these breathers are 
\sidenote{16}\revisedtext{spatially localized and temporally quasiperiodic when viewed in the moving frame with constant velocity $-4 \real \lam$.}

To obtain a similar asymptotic formula for \eqref{DNLS1}, we use the gauge transformation \eqref{G} to write 
\sidenote{8}
$$
u(x,t) = 
	\left[ \calG^{-1}	
			\circ
				\revisedtext{\calM(t)}
			\circ\calG
	\right]	(u_0)  (x)
$$ 
where \sidenote{8}\revisedtext{$\calM(t)$} is given by \eqref{sol.op}.
We derive an asymptotic formula for $u(x,t)$ in terms of spectral data for $q_0 = \calG (u_0)$ in Proposition \ref{prop:u.gauge.expansion}, which plays a key result and introduces some complications in the asymptotic formulas for small $\xi$.   Recall Definition \ref{def:U}, the fact that $\calG$ maps dense open subsets to dense open sets in $H^{2,2}(\R)$,
and the space-time region  defined in \eqref{S.cone}. If $u_0 \in \calG^{-1}(U)$, then $q_0 = \calG(u_0)$ has 
no spectral singularities and the scattering coefficient $\balpha$ for $q_0$ has at most finitely many zeros. 
In what follows, set
\begin{equation}
\label{usol}
\usol(x,t;\calD_I) := \calG^{-1}(\qsol(\cdot,t;\calD_I)).
\end{equation}

\begin{theorem}
\label{thm:long-time-gauge}
Suppose that $u_0 \in \calG^{-1}(U)$,  let $q_0 = \calG (u_0)$, and  
$$\calR(q_0) = \left\{ \rho, \{ (\lam_k,C_k) \}_{k=1}^N \right\}.$$ Fix $v_1,v_2,x_1,x_2$ as in Theorem \ref{thm:long-time}, let $I=[-v_2/4,-v_1/4]$,   $\xi = -x/(4t)$, and  $\eta = \pm 1$ for $\pm t > 0$. 
Fix $M > 0$.
The solution $u(x,t)$ of \eqref{DNLS1} has the following asymptotics as $|t| \rarr \infty$ in the cone
$\calS(v_1,v_2,x_1,x_2)$.
\begin{enumerate}
\item[(i)]	 For $|\xi| \geq M t^{-1/8}$,
$$ 
    u(x,t) =  
      u_\sol(x,t;\calD_I) e^{i\alpha_0(\xi,\eta)}
      \left[ 1 + t^{-1/2} g(x,t) \right] + \bigO{t^{-3/4}} 	 
$$
\item[(ii)]	For $|\xi| \leq M t^{-1/8}$,
$$ 
    u(x,t) =u_\sol(x,t;\calD_I) e^{i\alpha_0(\xi,\eta)} F(\xi,t,\eta) 
    \left[ 1 + t^{-1/2}\widetilde{g}(x,t) \right] + \bigO{t^{-3/4}}. 
$$
\end{enumerate}
In the above formulas,
denoting
${\mathcal{N}}^{\mathrm sol} \coloneqq {\mathcal{N}}^{\mathrm sol}(\xi'; x,t \vert {\mathcal D}_I),$
\begin{gather*}
 \begin{aligned}
	\alpha_0(\xi,\eta)	&= 	
		-2 \int_{\negint} \frac{\kappa(\lam)}{\lam} \, d\lam
	  + 4 \sum_{\mathclap{\Re \lam_k \in \negint \setminus I}} \arg \lam_k , 
	\\
	g(x,t) &= 
	  \frac{ f(x,t) }{\qsol(x,t; \calD_I)}
	  + 2^{1/2}\eps \Re 
	  \lb
	    A_{12}(\xi,\sgnt) 
		  {\mathcal N}^{\mathrm sol}_{11}
		  \overline{{\mathcal N}^{\mathrm sol}_{12}}
		  e^{i \phi(\xi) }
	  \rb  ,    
	\\
	\widetilde{g}(x,t) &= g(x,t) \\
	&\quad
	  + (1-G(\xi,t,\sgnt)) \overline{A_{12}(\xi,\sgnt)} 
	  \exp 
	    \left( 
	      4i \quad \sum_{\mathclap{\Re \lam_k \in \negint \setminus I}} \arg \lam_k 
	    \right) 
	  \int_x^\infty \usol(y,t;\calD_I) dy ,
  \end{aligned}
\intertext{where $\phi(\xi)$ is given by \eqref{phase.phi} and, setting $p=e^{i\pi/4} |8t\xi^2|^{1/2}$,}
	F(\xi,t,\eta) =	
	  \left[ e^{p^2/4} p^{-i\eta \kappa(\xi)} D_{i\eta \kappa(\xi)}(p) \right]^{-2} , 
	  \qquad
	G(\xi,t,\eta)	=	
	  \frac{p D_{i\eta\kappa(\xi)-1}(p)}{D_{i\eta\kappa(\xi)}(p)}.
\end{gather*}
and
$D_a(z)$ denote the parabolic cylinder functions whose properties are given in \cite{DLMF}.
\end{theorem}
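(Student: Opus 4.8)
The plan is to transport the asymptotics of $q(x,t)$ furnished by Theorem~\ref{thm:long-time} through the inverse gauge map $\calG^{-1}$. Since the gauge factor in \eqref{G} has unit modulus we have $|u|=|q|$, so the inverse of $\calG$ is
$$ u(x,t) = \exp\left( -i\eps \int_x^\infty |q(y,t)|^2 \, dy \right) q(x,t). $$
Thus the whole problem reduces to computing the large-$t$ asymptotics of the \emph{gauge phase} $\int_x^\infty |q(y,t)|^2\,dy$, uniformly in the cone $\calS(v_1,v_2,x_1,x_2)$, and multiplying by the known expansion \eqref{q.long-time} of $q$. Writing $\qsol=\calG(\usol)$ as in \eqref{usol}, the leading soliton factor together with its gauge phase will combine to produce $\usol(x,t;\calD_I)\,e^{i\alpha_0(\xi,\eta)}$, while the $t^{-1/2}$ dispersive term $f$ and the cross-contributions to the phase will assemble $g$ and $\widetilde g$.

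First I would establish the reconstruction formula of Proposition~\ref{prop:u.gauge.expansion}, which expresses the gauge phase through the value of the solution $N(\lam;x,t)$ of Riemann--Hilbert Problem~\ref{RHP2} at the distinguished point $\lam=0$. This is dictated by the Kaup--Newell structure: at $\lam=0$ the Lax operator $L$ collapses to a nilpotent off-diagonal part plus the diagonal term $-\tfrac{i}{2}\eps|q|^2\sigma_3$, so that the diagonal entries of the normalized solution at $\lam=0$ accumulate precisely $\exp(\mp\tfrac{i}{2}\eps\int_x^\infty|q|^2\,dy)$. Consequently the gauge phase is read off from $N_{11}(0;x,t)$ (which enters squared, accounting for the factor-$2$ and factor-$4$ enhancements seen in $\alpha_0$), and the task becomes evaluating the steepest-descent model at $\lam=0$ rather than at $\lam=\infty$, where \eqref{q.lam} reconstructs $q$ itself.

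Next I would re-use the Deift--Zhou/$\dbar$ deformation already built for Theorem~\ref{thm:long-time}: conjugate by the scalar $\delta$-function solving the scalar Riemann--Hilbert problem with jump $1-\eps\lam|\rho(\lam)|^2$ on $\negint$, open lenses, and reduce to the outer soliton model $\Nout$ plus a parabolic-cylinder parametrix $\NPC$ centered at the stationary point $\lam=\xi$, with $\dbar$-errors of order $\bigO{t^{-3/4}}$. Evaluating the composite solution at $\lam=0$, the factor $\delta(0)$ (appearing squared) produces the term $-2\int_{\negint}\kappa(\lam)/\lam\,d\lam$, while the Blaschke factors $(\lam-\lam_k)/(\lam-\overline{\lam_k})$ of $\Nout$, at $\lam=0$, produce $4\sum_{\Re\lam_k\in\negint\setminus I}\arg\lam_k$; together these assemble $\alpha_0(\xi,\eta)$. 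The first-order interaction between $\qsol$ and the term $f$ from \eqref{q.long-time} then yields the remaining pieces of $g$.

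The main obstacle is the behavior near $\xi=0$, which is exactly what forces the case split. The parabolic-cylinder parametrix is centered at $\lam=\xi$, yet the gauge phase requires $N$ at the fixed point $\lam=0$, whose rescaled distance to the parametrix is governed by $p=e^{i\pi/4}|8t\xi^2|^{1/2}$. When $|\xi|\ge Mt^{-1/8}$ the argument $p$ is uniformly large, the large-argument asymptotics of $D_a(p)$ give $F(\xi,t,\eta)\to 1$ and reduce $G(\xi,t,\eta)$ to its principal part, and one recovers case~(i) with correction $g$. When $|\xi|\le Mt^{-1/8}$ the point $\lam=0$ falls within the parabolic-cylinder scale, $p$ is no longer uniformly large, and the exact factors $F(\xi,t,\eta)$ and $G(\xi,t,\eta)$ must be retained; the nonlocality of the gauge phase then generates the extra term $(1-G(\xi,t,\eta))\,\overline{A_{12}(\xi,\eta)}\,\exp(4i\sum\arg\lam_k)\int_x^\infty\usol(y,t;\calD_I)\,dy$ appearing in $\widetilde g$. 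Pinning down the transition scale $t^{-1/8}$, matching the exact parabolic-cylinder expressions to their large-argument forms across the overlap region, and verifying that the $\bigO{t^{-3/4}}$ remainder survives the nonlocal $x$-integration defining the gauge phase are the principal technical points.
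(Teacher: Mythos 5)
Your proposal is correct and follows essentially the same route as the paper: reduce to the asymptotics of the gauge phase, express it as $n_{11}(0;x,t)^{-2}$ using the explicit solvability of the Kaup--Newell system at $\lam=0$ (the paper's Proposition~\ref{prop:u.gauge.expansion} via \eqref{N1.near.0}--\eqref{n11.at.0.a}), evaluate the steepest-descent factors $\delta(0)$, $\Nout(0)$, $\NPC(0)$, $\error(0)$, $\nk{3}(0)$ to assemble $\alpha_0$, $g$, $\widetilde g$, and split cases according to whether $\lam=0$ lies within the parabolic-cylinder scale measured by $p=e^{i\pi/4}|8t\xi^2|^{1/2}$, matching the two regimes through the large-$p$ expansion $D_\nu(p)=e^{-p^2/4}p^\nu[1+\bigO{p^{-2}}]$, which makes $F-1$ and $G-1$ of size $|t|^{-1}\xi^{-2}\lesssim t^{-3/4}$ exactly at the threshold $|\xi|\sim Mt^{-1/8}$. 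This is the paper's argument in both structure and detail.
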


\begin{remark}
The presence of the phase $e^{i\alpha_0(\xi,\sgnt)}$ in the above formulas represents the
mismatch in the phase of $u(x,t) = \calG^{-1}(q)$ and $\usol(x,t;\calD_I)$ as defined in \eqref{usol} caused by the cumulative interaction of $\usol(y,t;\calD_I)$ with the radiation and soliton components of the full system which are traveling faster than our chosen reference frame $\calS$. Because the velocities are proportional to $- \Re \lam$ (recall that $v = - 4 \Re \lam_k$ for solitons), faster velocities correspond to the part of the spectrum $\negint$ which lies to the left (resp. right) of $\xi$ as $t\to \infty$ (resp. $t \to-\infty$).
\end{remark}	

An important consequence of our analysis is that it provides a proof of asymptotic stability of  N-soliton solutions. 
Until now, the only known result about stability of DNLS soliton is due to Colin and Ohta \cite{CO06} who proved orbital stability of 1-soliton solutions in $H^1$.
\sidenote{17}\revisedtext{Our result gives a detailed description of the long-time behavior of perturbed $N$-solitons in the form of the sum of  $N$ $1$-soliton solutions with parameters
 close to those of the unperturbed $N$-soliton. It}
is obtained by combining the Lipschitz continuity of the forward and inverse scattering maps described by Theorems 1.3 and 1.8 of Paper III with the long time result of Theorem~\ref{thm:long-time}. 

\begin{theorem}
Given an $N$-soliton $q_\mathrm{sol}(x,t;\mathcal{D}^\mathrm{sol})$ solution of \eqref{DNLS2}    with initial data in $U_N$ such that $\Re \lam_k \neq \Re \lam_j$, $j\neq k$, with scattering data $\mathcal{D}^\mathrm{sol} = \{ 0, \{ \lam_k^\mathrm{sol}, C_k^{\mathrm{sol}} \}_{k=1}^N \}$, there exist positive constants $\eta_0=\eta_0(q_{\mathrm{sol}})$, $T=T(q_{\mathrm{sol}})$, and $K= K(q_{\mathrm{sol}})$ such that any initial data $q_0 \in H^{2,2}(\R)$ with
\[
	\eta_1 := \| q_0 - q_{\mathrm{sol}}(\, \cdot\,,0;\mathcal{D}^\mathrm{sol})  \|_{H^{2,2}(\R)} 	\leq \eta_0
\]
also lies in $U_N$, with scattering data 
$\mathcal{D} = \mathcal{R}(q_0) = \{ \rho, \{ \lam_k, C_k \}_{k=1}^N \}$ 
satisfying
\begin{equation}\label{near.soliton.data}
\|\rho\|_{H^{2,2}(\R)} +	\sum_{k=1}^N | \lam_k - \lam_k^\mathrm{sol}| + |C_k - C_k^\mathrm{sol}| \leq K \eta_1
\end{equation}
and the solution of the Cauchy problem \eqref{DNLS2}-\eqref{data} asymptotically separates into a sum of N 1-solitons 
\[
	\sup_{x \in \R} 
	\,\,
	\left| 
	\,
	q(x,t) - \sum_{k=1}^N  \mathcal{Q}_\mathrm{sol}(x,t;\lam_k, x_k^\pm, \alpha_k^\pm) 
	\,
	\right|
	\leq K \eta_1 |t|^{-1/2}, \qquad  |t| > T
\]	
where the 1-solitons $\mathcal{Q}_\mathrm{sol}$ are given by \eqref{sol.res.generic}-\eqref{alpha phases}.
\end{theorem}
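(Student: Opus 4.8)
\emph{Strategy.} The plan is to obtain this statement as a quantitative corollary of two results already established: the bi-Lipschitz mapping properties of the direct and inverse scattering transforms (the scattering-map theorem above, i.e. Theorems~1.3 and 1.8 of Paper III) and the generic soliton-resolution expansion \eqref{sol.res.generic}. The argument has three stages. First I would show that every $q_0$ within $\eta_0$ of $\qsol(\cdot,0;\mathcal{D}^{\mathrm{sol}})$ again belongs to $U_N$. Second, I would convert the $H^{2,2}$-closeness of potentials into the closeness \eqref{near.soliton.data} of scattering data. Third, I would insert these nearby data into \eqref{sol.res.generic} and track the dependence of the remainder on the perturbation size $\eta_1$.

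\emph{Membership in $U_N$ and closeness of data.} For the first stage, recall from the scattering-map theorem that $U$ is open in $H^{2,2}(\R)$; since $\qsol(\cdot,0;\mathcal{D}^{\mathrm{sol}}) \in U_N$, choosing $\eta_0$ small keeps the whole ball inside $U$, so $q_0$ has no spectral singularities. The count $N=|\Lambda^+|$ of zeros of $\balpha$ in $\C^+$ equals $\tfrac{1}{2\pi \im}\oint \balpha'/\balpha$ along $\R$ closed by a large semicircle in $\C^+$; this integral varies continuously with $q$ on $U$ and is integer valued, hence locally constant, so after shrinking $\eta_0$ we have $q_0 \in U_N$ with $N$ simple zeros, each lying (by Rouch\'e) in a small disk about a unique $\lam_k^{\mathrm{sol}}$. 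For the second stage, uniform Lipschitz continuity of $\calR$ on bounded subsets of $U_N$ produces a constant $K=K(\qsol)$ with $\norm[V_N]{\calR(q_0)-\mathcal{D}^{\mathrm{sol}}}\le K\eta_1$; because $\mathcal{D}^{\mathrm{sol}}$ is reflectionless this reads exactly as \eqref{near.soliton.data}, giving in particular $\norm[H^{2,2}(\R)]{\rho}\le K\eta_1$ and $|\lam_k-\lam_k^{\mathrm{sol}}|+|C_k-C_k^{\mathrm{sol}}|\le K\eta_1$ under the above labelling. For $\eta_1$ small the real parts $\real\lam_k$ remain distinct, so the generic hypothesis of Theorem~\ref{thm:long-time} holds and \eqref{sol.res.generic}--\eqref{alpha phases} apply with the perturbed data.

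\emph{Quantifying the remainder.} The substance of the theorem is promoting the $\bigo{|t|^{-1/2}}$ remainder of \eqref{sol.res.generic} to $K\eta_1|t|^{-1/2}$ with $K,T$ depending on $\qsol$ alone. The decisive structural observation is that the nonsoliton part of the solution is generated entirely by the reflection coefficient: if $\rho\equiv 0$ the inverse problem is the purely meromorphic RHP solved exactly by the $N$-soliton, with no jump across $\R$ and hence neither a parabolic-cylinder term (order $t^{-1/2}$) nor a $\dbar$ term (order $t^{-3/4}$). Revisiting those two contributions in the steepest-descent analysis to display their prefactors explicitly should show each is controlled by $\norm[H^{2,2}(\R)]{\rho}$; combined with $\norm[H^{2,2}(\R)]{\rho}\le K\eta_1$, the radiative part of the remainder is bounded by $C(\qsol)\,\eta_1\,(|t|^{-1/2}+|t|^{-3/4})$. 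Since all the relevant scattering data lie in one fixed bounded subset of $V_N$ as $\eta_1\to 0$, the implied constants and the onset time can be taken uniform over the ball, i.e. functions of $\qsol$ only, and for $|t|>T\ge 1$ the $t^{-3/4}$ piece is dominated by $t^{-1/2}$.

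\emph{Remaining piece and main obstacle.} The only discrepancy not of this radiative type is the soliton-soliton separation error made in replacing the reflectionless $N$-soliton by the sum $\sum_k \mathcal{Q}_{\mathrm{sol}}(x,t;\lam_k,x_k^\pm,\varphi_k^\pm)$: inside the cone around velocity $-4\real\lam_k$ the tails of the other one-solitons are exponentially small in $|t|$, and this term is absorbed by enlarging $T(\qsol)$. Assembling the three bounds yields $\sup_x|q-\sum_k \mathcal{Q}_{\mathrm{sol}}|\le K\eta_1|t|^{-1/2}$ for $|t|>T$, which is the asserted asymptotic stability. I expect the main obstacle to be exactly the quantitative bookkeeping of the third stage: verifying that every nonsoliton term in the $\dbar$-steepest-descent construction carries an explicit factor of $\norm[H^{2,2}(\R)]{\rho}$, and that all error constants are uniform over the bounded family of perturbed data, since this is what upgrades the qualitative expansion of Theorem~\ref{thm:long-time} into a perturbation estimate with constant proportional to $\eta_1$. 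Reconciling the $\eta_1$-independent exponential soliton-separation tails with an $\eta_1|t|^{-1/2}$ bound is the delicate point there, and forces $T$ to be taken suitably large.
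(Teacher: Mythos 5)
Your proposal follows essentially the same route as the paper: the paper's entire proof of this theorem is the single remark that it ``is obtained by combining the Lipschitz continuity of the forward and inverse scattering maps described by Theorems 1.3 and 1.8 of Paper III with the long time result of Theorem~\ref{thm:long-time},'' which is precisely your three-stage plan (membership in $U_N$ and the data estimate \eqref{near.soliton.data} via the Lipschitz scattering map, then insertion of the perturbed data into \eqref{sol.res.generic} with tracking of the $\eta_1$-dependence). The quantitative bookkeeping you flag as the main obstacle --- exhibiting a factor $\norm[H^{2,2}(\R)]{\rho}\lesssim\eta_1$ in the radiative terms (which does hold, since $|A_{12}(\xi,\sgnt)|^2=\kappa(\xi)/\xi=\bigO{|\rho(\xi)|^2}$ for small $\rho$) and reconciling the $\eta_1$-independent exponentially small soliton-separation tails with the $K\eta_1|t|^{-1/2}$ bound --- is not carried out in the paper either, so your attempt is, if anything, more detailed and more candid about the delicate point than the paper's own argument.
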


To state the corresponding result for \eqref{DNLS1} let 
$u_\mathrm{sol}(x,t;\mathcal{D}^\mathrm{sol})$ denote the $N$-soliton solution of \eqref{DNLS1} such that $\mathcal{R}\lp \mathcal{G}(u_\mathrm{sol}) \rp = \mathcal{D}^{\mathrm{sol}} := \left\{ 0, \{(\lam_k^\mathrm{sol}, C_k^\mathrm{sol} )\}_{k=1}^N \right\}$, i.e. 
\begin{gather*}
	\mathcal{G} \lp u_\mathrm{sol}(\cdot,t;\mathcal{D}^\mathrm{sol}) \rp = 
	q_\mathrm{sol}(x,t;\mathcal{D}^\mathrm{sol}),
\shortintertext{and similarly let }
	\mathcal{U}_\mathrm{sol}(x,t;\lam,x_k,\varphi_k) 
	:= \mathcal{G}^{-1} \lp \mathcal{Q}_\mathrm{sol}(\cdot,t;\lam,x_k,\varphi_k)\rp
\end{gather*}
be the inverse gauge transformation of the 1-soliton solutions of \eqref{DNLS2} defined by \eqref{sol.res.generic}-\eqref{alpha phases}. Then applying \eqref{thm:long-time-gauge} to the previous result gives  

\begin{theorem}
Given an $N$-soliton of \eqref{DNLS1} $u_\mathrm{sol}(x,t;\mathcal{D}^\mathrm{sol}) \in \mathcal{G}^{-1}(U_N)$ as defined above with $\Re \lam_k \neq \Re \lam_j$, $j\neq k$ there exist positive constants $\eta_0=\eta_0(q_{\mathrm{sol}})$, $T=T(q_{\mathrm{sol}})$, and $K= K(q_{\mathrm{sol}})$ such that any initial data $u_0 \in H^{2,2}(\R)$ with
\[
	\eta_1 := \| u_0 - u_{\mathrm{sol}}(\, \cdot\,,0;\mathcal{D}^\mathrm{sol}) \|_{H^{2,2}(\R)} 	\leq \eta_0
\]
also lies in $\mathcal{G}^{-1}(U_N)$ with scattering data 
$\mathcal{D} := \mathcal{R}(\mathcal{G}(u_0)) = \{ \rho, \{ \lam_k, C_k \}_{k=1}^N \}$ 
satisfying \eqref{near.soliton.data}. Moreover the solution of the Cauchy problem \eqref{DNLS1}-\eqref{data1} asymptotically separates into a sum of N 1-solitons 
\[
	\sup_{x \in \R} 
	\,\,
	\left| 
	\,
	u(x,t) - \sum_{k=1}^N  \mathcal{U}_\mathrm{sol}(x,t;\lam_k, x_k^\pm, \varphi_k^\pm) 
	e^{ i \alpha_0(\Re \lam_k, \pm)}
	\,
	\right|
	\leq K \eta_1 |t|^{-1/2}, 
	\quad   |t| > T
\]	
where the phase corrections $\alpha_0(\xi,\eta)$ are defined in Theorem~\ref{thm:long-time-gauge}.
\end{theorem}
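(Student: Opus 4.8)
The plan is to realize this statement as the gauge-transformed counterpart of the preceding asymptotic-stability theorem for \eqref{DNLS2}, transferring the membership and scattering-data bounds through $\calG$ and the asymptotic separation through Theorem~\ref{thm:long-time-gauge}. First I would promote the hypothesis on $u_0$ to one on $q_0 = \calG(u_0)$. Because $\calG$ is a nonlinear homeomorphism that maps bounded sets to bounded sets and is locally Lipschitz on bounded subsets of $H^{2,2}(\R)$, for $\eta_0$ small the ball of radius $\eta_0$ about $u_\mathrm{sol}(\cdot,0)$ lies in a fixed bounded set, so $\norm[H^{2,2}]{q_0 - q_\mathrm{sol}(\cdot,0;\calD^\mathrm{sol})} \le L\eta_1$ with $L$ the local Lipschitz constant and $q_\mathrm{sol} = \calG(u_\mathrm{sol})$. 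Choosing $\eta_0$ below $L^{-1}$ times the threshold furnished by the preceding theorem, that theorem gives $q_0 \in U_N$ together with \eqref{near.soliton.data} for $\calD = \calR(q_0)$ (after enlarging $K$ by the factor $L$); in particular $\norm[H^{2,2}]{\rho} \le K\eta_1$. Since $q_0 \in U_N$ we have $u_0 = \calG^{-1}(q_0) \in \calG^{-1}(U_N)$, and as $\calD = \calR(\calG(u_0))$ is precisely the data named in the theorem, the first two assertions follow with no new analysis.

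For the separation I would cover the relevant space-time region by finitely many cones $\calS_k$, one per soliton, chosen so that $I_k = [-v_2^{(k)}/4, -v_1^{(k)}/4]$ isolates the single value $\Re\lam_k$; this is possible exactly because the real parts are distinct. In each such cone $N(I_k)=1$ and $\calD_{I_k}$ is a one-soliton reflectionless datum, so by \eqref{usol} the modulated soliton $\usol(x,t;\calD_{I_k})$ is the inverse gauge of $\qsol(\cdot;\calD_{I_k})$; the norming-constant modulation recorded in Theorem~\ref{thm:long-time} is, via \eqref{x phases}--\eqref{alpha phases}, exactly the passage to the asymptotic position $x_k^\pm$ and phase $\varphi_k^\pm$, so $\usol(x,t;\calD_{I_k}) = \mathcal{U}_\mathrm{sol}(x,t;\lam_k,x_k^\pm,\varphi_k^\pm)$. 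Applying Theorem~\ref{thm:long-time-gauge}(i) in the cone (or part (ii) when some $\Re\lam_k = 0$, where $\xi\to0$; there $\kappa(\xi) = \bigo{\xi}$ forces the parabolic-cylinder factors $F,G$ to the identity up to an error the stated bound absorbs) yields
\[ u(x,t) = \usol(x,t;\calD_{I_k})\, e^{i\alpha_0(\xi,\eta)}\bigl[\,1 + t^{-1/2} g(x,t)\,\bigr] + \bigO{t^{-3/4}}. \]
Since $\usol(\cdot;\calD_{I_k})$ is exponentially localized near $\xi = \Re\lam_k$, while on that support the discrete part of $\alpha_0(\xi,\eta)$ is locally constant and its continuous part varies by only $\bigo{\norm[H^{2,2}]{\rho}^2/t}$, I may replace $e^{i\alpha_0(\xi,\eta)}$ by the constant $e^{i\alpha_0(\Re\lam_k,\pm)}$ up to a negligible error, recovering the phase factor of the statement.

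The crux is that the remainder must be proportional to $\eta_1$, not merely $\bigO{|t|^{-1/2}}$ with an absolute constant. This rests on the refined steepest-descent estimates underlying Theorem~\ref{thm:long-time}: every dispersive contribution---the coefficient $A_{12}(\xi,\eta)$, the functions $f$ and $g$, and the $\bigO{t^{-3/4}}$ remainder---is controlled by $\norm[H^{2,2}]{\rho}$, which vanishes for an exact reflectionless $N$-soliton. Since $\norm[H^{2,2}]{\rho} \le K\eta_1$, the bracketed correction $t^{-1/2} g$ is $\bigo{\eta_1 t^{-1/2}}$ and the remainder is $\bigo{\eta_1 t^{-3/4}}$, so for $|t| > T$, with $T$ depending only on $q_\mathrm{sol}$ through the fixed bounded neighborhood of $\calD^\mathrm{sol}$, both are dominated by $\eta_1|t|^{-1/2}$. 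The main obstacle is precisely this uniform, $\norm[H^{2,2}]{\rho}$-linear control of all correction terms through the inverse gauge $\calG^{-1}$: the factor $\exp(i\eps\int_x^\infty |u|^2)$ couples the soliton and radiation components, and near $\xi = 0$ this coupling produces the boundary layer treated by part (ii); confirming that $\calG^{-1}$ preserves, rather than inflates, the $\eta_1$-scaling of the error there is the delicate step. Summing the cone estimates then gives
\[ \sup_{x\in\R}\Bigl| u(x,t) - \sum_{k=1}^N \mathcal{U}_\mathrm{sol}(x,t;\lam_k,x_k^\pm,\varphi_k^\pm)\, e^{i\alpha_0(\Re\lam_k,\pm)}\Bigr| \le K\eta_1|t|^{-1/2}, \qquad |t| > T, \]
which is the assertion.
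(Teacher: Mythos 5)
Your proposal is correct and takes essentially the same route as the paper, which derives this theorem in a single line by applying Theorem~\ref{thm:long-time-gauge} to the preceding $q$-stability theorem (itself obtained by combining the Lipschitz continuity of the scattering maps from Paper III with Theorem~\ref{thm:long-time}). The details you supply --- transferring the hypothesis through the locally Lipschitz gauge map $\calG$, applying Theorem~\ref{thm:long-time-gauge} cone by cone with the identification $\usol(\cdot\,;\calD_{I_k}) = \mathcal{U}_{\mathrm{sol}}(\cdot\,;\lam_k,x_k^\pm,\varphi_k^\pm)$, and noting that every dispersive correction ($A_{12}$, $f$, $g$, and the remainder) is controlled linearly by $\norm[H^{2,2}]{\rho} \lesssim \eta_1$ so that $K$ and $T$ depend only on $q_{\mathrm{sol}}$ --- are exactly the steps the paper leaves implicit.
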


\begin{remark}
	In both of the above theorems, the condition that the discrete spectral points have distinct real parts is generic are made so that the stability is easier to state. If one considers multi-solitons with spectral points with equal real part (breathers) then the above results still hold for (generic) perturbations which separate the real parts, but the time scale on which the solitons separate will depend upon the particular perturbation. 
\end{remark}

\medskip
We close this introduction by sketching the content of this paper. 
The proofs of  
Theorems \ref{thm:long-time} and \ref{thm:long-time-gauge} are given 
in Sections \ref{sec:deform},  \ref{sec:models}, and \ref{sec:largetime}, using the steepest descent method of Deift and Zhou \cite{DZ93}, the later approach of  McLaughlin-Miller \cite{MM08} and  Dieng-McLaughlin \cite{DM08}, and the  recent work of Borghese, Jenkins and McLaughlin \cite{BJM16} on the focusing cubic NLS which shows how to treat a problem with discrete as well as continuous spectral data. Following \cite{BJM16}, we reduce RHP \ref{RHP2} to an `outer' model which describes the asymptotic behavior of solitons, and an `inner' model which computes the contributions due to the interactions of solitons and radiation. 

In Section \ref{sec:deform}, we begin with the row-vector valued function solution of  RHP  \ref{RHP2.row} 
and deform it to an RHP on a new contour $\Sigma^{(2)}$ whose jump matrices approach the identity exponentially fast away from the critical point $\xi$
 (see Fig. \ref{fig:n2def}). In Section \ref{sec:conj},
we conjugate the row-vector RHP for $n = (n_1, n_2)$, by defining a new unknown $n^{(1)} = n \delta^{-\sigma_3}$, where $\delta$ is the unique function satisfying Lemma \ref{lem:T}. The new unknown $n^{(1)}$ is amenable to lensing, i.e., deforming the contour $\mathbb{R}$ about the critical point $\xi$ so that the jump matrix of the deformed Riemann-Hilbert problem approaches the identity exponentially fast away from the critical point $\xi$. 
 In order to deform the contour, we extend the scattering data in the jump matrix of \revisedtext{RHP} \ref{rhp.n1} into the complex plane to define a new unknown $n^{(2)}$ (see eq. \eqref{n2 def}). The new unknown solves a mixed $\dbar$-Riemann-Hilbert problem, \revisedtext{$\dbar$/Riemann-Hilbert Problem \ref{rhp.n2}}, described in Section \ref{sec:extensions}). The extension introduces non-analyticity of $n^{(2)}$ which is solved away at a later step. The solution $n^{(2)}$ coincides with $n^{(1)}$ in the sectors $\Omega_2$ and $\Omega_5$ (see figure \ref{fig:n2def}) and is piecewise analytic in the sectors $\Omega_1$--$\Omega_6$ with no jumps across the real axis. 

In Section \ref{sec:models}, we construct a solution $\calN^{\mathrm{RHP}}$ 
of 
\revisedtext{RHP \ref{rhp.Nrhp},}
\revisedtext{determined by the asymptotic and jump conditions in $\dbar$/Riemann-Hilbert Problem \ref{rhp.n2}}. Thus the function $$n^{(3)} = n^{(2)}\left(\calN^{\mathrm{RHP}}\right)^{-1}$$ obeys a pure $\dbar$-problem, 
\sidenote{55}
\revisedtext{$\dbar$-}Problem \ref{dbar.n3}. Because the original RHP contains both `continuous' and `discrete' data, the solution $\calN^{\mathrm{RHP}}$ consists of
an `outer' model for the soliton components (see Section \ref{sec:outer model}, 
\sidenote{34}
\revisedtext{RHP} 
\ref{outermodel} and Proposition \ref{outer.soliton}) and an `inner' model for the stationary phase point (see Section \ref{sec:local model},  \sidenote{55}\revisedtext{RHP} \ref{rhp.localmodel} and Proposition \ref{prop:PCmodel.est}).  The outer and inner models are used to build a
\sidenote{55} 
parametrix for \revisedtext{RHP}
\ref{rhp.Nrhp} in Section \ref{sec:RHP.exist}. The `gluing' of parametrices is carried out by solving a small-norm Riemann-Hilbert problem, 
\sidenote{55}\revisedtext{Riemann-Hilbert Problem \ref{rhp.E}.}
The $\dbar$ problem for $n^{(3)}$ is solved in Section \ref{sec:dbar} and is shown to have asymptotic behavior 
\sidenote{18}
$$
n^{(3)}(z) = 
\revisedtext{I
+ \bigO{t^{-3/4}}, \quad I = \begin{pmatrix} 1 &&&& 0 \\ 0 &&&& 1 \end{pmatrix}
}.
$$ 
Thus, (suppressing the $(x,t)$ dependence for brevity)
$$ n(z) = n^{(3)}(z) \mathcal{N}^{\mathrm{RHP}}(z) \mathcal{R}^{(2)}(z)^{-1} \delta(z)^{\sigma_3}.$$
The leading contribution to $q(x,t)$ in \eqref{q.lam} comes from the explicitly computable model factor $\calN^{\mathrm{RHP}}$ owing to the asymptotics of $n^{(3)}$, the fact that $\mathcal{R}^{(2)}$ is the identity in sectors $\Omega_2$ and $\Omega_5$ (and we can take $z \rarr \infty$ in either sector), and the diagonal matrix $\delta^{-\sigma_3}$ does not change $n_2$ at order $\bigO{1/z}$.

\sidenote{55}
With the solution of \revisedtext{RHP} \ref{RHP2} in hand, we give the proof of Theorems \ref{thm:long-time} and 
\ref{thm:long-time-gauge} in Sections \ref{subsec:sol-q} and \ref{subsec:sol-u}, respectively.  To prove Theorem \ref{thm:long-time-gauge}, we establish an asymptotic formula for the phase factor in the gauge transformation \eqref{G} in terms of spectral data, Proposition \ref{prop:u.gauge.expansion}. 
This in turn relies on a weak Plancherel formula, Lemma \ref{lem.Plancherel}, proved in Appendix \ref{app:Weak}.
We construct $N$-soliton solutions $q_\sol$ for \eqref{DNLS2} in Appendix \ref{app:solitons}.


%
%
%
%
%
 
\section{Deformation to a mixed \texorpdfstring{$\bar{\partial}$}{DBAR}-Riemann-Hilbert Problem}
\label{sec:deform}

This section is devoted to the two first transformations in the reduction of the original RHP \ref{RHP2} to a model that can be solved explicitly
and provides the precise behavior of the solution of the DNLS equation for long time up to small terms of order $O(|t|^{-3/4})$.
We present the analysis for $t>0$ and $t<0$ simultaneously by introducing the parameter $\eta= \sgn(t)$.

The first step  is the conjugation of the solution of RHP \ref{RHP2} by a scalar function  $\delta$ defined in \eqref{T}, which is itself the solution of a scalar
RHP (Section \ref{sec:conj}). This operation is standard  and its effect is described in detail in \cite{DZ03}, see also \cite{BJM16} and Paper 2.
The second step (Section \ref{sec:extensions}) is the deformation of contours from the real axis to the contour $\Sigma^{(2)}$ shown in Figure \ref{fig:n2def}. 
Our presentation follows 
Paper 2 with the addition of the  treatment of the discrete data associated to the  residue conditions \cite{BJM16}.

%
%

\subsection{Conjugation}
\label{sec:conj}
The long-time asymptotic analysis of RHP~\ref{RHP2} is determined by the growth and decay of the exponential function $e^{2it\theta}$ 
appearing in both the jump relation (Problem \ref{RHP2}(iii))  and the residue conditions (Problem \ref{RHP2}(iv)).
Let
$\xi=-\frac{x}{2t}$ be the  (unique) critical point of the phase $\theta$ defined in \eqref{phase.lambda}. For $|t| \gg 1$, $|e^{2it \theta}| \ll 1$ whenever $\sgnt \Re(\lambda - \xi) < -c < 0$, and $|e^{2it \theta}| \gg 1$ whenever $\sgnt \Re(\lambda - \xi) > c > 0$. RHP~\ref{RHP2} is formulated from the scattering data in such a way that its solution has identity asymptotics as $x \to +\infty$ with $t$ fixed.  We are interested in the behavior of solutions when $|t| \to \infty$ with $x/t$ fixed in some interval. It is necessary to renormalize the RHP so
that it is well behaved as $t \to \infty$ with $x/t$ in the interval of interest. 

\begin{figure}[htb]
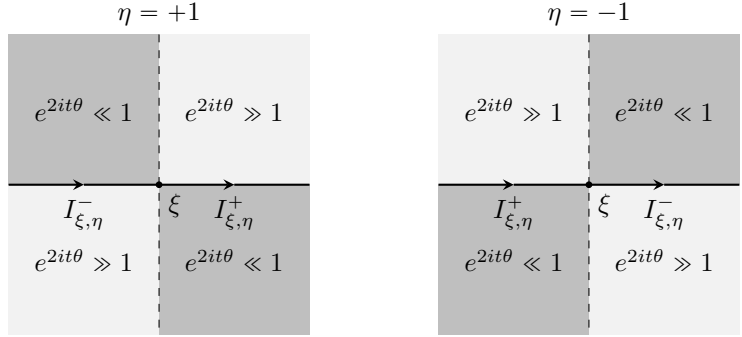

\centering
\hspace*{\stretch{1}}
\FigPhaseA[.5]
\hspace*{\stretch{1}}
\FigPhaseB[.5]
\hspace*{\stretch{1}}
\caption{The regions of growth and decay of the exponential factor $e^{2it\theta}$ in the $\lambda$-plane for either sign of $\sgnt = \sgn t$. 
\label{fig:theta signs}
}
\end{figure}

Orient the intervals $I^\pm_{\xi,\sgnt}$ from left-to-right, see \eqref{posint} and Figure~\ref{fig:theta signs}.
Note that if the sign of $t$, and thus $\sgnt$, is changed with $\xi$ held fixed, the effect is simply to exchange intervals 
$I_{\xi,\eta}^\pm$.
Let
\begin{equation}\label{T}
	\begin{gathered}
	\delta(\lambda) = \delta(\lambda, \xi, \sgnt)  := 
	\exp \lp i \int_{\negint}  \frac{\kappa(z) }{z-\lambda} dz \rp,
	\quad
	\kappa(z) = -\frac{1}{2\pi} \log(1 -\eps z |\rho(z)|^2).
	\end{gathered}
\end{equation}

\begin{lemma}\label{lem:T}
The function $\delta(\lambda)$ defined by \eqref{T} has the following properties:
\begin{enumerate}
	\item[(i)] $\delta$ is meromorphic in $\C \setminus \negint$. 
	\item[(ii)] For $\lambda \in \C \setminus \negint$, $ \delta(\lambda) \overline{\delta(\overline{\lambda})} = 1$.
	Moreover,
	$\displaystyle
		e^{-\| \kappa \|_\infty/2} \leq 
		\left| \delta(\lam) \right|
		\leq e^{\| \kappa \|_\infty/2} ~.
	$
	\item[(iii)] For $\lambda \in \negint$, $\delta$'s boundary values  $\delta_\pm$, as $\lambda$ approaches the real axis from above and below, satisfy 
	\begin{equation}\label{Tjump}
		\delta_+(\lambda) / \delta_-(\lambda) = 1 - \eps \lambda |\rho(\lambda)|^2.
	\end{equation}
		\item[(iv)] As $ |\lambda| \to \infty$ with $|\arg(\eta \lambda)| \neq \pi$, 
	\begin{equation}\label{Texpand}
		{  \delta(\lambda)} = 1 + 
		\frac{\delta_1}{\lam} 
		+ \bigo{ \lambda^{-2} },
		\quad \delta_1 = -i \int_{\negint} \kappa(z) dz .
	\end{equation}
	\item[(v)] As $\lambda \to \xi$ along any ray $\xi + e^{i \phi} \R_+$ with 
	$| \arg (\sgnt(\lambda-\xi) )| < \pi$
	\begin{gather} 
	\nonumber
		\left| \delta(\lambda, \xi,\sgnt) - \delta_0(\xi,\sgnt) (\sgnt(\lambda-\xi))^{i \sgnt \kappa(\xi)} \right| 
		\lesssim_{\rho,\phi} - |\lambda - \xi | \log |\lambda - \xi|.
\intertext{The implied constant depends on $\rho$ through its $H^{2,2}(\R)$-norm and is independent of $\xi$. Here $\delta_0(\xi, \sgnt) = \exp(i\beta(\xi,\xi))$ is a complex unit with}
		\nonumber
		\label{delta0 arg}
		\beta(z,\xi) = - \sgnt \kappa(\xi) \log(\sgnt(z-\xi+1))
		+ \int_{\negint} \frac{ \kappa(s) - \chi(s) \kappa(\xi)}{s-z} ds,
	\end{gather}
	and $\chi(s)$ is the characteristic function of the interval $\sgnt \xi - 1 < \sgnt s < \sgnt \xi$. 
In all of the above formulas, we choose the principal branch of power and logarithm functions.
\end{enumerate}
\end{lemma}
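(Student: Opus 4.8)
The plan is to derive all five properties from the single structural fact that
$\log\delta(\lambda) = i\int_{\negint}\kappa(z)(z-\lambda)^{-1}\,dz$ is a Cauchy integral of the real density $\kappa$ over the half-line $\negint$. First I would record the analytic facts about $\kappa$ that everything rests on: since $\rho\in H^{2,2}(\R)$ is continuous and rapidly decaying and $1-\eps z|\rho(z)|^2>0$, the quantity $\kappa(z)=-\tfrac1{2\pi}\log(1-\eps z|\rho(z)|^2)$ is real, bounded, and $C^1$, with $\kappa(z)=\bigo{z|\rho(z)|^2}$ as $|z|\to\infty$; in particular $\kappa$ and its first moment $\int_{\negint}|z|\,|\kappa(z)|\,dz$ are finite. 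Differentiating under the integral sign then shows $\int_{\negint}\kappa(z)(z-\lambda)^{-1}\,dz$ is holomorphic on $\C\setminus\negint$, so $\delta=\exp(\,\cdot\,)$ is holomorphic, hence \emph{a fortiori} meromorphic, there. This gives (i).

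For (ii) the symmetry is immediate from the reality of $\kappa$: conjugating $\log\delta(\overline\lambda)$ and using $\overline{(z-\overline\lambda)^{-1}}=(z-\lambda)^{-1}$ for real $z$ gives $\overline{\log\delta(\overline\lambda)}=-\log\delta(\lambda)$, i.e. $\delta(\lambda)\overline{\delta(\overline\lambda)}=1$. For the modulus I would take real parts to obtain $\log|\delta(\lambda)| = -\,\Im\lambda\int_{\negint}\kappa(z)|z-\lambda|^{-2}\,dz$, recognize (a multiple of) the half-plane Poisson kernel, and bound $|\kappa|$ by $\|\kappa\|_\infty$ and the remaining integral by the harmonic measure of $\negint$, which lies in $[0,1]$; this yields the two-sided exponential bound of (ii). Property (iii) is the Sokhotski--Plemelj formula: on the interior of $\negint$, where $\kappa$ is continuous, the boundary values satisfy $(\log\delta)_+-(\log\delta)_- = \log(1-\eps\lambda|\rho(\lambda)|^2)$, and exponentiating gives \eqref{Tjump}.

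For (iv) I would expand $(z-\lambda)^{-1}=-\lambda^{-1}-z\lambda^{-1}(z-\lambda)^{-1}$. In any closed subsector strictly interior to $|\arg(\sgnt\lambda)|<\pi$ one has $\dist(\lambda,\negint)\gtrsim|\lambda|$, hence $|z-\lambda|\gtrsim|\lambda|$ for all $z\in\negint$; the leading term integrates to $-\lambda^{-1}\int_{\negint}\kappa$, and the remainder is $\bigo{|\lambda|^{-2}}$ by the finiteness of the first moment of $\kappa$. Exponentiating gives $\delta=1+\delta_1\lambda^{-1}+\bigo{\lambda^{-2}}$ with $\delta_1=-i\int_{\negint}\kappa(z)\,dz$, which is \eqref{Texpand}.

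The real work is (v), the endpoint behavior at $\xi$. Here I would split the density as $\kappa(z)=\chi(z)\kappa(\xi)+\bigl(\kappa(z)-\chi(z)\kappa(\xi)\bigr)$, with $\chi$ the indicator of the unit interval abutting $\xi$. The singular piece $\kappa(\xi)\int\chi(z)(z-\lambda)^{-1}\,dz$ is explicit (a difference of two logarithms); after multiplication by $i$ and exponentiation its $\log(\xi-\lambda)$ part produces exactly the power $(\sgnt(\lambda-\xi))^{i\sgnt\kappa(\xi)}$ — a short branch computation confirms the exponent and the orientation of $\negint$ match for both $\sgnt=\pm1$ — while the remaining constant $\log$-term and the value of the regular Cauchy integral at $\lambda=\xi$ assemble into the unimodular prefactor $\delta_0(\xi,\sgnt)=e^{i\beta(\xi,\xi)}$. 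The regular piece defines $\beta(\lambda,\xi)$; since $H^{2,2}(\R)\hookrightarrow C^1$, $\kappa$ is Lipschitz near $\xi$, so $\kappa(z)-\chi(z)\kappa(\xi)$ vanishes linearly at $\xi$, and the standard endpoint estimate for the Cauchy integral of a linearly vanishing density (modelled on $\int_{-1}^{0} s(s-\lambda)^{-1}\,ds = 1+\lambda\log(-\lambda)+\bigo{\lambda}$) gives $|\beta(\lambda,\xi)-\beta(\xi,\xi)|\lesssim -|\lambda-\xi|\log|\lambda-\xi|$. Writing $\delta(\lambda)=\delta_0(\xi,\sgnt)\,(\sgnt(\lambda-\xi))^{i\sgnt\kappa(\xi)}\,e^{\,i[\beta(\lambda,\xi)-\beta(\xi,\xi)]}$, using that the power factor has bounded modulus near $\xi$ and $|e^{iw}-1|\le|w|$, then yields the claimed bound. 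The main obstacle is precisely this step: the branch-cut bookkeeping needed to assemble the explicit logarithms into $(\sgnt(\lambda-\xi))^{i\sgnt\kappa(\xi)}$ with the correct $\delta_0$, and — more seriously — obtaining the endpoint modulus of continuity $|\lambda-\xi|\log|\lambda-\xi|$ \emph{uniformly in the moving endpoint} $\xi$, with a constant depending on $\rho$ only through $\|\rho\|_{H^{2,2}(\R)}$. Uniformity in $\xi$ is the crux: it rules out exploiting pointwise data at a fixed $\xi$ and instead forces one to control the Lipschitz constant of $\kappa$ near the endpoint and the tail integrals uniformly, which is exactly what the $H^{2,2}$-norm (via Sobolev embedding) provides.
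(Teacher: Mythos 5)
Your proposal is correct and follows essentially the same route as the paper, which disposes of (i)--(iii) as elementary consequences of the definition and the Sokhotski--Plemelj formula, proves (iv) by expanding $(z-\lam)^{-1}$ for large $\lam$ using the integrability of $\kappa$, and for (v) uses exactly the splitting $\kappa(s)=\chi(s)\kappa(\xi)+\bigl(\kappa(s)-\chi(s)\kappa(\xi)\bigr)$ you describe, with the explicit logarithm producing the power factor and the endpoint modulus-of-continuity estimate handled as in Appendix A of \cite{LPS16}, to which the paper defers. Two harmless slips: the exact identity is $(z-\lam)^{-1}=-\lam^{-1}+z\lam^{-1}(z-\lam)^{-1}$ (your second term has the wrong sign), and your Poisson-kernel argument in (ii) actually yields $e^{-\pi\|\kappa\|_\infty}\le|\delta(\lam)|\le e^{\pi\|\kappa\|_\infty}$ rather than the printed constant $\|\kappa\|_\infty/2$ --- but the printed constant appears to be a notational slip (it is the natural constant for the density $\log(1-\eps z|\rho(z)|^2)=-2\pi\kappa$), and the uniform two-sided boundedness of $|\delta|^{\pm1}$ that your argument delivers is all that is used downstream.
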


\begin{proof}
\sidenote{19}
Parts $(i)$--\revisedtext{$(iii)$} are elementary consequences of the definition \eqref{T} and the Sokhotski-Plemelj formula. 
For part $(iv)$, one geometrically expands 
the factor $(z-\lambda)^{-1}$ for large $\lambda$, and uses the estimate $\| \kappa \|_{L^1(\R)} \lesssim \| \rho \|_{H^{2,2}(\R)}$ to bound the remainder in the integral term for $\lambda$ bounded away from the contour of integration. 
The proof of part $(v)$ can be found in Appendix A of \cite{LPS16}.
\end{proof}

We now define a new unknown function $\nk{1}$ using 
\sidenote{20}\revisedtext{the function $\delta(\lam)$:}
\begin{equation}\label{n1}
\nk{1}(\lambda) = \nn(\lambda) \delta(\lambda)^{-\sigma_3}.
\end{equation}
We claim that $\nk{1}$ satisfies the following RHP.

\begin{RHP}\label{rhp.n1}
Find an analytic row vector-valued function $\nk{1}: \C \setminus (\R  \cup \poles) \to \C^2$ with the following properties:
\begin{enumerate}
	\item[(i)] $\nk{1}(\lambda) = (1,0) + \bigo{\lambda^{-1}}$ as $\lambda \to \infty$. 
	\item[(ii)] For $\lambda \in \R$, the boundary values
	$\nk[\pm]{1}$ satisfy the jump relation 
	$\nk[+]{1}(\lambda) = \nk[-]{1}(\lambda) \vk{1}(\lambda)$ where
	\begin{equation}\label{V1}
		\vk{1} (\lambda) = \begin{cases}
			\triu{ \rho(\lambda) \delta(\lambda)^{2} e^{2it \theta} }
			\tril{ -\eps \lambda \bar{\rho}(\lambda) \delta(\lambda)^{-2} e^{-2it \theta} }	
			& \lambda \in \posint, \bigskip \\
			\tril{ \frac{ -\eps \lambda \bar{\rho}(\lambda) \delta_-(\lambda)^{-2}}
			{1 -\eps \lambda |\rho(\lambda)|^2} e^{-2it \theta} }
			\triu{ \frac{ \rho(\lambda) \delta_+(\lambda)^{2}}
			{1 -\eps \lambda |\rho(\lambda)|^2} e^{2it \theta}}
			& \lambda \in \negint.
		\end{cases}
	\end{equation}
	\item[(iii)]  $\nk{1}(\lambda)$ has simple poles at each $p \in \poles$: 
	\[
		\Res_{\lambda = p} \nk{1}(\lambda) 
		= \lim_{\lambda \to p} \nk{1}(\lambda) \vk{1}(p),
	\]
	where for each $\lam_k \in \poles^+$
	\begin{equation} \label{n1 residue matrices}		
		\vk{1}(\lambda_k) 
		= \tril[0]{ \lambda_k C_k \delta(\lambda_k)^{-2} e^{-2it \theta} }, \, \,
		\vk{1}(\bar{\lambda}_k) 
		= \triu[0]{ \eps \overline{C}_k \delta(\overline{\lambda}_k)^{2} e^{2it \theta} } ~.
	\end{equation}
\end{enumerate}
\end{RHP}

\begin{proposition}
Suppose that $\nn$ satisfies  { RHP~\ref{RHP2.row}}. 
Then $\nk{1}$ defined by \eqref{n1} satisfies  RHP~\ref{rhp.n1}.
\end{proposition}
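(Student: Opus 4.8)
The plan is to verify directly that $\nk{1} = \nn\,\delta^{-\sig}$ inherits each of the three defining properties of RHP~\ref{rhp.n1} from RHP~\ref{RHP2.row} together with the properties of $\delta$ collected in Lemma~\ref{lem:T}. Since $\delta$ is analytic and nonvanishing on $\C \setminus \negint$ by parts (i)--(ii) of that lemma, multiplication by $\delta^{-\sig}$ neither creates nor destroys poles, and the only new contour behavior is confined to $\negint$. For the normalization (property (i)) I would use the large-$\lam$ expansion $\delta(\lam) = 1 + \bigo{\lam^{-1}}$ of Lemma~\ref{lem:T}(iv): then $(1,0)\,\delta^{-\sig} = (\delta^{-1},0) = (1,0) + \bigo{\lam^{-1}}$, and combining this with $\nn = (1,0) + \bigo{\lam^{-1}}$ gives $\nk{1} = (1,0) + \bigo{\lam^{-1}}$ as $\lam \to \infty$.

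For the jump relation (property (ii)), write $J = e^{it\theta\,\ad\sig}\,v$ for the jump matrix of RHP~\ref{RHP2.row}, whose off-diagonal entries carry $e^{\pm 2it\theta}$. Since $\nk[\pm]{1} = \nn\,\delta_\pm^{-\sig}$ on the two sides of the contour, the new jump is $\vk{1} = \delta_-^{\sig} J \delta_+^{-\sig}$, a diagonal conjugation that scales the $(1,2)$ entry by $\delta_-\delta_+$, the $(2,1)$ entry by $(\delta_-\delta_+)^{-1}$, and the diagonal entries by $\delta_-/\delta_+$ and its reciprocal. On $\posint$ the function $\delta$ is continuous, so $\delta_\pm = \delta$, and one checks that the resulting matrix equals the upper/lower triangular product in the first line of \eqref{V1}. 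On $\negint$ I would invoke the scalar jump $\delta_+/\delta_- = 1-\eps\lam|\rho|^2$ of Lemma~\ref{lem:T}(iii): this factor cancels the $(1,1)$ entry $1-\eps\lam|\rho|^2$ of $J$ (and reproduces it in the $(2,2)$ slot), converting the single jump into the lower/upper triangular product in the second line of \eqref{V1}, where the substitution $\delta_+ = (1-\eps\lam|\rho|^2)\delta_-$ is precisely what matches the off-diagonal entries $\rho\delta_+^2/(1-\eps\lam|\rho|^2)$ and $-\eps\lam\rhobar\,\delta_-^{-2}/(1-\eps\lam|\rho|^2)$.

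For the residue conditions (property (iii)), the essential observation is that every $p \in \poles$ lies off $\negint$, so $\delta^{-\sig}$ is analytic and invertible near $p$ and may be pulled through the residue: $\Res_{\lam=p}\nk{1} = \big(\Res_{\lam=p}\nn\big)\,\delta(p)^{-\sig}$. Substituting the residue condition of RHP~\ref{RHP2.row}, and using the same evaluation of $e^{it\theta\,\ad\sig}$ on the nilpotent residue matrices as above, I would conjugate the resulting nilpotent matrix by $\delta(p)^{-\sig}$. For the lower-triangular matrix at $\lam_k$ this multiplies its single nonzero entry by $\delta(\lam_k)^{-2}$, and for the upper-triangular matrix at $\overline{\lam_k}$ by $\delta(\overline{\lam_k})^{2}$, yielding exactly the matrices $\vk{1}(\lam_k)$ and $\vk{1}(\overline{\lam}_k)$ in \eqref{n1 residue matrices}.

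The bookkeeping on $\negint$ is the only step that is not a formality: there the conjugation must turn the non-triangular jump $J$ into a product of two triangular factors, and this succeeds precisely because the diagonal jump of $\delta$ prescribed by Lemma~\ref{lem:T}(iii) is tuned to absorb the $1-\eps\lam|\rho|^2$ entry. This is exactly the mechanism that prepares RHP~\ref{rhp.n1} for the subsequent opening of lenses, so while each computation is routine, keeping the placement of $\delta_+$ versus $\delta_-$ and the signs of the exponents consistent is where care is required.
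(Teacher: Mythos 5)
Your proposal is correct and follows essentially the same route as the paper's proof: both verify the three properties directly, derive the jump via the conjugation $\vk{1} = \delta_-^{\sig}\lb e^{it\theta\ad\sig_3}v\rb\delta_+^{-\sig}$ combined with the scalar jump \eqref{Tjump} of $\delta$ on $\negint$ (the paper quotes the standard UL/LDU factorizations of $v$ and absorbs the diagonal factor, while you expand the conjugation entrywise and match it to the triangular products --- the same computation in a different order), and handle the residues by pulling the locally analytic factor $\delta(p)^{\pm\sig}$ through the residue, conjugating the nilpotent matrices exactly as in the paper.
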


\begin{proof}
The fact that $\nk{1}$ is analytic in $\C \setminus (\R \cup \poles)$, and approaches $(1,0)$ as $\lambda \to \infty$ follows directly from its definition, Lemma~\ref{lem:T}, and the analytic properties of $\nn$. 
The relation 
$\vk{1}(\lambda) = \delta_-^{\sig}(\lambda) \lb e^{it\theta \ad \sigma_3} v(\lambda) \rb\delta_+^{\sig}(\lambda)$, the standard factorizations of $v$
\begin{align*}
	v 	& = \triu{\,\,\,\, \rho \,\,\,\, } \tril{-\eps \lambda \rhobar \,\,\,\,} \\
	 	&=\tril{ \frac{-\eps \lambda \rhobar  }{1-\eps \lambda |\rho|^2} }
	 			\diag{1-\eps \lambda |\rho|^2}{(1-\eps \lambda |\rho|^2)^{-1} }
	 		\triu{ \frac{\rho }{1-\eps \lambda |\rho|^2} },
\end{align*}
and the jump relation \eqref{Tjump} satisfied by $\delta(z)$ on $\negint$ allow us to write $\vk{1}$ as in \eqref{V1}.
Concerning the residues \eqref{n1 residue matrices}, as $\delta(\lambda)$ is analytic near each $p \in \Lambda$,
\begin{align*}
	\Res_{\lam=p} \nk{1} 
	&= \lim_{\lam \to p} \nn(\lam) v(p) \delta(p)^{-\sig}	
	= \lim_{\lam \to p} \nk{1}(\lam) \delta(p)^{\sig} v(p) \delta(p)^{-\sig}	\\
	& =\lim_{\lam\to p} \nk{1}(\lam) \vk{1}(p)
\end{align*}
with $v^{(1)}$ defined in \eqref{n1 residue matrices}.
\end{proof}

%
%

\subsection{\texorpdfstring{$\dbar$}{DBAR}-extensions of jump factorization}
\label{sec:extensions}

We now introduce a transformation which uses the factorization \eqref{V1} to deform the jump matrix $\vk{1}$,  replacing it with new jumps along contours in the complex plane which are near identity. 
Let
\begin{equation*}
	\begin{gathered}
	\Sk{2} = \Sigma_1 \cup \Sigma_2 \cup \Sigma_3 \cup \Sigma_4 \\
	\Sigma_k = \xi + e^{\frac{i\pi}{4}(2+(2k-3)\sgnt) }\, \R_+ , \quad k = 1,2,3,4,
	\end{gathered}
\end{equation*}
with each ray oriented with increasing (resp. decreasing) real part for $\sgnt = +1$ (resp. $\sgnt = -1$). The function $e^{2it \theta}$ is exponentially increasing along $\Sigma_1$ and $\Sigma_3$ and decreasing along $\Sigma_2$ and $\Sigma_4$, while the reverse is true of $e^{-2it \theta}$.
Let  $\Omega_k,\, k=1,\dots,6$, denote the six connected components of $\C \setminus \lp \R \bigcup_{k=1}^4 \Sigma_k \rp$, starting with sector $\Omega_1$ between $\posint$ and $\Sigma_1$ and numbered consecutively continuing counterclockwise (resp. clockwise) if $\sgnt = +1$ (resp. $\sgnt = -1$) as shown in Figure~\ref{fig:n2def}.

In order to deform the contour $\R$ to the contour $\Sk{2}$, we introduce a new unknown $\nk{2}$ 
%
\begin{align}
\label{n2 def}
	\nk{2}(\lam) = \nk{1} (\lam)\mathcal{R}^{(2)}(\lam).
\end{align}
In each of the sectors $\Omega_k$, $k=1,3,4,6$, which meet the real axis, the condition that 
$\nk{2}$ has no jump on the real axis determines the boundary values of $\mathcal{R}^{(2)}$  through the factorization of $\vk{1}$ in \eqref{V1}. 
These factorizations involve the reflection coefficient $\rho$ which does not extend analytically to the complex plane. 
To extend $\mathcal{R}^{(2)}$ off the real axis, we use the method of 
\cite{BJM16,CJ16,DM08} which introduces non-analytic extensions. 
The new unknown $\nk{2}$ will satisfy  a mixed $\dbar$-RHP.
The only condition on the extension is that we have some mild control on $\dbar \mathcal{R}^{(2)}$ sufficient to ensure that the $\dbar$-contribution to the long-time asymptotics of $q(x,t)$ is negligible. 
This is the content of Lemma~\ref{lem:extensions} below. 
We have considerable freedom in choosing the extension. We use this freedom to ensure that: $1)$ the new jumps on $\Sk{2}$ match a well known model RHP;
\sidenote{18, 22}  $2)$ in a small neighborhood of each pole in $\Lambda$\revisedtext{,} $\mathcal{R}^{(2)}(\lam) =I$---this ensures that the residues are unaffected by the transformation. 
We choose $\mathcal{R}^{(2)}$ as shown in Figure~\ref{fig:n2def}, where the functions $R_1,R_3,R_4,R_6$ satisfy
%
%
%
\newcommand{\temp}{\frac{ -\eps \xi \rho^*(\xi)}{1 - \eps \xi |\rho(\xi)|^2} 
		\delta_0(\xi,\sgnt)^{-2} (\sgnt(\lam-\xi))^{-2i \sgnt \kappa(\xi)} 
		(1-\indicator(z))}
\sidenote{23}
\begin{subequations}\label{R_k}
\begin{align}
	\label{R1}
	R_1(\lam) &= \begin{dcases}
		 \spacing{-\eps \lambda \overline{\rho(\lam)} \delta(\lam)^{-2} }{\temp}
			& \lam \in \posint \\
		-\eps \xi \overline{\rho(\xi)} \delta_0(\xi, \sgnt)^{-2} (\revisedtext{\sgnt \cdot}(\lam-\xi))^{-2i \sgnt \kappa(\xi)}
		(1-\indicator(\lam)) 		
			& \lam \in  \Sigma_1
	\end{dcases} \bigskip \\
	\label{R3}
	R_3(\lam) &= \begin{dcases}
		\spacing{\frac{ \rho(\lam)}{1 - \eps \lambda |\rho(z)|^2} \delta_+(\lam)^{2}}{\temp} 
			& \lam \in \negint \\
		\frac{ \rho(\xi)}{1 - \eps \lam |\rho(\xi)|^2} 
		\delta_0(\xi, \sgnt)^{2} (\revisedtext{\sgnt \cdot}(\lam-\xi))^{2i \sgnt \kappa(\xi)} 
		(1-\indicator(\lam)) 
			& \lam \in  \Sigma_2
	\end{dcases} \bigskip \\
	\label{R4}
	R_4(\lam) &= \begin{dcases}
		\spacing{\frac{ -\eps \lambda \overline{\rho(\lam)}}{1 -\eps \lam |\rho(\lam)|^2} \delta_-(\lam)^{-2} }
		{\temp}
			& \lam \in \negint \\
		\frac{ -\eps \xi \overline{\rho(\xi)}}{1 - \eps \xi |\rho(\xi)|^2} 
		\delta_0(\xi, \sgnt)^{-2} (\revisedtext{\sgnt \cdot}(\lam-\xi))^{-2i \sgnt \kappa(\xi)} (1-\indicator(\lam)) 
			& \lam \in  \Sigma_3
	\end{dcases} \bigskip \\
	\label{R6}
	R_6(\lam) &= \begin{dcases}
		\spacing{\rho(\lam) \delta(\lam)^{2} }{\temp}
			& \lam \in \posint \\
		\rho(\xi) \delta_0(\xi, \sgnt)^{2} (\revisedtext{\sgnt \cdot}(\lam-\xi))^{2i \sgnt\kappa(\xi)}(1-\indicator(\lam)) 
			& \lam \in  \Sigma_4.
	\end{dcases} 
\end{align}
\end{subequations}
Here $\indicator$ is a $C_0^\infty(\C,[0,1])$ cutoff function supported on a neighborhood of each point of the discrete spectrum such that 
\begin{equation*}
	\indicator(\lam) = \begin{cases}
		1 & \dist(\lam, \poles) < \poledist/3 \\
		0 & \dist(\lam, \poles) > 2\poledist/3
	\end{cases}
\end{equation*} 
where $\poledist$, defined by \eqref{distances}, is sufficiently small to ensure that the disks of support intersect neither each other nor the real axis.
\begin{figure}[t]
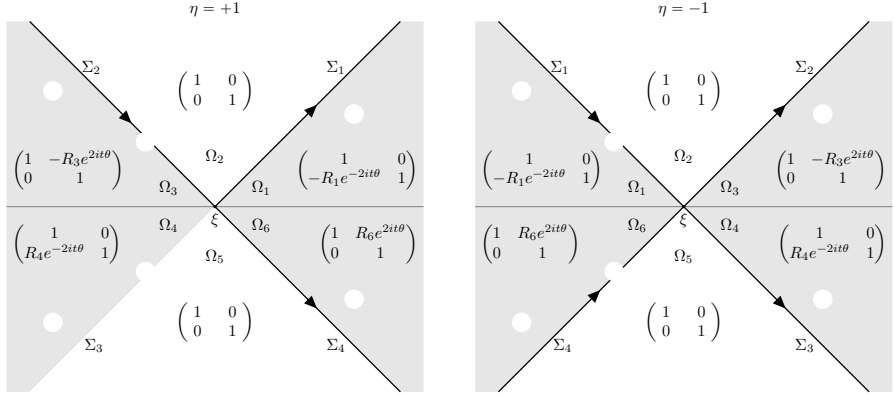

\hspace*{\stretch{1}}
\posDBARcontours
\hspace*{\stretch{1}}
\negDBARcontours
\hspace*{\stretch{1}}
\caption{Depicted here are the contour $\Sk{2} = \bigcup_{k=1}^4 \Sigma_{k}$ and regions $\Omega_k$ $k=1,\dots,6$ defining the transformation $\nk{2} = \nk{1} \mathcal{R}^{(2)}$. 
The labeling of the regions depends on $\sgnt$.
The non-analytic matrix $\mathcal{R}^{(2)}$ is given in each region $\Omega_k$. 
The support of  the $\bar{\partial}$-derivatives, $\Wk{2} = \dbar \mathcal{R}^{(2)}$ is shaded in gray.
\label{fig:n2def}
}
\end{figure}

The following lemma and its proof are almost identical to 
\cite[Proposition 2.1]{DM08}
or 
\cite[Lemma 4.1]{LPS16}.
It establishes the existence of the functions $R_k$ above and gives estimates that are needed to control the contribution of the solution of the $\dbar$-problem (Section~\ref{sec:dbar}) to the large time behavior of $q(x,t)$. To state it, we introduce the factors  
\begin{align*}
	& p_1(\lam) =   {-\eps \overline{\rho(\lam)}} ~,
	&& p_3(\lam) = \frac{\rho(\lam)}{1-\eps \lam |\rho(\lam)|^2}  ~,\\
	& p_6(\lam) = \rho(\lam) ~,
	&& p_4(\lam) =   { \frac{-\eps \overline{\rho(\lam)}}{1-\eps \lam |\rho(\lam)|^2} } ~.
\end{align*}
We let
$$ P_k(\lam) = \lam^{m_k} p_k'(\Re \lam) $$
where $m_k=1$ for $k=1,4$ and $m_k=0$ for $k=3,6$.

\begin{lemma}\label{lem:extensions}
Suppose that $\rho \in H^{2,2}(\R)$ and that $c:= \inf_{\lam \in \R} (1 - \eps \lam | \rho(\lam)|^2) > 0$ strictly. Then there exist functions $R_k$ on $\Omega_k$, $k=1,3,4,6$ satisfying \eqref{R_k}, such that
\sidenote{24}
\sidenote{25}
\begin{gather*}
	\left| \dbar R_k \right| \lesssim 
	\begin{cases}
		{ \left| \dbar \indicator(\lam) \right| }+ 
		|P_k(\lam)| 
		\revisedtext{-\log|\lam -\xi|}
		& \revisedtext{\lam} \in \Omega_k, \quad |\revisedtext{\lam}-\xi| \leq 1 \\
		{ \left| \dbar \indicator(\lam) \right| }+
		|P_k(\lam)|
		+ |\lam - \xi|^{-1} 
		& \revisedtext{\lam} \in \Omega_k, \quad |\revisedtext{\lam}-\xi| > 1 
	\end{cases} 
\shortintertext{and}
\revisedtext{
	\begin{aligned}	
	\dbar R_k(\lambda) &\equiv 0 
	  &&\text{if } \lam \in \Omega_2 \cup \Omega_5
	  \text{ or } \dist(\lam, \Lambda) \leq \poledist/3 \\
	\dbar R_k(\lam)  &= \bigO{\lam} 
	  &&\text{as } \lam \to 0 \in \Omega_1 \cup \Omega_4			
	\end{aligned}
}				
\end{gather*}
Here 
the implied constants are uniform for $\xi \in \R$ and $\rho$ in a fixed bounded subset of $H^{2,2}(\R)$ with $1- \eps \lam |\rho(\lam)|^2 \geq c > 0$ for a fixed constant $c$.
\end{lemma}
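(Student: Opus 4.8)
The plan is to construct each $R_k$ by an explicit interpolation in polar coordinates centered at the stationary point $\xi$, following the construction of \cite[Proposition 2.1]{DM08} and \cite[Lemma 4.1]{LPS16}, and then to read the $\dbar$-estimate directly off that interpolation. By the symmetries of the problem the four indices $k=1,3,4,6$ are handled identically, so I would describe only $R_1$ on the sector $\Omega_1$ and write $\lam = \xi + |\lam-\xi|\,e^{i\phi}$. Two structural facts make the bookkeeping clean. First, the value prescribed on $\Sigma_1$ in \eqref{R1}, namely $-\eps\xi\,\overline{\rho(\xi)}\,\delta_0(\xi,\sgnt)^{-2}(\sgnt(\lam-\xi))^{-2i\sgnt\kappa(\xi)}(1-\indicator(\lam))$, is (off the support of $\indicator$) an \emph{analytic} function of $\lam$: the power $(\sgnt(\lam-\xi))^{-2i\sgnt\kappa(\xi)}$ is holomorphic off its branch cut and the remaining factors are constants. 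Second, $\delta(\lam)$ is analytic throughout $\Omega_1$, since by Lemma~\ref{lem:T}(i) it is meromorphic on $\C\setminus\negint$ while $\Omega_1$ meets neither $\negint$ nor $\poles$.

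A convenient choice is then
$$
R_1(\lam) = \Big[ \cos(2\phi)\,\lam\,p_1(\Re\lam)\,\delta(\lam)^{-2} + \big(1-\cos(2\phi)\big)\,\xi\,p_1(\xi)\,\delta_0(\xi,\sgnt)^{-2}(\sgnt(\lam-\xi))^{-2i\sgnt\kappa(\xi)} \Big]\big(1-\indicator(\lam)\big),
$$
with the analogous definition for $k=3,4,6$ (replacing $p_1$ by $p_k$, adjusting the sign of the $\delta$-power, and inserting the prefactor $\xi^{m_k}$). Since $\cos(2\phi)=1$ on $\posint$, where $\indicator\equiv0$, and $\cos(2\phi)=0$ on $\Sigma_1$, this matches both boundary values in \eqref{R1} exactly; the factor $(1-\indicator)$ forces $R_1\equiv 0$ within distance $\poledist/3$ of $\poles$, so the residue conditions are untouched and $\dbar R_1\equiv 0$ there, and likewise $R_1$, hence $\dbar R_1$, vanishes on $\Omega_2\cup\Omega_5$.

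The heart of the matter is the $\dbar$-bound. Writing $\dbar = \tfrac12(\partial_{\Re\lam}+i\partial_{\Im\lam})$, the analytic factors $\lam$, $\delta(\lam)^{-2}$, $\delta_0^{-2}$ and $(\sgnt(\lam-\xi))^{-2i\sgnt\kappa(\xi)}$ are annihilated, so $\dbar R_1$ collects exactly three contributions. The first is $\dbar\indicator$, supported on the annulus $\poledist/3\le\dist(\lam,\poles)\le 2\poledist/3$ away from $\R$, giving the $|\dbar\indicator(\lam)|$ term. The second is the radial derivative of $p_1(\Re\lam)$, namely $\tfrac12\cos(2\phi)\,\lam\,\delta(\lam)^{-2}\,p_1'(\Re\lam)(1-\indicator)$; since $|\delta(\lam)^{-2}|$ is bounded by Lemma~\ref{lem:T}(ii), this is $\lesssim |\lam|\,|p_1'(\Re\lam)| = |P_1(\lam)|$. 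The third is the angular derivative $\dbar\cos(2\phi)$, which carries a factor $|\lam-\xi|^{-1}$ (because $\dbar\arg(\lam-\xi)=i/(2\overline{\lam-\xi})$) multiplying the difference $D(\lam) := \lam\,p_1(\Re\lam)\delta(\lam)^{-2} - \xi\,p_1(\xi)\delta_0^{-2}(\sgnt(\lam-\xi))^{-2i\sgnt\kappa(\xi)}$. For $|\lam-\xi|\le 1$ I would split $D$ into the $\delta$-discrepancy $\lam p_1(\Re\lam)\big[\delta(\lam)^{-2}-\delta_0^{-2}(\sgnt(\lam-\xi))^{-2i\sgnt\kappa(\xi)}\big]$, which by Lemma~\ref{lem:T}(v) is $\lesssim -|\lam-\xi|\log|\lam-\xi|$ and hence, after the $|\lam-\xi|^{-1}$ scaling, contributes $-\log|\lam-\xi|$, plus the data-difference $\big[\lam p_1(\Re\lam)-\xi p_1(\xi)\big]\delta_0^{-2}(\sgnt(\lam-\xi))^{-2i\sgnt\kappa(\xi)}$, which is $\bigO{\lam-\xi}$ by the $C^1$-bound on $p_1$ from $H^{2,2}(\R)\hookrightarrow C^1$ and is therefore of lower order than, and absorbed by, the $|P_1|$ and logarithmic terms. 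For $|\lam-\xi|>1$, Lemma~\ref{lem:T}(ii) bounds the $\delta$-factors uniformly and the angular term is $\lesssim |\lam-\xi|^{-1}$, giving the second case. The vanishing $\dbar R_k=\bigO{\lam}$ as $\lam\to 0$ in $\Omega_1\cup\Omega_4$ follows because for $k=1,4$ the explicit prefactor $\lam$ (equivalently $m_k=1$) factors out of $R_k$, so $\dbar R_k = \lam\,\dbar(\cdots) = \bigO{\lam}$ once the bracket has bounded $\dbar$ near the origin. Every bound above is expressed through $\|\rho\|_{H^{2,2}(\R)}$, the ellipticity constant $c$, and the $\xi$-uniform implied constant of Lemma~\ref{lem:T}(v), which yields the asserted uniformity.

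The main obstacle is the angular term near $\xi$: one must extract the logarithmic rate from the discrepancy between $\delta(\lam)^{-2}$ and its frozen leading-order form, and it is precisely the $-|\lam-\xi|\log|\lam-\xi|$ estimate of Lemma~\ref{lem:T}(v) that supplies the $-\log|\lam-\xi|$ in the bound; everything else is routine bookkeeping with the explicit interpolation and the Sobolev embedding for $\rho$, carried out exactly as in \cite{DM08,LPS16}.
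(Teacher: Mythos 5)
Your construction is essentially the paper's own proof. The paper defines $R_1$ by exactly this $\cos(2\phi)$-interpolation between $\lam\, p_1(\Re \lam)\,\delta(\lam)^{-2}$ on $\posint$ and the frozen, holomorphic value $\xi\, p_1(\xi)\,\delta_0(\xi,\sgnt)^{-2}(\sgnt(\lam-\xi))^{-2i\sgnt\kappa(\xi)}$ on $\Sigma_1$, cut off by $(1-\indicator)$, and then estimates the same three contributions to $\dbar R_1$ (the $\dbar\indicator$ term, the radial term $\tfrac{\lam}{2}p_1'(\Re\lam)$, and the angular term carrying $|\lam-\xi|^{-1}$ times the discrepancy), with Lemma~\ref{lem:T}(v) supplying the logarithm exactly as you say; if anything, your placement of $\cos 2\phi$ (without the paper's extra factor $\sgnt$) is the version that matches the boundary values \eqref{R1} for both signs of $t$.

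There is, however, one step whose justification as written is wrong: the claim $\dbar R_k=\bigO{\lam}$ as $\lam\to 0$ in $\Omega_1\cup\Omega_4$. You argue that ``the explicit prefactor $\lam$ factors out of $R_k$,'' but it does not: in your own formula the frozen term carries the constant $\xi\, p_1(\xi)$, not $\lam\, p_1(\Re\lam)$, so $R_1$ is not $\lam$ times a bracket with bounded $\dbar$. The correct argument is different: near $\lam=0$ the $\dbar\indicator$ term vanishes (the support of $\indicator$ is disjoint from $\R$), the radial term carries an explicit factor of $\lam$, and the angular term is proportional to $\sin(2\phi)$ with $\phi=\arg(\lam-\xi)$; since $\xi\in\R$, one has $|\sin 2\phi|\le 2|\Im\lam|/|\lam-\xi|\le 2|\lam|/|\lam-\xi|$, and it is this vanishing of $\sin 2\phi$ at the real axis that supplies the factor of $\lam$ for the angular term (for fixed $\xi\neq 0$). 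A second, smaller point: your claim that $\lam p_1(\Re\lam)-\xi p_1(\xi)=\bigO{\lam-\xi}$ with a constant \emph{uniform in $\xi$}, via $H^{2,2}\hookrightarrow C^1$, is too quick, since it needs control of $|\xi|\sup|p_1'|$ near $\xi$, which pointwise boundedness of $p_1'$ alone does not give; the uniform route is Cauchy--Schwarz together with the interpolation bound $\norm[L^2]{\langle\cdot\rangle \rho'}\lesssim\norm[H^{2,2}]{\rho}$, which is how \cite{DM08} and \cite{LPS16} treat this term (at the price of a $|\lam-\xi|^{-1/2}$ contribution). Neither point changes the architecture of your proof, but both need repair before the argument is complete.
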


This lemma has the following immediate corollary:
\begin{corollary}\label{cor:R2.bd}
Let $\lam - \xi = u + i v$ with $u,v \in \R$. Then under the assumptions of Lemma~\ref{lem:extensions}  for $k=1,3,4,6$,
and $\lam \in \Omega_k$, we have
\begin{gather*}
	\left| \dbar \mathcal{R}^{(2)}(\lam;\xi) \right| 
	\lesssim 
	\begin{cases}
		\lp    
				\left| \dbar \indicator(\lam) \right| 
				+ \left|P_k(\lam)\right| 
				\revisedtext{-\log |z-\xi|}
		\rp 
		e^{-8t |u| |v|}
		& 
		|\lam - \xi| \leq 1 \\
		\\
		\lp   
				 \left| \dbar \indicator(\lam) \right| 
				+ \left|  P_k(\lam) \right| 
				\revisedtext{\left( 1+ |\lam - \xi|^2\right)^{-1/2}}
		\rp 
		e^{-8t |u| |v|}
		& 
		|\lam - \xi| > 1,
	\end{cases}
\shortintertext{and}
	\revisedtext{
	\begin{aligned}
		\dbar \mathcal{R}^{(2)}(\lam,\xi) 
				& \equiv 0 
				&&\text{if } \lam \in \Omega_2 \cup \Omega_5
				\text{ or } \dist(\lam, \Lambda) \leq \poledist/3, \\
		\dbar \mathcal{R}^{(2)}(\lam,\xi) 	
	  			& = \bigO{\lam}  
				&&\text{as } \lam \to 0 \in \Omega_1 \cup \Omega_4.
	\end{aligned}
	}
\end{gather*}
Here $m_k$ is as defined in Lemma~\ref{lem:extensions}, and all the implied constants are uniform for $\xi \in \R$ and $|t|>1$.
\end{corollary}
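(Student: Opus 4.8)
The plan is to treat the corollary as a direct consequence of Lemma~\ref{lem:extensions} together with two elementary observations: $\dbar$ annihilates the analytic exponential factors that multiply the $R_k$ in the definition of $\mathcal{R}^{(2)}$, and the modulus of those exponentials is a decaying Gaussian in the local variable $\lam-\xi = u + iv$. Everything quantitative about $\dbar R_k$ is already supplied by the Lemma, so the corollary really only adds the phase factor.

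First I would read off from Figure~\ref{fig:n2def} that in each of the four sectors $\Omega_k$, $k=1,3,4,6$, the matrix $\mathcal{R}^{(2)}$ equals $I$ plus a single off-diagonal entry $\pm R_k(\lam)\,e^{\mp 2it\theta(\lam)}$, while $\mathcal{R}^{(2)}\equiv I$ on $\Omega_2\cup\Omega_5$. Because $\theta$ is holomorphic in $\lam$, the exponentials $e^{\pm 2it\theta}$ are entire and $\dbar$ passes through them: in $\Omega_k$ one has $\dbar\mathcal{R}^{(2)} = \pm(\dbar R_k)\,e^{\mp 2it\theta}$ in the relevant off-diagonal slot, and $\dbar\mathcal{R}^{(2)}\equiv 0$ on $\Omega_2\cup\Omega_5$. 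This gives at once the vanishing on $\Omega_2\cup\Omega_5$; and since Lemma~\ref{lem:extensions} provides $\dbar R_k\equiv 0$ on $\{\dist(\lam,\poles)\le\poledist/3\}$, the same vanishing transfers to $\dbar\mathcal{R}^{(2)}$ on that set.

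Next I would compute the modulus of the exponential factor. Expanding $\theta$ about its unique critical point $\xi$, where $\theta'(\xi)=0$ and $\theta''\equiv -4$, gives $\theta(\lam)=\theta(\xi)-2(\lam-\xi)^2$ with $\theta(\xi)\in\R$. Writing $\lam-\xi=u+iv$ one finds $\real\big(\mp 2it\theta(\lam)\big)=\mp 8tuv$, so that $|e^{\mp 2it\theta}|=e^{\mp 8tuv}$. The content here is checking signs sector by sector: in each $\Omega_k$ the sign of the product $uv$ combines with $\sgnt=\sgn t$ (encoded through the $\sgnt$-dependent rotation of the rays $\Sigma_j$) so that this exponent is negative, equal to $-8|t|\,|u|\,|v|$, which is exactly the Gaussian weight in the corollary. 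In particular this factor is bounded by $1$, is bounded away from $0$ on the compact support of $\dbar\indicator$ near any pole, and is $O(1)$ near $\lam=0$.

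Finally I would assemble the bounds. Multiplying the two pointwise estimates for $|\dbar R_k|$ from Lemma~\ref{lem:extensions} by this Gaussian factor reproduces the two displayed cases, using in the regime $|\lam-\xi|>1$ the comparison $|\lam-\xi|^{-1}\lesssim(1+|\lam-\xi|^2)^{-1/2}$ to recast the polynomially decaying remainder. The behaviour $\dbar\mathcal{R}^{(2)}=\bigO{\lam}$ as $\lam\to 0$ in $\Omega_1\cup\Omega_4$ follows from the identical property of $\dbar R_k$ in the Lemma (coming from the factor $\lam^{m_k}$ with $m_k=1$ for $k=1,4$) together with the boundedness of $e^{\mp 2it\theta}$ near the origin; note $\indicator$ vanishes near $\lam=0$ since the poles are bounded away from $\R$, so no $\dbar\indicator$ term survives there. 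Uniformity of the implied constants over $\xi\in\R$ and $|t|>1$ is inherited verbatim from Lemma~\ref{lem:extensions}, the Gaussian factor only improving the estimates. The only genuinely non-routine step is the sign bookkeeping of the phase across the six sectors for both values of $\sgnt$; everything else is substitution, which is why the statement is an immediate corollary.
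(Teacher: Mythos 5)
Your proposal is correct and is exactly the argument the paper intends: the paper states the corollary as an immediate consequence of Lemma~\ref{lem:extensions}, the point being precisely that $\dbar\mathcal{R}^{(2)}=(\dbar R_k)e^{\mp 2it\theta}$ in each sector, that $\real(\mp 2it\theta)=-8|t|\,|u|\,|v|$ there (your sector-by-sector sign check, valid for both values of $\sgnt$), and that $|\lam-\xi|^{-1}\lesssim(1+|\lam-\xi|^2)^{-1/2}$ for $|\lam-\xi|>1$. Nothing is missing.
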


\revisedtext{
\begin{remark}
\sidenote{25}
The estimates of the $\dbar$-derivatives at the origin appearing above are used later in the proof of Proposition~\ref{prop:n3 at 0} which is needed to compute an asymptotic expansion of the inverse gauge transform $u(x,t) = \mathcal{G}^{-1}(q)(x,t)$.
\end{remark}
}
\begin{proof}[Proof of Lemma~\ref{lem:extensions}]
We give the construction for $R_1$. Define $f_1(\lam)$ on $\Omega_1$ by\sidenote{26}
\begin{gather*}
	f_1(\lam) =   {\xi} p_1(\xi) \delta_0(\xi, \sgnt)^{-2} 
	( \revisedtext{\eta \cdot (\lam-\xi)} )^{-2i \eta \kappa(\xi) \sig} \delta(\lam)^{2}
\shortintertext{and let}
	R_1(\lam) = \lb f_1(\lam) + \lp   {\lam} p_1(\Re \lam)  
	- f_1(\lam) \rp \sgnt \cos (2\phi) \rb 
	\delta(\lam)^{-2}(1- \indicator(\lam)), 
\end{gather*}
where $\phi = \arg(\lam - \xi)$. It is easy to see that $R_1$, as constructed, satisfies the boundary conditions in \eqref{R1} and that $\dbar R_1(\lam) = 0$ for $\dist(\lam, \Lambda) < \poledist/3$. Writing $\lam - \xi = r e^{i \phi}$ we have
\begin{gather*}
	\dbar = \frac{1}{2} \lp \pd{}{\Re \lam} + i \pd{}{\Im \lam} \rp 
	= \frac{e^{i\phi}}{2} \lp \pd{}{r} + \frac{i}{r} \pd{}{\phi} \rp,
\shortintertext{and}
\begin{align*}
	\dbar R_1(\lam) &= 
	{-}\lb 
			f_1(\lam) +   
				{\sgnt} \lp   {\lam} p_1(\Re \lam) - f_1(\lam) \rp 
				\cos (2\phi) 
		\rb \
		\delta(\lam)^{-2} \dbar\indicator(\lam) \\
	& + \eta \lb 
					\frac{  {\lam}}{2} p_1'(\Re \lam) \cos(2\phi) 
					- \frac{i e^{i\phi}}{|z-\xi|} (p_1(\Re \lam) 
						- f_1(\lam)) \sin(2\phi) 
			\rb  \\
	& \times \delta^{-2}(\lam) (1-\indicator(\lam)).
\end{align*}
\end{gather*}
\sidenote{27}
Clearly, \sidenote{35}\revisedtext{$\dbar R_1(\lam) = \bigO{\lam}$} as $\lam \to 0$; it follows from Lemma~\ref{lem:T}\revisedtext{, (i) and (v) that}
\[
\left| \dbar R_{  {1} } \right| \lesssim_\rho 
	\begin{cases}
		  { | \dbar \indicator(\lam) | } + |   {\lam} p_1'(\Re \lam) | 
		+ \log |\lam - \xi|^{-1}, 
		&  |z-\xi| \leq 1 \\
		  { | \dbar \indicator(\lam) | } + |   {\lam} p_1'(\Re \lam) | 
		+ |\lam - \xi|^{-1}, 
		&  |z-\xi| > 1 
	\end{cases} 
\]
where the implied constants depend on $\inf_\R (1-\eps \lam |\rho(\lam)|^2)$, $\oldnorm{\rho}_{H^{2,2}(\R)} $, and $\poles$. The constructions of $R_3,R_4$ and $R_6$ are similar.
\end{proof}

The new unknown $\nk{2}$ satisfies a mixed $\dbar$-RHP. We compute the new jumps on $\Sk{2}$ using the formula
\[
	\vk{2} = \nk[-]{1}^{-1}  \nk[+]{1}
	= \lp \mathcal{R}^{(2)}_-\rp^{-1} \vk{1} \mathcal{R}^{(2)}_+
\]
where the \sidenote{28}\revisedtext{  subscripts $+/-$} refer to the left/right side of the contour with respect to its orientation. Away from $\Sk{2}$, remembering that $\nk{1}$ is analytic in $\C \setminus (\R \cup \poles)$, we have
\[
	\dbar \nk{2} = \nk{1} \dbar \mathcal{R}^{(2)} = \nk{2} \lp \mathcal{R}^{(2)}\rp^{-1} \dbar \mathcal{R}^{(2)}
	= \nk{2} \dbar \mathcal{R}^{(2)}.
\]
where the last step follows from the nilpotency of $\dbar \mathcal{R}^{(2)}$.
 
%
%
\sidenote{29, 55}
\begin{DBARHP}\label{rhp.n2}
Find a row vector valued-function 
$$\nk{2}: \C \setminus (\Sk{2} \cup \poles) \to \C^2$$ with the following properties:
\begin{enumerate}
\item[(i)] $\nk{2}(\lam)$ has sectionally continuous first partial derivatives in $\C \setminus (\Sk{2} \cup \poles)$ and continuous boundary values $\nk[\pm]{2}(\lambda)$ on $\Sk{2}$. 
\item[(ii)] $\nk{2}(\lam) = (1,0) + \bigo{\lam^{-1}}$ as $ \lam \to \infty$.
\item[(iii)] For $\lam \in \Sk{2}$, the boundary values satisfy the jump relation 
	$$\nk[+]{2}(\lam) = \nk[-]{2}(\lam) \vk{2}(\lam),$$ where
\sidenote{30}
\begin{equation}\label{V2}
	\begin{gathered}
		\vk{2}(\lam) = I + (1-\indicator(\lambda)) h(\lam), \\
		\\
		h(\lam) = \begin{cases}
			\tril[0]{ -\eps \xi \bar{\rho}(\xi) \delta_0(\xi,\sgnt)^{-2} 
			(\revisedtext{\sgnt \cdot}(\lam-\xi))^{-2i \sgnt \kappa(\xi)} e^{2it \theta} } 
		 		& \revisedtext{\lam} \in  \Sigma_1 \bigskip \\
    		\triu[0]{ \frac{\rho(\xi) \delta_0(\xi,\sgnt)^{2} }{1- \eps \xi |\rho(\xi)|^2}  
    		(\revisedtext{\sgnt \cdot}(\revisedtext{\lam}-\xi))^{2i \sgnt \kappa(\xi)} e^{-2it \theta} } 
    			& \revisedtext{\lam} \in  \Sigma_2 \bigskip\\
    		\tril[0]{ \frac{-\eps \xi \bar{\rho}(\xi) \delta_0^{-2}(\xi,\sgnt) } {1- \eps \xi |\rho(\xi)|^2} 
    		(\revisedtext{\sgnt \cdot}(\revisedtext{\lam}-\xi))^{-2i \sgnt \kappa(\xi)} e^{2it \theta} } 
    			& \revisedtext{\lam} \in  \Sigma_3 \bigskip \\
    		\triu[0]{ \rho(\xi) \delta_0(\xi,\sgnt)^{2}  
    		(\revisedtext{\sgnt \cdot}(\revisedtext{\lam}-\xi))^{2i  \sgnt \kappa(\xi)} e^{-2it \theta} } 
				& \revisedtext{\lam} \in  \Sigma_4.
		\end{cases}
	\end{gathered}
\end{equation}

\item[(iv)] For $\lam \in \C \setminus \Sk{2}$ we have
\[
	\dbar \nk{2} (\lam) = \nk{2}(\lam) \dbar \mathcal{R}^{(2)}(\lam)
\]	
where
\begin{equation*}
	\dbar \mathcal{R}^{(2)}(\lam) = \begin{cases}
		\tril[0]{-\dbar R_1(\lam) e^{-2it\theta} } & \lam \in \Omega_1 \medskip \\
		\triu[0]{-\dbar R_3(\lam) e^{2it\theta} } & \lam \in \Omega_3 \medskip \\
		\tril[0]{\dbar R_4(\lam) e^{-2it\theta} } & \lam \in \Omega_4 \medskip \\
		\triu[0]{\dbar R_6(\lam) e^{2it\theta} } & \lam \in \Omega_6 \medskip \\
		\qquad \quad \mathbf{0} & \textrm{elsewhere}.
	\end{cases}
\end{equation*}	

\item[(v)] $\nk{2}(\lam)$ has simple poles at each point $p \in \poles$. 
	\begin{equation}\label{n2 residue}
		\res_{\lam = p} \nk{2}(\lam) 
		= \lim_{\lam \to p} \nk{2}(\lam) \vk{1}(p) 
	\end{equation}
	where $\vk{1}(p)$ is as given in \eqref{n1 residue matrices}.
\end{enumerate}
\end{DBARHP}
%
%


%
%

\section{ Decomposition into a RH model problem and a pure \texorpdfstring{$\dbar$}{DBAR}-problem}
\label{sec:models}
The next step in our analysis is to construct the solution $\Nrhp$ of a \textit{matrix-valued} Riemann-Hilbert problem such that the transformation
\begin{equation}\label{n3 def}
	\nk{3}(\lam) = \nk{2}(\lam) (\Nrhp(\lam))^{-1}
\end{equation}
results in a pure $\dbar$-problem, \ie, the new unknown $\nk{3}$ is continuous; it has no jumps or poles. We arrive at the problem for $\Nrhp$ by essentially ignoring the $\dbar$ component of \revisedtext{$\dbar$/Riemann-Hilbert Problem~\ref{rhp.n2}.}

%
%
\begin{RHP}\label{rhp.Nrhp}
Find a $2\times2$ matrix valued function 
$\Nrhp: \C \setminus (\Sk{2} \cup \poles) \to SL_2(\C)$ with the following properties:
\begin{enumerate}
\item[(i)] $\Nrhp$ satisfies the symmetry relations 
	$ \Nrhp[22](\lam) = \overline{\Nrhp[11](\lambar)}$
	and 
	$\Nrhp[21](\lam) = \eps \lam \overline{\Nrhp[12](\lambar)}$.
%
\item[(ii)] $\Nrhp(\lam) = \stril{\alpha} + \bigo{\lam^{-1}}$ as $ \lam \to \infty$, for 
a constant $\alpha$ determined by the symmetry condition above.
\item[(iii)] For $\lam \in \Sk{2}$, the boundary values satisfy the jump relation 
	$\Nrhp[+](\lam) = \Nrhp[-](\lam) \vk{2}(\lam)$, where $\vk{2}$ is given by \eqref{V2}.
\item[(iv)] $\Nrhp(\lam)$ has simple poles at each $p \in \poles$ with
	$$
	\res_{\lam = p} \Nrhp(\lam) 
	= \lim_{\lam \to p} \Nrhp(\lam) \vk{\mathsc{rhp}}(p).
	$$
	For each $\lam_k \in \poles_+$ 
	\begin{equation}\label{Nrhp residue}
	\begin{aligned}
	\vk{\mathsc{rhp}}(\lam_k) &= 
	  \tril[0]{\lam_k C_k \delta^{-2}(\lam_k) e^{-2it\theta} },
	\\[5pt]
	\vk{\mathsc{rhp}}(\lambar_k) &=
	  \triu[0]{\eps \overline{C_k} \delta^{2}(\lambar_k) e^{2it\theta} }.
	  \end{aligned}
	\end{equation}
\end{enumerate}
\end{RHP}

%
%

The next step will be solving the following $\dbar$-problem.

\begin{DBAR}
\label{dbar.n3}
Given $x,t \in \R$ and $\rho \in H^{2,2}(\R)$ with $\inf_{\R} (1 - \eps \lam|\rho(\lam)|^2) > 0$, find a continuous, row vector-valued function $\nk{3}(\lam)$ with the following properties: 
\sidenote{31}
\begin{enumerate}
	\item[(i)] $\nk{3}(\lam) \to \begin{pmatrix} 1 & 0 \end{pmatrix}$
		as $|\lam | \to \infty$.
	\item[(ii)] $\dbar \nk{3}(\lam) = \nk{3}(\lam) \Wk{3}(\lam)$, where
	\begin{equation}\label{W3def}
		\Wk{3}(\lam) = \Nrhp (\lam)  \dbar \mathcal{R}^{(2)}(\lam) 
		\revisedtext{\left( \Nrhp(\lam) \right)^{-1}}.
	\end{equation}	
\end{enumerate}
\end{DBAR}

\begin{lemma}
Suppose that $\nk{2}$ solves $\dbar$-RHP~\ref{rhp.n2}. Given a solution $\Nrhp$ of RHP~\ref{rhp.Nrhp},
the function $\nk{3}$ defined by \eqref{n3 def} satisfies $\dbar$-Problem~\ref{dbar.n3} 
\sidenote{32}
\revisedtext{above.}
\end{lemma}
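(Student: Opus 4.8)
The plan is to verify directly the three defining properties of $\dbar$-Problem~\ref{dbar.n3} for the function $\nk{3} = \nk{2}(\Nrhp)^{-1}$ defined in \eqref{n3 def}: that it is continuous (no jumps across $\Sk{2}$ and no poles at $\poles$), that it is normalized to $(1,0)$ at infinity, and that it satisfies the stated $\dbar$-equation. I first dispatch the jumps. Both $\nk{2}$ and $\Nrhp$ satisfy the identical jump relation $(\cdot)_+ = (\cdot)_-\vk{2}$ across $\Sk{2}$ (property (iii) of $\dbar$/Riemann--Hilbert Problem~\ref{rhp.n2} and of RHP~\ref{rhp.Nrhp}), and since $\det\Nrhp \equiv 1$ the boundary values $\Nrhp[\pm]$ are invertible. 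Hence
\[
	\nk[+]{3} = \nk[+]{2}(\Nrhp[+])^{-1} = \nk[-]{2}\,\vk{2}\,(\vk{2})^{-1}(\Nrhp[-])^{-1} = \nk[-]{3},
\]
so $\nk{3}$ extends continuously across $\Sk{2}$.

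The step I expect to require the most care is the cancellation of the poles, so that $\nk{3}$ is regular at each $p \in \poles$. The essential point is that the residue conditions \eqref{n2 residue} and \eqref{Nrhp residue} are governed by the \emph{same} nilpotent matrix $\vk{1}(p) = \vk{\mathsc{rhp}}(p)$, which is strictly lower triangular at $\lam_k$ and strictly upper triangular at $\lambar_k$, and in either case squares to zero. I plan to exploit this nilpotency through a local factorization: near $p$ one writes
\[
	\Nrhp(\lam) = \widehat{N}(\lam)\left(I + \frac{\vk{1}(p)}{\lam - p}\right), \qquad \nk{2}(\lam) = \widehat{n}(\lam)\left(I + \frac{\vk{1}(p)}{\lam - p}\right),
\]
with $\widehat{N}$ and $\widehat{n}$ analytic at $p$. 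A short computation using $\vk{1}(p)^2 = 0$ confirms that these factorizations reproduce exactly the residue relations $\Res_{\lam=p}\Nrhp = \lim_{\lam\to p}\Nrhp\,\vk{1}(p)$ and $\res_{\lam=p}\nk{2} = \lim_{\lam\to p}\nk{2}\,\vk{1}(p)$. Since $\det\Nrhp \equiv 1$ forces $\det\widehat{N}\equiv 1$, the factor $\widehat{N}$ is invertible near $p$, and
\[
	\nk{3} = \nk{2}(\Nrhp)^{-1} = \widehat{n}\left(I + \frac{\vk{1}(p)}{\lam - p}\right)\left(I + \frac{\vk{1}(p)}{\lam - p}\right)^{-1}(\widehat{N})^{-1} = \widehat{n}\,(\widehat{N})^{-1},
\]
which is analytic at $p$. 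The pole is therefore removable, and the identical argument applies at each $\lambar_k$.

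For the normalization I use $\nk{2} \to (1,0)$ and $\Nrhp \to \stril{\alpha}$ as $\lam \to \infty$ (property (ii) in each problem); since $(\stril{\alpha})^{-1} = \stril{-\alpha}$ and $(1,0)\stril{-\alpha} = (1,0)$, it follows that $\nk{3} \to (1,0)$. Finally, for the $\dbar$-equation I differentiate the product: because $\Nrhp$ is analytic off $\Sk{2}\cup\poles$, so is $(\Nrhp)^{-1}$, whence $\dbar(\Nrhp)^{-1} = 0$ there; combined with property (iv) of $\dbar$/Riemann--Hilbert Problem~\ref{rhp.n2}, namely $\dbar\nk{2} = \nk{2}\,\dbar\mathcal{R}^{(2)}$, and the identity $\nk{2} = \nk{3}\Nrhp$, the product rule gives
\[
	\dbar\nk{3} = (\dbar\nk{2})(\Nrhp)^{-1} = \nk{2}\,\dbar\mathcal{R}^{(2)}\,(\Nrhp)^{-1} = \nk{3}\,\Nrhp\,\dbar\mathcal{R}^{(2)}\,(\Nrhp)^{-1} = \nk{3}\,\Wk{3},
\]
with $\Wk{3}$ exactly as in \eqref{W3def}. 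This verifies all the required properties and completes the proof.
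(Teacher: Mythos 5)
Your proof is correct and follows essentially the same route as the paper's: the jump cancellation, normalization check, and $\dbar$-computation are identical, and the pole removal rests on the same three facts (identical nilpotent residue matrices $\vk{1}(p)$, $\vk{1}(p)^2=0$, and unimodularity of $\Nrhp$). The only cosmetic difference is that you cancel the singularity by factoring $I + \vk{1}(p)/(\lam-p)$ out of both $\nk{2}$ and $\Nrhp$, whereas the paper multiplies the two Laurent expansions directly; these are equivalent, though for full rigor you should derive the existence of your factorization \emph{from} the residue condition (the same two-line Laurent computation the paper performs), rather than only verifying that the factorization reproduces the residue condition.
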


\begin{proof}
Given solutions $\nk{2}$ and $\Nrhp$ of $\dbar$-RHP~\ref{rhp.n2} and RHP~\ref{rhp.Nrhp} respectively, the normalization condition for $\nk{3}$ is immediate. As $\Nrhp$ is holomorphic in $\C \backslash \Sk{2}$, the $\dbar$-derivative of $\nk{3}$ satisfies
\sidenote{31}
\begin{equation*}
	\dbar \nk{3} = \dbar \nk{2} \revisedtext{\left(\Nrhp\right)^{-1}} = \lb \nk{2} 
\bar{\partial}\mathcal{R}^{(2)} 
	\rb \revisedtext{\left(\Nrhp\right)^{-1}} 
	= \nk{3} \lb \Nrhp 
 \bar{\partial}\mathcal{R}^{(2)} 
	\revisedtext{\left(\Nrhp\right)^{-1}} \rb.
\end{equation*}
For $\lam \in \Sk{2}$ the computation 
\begin{equation*}
	\begin{aligned}[t]
	\nk[+]{3}(\lam) 
		&= \nk[-]{3}(\lam) \Nrhp[-](\lam) \vk{2}(\lam) 
		\revisedtext{\left(\Nrhp[+](\lam) \right)^{-1}}
		 \\
		&= \nk[-]{3}(\lam) \Nrhp[-](\lam) \vk{2}(\lam) \lb \vk{2}(\lam)^{-1}
		\revisedtext{\left(\Nrhp[-](\lam)\right)^{-1}}
		\rb 
		= \nk[-]{3}(\lam)
	\end{aligned}
\end{equation*}
shows that $\nk{3}$ has no jumps and is everywhere continuous. Another direct calculation shows that 
$\nk{3}$ has removable singularities at each pole in $\poles$: for instance if $p \in \poles$ and $\vk{1}(p)$ is the nilpotent residue matrix in \eqref{n1 residue matrices} then 
using \eqref{n2 residue} and \eqref{Nrhp residue} we have the Laurent expansions in $z=\lam-p$ 
\begin{align*}
	\nk{2}(\lam) = a(p) \lb \frac{ \vk{1}(p)}{z} + I \rb + \bigo{z} \qquad 
	\Nrhp(\lam) = A(p) \lb \frac{ \vk{1}(p)}{z} + I \rb + \bigo{z} 
\end{align*}
where $a(p)$ and $A(p)$ are the constant row vector and matrix in their respective expansions. As $\Nrhp \in SL_2(\C)$, $(\Nrhp)^{-1} = \sigma_2 (\Nrhp)^\intercal \sigma_2$, and it follows that 
\begin{align*}
	\nk{3}(\lam) 	&= \nk{2}(\lam) \Nrhp(\lam)^{-1} \\
						&= \left\{ a(p) \lb \frac{ \vk{1}(p)}{z} + I \rb + \bigo{z} \right\}  
								\left\{ \lb \frac{ -\vk{1}(p)}{z} + I \rb 
								\sigma_2 A(p)^\intercal \sigma_2 + \bigo{z}  \right\} \\
						&=\revisedtext{ \bigo{1},}
\end{align*}
\sidenote{33}
where the last equality follows from the fact that $\vk{1}(p)^2 = 0$.
\end{proof}

The remainder of this section is dedicated to proving the following proposition
\begin{proposition}
Given $\rho \in H^{2,2}(\R)$ with $c:= \inf_{\lam \in \R} (1 - \eps \lam | \rho(\lam)|^2) >0$ strictly, then there exists $T> 0$ such that for $|t| > T$, there exists a unique solution $\Nrhp(\lam)$ of RHP~\ref{rhp.Nrhp} satisfying 
\[
	\oldnorm{\Nrhp(\lam)}_{L^\infty(\C \backslash B_\poles)} \lesssim 1
\] 
where $B_\poles$ is any open neighborhood of $\poles$ and the implied constants are uniform in $x$ and $|t|>T$; they depend on $B_\poles$ and $\rho$.
\end{proposition}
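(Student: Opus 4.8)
The plan is to solve \revisedtext{RHP}~\ref{rhp.Nrhp} by the Deift--Zhou parametrix method: I would write $\Nrhp = \error \cdot \mathcal{P}$, where $\mathcal{P}$ is an explicit global parametrix assembled from an \emph{outer} (soliton) model valid away from the critical point and an \emph{inner} (local) model valid near $\xi$, and where the error $\error$ solves a small-norm Riemann--Hilbert problem. Existence, the uniform bound, and (separately) uniqueness will all follow once the small-norm problem is shown to be solvable for $|t|$ large.

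First I would construct the outer model. Off any fixed neighborhood $\Uxi$ of $\xi$, the jump $\vk{2}$ in \eqref{V2} differs from $I$ by a quantity exponentially small in $t$, by the sign structure of $e^{2it\theta}$ shown in Figure~\ref{fig:theta signs}. Discarding these jumps leaves a reflectionless problem carrying only the residue data \eqref{Nrhp residue}; this is \revisedtext{RHP}~\ref{outermodel}, whose explicit solution $\Nout$ is produced in Proposition~\ref{outer.soliton}. Beforehand, poles $p \in \poles$ whose residue coefficient in \eqref{Nrhp residue} grows as $|t| \to \infty$ must be dealt with: following \cite{BJM16} I would conjugate by triangular matrices supported on disjoint disks about such poles, trading each offending residue condition for a jump on the disk boundary that is then exponentially small. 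The symmetry relations in \revisedtext{RHP}~\ref{rhp.Nrhp}(i) place $\Nout$ in $SL_2(\C)$, and since every pole lies a distance $\geq \tfrac{1}{2} d_\Lambda$ off the real axis and the norming data is controlled, $\Nout$ is bounded on $\C \setminus B_\poles$ with constants uniform in $\xi$ and $x$ (uniformity rests on $\| \kappa \|_\infty < \infty$, which holds because $c>0$; see Lemma~\ref{lem:T}(ii)).

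Inside $\Uxi$ the jumps are not small, so I would build a local parametrix $\NPC$ from parabolic cylinder functions solving \revisedtext{RHP}~\ref{rhp.localmodel}; Proposition~\ref{prop:PCmodel.est} guarantees that $\Nout \NPC$ matches $\Nout$ on $\partial \Uxi$ up to $\bigO{t^{-1/2}}$. Setting $\mathcal{P} = \Nout \NPC$ on $\Uxi$ and $\mathcal{P} = \Nout$ outside, the error $\error = \Nrhp \mathcal{P}^{-1}$ solves the small-norm problem \revisedtext{RHP}~\ref{rhp.E}, whose jump differs from $I$ by a term of size $\bigO{t^{-1/2}}$ on $\partial \Uxi$ and exponentially small on $\Sk{2} \setminus \Uxi$ and on the disk boundaries, uniformly in $\xi$ and $x$. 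For $|t| > T$ the associated Beals--Coifman operator $I - C_w$ is invertible by Neumann series, yielding a unique $\error$ with $\oldnorm[L^\infty]{\error - I} \lesssim t^{-1/2}$ and hence a solution $\Nrhp = \error\, \mathcal{P}$ bounded on $\C \setminus B_\poles$ as claimed.

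Uniqueness I would obtain by a Liouville argument independent of the parametrix: if $\Nrhp[1]$ and $\Nrhp[2]$ both solve \revisedtext{RHP}~\ref{rhp.Nrhp}, then $\Nrhp[1] (\Nrhp[2])^{-1}$ has cancelling jumps across $\Sk{2}$, removable singularities at each $p \in \poles$ (the shared residue conditions force the principal parts to cancel, using $\vk{1}(p)^2 = 0$ exactly as in the computation of $\nk{3}$ above), and tends to $I$ at infinity; since $\det \Nrhp[2] = 1$ it is entire and bounded, hence $\equiv I$. The main obstacle I anticipate is the uniformity of all estimates in $\xi$ and $x$: one must verify that the outer model stays bounded and that the parabolic-cylinder parametrix matches to order $t^{-1/2}$ with constants independent of the location of the critical point, which ultimately rests on the uniform control of $\delta$ and $\kappa$ supplied by Lemma~\ref{lem:T} together with keeping the solitons a fixed distance from the contour.
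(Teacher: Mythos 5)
Your architecture is the same as the paper's: an outer soliton model $\Nout$, a parabolic-cylinder model $\NPC$ on $\Uxi$, and a small-norm Riemann--Hilbert problem for the error, with the uniform bound coming from Lemma~\ref{lem:outer.bound}, Lemma~\ref{lem:PCmodel bound}, and the small-norm estimate. The genuine gap is in how you normalize the error problem. RHP~\ref{rhp.Nrhp} is \emph{not} normalized to the identity at infinity: condition (ii) reads $\Nrhp(\lam) = \stril{\alpha} + \bigO{\lam^{-1}}$, where $\alpha$ is an unknown constant which the symmetry (i) forces to equal $\eps$ times the conjugate of the $1/\lam$-coefficient of the $(1,2)$ entry; it is generically nonzero (it carries $\overline{q(x,t)}$). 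Consequently the error $\error = \Nrhp\,\mathcal{P}^{-1}$ tends to $\stril{\overline{q}_\error}$ with $\overline{q}_\error$ unknown, not to $I$, so the $I$-normalized small-norm problem you propose to solve by Neumann series is not the problem the error actually satisfies. If you solve the $I$-normalized problem anyway and set $\Nrhp := \error\,\mathcal{P}$, the resulting matrix satisfies (ii)--(iv) of RHP~\ref{rhp.Nrhp} but fails the symmetry (i): a Liouville comparison with the true solution $N^\sharp$ shows your matrix equals $\stril{-\overline{q}_\error}\,N^\sharp$, and $\overline{q}_\error$ is $\bigO{t^{-1/2}}$ but generically nonzero (at leading order it is proportional to the conjugate of the radiation term $t^{-1/2}f(x,t)$ of Theorem~\ref{thm:long-time}). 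This is precisely why the paper formulates RHP~\ref{rhp.E} with the nonstandard normalization $\error \to \stril{\overline{q}_\error}$, $\overline{q}_\error$ being part of the unknown, and why Lemma~\ref{lem:Err} constructs the solution row by row: the first row is canonically normalized to $(1\ 0)$ and handled by standard Cauchy-operator theory, then $\overline{q}_\error$ is \emph{computed} from the first row via the symmetry, and only then is the second row built with normalization $(\overline{q}_\error\ 1)$.

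The same oversight affects your uniqueness argument: for two solutions $N_1,N_2$ of RHP~\ref{rhp.Nrhp}, the ratio $N_1 N_2^{-1}$ tends to $\stril{\alpha_1}\stril{\alpha_2}^{-1} = \stril{\alpha_1-\alpha_2}$, not to $I$, so Liouville only yields $N_1 = \stril{c}\,N_2$ with $c = \alpha_1 - \alpha_2$. To finish, you must invoke the symmetry: the $(1,2)$ entries of $N_1$ and $N_2$ coincide, hence by (i) so do the $(2,1)$ entries, forcing $c\,(N_2)_{11} \equiv 0$ and thus $c=0$ since $(N_2)_{11} \to 1$. The repair is one line, but as written the argument is incomplete. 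The remaining ingredients of your plan are sound and match the paper: discarding the jumps off $\Uxi$ costs only exponentially small errors, your disk-trading of growing residues is the technique the paper uses in Appendix~\ref{app:solitons} (boundedness of $\Nout$ is obtained there via the Blaschke renormalization of Lemma~\ref{lem:sol.bound}), and the uniformity in $\xi$ and $x$ that you flag as the main obstacle is exactly what Lemma~\ref{lem:T} and Lemma~\ref{lem:sol.bound} supply.
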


To prove the existence of $\Nrhp$, we will first construct two explicit models: one which exactly solves the pure soliton problem obtained by ignoring the jump conditions, and a second which uses parabolic cylinder functions to build a matrix whose jumps exactly match those of $\nk{2}$ in a neighborhood of the critical point $\xi$. Using our models we prove that $\Nrhp$ exists and extract its behavior for large $t$.

\subsection{The outer model: the soliton component}
\label{sec:outer model}

The matrix $\Nrhp$ is meromorphic away from the contour $\Sk{2}$ on which its boundary values satisfy the jump relation $\Nrhp[+](\lam) = \Nrhp[-](\lam) \vk{2}(\lam)$. It is clear from \eqref{V2} that 
\begin{equation}\label{V2 bound outside} 
	\left| \vk{2}(\lam) - I \right| \lesssim 
	e^{-2\sqrt{2} t |\lam-\xi|^2},
\end{equation}
where the implied constant depends upon $\poledist$ and $\inf_{\lam \in \R}(1-\eps \lam |\rho(\lam)|^2)$. It follows that outside a fixed neighborhood of $\xi$ we introduce only exponentially small error (in  $t$) by completely ignoring the jump condition on $\Nrhp$. This results in the following outer model problem

\begin{RHP}
\sidenote{34}
\label{outermodel} 
\revisedtext{For any fixed $x,t \in \R$,} 
\revisedtext{find an analytic function $\Nout: (\C\setminus\Lambda) \to SL_2(\C)$}
such that
\begin{enumerate}
	\item[(i)] $\Nout$ satisfies the symmetry relations
	$\Nout[22](\lam) = \overline{\Nout[11](\lambar)}$
	and 
	$\Nout[21](\lam) = \eps \lam \overline{\Nout[12](\lambar)}$.
	\item[(ii)] $\Nout(\lam) = \stril{\alpha} + \bigo{\lam^{-1}}$ as $\lam \to \infty$, where $\alpha$ is determined via the symmetry condition.
	\item[(iii)] $\Nout$ has a simple pole at each point in $\poles$ satisfying the residue relations in \eqref{Nrhp residue} with $\Nout$ replacing $\Nrhp$.
	
\end{enumerate}
\end{RHP}

The essential fact we need concerning $\Nout$ is as follows.

\begin{proposition}
\label{outer.soliton}
The unique solution $\Nout$ of 
\revisedtext{RHP~\ref{outermodel} is given by}
$$
	\Nout(\lam) =  
	\ {\mathcal N}^{\mathrm sol}(\lam \, \vert\, \calD_\xi), 
$$
where 
{${\mathcal N}^{\mathrm sol} $}
 is the solution of RHP~\ref{RHP2} corresponding to the reflectionless scattering data $\mathcal{D}_\xi = \{ (\lam_k, 
{    \widetilde{C_k}}) \}_{k=1}^N$ generated by the $N$-soliton solution $\qsol(x,t;\mathcal{D}_\xi) $ of \eqref{DNLS2}. Here, $\{\lam_k\}_{k=1}^N$ are the points generated by our original initial data \eqref{data} and the modified connection coefficients are given by 
\[
 {  \widetilde{C_k}	} = C_k \exp\lp \frac{i}{\pi} \int_{\negint}
	\log(1 - \eps z |\rho(z)|^2) \frac{dz}{z - \lam_k} \rp.
\]
As $\lam \to \infty$, $\Nout$ admits the expansion
\begin{equation*}
	\Nout(\lam) = \tril{ -\frac{\eps \overline{\qsol}(x,t;\mathcal{D}_\xi)}{2i} } + \frac{ \Nout[1] }{\lam} + \bigo{\frac{1}{\lam^2}},
\end{equation*}
where
$$
	2i (\Nout[1] )_{12} = \qsol(x,t;\mathcal{D}_\xi).
$$
\end{proposition}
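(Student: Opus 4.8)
The plan is to recognize that RHP~\ref{outermodel} is nothing but a \emph{reflectionless} instance of RHP~\ref{RHP2}, and to pin down the correct norming constants by matching residues. Having discarded the jump on $\Sk{2}$, the matrix $\Nout$ is meromorphic on all of $\C$ with its only singularities the prescribed simple poles at the points of $\poles$; this is exactly the structure produced by the inverse-scattering construction from reflectionless data. Existence and uniqueness of such a solution, together with the associated $N$-soliton potential $\qsol(x,t;\calD_\xi)$ and RHP solution ${\mathcal N}^{\mathrm sol}(\lam\,\vert\,\calD_\xi)$, is the content of Appendix~\ref{app:solitons}. I will therefore determine the data $\calD_\xi$ for which ${\mathcal N}^{\mathrm sol}(\lam\,\vert\,\calD_\xi)$ satisfies RHP~\ref{outermodel} and then invoke that construction.

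The essential computation is the comparison of residue matrices. For reflectionless data $\calD_\xi = \{(\lam_k,\widetilde{C_k})\}$ the residue conditions of RHP~\ref{RHP2} carry the factor $e^{it\theta\ad\sigma_3}$; using $e^{it\theta\sigma_3}\lowmat{a}e^{-it\theta\sigma_3} = \lowmat{a\,e^{-2it\theta}}$ and the companion identity for $\upmat{a}$, they read
\begin{equation*}
	\Res_{\lam=\lam_k}{\mathcal N}^{\mathrm sol} = \lim_{\lam\to\lam_k}{\mathcal N}^{\mathrm sol}\lowmat{\lam_k\widetilde{C_k}e^{-2it\theta}}, \qquad \Res_{\lam=\lambar_k}{\mathcal N}^{\mathrm sol} = \lim_{\lam\to\lambar_k}{\mathcal N}^{\mathrm sol}\upmat{\eps\,\overline{\widetilde{C_k}}\,e^{2it\theta}}.
\end{equation*}
Matching these against \eqref{Nrhp residue} (with $\Nout$ replacing $\Nrhp$) forces $\widetilde{C_k} = C_k\,\delta(\lam_k)^{-2}$ from the pole at $\lam_k$ and $\widetilde{C_k} = C_k\,\overline{\delta(\lambar_k)}^{\,2}$ from the pole at $\lambar_k$. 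These two demands agree precisely because of the Schwarz symmetry $\delta(\lam)\overline{\delta(\lambar)} = 1$ of Lemma~\ref{lem:T}(ii), which gives $\overline{\delta(\lambar_k)}^{\,2} = \delta(\lam_k)^{-2}$. Inserting the definition \eqref{T} of $\delta$ and of $\kappa$, and using that $\delta$ is analytic at $\lam_k \in \C^+\setminus\negint$, turns $\delta(\lam_k)^{-2}$ into the stated closed form $\widetilde{C_k} = C_k\exp\lp\tfrac{i}{\pi}\int_{\negint}\log(1-\eps z|\rho(z)|^2)\tfrac{dz}{z-\lam_k}\rp$.

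To upgrade the statement that ${\mathcal N}^{\mathrm sol}(\cdot\,\vert\,\calD_\xi)$ solves RHP~\ref{outermodel} to the assertion that $\Nout$ equals it, I would run the standard Liouville uniqueness argument. If $N$ and $\tilde N$ both solve RHP~\ref{outermodel}, then near each $\lam_k$ each factors as an analytic, invertible matrix times $(I + v/(\lam-\lam_k))$ with the \emph{same} nilpotent residue matrix $v$; since $v^2 = 0$, the singular factors cancel in $N\tilde N^{-1}$ (well defined because $\det = 1$), so $N\tilde N^{-1}$ extends to an entire function. It is bounded and tends to a lower-triangular constant $\stril{\beta}$ at infinity, hence is constant; the off-diagonal symmetry relation $N_{21}(\lam)=\eps\lam\overline{N_{12}(\lambar)}$ (shared by both solutions) then forces $\beta=0$, so $N=\tilde N$.

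Finally, the large-$\lam$ expansion is read directly off the soliton RHP. The reconstruction formula \eqref{q.lam} defines $\qsol(x,t;\calD_\xi)$ as $\lim_{\lam\to\infty}2i\lam\,{\mathcal N}^{\mathrm sol}_{12}$, which is exactly the claim $2i(\Nout[1])_{12} = \qsol(x,t;\calD_\xi)$. The constant term is lower triangular, $\stril{\alpha}$, and letting $\lam\to\infty$ in the symmetry relation $\Nout[21](\lam)=\eps\lam\overline{\Nout[12](\lambar)}$ identifies $\alpha = \eps\,\overline{(\Nout[1])_{12}} = -\eps\overline{\qsol}/(2i)$, the claimed leading coefficient. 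The only substantive input is the existence and uniqueness of the reflectionless RHP, deferred to Appendix~\ref{app:solitons}; the one delicate point internal to this proof is the phase-and-conjugation bookkeeping that reconciles the two residue normalizations, which is exactly resolved by the symmetry of $\delta$.
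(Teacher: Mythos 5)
Your proposal is correct and follows essentially the same route as the paper: both identify RHP~\ref{outermodel} with the reflectionless soliton problem (Problem~\ref{outmodel2a} with $\Delta=\emptyset$, i.e.\ RHP~\ref{RHP2} with $\rho\equiv 0$) by matching residue conditions, which forces $\widetilde{C_k}=C_k\,\delta(\lam_k)^{-2}$ and yields the stated formula upon inserting \eqref{T}, and both read the expansion coefficients off \eqref{q.lam} and the symmetry condition. The only minor difference is that you prove uniqueness by a self-contained Liouville argument (using nilpotency of the residue matrices and the symmetry to kill the constant $\beta$), whereas the paper cites Lemma~\ref{lem:sol.bound} of Appendix~\ref{app:solitons}; your consistency check of the two residue normalizations via $\delta(\lam)\overline{\delta(\lambar)}=1$ is exactly the verification the paper leaves implicit.
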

\begin{proof}
Using formula \eqref{T} for $\delta(\lam)$, 
\sidenote{34}
\revisedtext{RHP}~\ref{outermodel}
 is identical to Problem~\ref{outmodel2a} with $\calD = \calD_\xi$ and $\Delta = \emptyset$. Uniqueness follows from Lemma~\ref{lem:sol.bound}
 \sidenote{35} 
 \revisedtext{in Appendix B.}
The expansion for large $\lam$ follows from the fact that $\Nout$ is meromorphic, and the given off-diagonal coefficients of the leading and first moment terms follow from \eqref{q.lam} and the symmetry condition in 
\sidenote{34}
\revisedtext{RHP~\ref{outermodel}.}
\end{proof}

\sidenote{35}
Lemma \ref{lem:sol.bound} of Appendix~\ref{app:solitons} provides the following useful facts. 

\begin{lemma}\label{lem:outer.bound}
Given $\rho \in H^{2,2}(\R)$ and $\{ (\lam_k, C_k) \}_{k=1}^N \subset \C^+ \times \C^\times$ for RHP~\ref{RHP2}, the solution $\Nout$ of 
\sidenote{34}
\revisedtext{RHP~\ref{outermodel}}
satisfies
\[
	\oldnorm{(\Nout)^{\pm 1}}_{L^\infty(\C \setminus \mathcal{B}_\poles)} = \bigo{1}
\]
where $\mathcal{B}_\poles$ is any open neighborhood of the points in $\Lambda$, and the implied constant is independent of $(x,t) \in \R^2$ and depends on $\rho$ through its $H^{2,2}(\R)$ norm. 
\end{lemma}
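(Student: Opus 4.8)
The plan is to leverage Proposition~\ref{outer.soliton}, which identifies $\Nout$ with the solution ${\mathcal N}^{\mathrm sol}(\lam\,\vert\,\mathcal{D}_\xi)$ of the \emph{reflectionless} version of RHP~\ref{RHP2}: there is then no jump contour, only the $2N$ residue conditions \eqref{Nrhp residue}, so $\Nout$ is a rational, $SL_2(\C)$-valued function whose only singularities are simple poles at the points of $\poles$. A cheap first observation disposes of the inverse: since $\det\Nout\equiv 1$, one has $(\Nout)^{-1}=\sigma_2(\Nout)^{\intercal}\sigma_2$, whose entries are those of $\Nout$ up to sign, so $\oldnorm[L^\infty(\C\setminus\mathcal{B}_\poles)]{(\Nout)^{-1}}=\oldnorm[L^\infty(\C\setminus\mathcal{B}_\poles)]{\Nout}$ and it suffices to bound $\Nout$ itself.

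Next I would reduce the matrix problem to a rational row-vector problem via the symmetry $\Nout[22](\lam)=\overline{\Nout[11](\lambar)}$, $\Nout[21](\lam)=\eps\lam\overline{\Nout[12](\lambar)}$, so that the full matrix is reconstructed from its first row $n=(n_1,n_2)=(\Nout[11],\Nout[12])$ (this is the equivalence with RHP~\ref{RHP2.row}). Because the residue matrices in \eqref{Nrhp residue} are nilpotent and lower- (resp.\ upper-) triangular at $\lam_k$ (resp.\ $\lambar_k$), the function $n_1$ has simple poles only at the $\lam_k$ and $n_2$ only at the $\lambar_k$. Substituting the partial-fraction ansatz $n_1(\lam)=1+\sum_{k=1}^N A_k/(\lam-\lam_k)$, $n_2(\lam)=\sum_{k=1}^N B_k/(\lam-\lambar_k)$ into the residue conditions collapses the problem to a $2N\times 2N$ linear system $M(x,t)\,\vec v=\vec b$, where $M$ depends on $(x,t)$ only through the effective norming constants $\hat c_k(x,t)=\lam_k C_k\,\delta(\lam_k)^{-2}e^{-2it\theta}$ and their conjugate partners. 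Away from $\mathcal{B}_\poles$ the size of $n$ is then governed by $\|M(x,t)^{-1}\|$ together with the fixed geometry of $\poles$ (points separated by $\poledist$ and bounded away from $\mathcal{B}_\poles$), so the whole estimate reduces to a uniform lower bound for the smallest singular value of $M(x,t)$.

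To obtain solvability and invertibility at each fixed $(x,t)$ I would prove a vanishing lemma: the homogeneous problem, with the normalization $(1,0)$ replaced by $(0,0)$, admits only $n\equiv 0$. Here the symmetry relations and the sign structure of $\eps\lam$ (with $\lam_k\in\C^+$) are used to build a scalar function from $n(\lam)$ and its reflected conjugate $\overline{n(\lambar)}$ which is analytic off $\poles$, decays like $\lam^{-2}$ at infinity, and whose residues at the $\lam_k,\lambar_k$ assemble, via \eqref{Nrhp residue}, into a sum of terms of one fixed sign; integrating around a large circle forces each term, hence $n$, to vanish. This identifies $M(x,t)$ with an invertible Cauchy/Gram-type matrix for every $(x,t)$, with $|\delta(\lam_k)^{\pm 2}|$ confined to fixed bounds by Lemma~\ref{lem:T}(ii).

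The main obstacle is upgrading this pointwise invertibility to a bound that is \emph{uniform} in $(x,t)\in\R^2$. The effective norming constants carry the factors $e^{-2it\theta}$, whose moduli $e^{2t\,\imag\theta(\lam_k)}$ grow or decay exponentially, so $M(x,t)$ is not confined to a compact family and $\det M$ could a priori fail to be bounded below. The resolution is a renormalization: a soliton whose $\hat c_k$ is exponentially large (resp.\ small) is merely a spatially translated one-soliton, which is still bounded, so I would factor the dominant exponential scales out of the corresponding rows and columns of $M$, reducing to a rescaled system whose data lie in a compact set on which the vanishing lemma again produces a uniform lower bound for the smallest singular value. Combined with the boundedness of the $\lam_k,C_k$ and of the $\delta$-factors, this gives $\|M(x,t)^{-1}\|\lesssim 1$ and hence the claimed estimate, with constants depending on $\rho$ only through $\|\rho\|_{H^{2,2}(\R)}$. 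Tracking which solitons are ``centered'' in the relevant $(x,t)$-window and peeling off the remainder is exactly the technical heart of the argument, and is the content imported from Lemma~\ref{lem:sol.bound} of Appendix~\ref{app:solitons}.
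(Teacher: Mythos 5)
Your proposal ends exactly where the paper's proof begins: after the identification $\Nout = \mathcal{N}^{\mathrm{sol}}(\,\cdot\,\vert\,\calD_\xi)$ from Proposition~\ref{outer.soliton} (so that the $\rho$-dependence enters only through the modified constants $\widetilde{C_k}$, controlled via Lemma~\ref{lem:T}(ii)) and the observation that $\det \Nout \equiv 1$ handles the inverse, the paper simply cites Lemma~\ref{lem:sol.bound} for the uniform-in-$(x,t)$ bound, and you do the same in your closing sentence. The difference is in the middle, where you sketch an alternative proof of that underlying boundedness: a partial-fraction reduction to a $2N\times 2N$ linear system, a vanishing lemma for invertibility, and a row/column rescaling to restore compactness of the parameter family. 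The paper's Appendix~\ref{app:solitons} never writes this linear system; instead it renormalizes at the level of the RHP itself, conjugating by the Blaschke factors $\mathfrak{B}^{\Delta}$ with $\Delta = \Delta_j^\eta$ chosen according to which region $S_j^\eta$ contains $(x,t)$, so that the residue coefficients $\gamma_k^{\Delta_j^\eta}$ are uniformly bounded by \eqref{gamma.bounds}, and then invokes continuity of the solution in these parameters over compact sets, with existence and uniqueness cited from Theorem 4.3 of \cite{JLPS17b} rather than reproved. Your route is more explicit and self-contained in principle; the paper's buys uniformity with less computation by exploiting that swapping the triangularity of the residue matrices converts an exponentially large norming constant into an exponentially small one.

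Two soft spots in your sketch would need repair if you intended it as a genuine replacement for Lemma~\ref{lem:sol.bound}. First, the vanishing lemma for the Kaup--Newell symmetry is not a sign-definiteness argument: already for $N=1$ the determinant of your system $M$ is $1 - \eps\lam_1|c|^2/|\lam_1-\lambar_1|^2$, which is nonzero because $\eps\lam_1|c|^2$ is non-real (as $\lam_1 \in \C^+$), not because the residue contributions carry one fixed sign — contrast focusing NLS, where the analogous quantity is $1+|c|^2/|\lam_1-\lambar_1|^2>0$. So the step "residues assemble into a sum of terms of one fixed sign" does not survive the $\eps\lam$-twist as stated. Second, a uniform lower bound on the smallest singular value of the \emph{rescaled} matrix does not by itself bound the original coefficients $A_k, B_k$: undoing the row/column scaling multiplies components of the solution vector back by the large exponential factors, so one must additionally extract decay of those components from the structure of the equations (in the $N=1$ example, $\widetilde{B} = \bigo{|c|^{-2}}$ follows from the first equation, not from invertibility alone). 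This is precisely what the paper's $\Delta$-normalization accomplishes automatically, since after the triangularity swap no large parameters remain to be reinstated. Because you ultimately import Lemma~\ref{lem:sol.bound} for this technical heart, your proof of the lemma as stated is sound; only the sketched reproof of that appendix lemma is incomplete.
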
  

\sidenote{36}
\revisedtext{The dependency of $\Nout$ on $\rho$ appears through the  modified connection coefficients defined in Proposition \ref{outer.soliton} above.}

\subsection{Local model at the {  stationary phase } point}
\label{sec:local model}
In any neighborhood of the critical point $\lam = \xi$ the bound \eqref{V2 bound outside} does not give a uniformly small estimate of 
\sidenote{37}\revisedtext{the difference $\vk{2}-I$}
for large times. It follows that our outer model, which replaced $\vk{2}$ with identity, is not a good approximation of $\Nrhp$ in a neighborhood of $\xi$. We require a new model, $\NPC$, which is an accurate approximation inside a small--but fixed with respect to $|t|$--neighborhood of $\lam = \xi$. Let 
\begin{equation}\label{xi disk}
	\Uxi = \{ \lam \in \C \,:\, |\lam - \xi| \leq \poledist/3 \} ,
\end{equation}
where the radius $\poledist/3$ is chosen such that $(1-\indicator(\lam))\equiv 1$ for $\lam \in \Uxi$; this has the effect of making the jump matrix $\vk{2}$, \eqref{V2}, constant along $\Sigma_k \cap \Uxi$, $k=1,\dots,4$.

Define the time-scaled local coordinate
\begin{gather}\label{zeta}
	\zeta(\lam) = |8t|^{1/2} (\lam - \xi).
\end{gather}
Under this change of variables we have the identifications
\sidenote{38}
\[
	e^{2it \theta}  = {  e^{- i\eta \zeta^2/2}} e^{4it\xi^2},
	\qquad
	(\revisedtext{\eta \cdot} (\lam - \xi))^{2i\eta \kappa(\xi)} 
	= (\eta \zeta)^{2i \eta \kappa(\xi)} e^{-i \eta \kappa(\xi) \log |8 t|}.
\]
Also set,
\begin{equation}
\begin{gathered}\label{PCconst}
	r_\xi := \rho(\xi) \delta_0(\xi,\sgnt)^2 e^{-i \sgnt \kappa(\xi) \log|8t|} e^{4it\xi^2}, 
	\qquad
	s_\xi := - \eps \xi \overline{r_\xi} \\
 	1+r_\xi s_\xi = 1 - \eps \xi | \rho(\xi)|^2
\end{gathered}
\end{equation}

\begin{figure}[htbp]
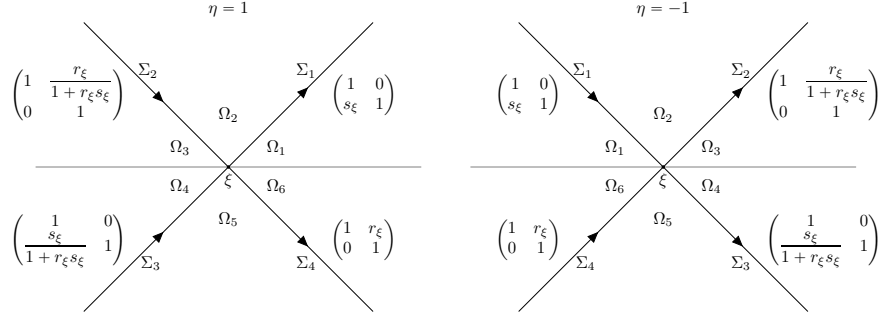

	\centering
	\hspace*{\stretch{1}}
	\FigPCjumps 
	\hspace*{\stretch{1}}
	\FigPCjumpsB
	\hspace*{\stretch{1}}
	\caption{
	The system of contours for the local model problem near $\lam = \xi$. 
	The model jumps are $\vk{\textsc{PC}} = 
	(\sgnt \zeta)^{i \sgnt \kappa(\xi)\sig} e^{-i \sgnt \zeta^2 \sig/4} V 
	(\sgnt \zeta)^{-i \sgnt \kappa(\xi)\sig} e^{i \sgnt \zeta^2 \sig/4}$ 
	where $V$ is given above in terms of the local variable $\zeta$ defined by \eqref{zeta}.
	\label{fig:PCjumps}
	}
\end{figure}

Using the notation just introduced, and extending the constant jumps of $\vk{2}\Big|_{\lam \in \Uxi}$ to infinity along each of the four rays $\Sigma_k,\ k=1,\dots,4$, (see Figure~\ref{fig:PCjumps}) our local model $\NPC$ satisfies  

\begin{RHP}\label{rhp.localmodel}
	Find a $2\times2$ matrix-valued function $\NPC(\lam) = \NPC(\lam;\xi,\sgnt)$, analytic in $\C \setminus \Sk{2}$ with the following properties:
	\begin{enumerate}
		\sidenote{39}
		\item[(i)] $\NPC(\lam;\xi,\sgnt) = 
		\revisedtext{I} 
		+ \bigo{\lam^{-1}}$ as $|\lam| \to \infty$.
		\item[(ii)] $\NPC(\lam;\xi,\sgnt)$ has continuous boundary values 
		$\NPC[\pm](\lam; \xi,\sgnt)$ on $\Sk{2}$ which satisfy the jump relation 
		$\NPC[+] = \NPC[-] \vk{\textsc{pc}}$, where
		\sidenote{40}
		\begin{equation} \label{NPC jump}
			\vk{\textsc{pc}}(\lam) = 
			\begin{cases}
				\tril{ s_\xi(\eta \zeta)^{-2i \eta \kappa(\xi)} e^{i \eta \zeta^2/2} } 
					& \revisedtext{\lam} \in \Sigma_1, \medskip \\
				\triu{\frac{r_\xi}{1+ r_\xi s_\xi} 
				(\eta \zeta)^{2i \eta \kappa(\xi)} e^{-i \eta \zeta^2/2} } 
					& \revisedtext{\lam} \in \Sigma_2, \medskip \\
				\tril{\frac{s_\xi}{1+ r_\xi s_\xi} 
				(\eta \zeta)^{-2i \eta \kappa(\xi)} e^{i \eta \zeta^2/2} } 
					& \revisedtext{\lam} \in \Sigma_3, \medskip \\
				\triu{  r_\xi (\eta \zeta)^{2i \eta \kappa(\xi)}  e^{-i \eta \zeta^2/2} } 
					& \revisedtext{\lam} \in \Sigma_4. 
			\end{cases} 
		\end{equation}
	\end{enumerate}
\end{RHP}
\begin{remark}
RHP~\ref{rhp.localmodel} does not possess the symmetry condition shared by RHP~\ref{rhp.Nrhp} and 
\sidenote{34}
\revisedtext{RHP~\ref{outermodel}.}
This is because it is a local model and will only be used for bounded values of $\lam$. The normalization is chosen such that the residual error $\error$ defined by \eqref{error def} \sidenote{41}\revisedtext{below} has a near identity jump on the shared boundary between the local and outer models. 
\end{remark}

\revisedtext{
\begin{remark}
\sidenote{42}
The jump matrices for RHP \ref{rhp.localmodel} agree identically with those of RHP \ref{rhp.Nrhp}, and can be solved by special functions, owing to the special choice of $\dbar$-extension of the matrix factors in \eqref{R_k}. By making this choice of extension, one insures that the jumps may be solved exactly `locally,' i.e., in a neighborhood of the critial point, and produces remainders which can be effectively estimated. Even in the original application of the $\dbar$-method to soliton-free NLS, this method produces an improved remainder estimate for large-time asymptotics (see \cite{DM08}). 
\end{remark}
}

This type of model problem is typical in integrable systems whenever there is a phase function, here $\theta$, which has a quadratic critical point along the real line. 
The solution in each of these cases is found by a further reduction of RHP~\ref{rhp.localmodel} to a problem with constant jumps (at the price of nontrivial behavior at infinity) whose solution {  satisfies a} 
differential equation,
 which can be solved using parabolic cylinder functions, $D_a(z)$, whose properties are tabulated in \cite[Chapter 12]{DLMF}. The precise details of the construction for DNLS, which differ only slightly from the construction for KdV or NLS 
{(see Deift-Zhou \cite{DZ93} and Its \cite{Its81})} can be found in \cite{LPS16}; here we give only the necessary details.

\begin{proposition}\label{prop:PCmodel.est}
Fix $\xi$ and let $\kappa = \kappa(\xi)$ be as given in \eqref{T}. 
Then for any choice of constants $r_\xi, s_\xi$ in \eqref{PCconst} such that $1 + r_\xi s_\xi = e^{-2\pi \kappa} \neq 0$, the solution $\NPC(\lam;\xi,\sgnt)$ of RHP~\ref{rhp.localmodel} is given by  
\begin{gather}
\label{local model soln}
	\begin{aligned}
		\NPC(\lam;\xi, +) &= F(\zeta(\lam); s_\xi, r_\xi)  \\
		\NPC(\lam;\xi, -) &= \sigma_2 F(-\zeta(\lam); r_\xi, s_\xi) \sigma_2 
	\end{aligned}
\end{gather}
where
\sidenote{43}
\begin{gather}
\nonumber	
	F(\zeta; s,r) := \Phi_{s,r}(\zeta) \mathcal{P}_{s,r}(\zeta) 
	\zeta^{-i \kappa \sig} e^{i\zeta^2\sig/4} \\[5pt]
\nonumber
	\mathcal{P}_{s,r}(\zeta) = 
	\left\{ 
		\begin{array}{c@{\quad}l@{\hspace{2em} }c@{\quad}l}
			\tril{s_\xi} & \arg \zeta \in \lp 0, \frac{\pi}{4} \rp, &
			\triu{\frac{r_\xi}{1+r_\xi s_\xi}} & \arg \zeta \in \lp \frac{3\pi}{4},\pi \rp, 
		\medskip \\
			\triu{-r_\xi} & \arg \zeta \in \lp -\frac{\pi}{4}, 0 \rp,  &
			\tril{\frac{-s_\xi}{1+r_\xi s_\xi}} & \arg \zeta \in \lp -\pi, -\frac{3\pi}{4} \rp, \\ \\
		\multicolumn{4}{c}{
		\revisedtext{I}
		\quad |\arg \zeta| \in \lp \frac{\pi}{4},\frac{3\pi}{4} \rp.} 
		\end{array}
	\right.
\end{gather}
and, writing $\kappa$ for $\kappa(\xi)$,
\begin{gather*}
 	\Phi_{s,r} (\zeta)= 
	\begin{pmatrix}
 		{e^{-\frac{3\pi}{4}\kappa} D_{i\kappa}(\zeta e^{-3i\pi/4})} &
		-i \beta_{12}
			e^{\frac{\pi}{4}(\kappa-i)} 
		  D_{-i\kappa-1}(\zeta e^{-\pi i/4}) \\[5pt]
		i \beta_{21}
			e^{-\frac{3\pi}{4}(\kappa+i)}
		  D_{i\kappa -1}(\zeta e^{-3i\pi/4}) &
		e^{\pi\kappa/4}D_{-i\kappa}(\zeta e^{-i\pi/4})
	\end{pmatrix}
\shortintertext{for $\Im(\zeta)>0$, and for $\Im(\zeta)<0$}
 	\Phi_{s,r} (\zeta)= 
	\begin{pmatrix}
		e^{\pi\kappa/4}D_{i\kappa}(\zeta e^{\pi i/4}) &
		-i \beta_{12}
			e^{-\frac{3\pi}{4}(\kappa-i)}
				D_{-i\kappa-1}(\zeta e^{3i\pi/4}) \\[5pt]
		i \beta_{21}
			e^{\frac{\pi}{4}(\kappa+i)}
				D_{i\kappa-1}(\zeta e^{\pi i/4}) &
		e^{-3\pi \kappa/4}D_{-i\kappa}(\zeta e^{3i\pi/4})
	\end{pmatrix}.
\end{gather*}
Here
\begin{gather}
\label{betas}
\beta_{12} :=	\beta_{12}(s,r) = \frac{ \sqrt{2\pi}e^{-\pi \kappa/2} e^{i \pi/4} }{s \Gamma(-i\kappa)}
	\qquad
\beta_{21} :=	\beta_{21}(s,r)
		= \frac{ -\sqrt{2\pi}e^{-\pi \kappa/2} e^{-i \pi/4} }{r \Gamma(i\kappa)}.
\shortintertext{As $\zeta \to \infty$}
\nonumber
	F(\zeta; s,r) = I + \frac{1}{\zeta} \offdiag{ -i\beta_{12}(s,r)}{i\beta_{21}(s,r)}
	+ \bigo{\zeta^{-2}}.
\end{gather}
\end{proposition}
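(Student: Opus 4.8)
The plan is to verify that the explicit formula \eqref{local model soln} solves RHP~\ref{rhp.localmodel}, following the classical reduction of a quadratic-phase local model to the parabolic cylinder equation (as in \cite{DZ93,Its81} and, for DNLS, \cite{LPS16}); I only indicate the main steps. First I would dispose of the case $\sgnt = -1$ by symmetry: a direct check shows that $\NPC(\lam;\xi,-) = \sigma_2\, \NPC(\lam;\xi,+)\big|_{\zeta\to-\zeta,\ s_\xi\leftrightarrow r_\xi}\,\sigma_2$ carries the jump contour and the jump matrices \eqref{NPC jump} for $\sgnt=+1$ to those for $\sgnt=-1$. Indeed, the reflection $\zeta\mapsto-\zeta$ permutes the rays $\Sigma_1\leftrightarrow\Sigma_3$ and $\Sigma_2\leftrightarrow\Sigma_4$, conjugation by $\sigma_2$ interchanges the upper- and lower-triangular factors, and the replacement $s_\xi\leftrightarrow r_\xi$ accounts for the exchanged off-diagonal entries, while the normalization at $\infty$ is preserved. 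Hence it suffices to construct $\NPC(\lam;\xi,+) = F(\zeta;s,r)$.

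Next I would reduce the $\sgnt=+1$ problem to one with $\zeta$-independent jumps. Writing $F = \Phi_{s,r}\,\mathcal{P}_{s,r}\,\zeta^{-i\kappa\sig}e^{i\zeta^2\sig/4}$ with the sectionally analytic prefactor $\mathcal{P}_{s,r}(\zeta)\zeta^{-i\kappa\sig}e^{i\zeta^2\sig/4}$ exactly as in the statement, one verifies by direct computation that on each ray $\Sigma_k$ the triangular jump $\vk{\textsc{pc}}$ in \eqref{NPC jump} is reproduced, so that the remaining unknown $\Phi_{s,r}$ has \emph{no} jump across any $\Sigma_k$. The only surviving jump of $\Phi_{s,r}$ is across $\R$, and -- this is the key point -- it is a constant ($\zeta$-independent) matrix, since the factors $(\eta\zeta)^{\pm 2i\eta\kappa}$ and $e^{\mp i\eta\zeta^2/2}$ have all been conjugated into $\zeta^{-i\kappa\sig}e^{i\zeta^2\sig/4}$ and cancel against the triangular entries.

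With $\Phi_{s,r}$ now analytic off $\R$, carrying a constant jump on $\R$ and of at most polynomial growth, I would form the logarithmic derivative $\bigl(\partial_\zeta\Phi_{s,r}\bigr)\Phi_{s,r}^{-1}$ and subtract the contribution $-\tfrac{i}{2}\zeta\sig$ of the phase factor: the constant-jump property makes this quantity continuous across $\R$, hence entire, and the growth estimate forces it to be affine in $\zeta$. Thus $\Phi_{s,r}$ solves a linear ODE $\partial_\zeta\Phi_{s,r} = \bigl(-\tfrac{i}{2}\zeta\sig + B\bigr)\Phi_{s,r}$ with $B$ a constant off-diagonal matrix, and each entry satisfies Weber's (parabolic cylinder) equation. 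Consequently $\Phi_{s,r}$ is built from $D_{i\kappa}$ and $D_{-i\kappa-1}$ evaluated at $\zeta e^{\mp i\pi/4}$ and $\zeta e^{\mp 3i\pi/4}$, exactly as displayed; the coefficients $\beta_{12},\beta_{21}$ in \eqref{betas} are pinned down by the connection formulas for parabolic cylinder functions, which produce the $\Gamma(\pm i\kappa)$ and $e^{\pm\pi\kappa/2}$ factors.

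Finally, the normalization $F = I + \bigo{\zeta^{-1}}$ and the first moment follow from the standard large-argument asymptotics $D_a(z)\sim z^{a}e^{-z^2/4}$ \cite{DLMF}, which yield $F(\zeta;s,r) = I + \zeta^{-1}\offdiag{-i\beta_{12}}{i\beta_{21}} + \bigo{\zeta^{-2}}$ and in particular condition (i). I expect the main obstacle to lie in the middle two steps -- simultaneously verifying that the prefactor reproduces each ray jump \emph{and} leaves only a constant jump across $\R$, and then matching the connection data of the parabolic cylinder functions to fix $B$ together with the precise constants $\beta_{12},\beta_{21}$ -- because the signs, the branch choice of $\zeta^{-i\kappa}$, and the $e^{\pm i\pi/4}$ rotations of the argument must all be tracked consistently. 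By contrast, uniqueness of $\NPC$ is routine: once the jumps are shown to be constant one has $\det\NPC\equiv 1$, and a Liouville argument applied to $\NPC$ times the inverse of the explicit solution completes the claim.
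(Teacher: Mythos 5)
Your proposal is correct and takes essentially the same route the paper relies on: the paper gives no self-contained proof of Proposition~\ref{prop:PCmodel.est}, instead citing the classical construction of Deift--Zhou \cite{DZ93} and Its \cite{Its81} with details in \cite{LPS16}, which is precisely the argument you sketch --- reduce $\sgnt=-1$ to $\sgnt=+1$ by $\sigma_2$-conjugation and $\zeta\mapsto-\zeta$, $s_\xi\leftrightarrow r_\xi$; strip off the piecewise-constant and phase factors $\mathcal{P}_{s,r}(\zeta)\zeta^{-i\kappa\sigma_3}e^{i\zeta^2\sigma_3/4}$ to leave a function with a single constant jump across $\R$; apply a Liouville argument to the logarithmic derivative to obtain the affine-coefficient ODE whose entries satisfy Weber's equation; and fix $\beta_{12},\beta_{21}$ and the normalization from the parabolic cylinder connection formulas and large-argument asymptotics. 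Your outline fills in, step for step, exactly what the paper outsources to those references.
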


The essential property of $\NPC$ that we will need later is the asymptotic expansion for large $\zeta$. 
Using \eqref{local model soln},  we have 

\begin{equation} \label{local model expand}
	\NPC(\lam;\xi,\sgnt) = 
	 I + \frac{|8t|^{-1/2}}{\lam - \xi} A(\xi,\sgnt)
	 	+ \bigo{t^{-1}}, \\
	\qquad \lam \in \partial \Uxi,
\end{equation}
where
\begin{equation}\label{Axi}
	A(\xi,\sgnt) = \offdiag{ -iA_{12}(\xi, \sgnt) }{iA_{21}(\xi, \sgnt)}
\end{equation}
{  with 
\begin{align*}
&A_{12} (\xi, +) = \beta_{12} (s_\xi, r_\xi ), \   \  A_{21} (\xi, +) = \beta_{21} (s_\xi, r_\xi ) \nonumber \\
&A_{12} (\xi, -) = -\beta_{21}(r_\xi, s_\xi),  \  A_{21} (\xi, -) = -\beta_{12}(r_\xi, s_\xi)
\end{align*}
satisfies}

\begin{equation}\label{mod beta}
    	|A_{12}(\xi,\sgnt)|^2 =  \frac{\kappa(\xi)}{\xi} 
		\qquad \qquad 
		A_{21}(\xi, \sgnt) = \eps \xi \overline{A_{12}(\xi,\sgnt)}
\end{equation}
\begin{subequations}\label{beta arg}
\begin{align*}
	&\begin{multlined}[.9\textwidth]
	\arg A_{12}(\xi, +) = 
	\frac{\pi}{4} + \arg \Gamma(i \kappa(\xi)) - \arg ( -\eps \xi \overline{\rho(\xi)} ) \\
	+ \frac{1}{\pi} \int_{-\infty}^\xi \log|\xi - \lam|\, \mathrm{d}_\lam \log(1-\eps \lam |\rho(\lam)|^2) 
	- \kappa(\xi) \log|8t| + 4t \xi^2 
	\end{multlined} \\
	&\begin{multlined}[.9\textwidth]
	\arg A_{12}(\xi, -) = 
	\frac{\pi}{4} - \arg \Gamma(i \kappa(\xi)) - \arg (-\eps \xi \overline{\rho(\xi)} ) \\
	+ \frac{1}{\pi} \int_{\xi}^\infty \log|\xi - \lam|\, \mathrm{d}_\lam \log(1-\eps \lam |\rho(\lam)|^2) 
	+ \kappa(\xi) \log|8t| + 4t \xi^2 .
	\end{multlined} 
\end{align*}	
\end{subequations}

For $\eta = +1$, the first line of \eqref{local model expand} and \eqref{mod beta} are proved in \cite{LPS16}; the results for $\eta = -1$ then follow from \eqref{local model soln}. The second equality in \eqref{mod beta} is a consequence of the fact that $\beta_{12} \beta_{21} = \kappa$. Equations \eqref{beta arg} follow simply from \eqref{betas} and \eqref{PCconst}
where we use \eqref{delta0 arg} and integration by parts to express the integral terms.

\sidenote{44}\revisedtext{Later, we will compute an asymptotic expansion for the solution $u(x,t)$ of \eqref{DNLS1}-\eqref{data1} via the inverse gauge transformation $u(x,t) = \mathcal{G}^{-1}(q)(x,t)$. This can by computed in terms of $1/[n_{11}(0;x,t)]^2$, the solution of RHP~\ref{RHP2.row} (cf. Proposition~\ref{prop:u.gauge.expansion} below and \eqref{n11.at.0.a} in particular). 
When $\xi$ is near zero (specifically, when $0 \in \mathcal{U}_{\xi}$) this computation involves the value of the model problem at $\lambda=0$, which from \eqref{zeta} corresponds to $\zeta(0) = -|8t|^{1/2} \xi$ in the model plane}.
Note that, though \eqref{local model soln} is piecewise defined across the real axis, $\NPC$ does not have a jump across the real axis (cf. \eqref{NPC jump}); in the formulas below we have chosen the components of $\Phi_{s,r}$ for which right multiplication by $\mathcal{P}_{s,r}(\zeta)$ has no effect. {The first column  $\NPC[1]$  of $\NPC$ at $\lambda=0$ is given by:}
\begin{multline}\label{NPC1.at.0}
	\NPC[1](0;\xi,\sgnt) = 
	e^{\frac{\pi \kappa(\xi)}{4}} 
	{e^{2it\xi^2 - \frac{i}{2} \sgnt \kappa(\xi)  \log |8t\xi^2|} }
	\diag{1}{ -e^{\frac{i\sgnt \pi}{4}} \sgn(\xi) }	\\
	\times
	\begin{bmatrix}
	 D_{i\sgnt \kappa(\xi)} \lp e^{\frac{i\sgnt \pi}{4}} |8t\xi^2|^{1/2} \rp \\
	 iA_{21}(\xi,\sgnt) D_{i\sgnt \kappa(\xi)-1} 
	   \lp e^{\frac{i\sgnt \pi}{4}} |8t\xi^2|^{1/2} \rp
	\end{bmatrix}
\end{multline}
\begin{lemma}\label{lem:NPC21}
Let $c_1, c_2, c_3$ be strictly positive constants, and suppose that $\rho \in H^{2,2}(\R)$ with $\oldnorm{\rho}_{H^{2,2}(\R)} \leq c_1$, $\inf_{\lam \in \R} (1 - \eps \lam |\rho(\lam)|^2 ) \geq c_2$, and $|\xi| < c_3$. Then as $|t| \to \infty$, 
\[
	|\NPC[21](0;\xi,\sgnt)|  \lesssim t^{-1/2},
\]
where the implied constant is independent of $\xi$ and $\rho$.
\end{lemma}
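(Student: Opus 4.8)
The plan is to read $\NPC[21](0;\xi,\sgnt)$ directly off the explicit formula \eqref{NPC1.at.0} and to separate the unimodular and bounded factors from the one carrying the decay in $t$. Writing $p = e^{i\sgnt\pi/4}|8t\xi^2|^{1/2}$, the second entry of the first column in \eqref{NPC1.at.0} is
\[
\NPC[21](0;\xi,\sgnt)
= -\, e^{\pi\kappa(\xi)/4}\,
   e^{2it\xi^2 - \frac{i}{2}\sgnt\kappa(\xi)\log|8t\xi^2|}\,
   e^{i\sgnt\pi/4}\sgn(\xi)\, i\, A_{21}(\xi,\sgnt)\,
   D_{i\sgnt\kappa(\xi)-1}(p).
\]
Since $1-\eps\xi|\rho(\xi)|^2 \geq c_2 > 0$ and $\rho\in H^{2,2}(\R)$ embeds in $L^\infty$, the function $\kappa(\xi)$ of \eqref{T} is real and bounded (uniformly in the class fixed by $c_1,c_2$), so $e^{\pi\kappa(\xi)/4}$ is bounded, while the factors $e^{2it\xi^2}$, $e^{-\frac{i}{2}\sgnt\kappa(\xi)\log|8t\xi^2|}$, $e^{i\sgnt\pi/4}$, $\sgn(\xi)$ and $i$ are unimodular. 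Hence
\[
|\NPC[21](0;\xi,\sgnt)|
\lesssim e^{\pi\kappa(\xi)/4}\,|A_{21}(\xi,\sgnt)|\,\bigl| D_{i\sgnt\kappa(\xi)-1}(p)\bigr|,
\]
and the problem reduces to estimating the product on the right.

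For the amplitude I would use \eqref{mod beta}, which gives $|A_{21}(\xi,\sgnt)| = |\xi|\,|A_{12}(\xi,\sgnt)| = \sqrt{|\xi\,\kappa(\xi)|}$ (the radicand is nonnegative since $|A_{12}|^2 = \kappa(\xi)/\xi \geq 0$). The one elementary fact driving everything is that $\kappa(\xi)=\bigo{\xi}$ near the origin: the function $\log(1-\eps\xi|\rho(\xi)|^2)/\xi$ extends continuously across $\xi=0$ and is therefore bounded on $[-c_3,c_3]$, so $|\kappa(\xi)|\lesssim|\xi|$ and consequently $|\kappa(\xi)/\xi|\lesssim 1$, uniformly for $|\xi|<c_3$. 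I would then split according to the size of $|p| = (8|t|)^{1/2}|\xi|$.

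When $|p|\geq 1$ the large-argument asymptotic $D_a(z)\sim z^a e^{-z^2/4}$ (valid for $|\arg z|<3\pi/4$, here $\arg p = \pm\pi/4$) applies. Because $p^2 = \pm i\,8|t|\xi^2$ is purely imaginary, $|e^{-p^2/4}|=1$, while with $a = i\sgnt\kappa(\xi)-1$ and $\arg p = \sgnt\pi/4$ one computes $|p^a| = |p|^{-1}e^{-\pi\kappa(\xi)/4}$, so that $\bigl|D_{i\sgnt\kappa(\xi)-1}(p)\bigr|\lesssim e^{-\pi\kappa(\xi)/4}/|p|$. The exponential cancels the $e^{\pi\kappa(\xi)/4}$ prefactor, giving
\[
|\NPC[21](0;\xi,\sgnt)|
\lesssim \frac{\sqrt{|\xi\,\kappa(\xi)|}}{(8|t|)^{1/2}|\xi|}
= (8|t|)^{-1/2}\,\sqrt{\bigl|\kappa(\xi)/\xi\bigr|}
\lesssim |t|^{-1/2}.
\]
When $|p|<1$, i.e.\ $|\xi|<(8|t|)^{-1/2}$, the order $i\sgnt\kappa(\xi)-1$ ranges over a bounded set and $p$ over a bounded region, so by joint analyticity of $D_a(z)$ in $a$ and $z$ the factor $\bigl|D_{i\sgnt\kappa(\xi)-1}(p)\bigr|$ is bounded; then $|\NPC[21](0;\xi,\sgnt)|\lesssim\sqrt{|\xi\,\kappa(\xi)|}\lesssim|\xi|\lesssim|t|^{-1/2}$. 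Combining the two regimes proves the claim with a constant depending only on $c_1,c_2,c_3$.

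The main obstacle is uniformity as $\xi\to 0$: taken at face value the large-argument estimate produces the factor $\sqrt{|\kappa(\xi)/\xi|}$, which superficially threatens to blow up as $\xi\to 0$. The resolution is precisely the cancellation $\kappa(\xi)=\bigo{\xi}$, which keeps $\kappa(\xi)/\xi$ bounded and also reconciles the two regimes across $|p|\sim 1$. A secondary technical point is to ensure the implied constants in the parabolic cylinder asymptotics are uniform as the order $i\sgnt\kappa(\xi)-1$ varies over the bounded set $\{|\kappa|\le\|\kappa\|_\infty\}$, which follows from the standard uniform-in-bounded-order error bounds for $D_a$ recorded in \cite{DLMF}.
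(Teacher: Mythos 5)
Your proof is correct and follows essentially the same route as the paper: both read $\NPC[21](0;\xi,\sgnt)$ off \eqref{NPC1.at.0}, use \eqref{mod beta} to write $|A_{21}|=\sqrt{|\xi\,\kappa(\xi)|}$, exploit the boundedness of $\kappa(\xi)/\xi$ near the origin, and invoke the large-argument parabolic cylinder asymptotics (with the cancellations $|e^{-p^2/4}|=1$ and $|p^{i\sgnt\kappa}|=e^{-\pi\kappa/4}$) to get the $|t|^{-1/2}$ decay. Your split at $|p|=1$ versus the paper's factoring of $|\kappa/(8t\xi)|^{1/2}$ times a function bounded in $p$ is only an organizational difference, not a different argument.
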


\begin{proof}
	From \eqref{NPC1.at.0} and \eqref{mod beta} we have, setting  $p := e^{\frac{i\sgnt \pi}{4}} |8t\xi^2|^{1/2} $,
\begin{align*}
	|\NPC[21](0; \xi,\sgnt)|	
	= 
	e^{\frac{\pi \kappa(\xi)}{4} } 
	\left| 
	  A_{21}(\xi,\sgnt) 
	  D_{i\eta \kappa(\xi) - 1} \lp  p \rp 
	\right| 
	= 
	\left| \frac{\kappa(\xi)}{8t\xi} \right|^{1/2} 
	\left| 
	  e^{\frac{\pi \kappa(\xi)}{4} } p 
	  D_{i\eta \kappa(\xi) - 1} \lp   p \rp
	\right|,
\end{align*}
Since $\kappa(\xi)/\xi \to \tfrac{\eps}{2\pi} |\rho(0)|^2$ as $\xi \to 0$, 
 {  it is sufficient to } show that the last factor is bounded in $p \geq 0$. For finite $p$ this is trivial, and for large $p$, the asymptotic expansion of $D_{\nu}(z)$ \cite[Eq. 12.9.1]{DLMF} gives 
\[
	\left| 
	  e^{\frac{\pi \kappa(\xi)}{4} } p 
	  D_{i\eta \kappa(\xi) - 1} \lp  p \rp
	\right|
	= \left| e^{-i p^2/4} p^{i \sgnt \kappa(\xi)} \left[1 + \bigo{p^{-2}} \right] \right|
	= 1 + \bigo{p^{-2}}.
\]
\end{proof}

We also need the following boundedness property:
\begin{lemma}[{see \cite[Appendix D]{LPS16}}]
\label{lem:PCmodel bound}
Let $c_1$ and $c_2$ be strictly positive constants, and suppose that $\rho \in H^{2,2}(\R)$ with $\oldnorm{\rho}_{H^{2,2}(\R)} \leq c_1$ and $\inf_{\lam \in \R} (1 - \eps \lam |\rho(\lam)|^2 ) \geq c_2$.
Then, 	
\begin{gather*}
	\oldnorm{ \NPC(\, \cdot\, ; \xi,\sgnt) }_\infty \lesssim 1 \qquad
	\oldnorm{ \NPC(\, \cdot\, ; \xi,\sgnt)^{-1} }_\infty \lesssim 1,
\end{gather*}
where the implied constants are uniform in $\xi$ and $|t|>1$ and depends only on $c_1$ and $c_2$.
\end{lemma}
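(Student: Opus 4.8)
The proof follows the argument of \cite[Appendix D]{LPS16}; I indicate the structure. By \eqref{local model soln} it suffices to treat the case $\sgnt = +1$, since $\NPC(\lam;\xi,-) = \sigma_2 F(-\zeta(\lam);r_\xi,s_\xi)\sigma_2$ and conjugation by $\sigma_2$, together with the change of variable $\zeta \mapsto -\zeta$ and the relabeling $(s,r)\mapsto(r,s)$, preserve the operator norm. Thus the task reduces to bounding $F(\zeta;s_\xi,r_\xi)$ and its inverse uniformly for $\zeta \in \C$, uniformly over the admissible parameters.

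First I would confine the parameters to a fixed compact set. Under the hypotheses $\oldnorm{\rho}_{H^{2,2}(\R)} \leq c_1$ and $\inf_{\lam}(1-\eps\lam|\rho(\lam)|^2) \geq c_2$, the Sobolev embedding gives a uniform pointwise bound on $|\xi||\rho(\xi)|^2$, so that $\kappa = \kappa(\xi)$ defined in \eqref{T} satisfies $|\kappa| \lesssim 1$, while $|r_\xi| = |\rho(\xi)| \lesssim 1$ and $|s_\xi| = |\xi||\rho(\xi)| \lesssim 1$ by \eqref{PCconst} (recall $|\delta_0|=1$). The delicate quantities are the constants $\beta_{12},\beta_{21}$ in \eqref{betas}, which a priori contain factors $1/s_\xi$ and $1/\Gamma(-i\kappa)$ that degenerate as $\xi \to 0$. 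Here one uses the identity $|\beta_{12}|^2 = |A_{12}|^2 = \kappa(\xi)/\xi$ from \eqref{mod beta}: since $\kappa(\xi)/\xi$ extends continuously to $\xi = 0$, the two degeneracies cancel and $\beta_{12},\beta_{21}$ remain uniformly bounded. Consequently $(s_\xi,r_\xi,\kappa)$ and $(\beta_{12},\beta_{21})$ lie in a fixed compact set depending only on $c_1,c_2$, with $1+r_\xi s_\xi = e^{-2\pi\kappa}$ bounded away from $0$.

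Next I would bound $F$ itself. Each entry of $\Phi_{s,r}$ is built from parabolic cylinder functions $D_a(\zeta e^{i\phi})$, which are entire in $\zeta$ and depend analytically on the order $a=i\kappa$; since $\mathcal{P}_{s,r}$ exactly absorbs the unipotent jumps, $F$ is continuous up to the rays $\Sigma_k$. Hence on any disk $|\zeta| \leq R$ the matrix $F$ is bounded, and because its entries depend continuously on the compactly-confined parameters, this bound is uniform. For $|\zeta| > R$ the essential cancellation of the construction enters: the large-argument asymptotics of $D_a$ \cite[Eq.~12.9.1]{DLMF} exactly cancel the growth of $\zeta^{-i\kappa\sig} e^{i\zeta^2\sig/4}$, yielding
$$ F(\zeta;s,r) = I + \frac{1}{\zeta}\offdiag{-i\beta_{12}}{i\beta_{21}} + \bigo{\zeta^{-2}} $$
as recorded in Proposition \ref{prop:PCmodel.est}; the remainder is uniform in the parameters because $a=i\kappa$ ranges over a compact set. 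Choosing $R$ large then gives $\oldnorm{F}_\infty \lesssim 1$ uniformly.

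Finally, the bound on the inverse is immediate: every jump matrix in \eqref{NPC jump} is unimodular and $F \to I$, so $\det F \equiv 1$ by Liouville's theorem applied to the entire function $\det F$; for a $2\times 2$ matrix of determinant one, $F^{-1}$ is obtained by permuting and negating the entries of $F$, whence $\oldnorm{F^{-1}}_\infty \lesssim \oldnorm{F}_\infty \lesssim 1$ with the same uniformity. The \emph{main obstacle} in the argument is precisely the uniformity as $\xi \to 0$ (and $\rho(\xi) \to 0$): the individual factors $1/s_\xi$ and $1/\Gamma(-i\kappa)$ in $\beta_{12}$ blow up and vanish respectively, and one must exhibit their exact cancellation through $|\beta_{12}|^2 = \kappa(\xi)/\xi$ in order to keep the coefficients, and hence the $|\zeta| > R$ estimate, under uniform control.
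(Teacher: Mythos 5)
Your proof is correct and follows essentially the route the paper itself relies on: the paper does not prove this lemma internally but cites \cite[Appendix D]{LPS16}, where the bound is obtained just as you describe --- confine $(s_\xi, r_\xi, \kappa)$ to a compact set determined by $c_1, c_2$ (with $t$ entering only through unimodular phases), bound $F$ on $|\zeta|\leq R$ by continuity of the parabolic cylinder entries, bound $|\zeta|>R$ by the uniform large-$\zeta$ expansion of Proposition~\ref{prop:PCmodel.est}, and get the inverse from $\det \NPC \equiv 1$ via Liouville. You also correctly isolate the one genuinely delicate point, uniformity as $\xi \to 0$ (and $\rho(\xi)\to 0$), where the apparent blow-up of $1/s_\xi$ and the vanishing of $1/\Gamma(-i\kappa)$ in \eqref{betas} cancel, as made quantitative by the identity $|A_{12}|^2 = \kappa(\xi)/\xi$ of \eqref{mod beta} together with $|\kappa(\xi)/\xi| \leq |\rho(\xi)|^2/(2\pi c_2)$.
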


\subsection{Existence theory for the RH model problem}
\label{sec:RHP.exist}
In this section, we prove that the solution $\Nrhp$ of our model problem, RHP~\ref{rhp.Nrhp} exists, by constructing it from the outer and local models introduced previously. We will show that for large times, the error solves a small norm Riemann-Hilbert problem which we can expand asymptotically.

Write the solution $\Nrhp$ of RHP~\ref{rhp.Nrhp} in the form 
\begin{equation}\label{error def}
	\Nrhp(\lam) = 
	\begin{cases}
		\error(\lam) \Nout(\lam) & |\lam-\xi| \notin \Uxi \\
		\error(\lam) \Nout(\lam) \NPC(\lam) & |\lam-\xi| \in \Uxi \\
	\end{cases}
\end{equation}
where  {  $\Uxi$ is defined in \eqref{xi disk},} $\Nout$, the solution of 
\sidenote{34}
\revisedtext{RHP~\ref{outermodel},}
and $\NPC$, the solution of RHP~\ref{rhp.localmodel}, are both  bounded functions of $(x,t)$ {  having determinant equal to $1$}. This relation implicitly defines a transformation to a new unknown $\error$ which satisfies a new RH problem. In order to state it let 
\begin{equation*}
	\Ske = \partial \Uxi \cup (\Sigma_2 \setminus \Uxi)
\end{equation*}
where the circle $\partial \Uxi$ is oriented counterclockwise. 

\begin{RHP}
\label{rhp.E}
Find a $2 \times 2$ matrix value function $\error$ analytic in $\C \backslash \Ske$ with the following properties:
\begin{enumerate}
	\sidenote{45}
	\item[(i)] For $\revisedtext{\lam} \in \C \setminus \calU_\xi$, $\error$ satisfies the symmetry relations 
	$
	\error_{22}(\lam) = \overline{\error_{11}(\lambar)}
	$ and
	$
	\error_{21}(\lam) = \eps \lam \overline{\error_{12}(\lambar)}.
	$
	\item[(ii)] $\error (\lam) = \stril{{\overline{q}_\error}} + \bigo{\lam^{-1}}$ as $|\lam| \to \infty$, for a constant ${\overline{q}_\error}$ determined by the symmetry condition above. 
	\item[(iii)] For $\lam \in \Ske$, the boundary values $\error_\pm$ satisfy the jump relation $\error_+(\lam) = \error_-(\lam) \vke(\lam)$ where
\sidenote{45}
\begin{equation}\label{error jump}
	\vke(\lam) = 
	\begin{cases}
		\Nout(\lam) \vk{2}(\lam) \Nout(\lam)^{-1} & \lam \in \Sigma_2 \setminus \Uxi \\
		\Nout(\lam) \NPC(\lam)^{-1}  \Nout(\lam)^{-1} &  \revisedtext{\lam} \in \partial \Uxi.
	\end{cases}
\end{equation}

\end{enumerate}
\end{RHP}

The jump matrix $\vke$ is uniformly near identity for large times; it follows from \eqref{V2 bound outside}, \eqref{local model expand} and Lemma~\ref{lem:outer.bound}, that 
\begin{gather}\label{error jump bound0}
	\left| \vke(\lam) - I \right| \lesssim 
	\begin{cases}
		|t|^{-1/2} & \lam \in \partial \Uxi \\
		e^{-2\sqrt{2} |t (\lam-\xi)^2|} & \lam \in \Sigma_2 \setminus \Uxi
	\end{cases}
\shortintertext{and} 
	\label{error jump bound}
	\oldnorm{\vke - I}_{L^{2,k}(\R) \cap L^\infty(\R)} \lesssim |t|^{-1/2}, \quad k \in {   \N}.
\end{gather}
There is a well known existence and uniqueness theorem for RHPs with near identity jump matrices \cite{DIZ93,TO16,Zhou89}. 
Let $C_\error$ denote the Cauchy integral operator
\begin{equation*}
	C_\error f  = C^-(f(\vke-I)),
\end{equation*}
where $C^-$ is the usual Cauchy projection operator on $\Ske$: 
\[
	C^- f(\lam) = \lim_{\lam \to \Ske[-]} \frac{1}{2\pi i} \int_{\Ske} \frac{ f(z)}{z-\lam} dz.
\]
The essential fact needed for the small-norm theory is that $C_\error$ is a small norm operator,
\begin{equation}\label{error.op.bound}
	\oldnorm{ C_\error }_{L^2(\Ske) \rarr L^2(\Ske)}
	= \bigo{ \| \vke - I \|_\infty } = \bigo{|t|^{-1/2}}.
\end{equation}

\begin{lemma}
\label{lem:Err}
Suppose that $\rho \in H^{2,2}(\R)$ and $c:= \inf_{\lam \in \R} (1 - \lam | \rho(\lam)|^2) > 0$ strictly. 
Then, for sufficiently large times $|t|>0$, there exists a unique solution $\error(\lam;x,t)$ of RHP~\ref{rhp.E} with the property that 
\[
	\oldnorm{\error - {\begin{pmatrix} 1 &&& 0 \\\overline{q}_\error &&& 1 \end{pmatrix}}  }_{L^\infty(\C) } \lesssim |t|^{-1/2}.
\]
Moreover, as $\lam \to \infty$
\[
	\error(\lam) =  {\begin{pmatrix} 1 &&& 0 \\ \overline{q}_\error &&& 1 \end{pmatrix}}
	  + \lam^{-1} \error_{1} + \bigo{ \lam^{-2} }
\]
where ${{q}_\error} := \eps \lp \error_{1} \rp_{12}$ and
\begin{equation}\label{E1.12}
	2i \lp \error_{1} \rp_{12} = 
{ 	\frac{1}{|2t|^{1/2} }  }
	\lb A_{12}(\xi, \sgnt) \Nout[11](\xi)^2 
		+ A_{21}(\xi, \sgnt) \Nout[12](\xi)^2 
	\rb + \bigo{|t|^{-1}}, 
\end{equation}
Here, $\Nout$ is the solution of 
\sidenote{34}
\revisedtext{RHP~\ref{outermodel}}
described in Lemma~\ref{outer.soliton}
while $A_{12}$ and $A_{21}$ are given by \eqref{Axi}-\eqref{beta arg}. 
\end{lemma}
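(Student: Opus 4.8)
The plan is to solve RHP~\ref{rhp.E} by the standard small-norm Riemann--Hilbert theory, using the decay estimates \eqref{error jump bound} and \eqref{error.op.bound} already in hand, and then to read off the first moment $\error_1$ by a single residue computation on $\partial\Uxi$. Since $\oldnorm[L^2(\Ske)\cap L^\infty(\Ske)]{\vke-I}\lesssim|t|^{-1/2}$, the bound \eqref{error.op.bound} shows $C_\error$ has operator norm $\bigO{|t|^{-1/2}}$ on $L^2(\Ske)$, so for $|t|$ large $I-C_\error$ is invertible by Neumann series and the Beals--Coifman equation $(I-C_\error)\mu=\error_\infty$ has a unique solution $\mu=\error_-$ with $\mu=\error_\infty+\bigO{|t|^{-1/2}}$ in $L^2$. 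The solution of the RHP is recovered from
\[
  \error(\lam) = \error_\infty + \frac{1}{2\pi i}\int_{\Ske}\frac{\mu(z)\left(\vke(z)-I\right)}{z-\lam}\,dz, \qquad \error_\infty = \begin{pmatrix} 1 & 0 \\ \overline{q}_\error & 1 \end{pmatrix}.
\]
The lower-triangular form of $\error_\infty$ is forced by the symmetry condition (i): the factor $\lam$ in $\error_{21}(\lam)=\eps\lam\,\overline{\error_{12}(\lambar)}$ converts the $\bigO{\lam^{-1}}$ decay of $\error_{12}$ into a nonzero constant limit for $\error_{21}$, so that necessarily $\overline{q}_\error=\eps\,\overline{(\error_1)_{12}}$, i.e. $q_\error=\eps(\error_1)_{12}$. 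Uniqueness, together with the compatibility of $\vke$ with the symmetry involution, guarantees that the constructed $\error$ inherits property (i).

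The $L^\infty$ bound follows from the integral representation: for $\lam$ bounded away from $\Ske$ one has $|\error-\error_\infty|\lesssim\oldnorm[L^1(\Ske)]{\mu(\vke-I)}\lesssim\oldnorm[L^2]{\mu}\,\oldnorm[L^2]{\vke-I}\lesssim|t|^{-1/2}$ (the noncompact $\Sigma_2\setminus\Uxi$ part contributes only exponentially by \eqref{V2 bound outside} and \eqref{error jump}), and the standard boundary-value estimates extend this uniformly up to the contour. Expanding $(z-\lam)^{-1}$ for large $\lam$ gives $\error(\lam)=\error_\infty+\lam^{-1}\error_1+\bigO{\lam^{-2}}$ with $\error_1=-\tfrac{1}{2\pi i}\int_{\Ske}\mu(z)(\vke(z)-I)\,dz$. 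To the required order I replace $\mu$ by $\error_\infty$ in the entries that matter: the correction $(\mu-\error_\infty)(\vke-I)$ is $\bigO{|t|^{-1}}$ in $L^1$ by Cauchy--Schwarz, since both factors carry $|t|^{-1/2}$. The $\Sigma_2\setminus\Uxi$ contribution is beyond all orders, so only $\partial\Uxi$ survives.

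On $\partial\Uxi$ I insert the local expansion \eqref{local model expand}, which gives
\[
  \vke(\lam)-I = \Nout(\lam)\left(\NPC(\lam)^{-1}-I\right)\Nout(\lam)^{-1} = -\,\frac{|8t|^{-1/2}}{\lam-\xi}\,\Nout(\lam)\,A(\xi,\sgnt)\,\Nout(\lam)^{-1} + \bigO{|t|^{-1}},
\]
uniformly on the fixed circle, with $A(\xi,\sgnt)$ the off-diagonal matrix \eqref{Axi}. Because $\Nout A\,\Nout^{-1}$ is analytic inside $\Uxi$ (the poles of $\Nout$ lie outside $\Uxi$ by the choice of radius $\poledist/3$), the residue theorem on the counterclockwise circle yields
\[
  \error_1 = |8t|^{-1/2}\,\Nout(\xi)\,A(\xi,\sgnt)\,\Nout(\xi)^{-1} + \bigO{|t|^{-1}}.
\]
Reading off the $(1,2)$ entry with $A=\soffdiag{-iA_{12}}{iA_{21}}$ and $\det\Nout=1$ gives $(\error_1)_{12}=-i|8t|^{-1/2}\left(A_{12}\Nout[11](\xi)^2+A_{21}\Nout[12](\xi)^2\right)+\bigO{|t|^{-1}}$; multiplying by $2i$ and using $2|8t|^{-1/2}=|2t|^{-1/2}$ reproduces exactly \eqref{E1.12}. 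The uniform $L^\infty$ bounds on $\Nout$ and $\NPC$ from Lemmas~\ref{lem:outer.bound} and \ref{lem:PCmodel bound} make all implied constants uniform in $x$ and $\xi$.

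I expect the main obstacle to be twofold, and it is not the existence theory, which is routine once \eqref{error.op.bound} is granted. First, one must pin down the self-consistent normalization $\overline{q}_\error$ without circularity: writing $\error_\infty=I+\overline{q}_\error E_{21}$ with $E_{21}=\stwomat{0}{0}{1}{0}$, the point is that $\left[E_{21}(\vke-I)\right]_{12}\equiv 0$, so $(\error_1)_{12}$ is insensitive to $\overline{q}_\error$ at leading order and the map $\overline{q}_\error\mapsto\eps\,\overline{(\error_1)_{12}}$ is a contraction (Lipschitz constant $\bigO{|t|^{-1}}$) with a unique fixed point. Second, one must verify that each neglected contribution---the density error $(\mu-\error_\infty)(\vke-I)$, the $\Sigma_2$ tail, and the $\bigO{|t|^{-1}}$ remainder in \eqref{local model expand}---is genuinely $\bigO{|t|^{-1}}$ rather than merely $\littleo{|t|^{-1/2}}$, since it is the sharper remainder order that the statement demands.
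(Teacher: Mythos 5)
Your construction is, in all essentials, the paper's own proof: both rest on the operator bound \eqref{error.op.bound} and a Neumann series to solve the singular integral equation, both write the first moment as $\error_{1}=-\frac{1}{2\pi i}\int_{\Ske}\mu(z)(\vke(z)-I)\,dz$, both discard the density correction by Cauchy--Schwarz (it is $\bigO{|t|^{-1}}$) and the ray contribution (exponentially small), and both finish with the identical residue computation on $\partial\Uxi$ using \eqref{local model expand} and $\det\Nout=1$ to get \eqref{E1.12}. The only organizational difference is how $\overline{q}_\error$ is pinned down: the paper builds the solution row by row --- the first row with canonical normalization $(1\ 0)$, after which $\overline{q}_\error$ is \emph{defined} by the formula the symmetry forces in the $\lam\to\infty$ limit, then the second row with normalization $(\overline{q}_\error\ 1)$ --- whereas you keep $\overline{q}_\error$ as an unknown in the normalization and close the loop by a fixed-point argument. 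Your observation that $(\error_1)_{12}$ is insensitive to $\overline{q}_\error$ can in fact be sharpened: the integral equation is linear in the normalizing row vector, so the first row of the solution (hence $(\error_1)_{12}$) is \emph{exactly} independent of $\overline{q}_\error$; your map $\overline{q}_\error\mapsto\eps\,\overline{(\error_1)_{12}}$ is constant, and its unique fixed point is precisely the paper's explicit formula. So the two treatments coincide.

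One claim in your write-up is wrong, although it does not affect the asymptotic formula: you assert that $\vke$ is compatible with the symmetry involution and that property (i) of RHP~\ref{rhp.E} then follows by uniqueness. It is not compatible. The involution carries the explicit factor $\eps\lam$, while the $\dbar$-extension \eqref{R_k} freezes the jump coefficients at $\xi$ (e.g.\ $-\eps\lam\overline{\rho(\lam)}\mapsto-\eps\xi\overline{\rho(\xi)}$ along the rays), and $\NPC$ itself does not obey the symmetry, as the paper notes in the remark following RHP~\ref{rhp.localmodel}; checking the paired rays one finds the involution reproduces the opposite jump only up to a factor $\xi/\lam$, so exact compatibility fails away from $\lam=\xi$. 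This is precisely why condition (i) is imposed only on $\C\setminus\Uxi$, and why the paper uses the symmetry relation only to \emph{determine} the constant $\overline{q}_\error$ rather than to argue that the constructed matrix is symmetric. If you want the symmetry statement itself, the clean route is the paper's (or to observe that $\error:=\Nrhp(\Nout)^{-1}$ inherits the exterior symmetry from $\Nrhp$ and $\Nout$ once $\Nrhp$ is known to exist). A smaller gloss: your appeal to ``standard boundary-value estimates'' for the $L^\infty$ bound up to $\Ske$ is exactly the point the paper handles by deforming the contour, exploiting local analyticity of the jump; some such argument is needed, since the Cauchy kernel bound you use degenerates as $\lam$ approaches $\Ske$.
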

 
\begin{proof}
Due to the nonstandard normalization we will construct the solution $\error$ row-by-row.
We begin by considering the first row, which we denote $\vect{e}_1 = \begin{pmatrix} \error_{11} & \error_{12} \end{pmatrix}$, which is canonically normalized.

By standard results in the theory of Cauchy integral operators \cite{DZ03}, $\vect{e}_1$ must satisfy
\begin{equation}\label{row1}
	\vect{e}_1(\lam) = (1 \ 0 )
	+ \frac{1}{2\pi i} \int_{\Ske} \frac{ ((1 \ 0)  + \vect{\mu}_1(z) )(\vke(z)-I) }{z-\lam} dz
\end{equation}
where $\vect{\mu}_1 \in L^2(\Ske)$ is the unique row vector solution of 
\begin{equation*}
	(1 - C_\error) \vect \mu_1 = C_\error (1 \ 0)
\end{equation*}
The existence and uniqueness of $\vect \mu_1$ follows immediately from \eqref{error.op.bound}
which establishes the existence of $(1- C_\error)^{-1}$, and allows one to construct $\vect \mu_1$ by Neumann series, moreover, we have 
\begin{equation}\label{mu bound}
	\oldnorm{\vect \mu_1 }_{ {  L^2(\Ske) }} \lesssim  
	\frac{ \oldnorm{ C_\error }_{L^2(\Ske) \rarr L^2(\Ske)} }{1- \oldnorm{ C_\error }_{L^2(\Ske) \rarr L^2(\Ske)} }
	\lesssim |t|^{-1/2}.
\end{equation}
Fix a small constant $d$ and suppose that $\inf_{z \in \Ske} |\lam - z| > d$, then 
\[
	\left| \vect{e}_1 - (1\ 0) \right| \leq  
	\frac{d^{-1}}{2\pi} \lp
		\oldnorm{\vke - I}_{{  L^1}} + \oldnorm{\vect \mu_1}_{{  L^2}} \oldnorm{\vke -I}_{{  L^2}} \rp \lesssim |t|^{-1/2}.
\]
To get $L^\infty$ control for $\lam$ approaching $\Ske$ we observe that the jumps on the contours $\Ske$ are locally analytic, and so \sidenote{46}\revisedtext{the contour $\Ske$ can be freely deformed to a new contour $\widetilde{\Sigma}^{(\error)}$, with different points of self-intersection,} by a bounded invertible transformation $\vect{e}_1 \mapsto \widetilde{\vect{e}}_1$. The previous argument then goes through to show that $\left| \widetilde{ \vect{e}}_1 - (1\ 0) \right|$ is bounded on $\Ske$ which then gives a similar bound on $\vect{e}_1$ as the transformation itself is bounded. 

To build the second row $\vect{e}_2 = \begin{pmatrix} \error_{21} & \error_{22} \end{pmatrix}$, we begin by using the symmetry condition to compute ${\overline{q}_\error}$. Since $\error_{21}(\lam) = \eps \lam \overline{\error_{12}(\overline{\lam})}$ for all large $\lam$, we use \eqref{row1} and take the limit as $\lam \to \infty$ to find
\[
	{q_\error} = \frac{\eps}{2 \pi i } \lp \int_{\Ske} \lb (1\ 0) + \vect{e}_1(z) \rb \lb \vke{z} - I \rb_2 dz \rp,
\]
where the subscript $2$ on the second factor of the integrand denotes the second column of the matrix. 
Finally, using \eqref{error jump bound} and \eqref{mu bound} we have the bound
\begin{equation}\label{alpha bound}
	|{\overline{q}_\error}| \lesssim |t|^{-1/2}.
\end{equation}

Now that $\overline{q}_\error$ is well defined, we construct the second row as 
\begin{equation*}
	\vect{e}_2(\lam) = ({\overline{q}_\error} \ 1 )
	+ \frac{1}{2\pi i} \int_{\Ske} \frac{ (({\overline{q}_\error} \ 1)  + \vect{\mu}_2(z) )(\vke(z)-I) }{z-\lam} dz
\end{equation*}
where
\begin{equation*}
	(1- C_\error) \vect \mu_2 = C_\error ({\overline{q}_\error} \ 1).
\end{equation*}
Then repeating the arguments above we have that $\oldnorm{ \vect{\mu}_2}_{L^2{\Ske} } \lesssim  |t|^{-1/2}$ and also that 
$\oldnorm{\vect{e}_2 - ({\overline{q}_\error} \ 1)}_{L^\infty(\C)} \lesssim |t|^{-1/2}$.

Define the matrices 
$\error = \twovec{ \vect{e}_1 }{\vect{e}_2 }$ and $\vect{\mu} = \twovec{ \vect{\mu}_1 }{\vect{ \mu}_2 }$. Then, for large $\lam$ write 
$\error_0 = 
{\begin{pmatrix} 1 &0 \\ \overline{q}_\error &1
\end{pmatrix}}$
and
\begin{gather*}
	\error(\lam) = \error_0 + \lam^{-1} \error_{1} + \lam^{-2} \mathcal{S}(\lam) \\
	\error_{1} = \frac{-1}{2\pi i} \int\limits_{\Ske} (\error_0 + \vect \mu(z))(\vke(z) - I) dz, 
\\
	\mathcal{S}(\lam) = \frac{\lam}{2\pi i} 
		\int\limits_{\Ske} \frac{(\error_0 + \vect \mu(z)) z (\vke(z) - I)}{(z-\lam)} dz .
\end{gather*}
Using \eqref{error jump bound} and \eqref{mu bound}, as $\lam \to \infty$ 
the residual $\mathcal{S}$ satisfies 
\[
	|\mathcal{S} | \lesssim \oldnorm{V_E-I }_{L^{2,2}(\R)} \lesssim |t|^{-1/2},
\]
while using \eqref{alpha bound} we have
\begin{equation}\label{E1}
	\begin{aligned}
		\error_{1} 
		&= -\frac{1}{2\pi i} \oint_{\partial \Uxi} (\vke(z) -I) dz + \bigo{|t|^{-1}} \\
		&= |8t|^{-1/2} \Nout(\xi) {A(\xi,\sgnt) } \Nout(\xi)^{-1} + \bigo{|t|^{-1}}.
	\end{aligned}
\end{equation}
The last equality in \eqref{E1} follows from a residue calculation using \eqref{local model expand}. A direct calculation using the fact that $\det \Nout = 1$ then gives \eqref{E1.12}.
\end{proof}

Combining Lemmas~\ref{lem:outer.bound}, \ref{lem:PCmodel bound}, and \ref{lem:Err}, it follows from \eqref{error def} that

\begin{proposition}\label{prop:Nrhp.bound}
Let $c_1$ and $c_2$ be strictly positive constants, and suppose that $\rho \in H^{2,2}(\R)$ with $\oldnorm{\rho}_{H^{2,2}(\R)} \leq c_1$ and $\inf_{\lam \in \R} (1 - \eps \lam |\rho(\lam)|^2 ) \geq c_2$.
Then, 	
\begin{gather*}
	\oldnorm{ \Nrhp }_\infty \lesssim 1 \qquad
	\oldnorm{ (\Nrhp)^{-1} }_\infty \lesssim 1,
\end{gather*}
where the implied constants are uniform in $\xi$ and $|t|>1$ and depend only on $c_1$ and $c_2$.
\end{proposition}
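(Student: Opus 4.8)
The plan is to read the estimate straight off the factorization~\eqref{error def}, namely $\Nrhp = \error\,\Nout$ on $\C\setminus\Uxi$ and $\Nrhp = \error\,\Nout\,\NPC$ on $\Uxi$, by bounding each of the three factors together with its inverse while tracking every constant, so that the final bound is uniform in $\xi$ and $|t|>T$ and depends only on $c_1,c_2$. Two of the factors are immediate. By Lemma~\ref{lem:Err}, once $|t|$ exceeds a threshold fixed by $c_1,c_2$ through the small-norm estimate~\eqref{error.op.bound}, $\error$ differs from the fixed triangular matrix $\stril{\overline{q}_\error}$ by $\bigo{|t|^{-1/2}}$ with $|\overline{q}_\error|\lesssim|t|^{-1/2}$; since $\det\error\equiv 1$ this gives $\oldnorm{\error^{\pm1}}_{L^\infty(\C)}\lesssim 1$, the constant entering only through the jump bound~\eqref{error jump bound} and hence only through $c_1,c_2$. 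By Lemma~\ref{lem:PCmodel bound}, $\oldnorm{(\NPC)^{\pm1}}_\infty\lesssim 1$ uniformly in $\xi$ and $|t|>1$ with constant depending only on $c_1,c_2$, and $\NPC\equiv I$ off $\Uxi$, so this factor is active only on the fixed disk $\Uxi$.

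The substance is the outer factor $\Nout$. By Proposition~\ref{outer.soliton}, $\Nout$ is the reflectionless solution of RHP~\ref{RHP2} attached to the data $\calD_\xi=\{(\lam_k,\widetilde{C_k})\}$, whose sole $\xi$-dependence sits in the $\delta$-dressing $\widetilde{C_k}=C_k\exp\left(\frac{i}{\pi}\int_{\negint}\log(1-\eps z|\rho(z)|^2)\,\frac{dz}{z-\lam_k}\right)$ of the original connection coefficients. Here the positivity hypothesis is decisive, and it is precisely what makes the $\xi$-uniform bound hold. Because $1-\eps\lam|\rho(\lam)|^2\ge c_2>0$ and $\oldnorm{\rho}_{H^{2,2}(\R)}\le c_1$, the weight $\kappa=-\frac{1}{2\pi}\log(1-\eps\lam|\rho|^2)$ of~\eqref{T} obeys $\oldnorm{\kappa}_\infty\lesssim 1$ with constant set by $c_1,c_2$, so Lemma~\ref{lem:T}(ii) gives $e^{-\oldnorm{\kappa}_\infty/2}\le|\delta(\lam)|\le e^{\oldnorm{\kappa}_\infty/2}$ uniformly in $\lam$ and $\xi$. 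Consequently the dressed data $\{\widetilde{C_k}\}$ stays in a fixed compact subset of $\C^\times$, bounded and bounded away from $0$ uniformly in $\xi$ with bounds fixed by $c_1,c_2$; in particular the dressed residues neither collide nor degenerate as $\xi$ sweeps $\R$. Feeding this uniform control into the boundedness theory for the soliton problem---Lemma~\ref{lem:outer.bound}, which rests on Lemma~\ref{lem:sol.bound} of Appendix~\ref{app:solitons}, where the residue conditions are traded for jumps on fixed circles of radius comparable to $\poledist$ so as to produce a genuinely analytic object---yields $\oldnorm{(\Nout)^{\pm1}}_\infty\lesssim 1$ with constant uniform in $x,t,\xi$ and depending only on $c_1,c_2$.

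Assembling the three bounds through~\eqref{error def} then gives $\oldnorm{\Nrhp}_\infty\lesssim1$ and $\oldnorm{(\Nrhp)^{-1}}_\infty\lesssim1$: on $\C\setminus\Uxi$ one multiplies the uniformly bounded factors $\error^{\pm1}$ and $(\Nout)^{\pm1}$, and on $\Uxi$ one multiplies $\error^{\pm1}$, $(\Nout)^{\pm1}$ and $(\NPC)^{\pm1}$, so in either region the product is bounded with constant depending only on $c_1,c_2$. I expect the main obstacle to be exactly the $\xi$-uniformity of the outer estimate: a priori the soliton matrix $\Nout$ varies with $\xi$ through the $\delta$-dressing of its connection coefficients, and one must rule out any degeneration of the dressed data (collision of residues, or $\widetilde{C_k}\to 0$) as $\xi$ ranges over all of $\R$. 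The positivity hypothesis $1-\eps\lam|\rho|^2\ge c_2$ is what forecloses this, by trapping $|\delta|^{\pm1}$ between two positive constants determined by $c_1,c_2$; carrying out this trapping carefully is the one genuinely substantive step, and it is the reason the final constants depend on $c_1,c_2$ alone rather than on the particular discrete data or on an ad hoc excised neighborhood of the poles.
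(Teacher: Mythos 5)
Your proposal is correct and follows essentially the same route as the paper: the paper's proof of Proposition~\ref{prop:Nrhp.bound} is precisely the one-line combination of Lemma~\ref{lem:outer.bound}, Lemma~\ref{lem:PCmodel bound}, and Lemma~\ref{lem:Err} through the factorization \eqref{error def}. Your extra discussion of the $\xi$-uniformity of $\Nout$ via the trapping $e^{-\|\kappa\|_\infty/2}\le|\delta|\le e^{\|\kappa\|_\infty/2}$ simply makes explicit what the paper delegates to Lemmas~\ref{lem:outer.bound} and~\ref{lem:sol.bound}, where the dressed residue data $\gamma_k^\Delta$ are shown to vary over compact sets.
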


When estimating  the gauge factor for  the solution $u$ of the DNLS equation (Section  \ref{subsec:sol-u}),  we need the following result that provides the large-time 
behavior of the error term $\error$ at $z=0$ : 
\begin{proposition}
\label{prop:E}
Suppose that $\rho \in H^{2,2}(\R)$ and $c:= \inf_{\lam \in \R} (1 - \eps \lam |\rho(\lam)|^2) > 0$ strictly. Then, as $|t| \to \infty$ the unique solution of RHP~\ref{rhp.E} described by Lemma~\ref{lem:Err} satisfies
\begin{align}
\label{e11.at0.out}	
	\error_{11}(0) 
	&= 1
	- \frac{i\eps}{|2t|^{1/2}} 
	\Re \lb A_{12} \, \Nout[11](\xi) \overline{\Nout[12](\xi)} \rb 	
	+ \bigo{|t|^{-1}}
\shortintertext{
whenever $0 \neq \Uxi$; when $0 \in \Uxi$ it satisfies
}
\label{e11.at0.in}	
	\error_{11}(0) 
	&= 1
	- \frac{i\eps}{|8t|^{1/2}} 
	\lb
		2\Re \left( A_{12} \Nout[11](\xi) \overline{\Nout[12](\xi)} \right)
		-\overline{A_{12}} \Nout[12](0)  \overline{\Nout[11](0)}
	\rb \\
	\nonumber
	& \quad
	+ \bigo{|t|^{-1}} \\
\label{e12.at0.in}	
	\error_{12}(0) 
	 & = \bigo{|t|^{-1/2}}
\end{align}
where $A_{12}=A_{12}(\xi,\sgnt)$.
\end{proposition}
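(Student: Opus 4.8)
The plan is to read off $\error(0)$ from the Cauchy-integral representation of the rows of $\error$ established inside the proof of Lemma~\ref{lem:Err}, and to reduce everything to a residue computation on the circle $\partial\Uxi$. Starting from \eqref{row1} for the first row $\vect{e}_1 = (\error_{11},\ \error_{12})$ and setting $\lam = 0$,
\[
	\vect{e}_1(0) = (1\ 0) + \frac{1}{2\pi i} \int_{\Ske} \frac{\lp (1\ 0) + \vect{\mu}_1(z) \rp (\vke(z)-I)}{z} \, dz,
\]
I would first discard two negligible pieces. On $\Sigma_2 \setminus \Uxi$ the factor $\vke - I$ is exponentially small in $|t|$ by \eqref{error jump bound0}, so that part of the integral is negligible. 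The $\vect{\mu}_1$-dependent piece is controlled by Cauchy--Schwarz together with $\oldnorm[L^2(\Ske)]{\vect{\mu}_1} \lesssim |t|^{-1/2}$ from \eqref{mu bound} and $\oldnorm[L^2(\partial\Uxi)]{\vke - I} \lesssim |t|^{-1/2}$, giving a contribution of order $\bigo{t^{-1}}$ as long as $0$ stays bounded away from $\partial\Uxi$. Hence the leading behaviour comes entirely from $\tfrac{1}{2\pi i} \int_{\partial\Uxi} z^{-1}(1\ 0)(\vke(z)-I)\, dz$.

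Next I would insert the explicit jump. On $\partial\Uxi$ we have $\vke = \Nout \lp \NPC \rp^{-1} \Nout^{-1}$ from \eqref{error jump}, and \eqref{local model expand} gives $\lp\NPC\rp^{-1} = I - |8t|^{-1/2}(\lam-\xi)^{-1} A(\xi,\sgnt) + \bigo{t^{-1}}$, so that
\[
	\vke(z) - I = -\frac{|8t|^{-1/2}}{z-\xi}\, \Nout(z)\, A(\xi,\sgnt)\, \Nout(z)^{-1} + \bigo{t^{-1}}.
\]
The crucial point is that the row vector $g(z) := (1\ 0)\, \Nout(z)\, A(\xi,\sgnt)\, \Nout(z)^{-1}$ is analytic throughout the interior of $\Uxi$, since the poles of $\Nout$ lie in $\poles$, which is separated from $\Uxi$ by the choice of radius $\poledist/3$. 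Consequently the leading contribution is the residue sum of $-|8t|^{-1/2}\, g(z)\big/\lp z(z-\xi) \rp$ over the positively oriented circle, and the two regimes in the statement correspond exactly to which poles of $z(z-\xi)^{-1}$ are enclosed: when $0 \notin \Uxi$ only $z=\xi$ contributes, yielding $-|8t|^{-1/2} g(\xi)/\xi$; when $0 \in \Uxi$ the pole at $z=0$ adds $+|8t|^{-1/2} g(0)/\xi$, so that $\vect{e}_1(0) = (1\ 0) + |8t|^{-1/2}\lp g(0) - g(\xi)\rp/\xi + \bigo{t^{-1}}$.

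It then remains to evaluate the components of $g$ at the real points $\xi$ and $0$. Writing $A = \offdiag{-iA_{12}}{iA_{21}}$ from \eqref{Axi} and using $\det\Nout = 1$, a short computation shows that the first component of $g(\lam)$ at a real point equals $i A_{21} \Nout[12]\Nout[22] + i A_{12}\Nout[11]\Nout[21]$. Applying the symmetry relations of RHP~\ref{outermodel}, namely $\Nout[22](\lam) = \overline{\Nout[11](\lam)}$ and $\Nout[21](\lam) = \eps\lam\,\overline{\Nout[12](\lam)}$ for real $\lam$, together with $A_{21} = \eps\xi\,\overline{A_{12}}$ from \eqref{mod beta}, gives $g_1(\xi) = 2i\eps\xi\,\Re\lb A_{12}\Nout[11](\xi)\overline{\Nout[12](\xi)}\rb$; at $z=0$ the relation $\Nout[21](0) = 0$ collapses the expression to $g_1(0) = i\eps\xi\,\overline{A_{12}}\,\Nout[12](0)\overline{\Nout[11](0)}$. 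Substituting these and using $2|8t|^{-1/2} = |2t|^{-1/2}$ reproduces \eqref{e11.at0.out} and \eqref{e11.at0.in}, while \eqref{e12.at0.in} follows because the second component of $\lp g(0)-g(\xi)\rp/\xi$ is bounded, $g$ being analytic (hence Lipschitz) on $\Uxi$. The one delicate point is the uniformity of the discarded error terms as $0$ approaches $\partial\Uxi$, where the Cauchy kernel $1/z$ degenerates; this does not affect the regime relevant to Theorem~\ref{thm:long-time-gauge}, in which $\xi \to 0$ places $0$ near the centre of $\Uxi$, and this is the main technical care needed in an otherwise routine residue calculation.
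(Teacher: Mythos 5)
Your proof is correct and follows essentially the same route as the paper's: both start from the Cauchy-integral representation \eqref{row1} evaluated at $\lam=0$, discard the $\Sigma_2\setminus\Uxi$ and $\vect{\mu}_1$ contributions using \eqref{error jump bound0}, \eqref{error jump bound} and \eqref{mu bound}, insert the jump \eqref{error jump} with the expansion \eqref{local model expand}, and conclude with a residue computation at $z=\xi$ (plus $z=0$ when $0\in\Uxi$) simplified via the symmetry of RHP~\ref{outermodel} and \eqref{mod beta}. The paper compresses that residue calculation into one sentence; your write-up simply fills in the same details (including the Lipschitz-type bound yielding \eqref{e12.at0.in} and the honest remark about uniformity when $0$ nears $\partial\Uxi$, a point the paper also leaves implicit).
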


\begin{proof}
	Write $\vect{e}_1 = (\error_{11} \ \error_{12})$ for the first row of $\error$. Then starting from \eqref{row1} we use \eqref{error jump} and \eqref{local model expand} together with the bounds \eqref{error jump bound0}, \eqref{error jump bound} and \eqref{mu bound} to write
\[
	\vect{e}_1(0) = (1\ 0) - \frac{(1\ 0)}{|8t|^{1/2} 2\pi i} \int_{\partial \Uxi} 
	\frac{ \Nout(z) A(\xi,\sgnt) \Nout(z)^{-1}}{z(z-\xi)} dz + \bigo{|t|^{-1}}.
\]	
A residue calculation, using the symmetry condition of 
\sidenote{34}
\revisedtext{RHP~\ref{outermodel}}
and \eqref{mod beta} to simplify the result, completes the proof. 
\end{proof}

\subsection{The remaining \texorpdfstring{$\dbar$}{DBAR} problem}
\label{sec:dbar}

We are ready to consider the pure $\dbar$-Problem~\ref{dbar.n3} for $\nk{3}$. 
The next proposition describes  its large-time asymptotics.

\begin{proposition}
\label{prop:N3.est}
Suppose that $\rho \in H^{2,2}(\R)$ with $c \coloneqq \inf_{\lam \in \R} \left(1- \lam |\rho( \lam)|^2 \right) >0$ strictly.
Then, for sufficiently large time $|t|>0$, there exists a unique solution $\nk{3}(\lam;x,t)$ for $\dbar$-Problem~\ref{dbar.n3} with the property that 
\begin{equation}
\label{N3.exp}
\nk{3}(\lam;x,t) = I + \frac{1}{\lam}  \nk[1]{3}(x,t) + \littleo[\xi,t]{\frac{1}{\lam}}
\end{equation}
for $\lam=i y$ with $y \to +\infty$ where

\begin{equation}
\label{N31.est}
\left| \nk[1]{3}(x,t) \right| \lesssim |t|^{-3/4}
\end{equation}
 where the implied constant in \eqref{N31.est} is independent of $\xi$ and $t$ and uniform for 
$\rho$ in a bounded subset of $H^{2,2}(\R)$ with $\inf_{\lam \in \R} (1-\lam|\rho(\lam)|^2) \geq c>0$ for a fixed $c>0$.
\end{proposition}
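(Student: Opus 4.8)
The plan is to recast $\dbar$-Problem~\ref{dbar.n3} as an integral equation and solve it by Neumann series for large $|t|$. Since $\nk{3}$ has no jumps and no poles, the generalized Cauchy--Pompeiu formula shows that any bounded solution of $\dbar \nk{3} = \nk{3} \Wk{3}$ normalized by $\nk{3} \to (1,0)$ must satisfy
\begin{equation*}
	\nk{3}(\lam) = (1,0) + \frac{1}{\pi} \int_{\C} \frac{ \nk{3}(s) \Wk{3}(s)}{s - \lam}\, dA(s),
\end{equation*}
where $dA$ is Lebesgue measure on $\C$; conversely any $L^\infty$ solution of this equation solves the $\dbar$-problem. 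Writing $(Kf)(\lam) = \tfrac{1}{\pi}\int_{\C} \tfrac{f(s)\Wk{3}(s)}{s-\lam}\, dA(s)$, the equation is $(I-K)\nk{3} = (1,0)$. I would first record that, because $\dbar \mathcal{R}^{(2)} \equiv 0$ in a neighborhood of $\poles$, the factor $\Wk{3} = \Nrhp\, \dbar \mathcal{R}^{(2)}\, (\Nrhp)^{-1}$ only involves $\Nrhp$ on a set bounded away from the poles; hence Proposition~\ref{prop:Nrhp.bound} yields the pointwise bound $|\Wk{3}(s)| \lesssim |\dbar \mathcal{R}^{(2)}(s)|$ with constant depending only on $c_1,c_2$.

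For existence and uniqueness I would show $\oldnorm[L^\infty \to L^\infty]{K} \lesssim |t|^{-1/4}$, so that $(I-K)^{-1}$ exists by Neumann series once $|t|$ is large and $\nk{3} = (I-K)^{-1}(1,0) \in L^\infty$ with $\oldnorm[\infty]{\nk{3}}\lesssim 1$. This reduces to bounding $\int_{\C} \tfrac{|\dbar \mathcal{R}^{(2)}(s)|}{|s - \lam|}\, dA(s)$ uniformly in $\lam$. Using Corollary~\ref{cor:R2.bd}, on each sector $\Omega_k$ ($k=1,3,4,6$) the integrand is controlled by $(|\dbar \indicator| + |P_k| + \ell(s))\,e^{-8t|u||v|}$ with $s-\xi = u+iv$ and $\ell(s) = -\log|s-\xi|$ (resp.\ $(1+|s-\xi|^2)^{-1/2}$); peeling $\tfrac{1}{|s-\lam|}$ off by H\"older's inequality (an $L^2$-in-$u$ estimate for the regular factors, with $\oldnorm[L^2]{p_k'}\lesssim \oldnorm[H^{2,2}]{\rho}$, combined with a one-dimensional Cauchy-kernel bound) produces the $|t|^{-1/4}$ estimate uniformly in $\xi$ and $\lam$.

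For the asymptotics and the moment bound, for $\lam = iy$ with $y\to+\infty$ I would expand $\tfrac{1}{s-\lam} = -\tfrac{1}{\lam} - \tfrac{s}{\lam^2} - \cdots$ in the integral equation to obtain
\begin{equation*}
	\nk{3}(\lam) = I + \frac{1}{\lam}\nk[1]{3}(x,t) + \littleo[\xi,t]{\frac1\lam},
	\qquad
	\nk[1]{3}(x,t) = -\frac{1}{\pi}\int_{\C} \nk{3}(s)\, \Wk{3}(s)\, dA(s),
\end{equation*}
the $\littleo[\xi,t]{1/\lam}$ remainder following from dominated convergence once $\int_{\C}(1+|s|)|\Wk{3}(s)|\,dA<\infty$ (guaranteed by the Gaussian factor $e^{-8t|u||v|}$). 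Then $|\nk[1]{3}| \lesssim \oldnorm[\infty]{\nk{3}}\int_{\C} |\dbar \mathcal{R}^{(2)}|\, dA$, and the \emph{absence} of the singular kernel $\tfrac{1}{|s-\lam|}$ improves the exponent to $|t|^{-3/4}$. Concretely, in $\Omega_1$ the $|P_1|$-term contributes (after integrating $\int_0^u e^{-8tuv}\,dv = \tfrac{1-e^{-8tu^2}}{8tu}$) an integral dominated by $\int_0^\infty |p_1'(\xi+u)|\,\tfrac{1-e^{-8tu^2}}{8tu}\,du$; using $\tfrac{1-e^{-8tu^2}}{8tu}\le \min\{u,(8tu)^{-1}\}$, splitting at $u=(8t)^{-1/2}$, and applying Cauchy--Schwarz gives $\bigo{|t|^{-3/4}}$. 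The logarithmic term is handled identically, and the $\dbar\indicator$ term contributes $\bigo{e^{-ct}}$ since its support sits at fixed distance from $\R$. Summing the four sectors gives \eqref{N31.est}.

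The main obstacle is the delicate two-dimensional estimate in the sectors, where the slow (logarithmic) growth of $\dbar \mathcal{R}^{(2)}$ near the stationary point $\xi$ and its merely $L^2$ regularity (inherited from $\rho\in H^{2,2}(\R)$) must be balanced against the Gaussian decay $e^{-8t|u||v|}$: the split at $u\sim |t|^{-1/2}$ and the use of H\"older/Cauchy--Schwarz in precisely the right variable are what produce the sharp exponents $|t|^{-1/4}$ (operator) and $|t|^{-3/4}$ (moment). Keeping all constants uniform in $\xi\in\R$ and in $\rho$ over bounded subsets of $H^{2,2}(\R)$ is the remaining point requiring care, and is exactly what Corollary~\ref{cor:R2.bd} and Proposition~\ref{prop:Nrhp.bound} are designed to supply.
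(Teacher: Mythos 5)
Your proposal is correct and follows essentially the same route as the paper's own proof: recast the $\dbar$-problem as an integral equation via the solid Cauchy transform, prove the $|t|^{-1/4}$ operator norm bound from Corollary~\ref{cor:R2.bd} by H\"older/Cauchy--Schwarz against the Cauchy kernel, extract $\nk[1]{3}$ as the first moment by expanding along the imaginary axis with dominated convergence, and bound $\int_\C |\Wk{3}|\,dm \lesssim |t|^{-3/4}$ sector by sector (your integration in $v$ first with a split at $u \sim |t|^{-1/2}$ is a harmless variant of the paper's Schwarz-in-$u$ computation, and your exponential bound on the $\dbar\indicator$ term is even slightly stronger than needed). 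One cosmetic slip: the Cauchy--Pompeiu kernel should be $\tfrac{1}{\lam - s}$ (equivalently, the area term carries a minus sign), so your equation as written corresponds to $\dbar \nk{3} = -\nk{3}\Wk{3}$; since you flip the sign compensatingly in the moment formula and every estimate is on absolute values, nothing downstream is affected.
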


\begin{proposition}\label{prop:n3 at 0}
Given the same assumptions as Proposition~\ref{prop:N3.est}  and for sufficiently large times $|t|>0$, the unique solution $\nk{3}(\lam;x,t)$ of $\dbar$-Problem~\ref{dbar.n3} satisfies
\begin{equation*}
	\nk[11]{3}(0;x,t) = 1 + \bigo{|t|^{-3/4}}
\end{equation*}
where the implied constant is independent of $\xi$ and $t$ and is uniform for $\rho$ in a bounded subset of $H^{2,2}(\R)$ with $\inf_{\lam \in \R} (1-\lam|\rho(\lam)|^2) \geq c>0$ for a fixed $c>0$. 	
\end{proposition}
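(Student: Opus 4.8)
The plan is to represent $\nk{3}$ through the integral equation underlying the solution of $\dbar$-Problem~\ref{dbar.n3} constructed in the proof of Proposition~\ref{prop:N3.est}, and then to evaluate it at $\lam = 0$. Since $\nk{3}$ is continuous with $\dbar \nk{3} = \nk{3}\Wk{3}$ and $\nk{3}\to(1,0)$, the solid Cauchy (Cauchy--Pompeiu) representation gives
\[
 \nk{3}(\lam) = (1,0) - \frac{1}{\pi}\iint_{\C}\frac{(\nk{3}\Wk{3})(s)}{s-\lam}\,dA(s),
\]
so that, taking the first component at $\lam = 0$,
\[
 \nk[11]{3}(0;x,t) = 1 - \frac{1}{\pi}\iint_{\C}\frac{(\nk{3}\Wk{3})_{11}(s)}{s}\,dA(s).
\]
The whole task is therefore to show this integral is $\bigO{t^{-3/4}}$. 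Away from the origin (say $|s|\ge\delta$ for a fixed small $\delta$) the factor $|s|^{-1}$ is bounded and the contribution is controlled by $\iint|\nk{3}\Wk{3}|\,dA\lesssim t^{-3/4}$, exactly as in Proposition~\ref{prop:N3.est}, using $\norm[\infty]{\nk{3}}\lesssim1$, $\norm[\infty]{(\Nrhp)^{\pm1}}\lesssim1$ (Proposition~\ref{prop:Nrhp.bound}), and the estimates of Corollary~\ref{cor:R2.bd} on $\dbar\mathcal{R}^{(2)}$. The only genuinely new point is the apparent non-integrable growth of the kernel $1/s$ at $s = 0$.

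The key observation is that this singularity is cancelled by the matrix structure of $\Wk{3}(s) = \Nrhp(s)\,\dbar\mathcal{R}^{(2)}(s)\,\Nrhp(s)^{-1}$ near $s = 0$. Recall from $\dbar$/Riemann-Hilbert Problem~\ref{rhp.n2}(iv) that $\dbar\mathcal{R}^{(2)}$ is strictly lower triangular in $\Omega_1\cup\Omega_4$, strictly upper triangular in $\Omega_3\cup\Omega_6$, and vanishes elsewhere. A direct computation using $\det\Nrhp = 1$ shows that in the lower-triangular sectors both $(\Wk{3})_{11}$ and $(\Wk{3})_{21}$ carry a factor of the $(2,1)$-entry of $\dbar\mathcal{R}^{(2)}$, i.e.\ $\mp\dbar R_{1}\,e^{-2it\theta}$ resp.\ $\dbar R_{4}\,e^{-2it\theta}$, and by Corollary~\ref{cor:R2.bd} one has $\dbar R_{1,4}(s) = \bigO{s}$ as $s\to0$ in $\Omega_1\cup\Omega_4$; while in the upper-triangular sectors both entries carry a factor $\Nrhp[21](s)$, and the symmetry relation $\Nrhp[21](s) = \eps s\,\overline{\Nrhp[12](\bar s)}$ of RHP~\ref{rhp.Nrhp}(i), together with the boundedness of $\Nrhp[12]$, forces $\Nrhp[21](s) = \bigO{s}$. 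In either case $(\nk{3}\Wk{3})_{11}(s)/s$ extends to a bounded function near the origin, dominated there by $|\dbar\mathcal{R}^{(2)}(s)|$ up to a bounded factor.

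With the singularity removed, the near-origin piece satisfies
\[
 \iint_{|s|<\delta}\Bigl|\tfrac{(\nk{3}\Wk{3})_{11}(s)}{s}\Bigr|\,dA(s)
 \;\lesssim\; \iint_{|s|<\delta}\bigl|\dbar\mathcal{R}^{(2)}(s)\bigr|\,dA(s),
\]
which is controlled by the same weighted Cauchy--Schwarz estimates (splitting $\dbar R_k$ into its $\dbar\indicator$, $P_k$, and logarithmic/decay parts, and exploiting the Gaussian factor $e^{-8t|u||v|}$ of Corollary~\ref{cor:R2.bd}) that yield the $t^{-3/4}$ bound in Proposition~\ref{prop:N3.est}. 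Hence the full integral is $\bigO{t^{-3/4}}$, uniformly in $\xi$ and in $\rho$ ranging over the stated bounded class.

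I expect the main obstacle to be the uniformity-in-$\xi$ bookkeeping, specifically verifying that for \emph{every} $\xi\in\R$ the origin lies only in sectors where one of the two vanishing mechanisms applies. For $\xi>0$ the origin lies on the real axis between $\Omega_3$ (upper triangular, handled by the $\Nrhp[21]$ symmetry) and $\Omega_4$ (lower triangular, handled by $\dbar R_4 = \bigO{s}$); for $\xi<0$ it lies between $\Omega_1$ and $\Omega_6$ with the roles reversed; and for $\xi=0$ all four relevant sectors meet at the origin, each covered by exactly one mechanism. This is precisely the use of the $\bigO{\lam}$ vanishing of $\dbar\mathcal{R}^{(2)}$ anticipated in the remark following Corollary~\ref{cor:R2.bd}, now supplemented by the symmetry-induced vanishing of $\Nrhp[21]$; together they are what make the estimate survive uniformly as the evaluation point $\lam=0$ collides with the stationary-phase point $\xi$, which is the regime ($|\xi|\le M t^{-1/8}$) where a cruder bound would only give $\bigO{t^{-1/2}}$.
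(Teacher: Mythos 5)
Your proposal is correct and follows essentially the same route as the paper's proof: evaluate the solid-Cauchy integral representation at $\lam=0$, kill the $1/z$ singularity in the first column of $\Wk{3}$ by the two mechanisms you identify --- the $\bigO{\lam}$ vanishing of $\dbar R_1,\dbar R_4$ at the origin in the lower-triangular sectors, and the factor of $z$ supplied by the symmetry $N_{21}(z)=\eps z\,\overline{N_{12}(\bar z)}$ in the upper-triangular sectors --- and then reuse the $\bigO{|t|^{-3/4}}$ bound on $\int_{\C}|\dbar\mathcal{R}^{(2)}|\,dm$ established in Proposition~\ref{prop:N3.est}. The only cosmetic difference is that you implement the cancellation with $\Nrhp$ and Proposition~\ref{prop:Nrhp.bound}, whereas the paper writes the entries of $\Wk{3}$ in terms of $\Nout$ and invokes Lemma~\ref{lem:outer.bound}; both versions rest on the identical symmetry and boundedness properties, so the arguments coincide.
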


\begin{remark}
The remainder estimate in \eqref{N3.exp} need not be (and is not) uniform in $\xi$ and $t$; what matters for the proof of Theorem \ref{thm:long-time} is that the implied constant in the estimate \eqref{N31.est} for $\nk[1]{3}(x,t)$ is independent of $\xi$ and $t$.
\end{remark}

To prove Proposition~\ref{prop:N3.est} we recast $\dbar$-Problem~\ref{dbar.n3} as a Fredholm-type integral equation using the solid Cauchy transform
\[
	(Pf)(\lam) = \frac{1}{\pi} \int_\C \frac{1}{\lam - z} f(z) \, dm(z) 
\]
where $dm$ denotes Lebesgue measure on $\C$.  The following lemma is standard.

\begin{lemma}\label{lem:DNLS.dbar.int}
A continuous, bounded row vector-valued function $\nk{3}(\lam;x,t)$ solves $\dbar$~Problem~\ref{dbar.n3} if and only if
\begin{equation}
\label{DNLS.dbar.int}
\nk{3}(\lam;x,t) = (1,0) + \frac{1}{\pi} \int_\C \frac{1}{\lam - z} \nk{3}(z;x,t) \Wk{3}(z;x,t) \, dm(z).
\end{equation}
\end{lemma}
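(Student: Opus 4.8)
The plan is to prove the two implications separately, relying on the single analytic fact that the solid Cauchy transform $P$ is a right inverse of $\dbar$, i.e. $\dbar (Pf) = f$ (in the distributional sense, applied entrywise to row vectors) for every $f$ for which $Pf$ is defined. Before either direction I would record the integrability of the right-hand side $f := \nk{3}\Wk{3}$ that makes $Pf$ meaningful: $\nk{3}$ is bounded by hypothesis, $\Nrhp$ and $(\Nrhp)^{-1}$ are bounded by Proposition~\ref{prop:Nrhp.bound}, and $\dbar\mathcal{R}^{(2)}$ obeys the pointwise bounds of Corollary~\ref{cor:R2.bd}. Since those bounds exhibit at worst a logarithmic singularity at $\xi$ together with rapid decay away from $\xi$, one concludes $f \in L^1(\C) \cap L^p(\C)$ for some $p > 2$. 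Splitting the kernel $1/(\lam-z)$ into the regions $|z-\lam| < 1$ and $|z-\lam| \ge 1$ and using the local integrability of $1/|z-\lam|$ in the plane, it follows by the standard mapping properties of $P$ that $Pf$ is bounded and continuous and that $Pf(\lam) \to 0$ as $|\lam| \to \infty$ (by dominated convergence, the kernel vanishing pointwise for fixed $z$).

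For the backward implication I would assume $\nk{3}$ satisfies \eqref{DNLS.dbar.int} and apply $\dbar$ to both sides. Since $\dbar(1,0) = 0$ and $\dbar(Pf) = f = \nk{3}\Wk{3}$, this yields property (ii) of $\dbar$-Problem~\ref{dbar.n3}, while property (i) is precisely the vanishing of $Pf$ at infinity established above.

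For the forward implication I would set
$$ g(\lam) = \nk{3}(\lam) - (1,0) - P\big(\nk{3}\Wk{3}\big)(\lam) $$
and compute $\dbar g = \dbar \nk{3} - \nk{3}\Wk{3} = 0$, using that $\nk{3}$ solves the $\dbar$-problem. Hence each entry of $g$ is entire. Because $\nk{3} \to (1,0)$ by property (i) and $P(\nk{3}\Wk{3}) \to 0$ at infinity, $g$ is a bounded entire row vector tending to $0$; Liouville's theorem then forces $g \equiv 0$, which is exactly \eqref{DNLS.dbar.int}.

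The only non-formal point—hence the main obstacle—is the verification that $f = \nk{3}\Wk{3}$ lies in a space on which $P$ produces a continuous function vanishing at infinity. Once the estimates of Corollary~\ref{cor:R2.bd} and Proposition~\ref{prop:Nrhp.bound} are in hand this reduces to the textbook continuity and decay properties of the solid Cauchy transform on $L^1 \cap L^p$, $p>2$; the remainder is the formal $\dbar$-calculus together with Liouville's theorem.
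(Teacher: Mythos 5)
Your proposal is correct, and it is precisely the standard argument the paper invokes when it introduces the lemma with ``The following lemma is standard'' and omits the proof: the identity $\dbar(Pf)=f$ for the solid Cauchy transform, Liouville's theorem applied to $g=\nk{3}-(1,0)-P\bigl(\nk{3}\Wk{3}\bigr)$ for one direction, and differentiation of \eqref{DNLS.dbar.int} for the other. Your attention to the one non-formal point---that $\Wk{3}\in L^1\cap L^p$, $p>2$, via Proposition~\ref{prop:Nrhp.bound} and Corollary~\ref{cor:R2.bd}, so that $P\bigl(\nk{3}\Wk{3}\bigr)$ is continuous, bounded, and vanishes at infinity---is exactly the verification the paper's later estimates (e.g.\ the bound on $\int_\C|\Wk{3}|\,dm$ in Lemma~\ref{lemma:N3.exp}) implicitly supply.
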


\begin{proof}[Proof of Proposition \ref{prop:N3.est}, given Lemmas \ref{lemma:KW}--\ref{lemma:N31.est}]
As in \cite{BJM16}  and \cite{DM08}, we   first show that, for large times, the integral operator $K_W$ 
defined by
\begin{equation*} 
	\left( K_W f \right)(\lam) = \frac{1}{\pi} \int_\C \frac{1}{\lam-z} f(z) \Wk{3}(z) \, dm(z)
\end{equation*}
(suppressing the parameters $x$ and $t$) 
obeys the estimate
\begin{equation} \label{dbar.int.est1}
	\norm[L^\infty \to L^\infty] {K_W}\lesssim |t|^{-1/4} 
\end{equation}
where the implied constants depend only on $\norm[H^{2,2}]{\rho}$ and 
$$c : = \inf_{\lam \in \R} \left( 1- \eps \lam|\rho(\lam)|^2 \right)$$
and, in particular, are independent of $\xi$ and $t$.   
This is the object of Lemma \ref{lemma:KW}.
In particular, this shows that the solution formula
\begin{equation}
\label{N3.sol}
\nk{3} = (I-K_W)^{-1} (1,0) 
\end{equation}
makes sense and defines an $L^\infty$ solution of \eqref{DNLS.dbar.int} bounded uniformly in $ \xi \in \R$ and $\rho$ in a bounded subset of $H^{2,2}(\R)$ with $c>0$.  

We then prove that the solution $\nk{3}(\lam;x,t)$ has a large-$\lam$ asymptotic expansion of the form \eqref{N3.exp} where $\lam \to \infty$ along the \emph{positive imaginary axis} (Lemma \ref{lemma:N3.exp}). Note that, for such $\lam$, we can bound $|\lam-z|$ below by a constant times $|\lam|+|z|$.
The remainder need not be bounded uniformly in $\xi$. Finally, we  prove  estimate \eqref{N31.est}.
\end{proof}

\begin{proof}[Proof of Proposition~\ref{prop:n3 at 0}, given Lemmas \ref{lemma:KW}--\ref{lemma:N31.est}]
From \eqref{DNLS.dbar.int} we have 
\[
	\nk[11]{3}(0) = 1 - \frac{1}{\pi} \int_\C  
	\frac{\nk[11]{3}(z)\Wk[11]{3}(z)+ \nk[12]{3}(z)\Wk[21]{3}(z)}{z} dm(z).
\] 
Computing $\Wk[11]{3}$ {and $\Wk[21]{3}$} using \eqref{W3def} and the symmetry in 
\sidenote{34}
\revisedtext{RHP~\ref{outermodel},}
recalling that $\dbar \mathcal{R}^{(2)}$ has zeros on its diagonal, gives
\begin{align*}
	&\frac{\Wk[11]{3}(z)}{z}  = 
	\Nout[12](z) \overline{\Nout[11](\bar z)} \frac{ \dbar \mathcal{R}_{21}^{(2)}(z)}{z}
   +\eps \Nout[11](z) \overline{\Nout[12](\bar z)} \dbar \mathcal{R}_{12}^{(2)}(z). \\
&{
	\frac{\Wk[21]{3}(z)}{z} =
	\overline{\Nout[11](\bar z)}^2 \frac{\dbar \mathcal{R}_{21}^{(2)}(z)}{z}
   -z \overline{\Nout[12](\bar z)}^2 \dbar \mathcal{R}_{12}^{(2)}(z) 
}
\end{align*}
{Equations \eqref{dbar.int.est1} and \eqref{N3.sol} imply that $| \nk{3}(z) | \lesssim 1$. Using} Lemma~\ref{lem:outer.bound} then gives
\[
	{\left| \nk{3}(0) - 1 \right| \lesssim }
		\int_\C \left| \frac{ \dbar \mathcal{R}_{21}^{(2)}(z)}{z} \right| 
		+ \left|\dbar \mathcal{R}_{12}^{(2)}(z) \right| dm(z)  = \bigo{|t|^{-3/4}}.
\]
Where the last equality uses Corollary~\ref{cor:R2.bd} to control the size of each term in the integrand, allowing identical estimates as those used to bound $\int |\Wk{3}(z)| dm(z)$ in 
{  Proposition ~\ref{prop:N3.est}} to establish the result. 
\end{proof}

Estimates \eqref{N3.exp}, \eqref{N31.est}, and \eqref{dbar.int.est1} rest on the  bounds stated in three  lemmas.

\begin{lemma} \label{lemma:KW}
Suppose that $\rho \in H^{2,2}(\R)$ and 
$$c := \inf_{\lam \in \R} \left(1- \eps \lam |\rho(\lam)|^2 \right) >0$$ strictly.
Then, the estimate \eqref{dbar.int.est1} holds, where the implied constants depend on $\norm[H^{2,2}]{\rho}$ and $c$.
\end{lemma}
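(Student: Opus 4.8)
The plan is to reduce the operator bound to a scalar area integral and then feed in the pointwise bounds on $\dbar\mathcal{R}^{(2)}$ furnished by Corollary~\ref{cor:R2.bd}. For $f\in L^\infty(\C)$ and any $\lam\in\C$,
\[
  \left| (K_W f)(\lam) \right|
  \le \oldnorm{f}_{L^\infty}\, \frac{1}{\pi}\int_\C \frac{\left|\Wk{3}(z)\right|}{|\lam - z|}\,dm(z),
\]
so it suffices to show $\int_\C |\Wk{3}(z)|\,|\lam-z|^{-1}\,dm(z)\lesssim |t|^{-1/4}$ uniformly in $\lam$. Since $\Wk{3}=\Nout\,\dbar\mathcal{R}^{(2)}\,(\Nout)^{-1}$ and, by Corollary~\ref{cor:R2.bd}, $\dbar\mathcal{R}^{(2)}$ vanishes whenever $\dist(z,\poles)\le \poledist/3$, the factors $\Nout$ and $(\Nout)^{-1}$ are only ever evaluated on the support of $\dbar\mathcal{R}^{(2)}$, where they are bounded by Lemma~\ref{lem:outer.bound} (taking $\mathcal{B}_\poles$ to be the $\poledist/3$-neighborhood of $\poles$). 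Hence $|\Wk{3}(z)|\lesssim |\dbar\mathcal{R}^{(2)}(z)|$, and everything reduces to estimating the scalar kernel $\int_\C |\dbar\mathcal{R}^{(2)}(z)|\,|\lam-z|^{-1}\,dm(z)$.

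Next I would split the integral over the sectors $\Omega_k$ and insert the bounds of Corollary~\ref{cor:R2.bd}. The $|\dbar\indicator|$ contribution is supported in annuli about the poles that stay bounded away from both $\R$ and $\xi$, so there $e^{-8t|u||v|}\le e^{-ct}$ with $c>0$ and this term is exponentially small. For the dominant term I parametrize $z=\xi+u+iv$ inside a sector (say $0\le v\le u$), so that $|z-\xi|^2=u^2+v^2$ and $e^{-8t|u||v|}\le e^{-8tv^2}$. Fixing $v$ and applying Cauchy--Schwarz in $u$, together with $\int_\R |\lam-z|^{-2}\,du=\pi/|\Im\lam-v|$, gives
\[
  \int_\R \frac{|P_k(z)|\,e^{-8tuv}}{|\lam-z|}\,du
  \;\lesssim\; \oldnorm{\rho}_{H^{2,2}(\R)}\,(1+v)\,\frac{e^{-8tv^2}}{|\Im\lam - v|^{1/2}}.
\]
The crucial uniformity point is that the weighted $L^2$-norm of $P_k$ along the horizontal line $\Im z=v$ is controlled by $\oldnorm{\rho}_{H^{2,2}(\R)}$ \emph{uniformly in $\xi$}: writing $w=\xi+u$ for the intrinsic real coordinate, the explicit factor $\lam^{m_k}$ in $P_k$ is absorbed because $H^{2,2}(\R)$ controls $\langle w\rangle^2\rho$ and $\langle w\rangle\rho'$, whence $\oldnorm{w\,p_k'(w)}_{L^2(dw)}\lesssim\oldnorm{\rho}_{H^{2,2}(\R)}$ with no $\xi$-dependence (the extra factor $1+v$ for $m_k=1$ only sharpens the subsequent Gaussian $v$-integral).

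Integrating in $v$ then reduces matters to the elementary estimate $\int_0^\infty (1+v)\,e^{-8tv^2}|\Im\lam-v|^{-1/2}\,dv\lesssim |t|^{-1/4}$, uniform in $\lam$, which I would prove by splitting into $|\Im\lam-v|\le 1$ (Hölder with exponents $p>2$, $q<2$ on the bounded interval, giving $|t|^{-1/(2p)}\le|t|^{-1/4}$) and $|\Im\lam-v|>1$ (where $|\Im\lam-v|^{-1/2}\le 1$ and the Gaussian integrates to $O(|t|^{-1/2})$). The remaining contributions, coming from the logarithmic factor $-\log|z-\xi|$ near $\xi$ and the $(1+|z-\xi|^2)^{-1/2}$ factor far from $\xi$, carry no $L^2$ constraint and can instead be handled by the $L^p$--$L^q$ Hölder inequality for a fixed $p>2$, yielding decay of order $|t|^{-1/2}$ (up to logarithms), which is subdominant. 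Collecting the three pieces establishes \eqref{dbar.int.est1}.

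The main obstacle is not the existence of decay but obtaining the \emph{sharp} power $|t|^{-1/4}$ together with uniformity in both $\lam$ and $\xi$. The rate $|t|^{-1/4}$ is forced by the $P_k$-term: the Cauchy--Schwarz step necessarily trades half a power of $t$ because $\rho$ (and hence $p_k'$) is only controlled in $L^2$, so no faster decay can come from that piece. Uniformity in $\xi$ is the delicate bookkeeping, and it rests precisely on the observation above that all weighted norms are measured in the intrinsic variable $w=\xi+u$, so that moving the stationary point does not degrade the $H^{2,2}$ bounds; the weight $\lam^{m_k}$ hidden in $P_k$ must be tracked carefully through this reduction. Once this uniform $|t|^{-1/4}$ bound is in hand, it feeds directly into the Neumann-series solution \eqref{N3.sol} and the moment estimates used in the proof of Proposition~\ref{prop:N3.est}.
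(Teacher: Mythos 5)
Your architecture is the same as the paper's: reduce $\norm[L^\infty \to L^\infty]{K_W}$ to the scalar integral $\int_\C |\Wk{3}(z)|\,|\lam-z|^{-1}\,dm(z)$, bound $|\Wk{3}|\lesssim|\dbar\mathcal{R}^{(2)}|$ by uniform boundedness of the conjugating matrices, split according to Corollary~\ref{cor:R2.bd}, and handle the dominant $P_k$-piece by Cauchy--Schwarz in $u$ against the kernel bound $\norm[L^2(v,\infty)]{|\lam-z|^{-1}}\lesssim |v-\beta|^{-1/2}$, followed by a Gaussian $v$-integral; your $\xi$-uniform control of $\norm[L^2]{w\,p_k'(w)}$ by $\norm[H^{2,2}]{\rho}$ in the intrinsic variable $w=\xi+u$ is exactly the right way to absorb the weight $\lam^{m_k}$. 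Two deviations from the paper: first, by \eqref{W3def} the conjugation in $\Wk{3}$ is by $\Nrhp$, not $\Nout$, so the boundedness you need is Proposition~\ref{prop:Nrhp.bound} (which also requires the local-model and error bounds), not Lemma~\ref{lem:outer.bound} alone; this is a citation fix, not a structural one. Second, your disposal of the $\dbar\indicator$ term by exponential smallness (on $\mathrm{supp}\,\dbar\indicator\cap\Omega_k$ both $|u|$ and $|v|$ are bounded below, since $|u|\ge|v|\ge c_0$ there) is valid and in fact cleaner than the paper, which runs that term through the same $t^{-1/4}$ machinery.

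The genuine gap is in your final $v$-integral. You claim $\int_0^\infty (1+v)e^{-8tv^2}|\Im\lam-v|^{-1/2}\,dv \lesssim |t|^{-1/4}$ via H\"older with exponents $p>2$, $q<2$ on $\{|\Im\lam - v|\le 1\}$, ``giving $|t|^{-1/(2p)}\le |t|^{-1/4}$.'' That inequality is backwards: for $p>2$ one has $1/(2p)<1/4$, hence $|t|^{-1/(2p)}\ge|t|^{-1/4}$ for $|t|\ge 1$. Moreover no fixed-exponent H\"older can rescue this, since as $p\downarrow 2$ the companion factor $\norm[L^q(|s|\le1)]{|s|^{-1/2}}$ diverges; your argument therefore only proves $|t|^{-1/4+\epsilon}$, not \eqref{dbar.int.est1} as stated. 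The correct step---implicit in the paper's citation of \cite[proof of Proposition C.1]{BJM16}---is the scaling substitution $w=|t|^{1/2}v$, which gives
\[
\int_0^\infty (1+v)\,e^{-8tv^2}\,|\Im\lam - v|^{-1/2}\,dv
= |t|^{-1/4}\int_0^\infty \bigl(1+|t|^{-1/2}w\bigr)\,e^{-8w^2}\,|w-\beta'|^{-1/2}\,dw,
\qquad \beta' = |t|^{1/2}\Im\lam,
\]
and the last integral is bounded uniformly in $\beta'\in\R$ (split at $|w-\beta'|=1$: near the singularity use $\int_{|s|\le1}|s|^{-1/2}ds=4$ and $e^{-8w^2}\le1$; away from it use $|w-\beta'|^{-1/2}\le1$ and the Gaussian). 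With this replacement, the rest of your proof goes through and coincides with the paper's.
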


\begin{proof}
To prove \eqref{dbar.int.est1}, first note that
\begin{align*}
 	\norm[\infty]{K_W f} & \leq \norm[\infty]{f} \int_\C \frac{1}{|\lam-z|}|\Wk{3}(z)| \, dm(z) 
\end{align*}
where, using Proposition~\ref{prop:Nrhp.bound}, 
\[
	|\Wk{3}(z)| \leq 
	\norm[\infty]{\Nrhp} \norm[\infty]{(\Nrhp)^{-1}} \left| \dbar \mathcal{R}^{(2)}\right|
	\lesssim \left| \dbar \mathcal{R}^{(2)}\right|.
\]
We will prove the estimate for $\eta = \sgnt = +1$ and $z \in \Omega_1$ since estimates for $\eta = -1$ and $\Omega_3$, $\Omega_4$, and $\Omega_6$ are similar.  
Setting $\lam =\alpha+i\beta$ and $z-\xi=u+iv$, the region $\Omega_1$ corresponds to 
\begin{equation}\label{omega.1}
	\Omega_1= \left\{ (\xi+u,v): v \geq 0, \, v \leq u < \infty\right\}.
\end{equation}
We then have from Corollary~\ref{cor:R2.bd} that
\[
	\int_{\Omega_1}  \frac{1}{|\lam-z|} |\Wk{3}(z)| \, dm(z)  \lesssim  I_1 + I_2 + I_3 + I_4
\]
where 
\begin{align*}
	I_1 &= 
	 \int_0^\infty \int_v^\infty \frac{1}{|\lam-z|} |p_1'(u)| e^{-8tuv} \, du \, dv 
	\\[5pt]
	I_2 &= 
	 \int_0^1 \int_v^1 \frac{1}{|\lam-z|} \left| \log(u^2 + v^2) \right| e^{-8tuv} \,du\,dv
	\\[5pt]
	I_3 &= 
	 \int_0^\infty \int_v^\infty \frac{1}{|\lam-z|} \frac{1}{1+ |z-\xi|} e^{-8tuv} \, du \, dv
	\\[5pt] 
	I_4 &= 
	\int_0^\infty \int_v^\infty \frac{1}{|\lam-z|} |\indicator(z)| e^{-8tuv} \, du \, dv. 
\end{align*}
We recall from \cite[proof of Proposition C.1]{BJM16} the bound
\sidenote{47}
\begin{equation*} 
\revisedtext{\norm[L^2(v,\infty)]{\frac{1}{\lam - z}} = \Big(  \int_{v+\xi}^\infty \frac{du}{(u-\alpha)^2+(v-\beta)^2}           \Big)^{1/2} }
	\leq \frac{\pi^{1/2}}{|v-\beta|^{1/2}}
\end{equation*}
where $z= \xi + u+iv$ and $\lam =\alpha + i \beta$. Using this bound and Schwarz's inequality on the $u$-integration we may bound $I_1$  by constants times
\[
	(1+\norm[2]{p_1'})  \int_0^\infty \frac{1}{|v-\beta|^{1/2}} e^{-tv^2} \, dv \lesssim t^{-1/4}
\]
(see for example \cite[proof of Proposition C.1]{BJM16} for the estimate)
For $I_2$, we remark that $ |\log(u^2+v^2)| \lesssim 1+ |\log(u^2)|$ and that $1+ |\log(u^2)|$ is square-integrable on $[0,1]$. We can then argue as before to conclude that
$ I_2 \lesssim t^{-1/4}$. Similarly, the inequality 
\[ 
	\frac{1}{1+|z-\xi|} \leq \frac{1}{1+u}
\] 
and the finite support of $\indicator$ shows that we can bound $I_3$ and $I_4$ in a similar way.

It now follows that
\[ 
	\int_{\Omega_1} \frac{1}{|\lam-z|} |\Wk{3}(z)| \, dm(z) \lesssim t^{-1/4} 
\]	
which, together with similar estimates for the integrations over $\Omega_3$, $\Omega_4$, and $\Omega_6$,  proves \eqref{dbar.int.est1}.
\end{proof}

\begin{lemma}
\label{lemma:N3.exp}
For $\lam=iy$, as $y \to +\infty$, the expansion \eqref{N3.exp} holds with 
\begin{equation}
\label{N3.1}
\nk[1]{3}(x,t) = \frac{1}{\pi} \int_{\C} \nk{3}(z;x,t) \Wk{3}(z;x,t) \, dm(z) . 
\end{equation}
\end{lemma}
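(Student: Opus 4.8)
The plan is to start from the integral representation \eqref{DNLS.dbar.int} furnished by Lemma~\ref{lem:DNLS.dbar.int} and to extract the leading $1/\lam$ behaviour by expanding the Cauchy kernel. Writing $\frac{1}{\lam - z} = \frac{1}{\lam} + \frac{z}{\lam(\lam - z)}$ and inserting this into \eqref{DNLS.dbar.int} gives
$$
\nk{3}(\lam;x,t) = (1,0) + \frac{1}{\lam} \cdot \frac{1}{\pi}\int_\C \nk{3}(z;x,t)\Wk{3}(z;x,t)\,dm(z) + \frac{1}{\lam} E(\lam;x,t),
$$
where $E(\lam;x,t) = \frac{1}{\pi}\int_\C \frac{z}{\lam - z}\,\nk{3}(z;x,t)\Wk{3}(z;x,t)\,dm(z)$. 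The first integral is precisely $\nk[1]{3}(x,t)$ as defined in \eqref{N3.1}, so the whole statement reduces to proving that $E(\lam;x,t) \to 0$ as $\lam = iy$ with $y \to +\infty$; this identifies the remainder $\frac{1}{\lam}E$ as $\littleo[\xi,t]{1/\lam}$ and establishes \eqref{N3.exp}.

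For the vanishing of $E$ I would invoke dominated convergence, which requires an integrable dominating function and a pointwise limit. Two facts supply the dominating function. First, the solution formula \eqref{N3.sol} together with the operator bound \eqref{dbar.int.est1} yields $\|\nk{3}\|_\infty \lesssim 1$. Second, the pointwise bounds on $\dbar\mathcal{R}^{(2)}$ from Corollary~\ref{cor:R2.bd}, run exactly as the estimates $I_1,\dots,I_4$ in the proof of Lemma~\ref{lemma:KW} but with the kernel factor $|\lam - z|^{-1}$ removed, give $\int_\C |\Wk{3}(z)|\,dm(z) < \infty$; hence $\nk{3}\Wk{3} \in L^1(\C)$. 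The final ingredient is to control the extra factor $z/(\lam - z)$: restricting to $\lam = iy$ on the positive imaginary axis and recalling that the support of $\Wk{3}$ lies in $\Omega_1 \cup \Omega_3 \cup \Omega_4 \cup \Omega_6$, i.e.\ in sectors clustered about the real axis near $\xi$, one has the lower bound $|\lam - z| \gtrsim |\lam| + |z|$ recorded in the proof of Proposition~\ref{prop:N3.est}. Consequently $\left|\frac{z}{\lam - z}\right| \lesssim \frac{|z|}{|\lam| + |z|} \leq 1$ uniformly in $z$ on the support of $\Wk{3}$, so the integrand of $E$ is dominated by a constant multiple of $|\nk{3}(z)\Wk{3}(z)|$.

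With the dominating function in place, the conclusion follows at once: for each fixed $z$ in the support of $\Wk{3}$ we have $\frac{z}{\lam - z} \to 0$ as $|\lam| \to \infty$, so the integrand tends to $0$ pointwise and dominated convergence gives $E(\lam;x,t) \to 0$. I would stress that this limit is taken for fixed $(x,t)$ (equivalently fixed $\xi$ and $t$), which is exactly why the remainder in \eqref{N3.exp} carries the subscript $\littleo[\xi,t]{\,\cdot\,}$ and is not claimed to be uniform. The one genuinely essential point—rather than routine bookkeeping—is the separation estimate $|\lam - z| \gtrsim |\lam| + |z|$; it is the reason the expansion is taken along the imaginary axis rather than along a generic ray, since there the direction of $\lam$ stays bounded away from the (near-real) directions occupied by the support of $\Wk{3}$, preventing the Cauchy kernel from concentrating and ensuring the uniform bound on $z/(\lam-z)$.
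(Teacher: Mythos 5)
Your proposal is correct and takes essentially the same route as the paper's proof: the same splitting of the Cauchy kernel $\tfrac{1}{\lam-z}=\tfrac{1}{\lam}+\tfrac{z}{\lam(\lam-z)}$, the same identification of $\nk[1]{3}$, and the same dominated-convergence argument using $\|\nk{3}\|_\infty \lesssim 1$, the lower bound $|\lam-z|\gtrsim |\lam|+|z|$ for $\lam$ on the positive imaginary axis, and the $L^1(\C)$ bound on $\Wk{3}$ obtained by rerunning the $I_1,\dots,I_4$ estimates without the kernel factor. The paper's proof additionally records the quantitative bound $\int_\C |\Wk{3}|\,dm \lesssim |t|^{-3/4}$, which it reuses in the following lemma, but for the statement at hand your finiteness claim suffices.
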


\begin{proof}
We write \eqref{DNLS.dbar.int} as 
\[
	\nk{3}(\lam;x,t) = (1,0) + \frac{1}{\lam} \ \nk[1]{3}(x,t) 
	+ \frac{1}{\pi \lam} \int_{\C} \frac{z}{\lam-z} \nk{3}(z;x,t) \Wk{3}(z;x,t) \, dm(z)
\]
where $\nk[1]{3}$ is given by \eqref{N3.1}. If $\lam =iy$ and $z \in \Omega_1 \cup \Omega_3 \cup \Omega_4 \cup \Omega_6$, it is easy to see that $|\lam|/|\lam-z|$ is bounded above by a fixed constant independent of $\lam$, while $|\nk{3}(z;x,t)| \lesssim 1$ by the remarks following \eqref{N3.sol}. If we can show that 
$$\int_\C |\Wk{3}(z;x,t)| \, dm(z) < \infty, $$ it will follow from the Dominated Convergence Theorem that 
\[
	\lim_{y \to \infty} \int_\C \frac{z}{iy-z} \nk{3}(z;x,t) \Wk{3}(z;x,t) \, dm(z) = 0 
\] 
which implies the required asymptotic estimate. We will estimate the integral $\displaystyle \int_{\Omega_1} |\Wk{3}(z)| \, dm(z)$ since the other estimates are similar.  
Using Corollary~\ref{cor:R2.bd} and \eqref{omega.1}, we may then estimate
\[
\int_{\Omega_1} |W^{(3)}(z;x,t)| \, dm(z)	\lesssim  I_1+I_2+I_3+I_4
\]
where for $\eta = \sgn t =1$, 
\begin{align*}
&I_1 = \int_0^\infty \, \int_v^\infty \left| p_1'(\xi+u) \right| e^{-8tuv} \, du \, dv
&&I_2 =	\int_0^1 \int_v^1 \left| \log(u^2 + v^2) \right| e^{-8tuv} \, du \, dv \\
&I_3 = \int_0^\infty \, \int_v^\infty \frac{1}{\sqrt{1+u^2+v^2}} e^{-8tuv} \, du \, dv 
&&I_4 = \int_0^\infty \int_v^\infty  |\indicator(z)| e^{-8tuv} \, du \, dv. 
\end{align*}
For $\eta = -1$, the integrations domains are reflected in the $u$-variable, and the following estimates are altered in an obvious manner. 
To estimate $I_1$, we use the Schwarz inequality on the $u$-integration to obtain
\[
	I_1 \leq 	\norm[2]{p_1'} \frac{1}{4\sqrt{t}} \int_0^\infty \frac{1}{\sqrt{v}}e^{-  8 tv^2} \, dv
		=		\norm[2]{p_1'} \frac{\Gamma(1/4)}{ 8^{5/4}t^{3/4}}.
\]
Since $\log(u^2+v^2) \leq \log(2u^2)$ for $v \leq u \leq 1$, we may similarly bound
\[
	I_2  \leq 	\norm[L^2(0,1)]{\log(2u^2)} \frac{\Gamma(1/4)}{8^{5/4}t^{3/4}}.
\]
To estimate $I_3$, we note that $1+u^2+v^2 \geq  1+u^2$ and $(1+u^2)^{-1/2} \in L^2(\R^+)$, so we may 
conclude that
\[
	I_3 \leq \norm[2]{(1+u^2)^{-1/2}} \frac{\Gamma(1/4)}{ 8^{5/4}t^{3/4}}.
\]
Finally, as $\indicator$ is finitely supported, by similar bounds
\[
	I_4 \leq C_\poles \frac{\Gamma(1/4)}{ 8^{5/4}t^{3/4}}.
\]
where the constant $C_\poles$ depends only on the discrete spectrum $\poles$. 
These estimates together show that
\begin{equation} \label{W.L1.est}
	\int_{\Omega_1} |\Wk{3}(z;x,t)| \, dm(z)	\lesssim t^{-3/4}
\end{equation}
and that the implied constant depends only on $\norm[{H^{2,2}}]{\rho}$ and $\poles$.  
In particular, the integral \eqref{W.L1.est} is bounded uniformly as $t \to \infty$.
\end{proof}

The estimate \eqref{W.L1.est} is also strong enough to prove 
\eqref{N31.est}:

\begin{lemma} \label{lemma:N31.est}
The estimate \eqref{N31.est} 
holds with constants uniform in $\rho$ in a bounded subset of $H^{2,2}(\R)$ and $\inf_{\lam \in \R} \left(1-\eps \lam |\rho(\lam)|^2 \right) > 0$ strictly.
\end{lemma}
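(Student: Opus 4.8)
The plan is to read \eqref{N31.est} off directly from the integral representation \eqref{N3.1} for $\nk[1]{3}$, feeding it the two estimates already established: the uniform $L^\infty$ bound on $\nk{3}$ coming from the small-norm theory, and the $L^1$ bound on $\Wk{3}$ obtained in the proof of Lemma~\ref{lemma:N3.exp}. No new analytic input is required; this lemma is simply the harvest of the work already done in Lemmas~\ref{lemma:KW} and \ref{lemma:N3.exp}.

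First, from Lemma~\ref{lemma:N3.exp} we have
\[
	\nk[1]{3}(x,t) = \frac{1}{\pi} \int_\C \nk{3}(z;x,t)\, \Wk{3}(z;x,t)\, dm(z),
\]
so the triangle inequality gives
\[
	\left| \nk[1]{3}(x,t) \right|
	\leq \frac{1}{\pi}\, \norm[\infty]{\nk{3}} \int_\C \left| \Wk{3}(z;x,t) \right| dm(z).
\]
The first factor is controlled by recalling that the solution formula \eqref{N3.sol}, together with the operator bound \eqref{dbar.int.est1} of Lemma~\ref{lemma:KW}, shows that $(I - K_W)^{-1}$ exists and that $\norm[\infty]{\nk{3}} \lesssim 1$, with the implied constant uniform in $\xi$, in $t$, and in $\rho$ ranging over the prescribed bounded subset of $H^{2,2}(\R)$ with $c>0$.

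It then remains only to bound the $L^1$ norm of $\Wk{3}$. Since $\dbar \mathcal{R}^{(2)}$ has vanishing diagonal entries and is identically zero on $\Omega_2 \cup \Omega_5$, the matrix $\Wk{3} = \Nout\, \dbar\mathcal{R}^{(2)} \left(\Nout\right)^{-1}$ is supported in $\Omega_1 \cup \Omega_3 \cup \Omega_4 \cup \Omega_6$, and so \eqref{W.L1.est} together with the three analogous estimates over $\Omega_3$, $\Omega_4$, and $\Omega_6$ yields
\[
	\int_\C \left| \Wk{3}(z;x,t) \right| dm(z) \lesssim |t|^{-3/4},
\]
with implied constant depending only on $\norm[H^{2,2}]{\rho}$ and the discrete spectrum $\poles$. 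Combining the three displays gives \eqref{N31.est}.

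The main point to flag is not an obstacle but the source of the uniformity claimed in the statement: as emphasized in the remark following Proposition~\ref{prop:N3.est}, one deliberately routes the argument through the $L^1$ estimate \eqref{W.L1.est}---whose constant is independent of $\xi$ because the decay is extracted from the Gaussian factor $e^{-8tuv}$ via the Schwarz inequality rather than from any $\xi$-sensitive pointwise bound---and through the $\xi$-uniform operator estimate \eqref{dbar.int.est1}. The constant in \eqref{N31.est} is thus inherited verbatim from those two bounds, and is in particular independent of $\xi$ and $t$, even though the full remainder in the expansion \eqref{N3.exp} need not be.
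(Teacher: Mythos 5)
Your proposal is correct and follows essentially the same route as the paper: it reads \eqref{N31.est} off the representation formula \eqref{N3.1}, using the uniform bound on $\nk{3}$ supplied by Lemma~\ref{lemma:KW} via \eqref{N3.sol}, and the $L^1$ bound \eqref{W.L1.est} on $\Wk{3}$ (plus its analogues on $\Omega_3$, $\Omega_4$, $\Omega_6$) established in the proof of Lemma~\ref{lemma:N3.exp}. One small slip worth correcting: by \eqref{W3def} the conjugating factor in $\Wk{3}$ is $\Nrhp$, not $\Nout$, though this changes nothing since Proposition~\ref{prop:Nrhp.bound} provides the needed uniform bound.
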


\begin{proof}
From the representation formula \eqref{N3.1}, Lemma \ref{lemma:KW}, and the remarks following, we have
\[ 
	\left|\nk[1]{3}(x,t) \right| \lesssim \int_\C |\Wk{3}(z;x,t)| \, dm(z). 
\]
In the proof of Lemma \ref{lemma:N3.exp}, we bounded this integral by $t^{-3/4}$ modulo constants with the required uniformities.
\end{proof}


%
%

\newcommand{\ssstyle}{\scriptscriptstyle}

\section{Large-time asymptotics for solutions of DNLS} 
\label{sec:largetime}

We now gather the estimates on the RHPs considered previously to prove Theorems \ref{thm:long-time} and \ref{thm:long-time-gauge} which provide precise asymptotic descriptions of the large-time behavior of the solutions $q(x,t)$ and $u(x,t)$ 
of \eqref{DNLS2} and \eqref{DNLS1} within any given space-time cone $\mathcal{S}$.
The leading\revisedtext{-}order soliton component of each expansion 
\sidenote{48}\revisedtext{arises}
from our outer model $\Nout$. 
We begin with the following result, which describes the soliton components in $\Nout$ in terms of only those solitons whose speeds are `visible' from $\mathcal{S}$. 

Recall the notation \eqref{distances}, \eqref{S.cone}-\eqref{posint} used in Theorem~\ref{thm:long-time}-\ref{thm:long-time-gauge} and for any real interval $I$ let
\begin{equation}\label{nu0}
	\nu_0(I)  = \nu_0 = \min_{ \lam \in \poles \setminus \poles(I)} 
	\dist(\Re \lam, I) .
\end{equation}
Additionally, for any multi-soliton $\qsol(x,t;\calD)$, where $\calD$ is the associated reflectionless scattering data, we use \eqref{u.gauge} 
\sidenote{49}\revisedtext{below}
to define
\begin{equation}\label{usol.1}
	u_{\mathrm{sol}}(x,t;\calD)  :=
	\qsol(x,t;\calD) 
	\exp \lp
		  -i\eps \int_x^{\infty} |\qsol(y,t;\calD)|^2 dy
	\rp
\end{equation}
\begin{proposition}
\label{prop:sol separation}
Fix $x_1 \leq x_2$ and $v_1 \leq v_2$.
Take $\mathcal{S}$, $I$, and $\calD_I$ as in Theorem~\ref{thm:long-time}; take $\Nout$ and $\calD_\xi$ as in Proposition~\ref{outer.soliton}. Then as $|t| \to \infty$ with $(x,t) \in \mathcal{S}(v_1,v_2,x_1,x_2)$ we have
\begin{equation}\label{Nout.to.Nsol}
   \Nout(\lam) =   \mathcal{N}^{\mathrm{sol} }  
   (\lam \,\vert\, \calD_I)
   \ \prod_{\mathclap{\Re \lam_k \in \negint \setminus I }}
   \quad \  \lp \frac{\lam - \lambar_k}{\lam-\lam_k} \rp^{\sig} 
	+ \bigo{ e^{-4\poledist \nu_0 |t|} }
\end{equation}
where ${\mathcal N}^{\mathrm sol}(\lam \,\vert\, \calD_I)$
 is the solution of RHP~\ref{RHP2} corresponding to the reflectionless scattering data $\calD_I$.
In particular, 
\begin{align}
	\label{N12.out.inf}
	&2i (\Nout[1])_{12} := \lim_{\lam \to \infty} 2i\lam \Nout[12](\lam;x,t) 
	   = \qsol(x,t;\calD_I) + \bigo{e^{-4\poledist \nu_0 |t| }}  \\
   \label{N.out.zero}
   &\begin{multlined}[.9\textwidth][t]
     \Nout(0;x,t) = 
     \exp \bigg( \frac{i \eps \sig}{2} \int_x^\infty |\qsol(y,t;\calD_I)|^2 dy \bigg) \\
     \times 
     \begin{pmatrix} 
       1 & -\int_{x}^\infty \usol(y,t;\calD_I) dy \\ 
       0 & 1 
     \end{pmatrix}
     \exp \bigg( 
       -2i\sig \sum_{\mathclap{\scriptscriptstyle \Re \lam_k \in \negint\setminus I}} \arg \lam_k
     \bigg) 
     + \bigo{ e^{-4\poledist \nu_0 |t|} }.
   \end{multlined}
\end{align}
\end{proposition}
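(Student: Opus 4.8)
The plan is to reduce the full reflectionless problem solved by $\Nout$ to the one involving only the solitons whose velocities are visible from $\mathcal{S}$, by an explicit diagonal Blaschke conjugation followed by a small-norm comparison, in the spirit of \cite{BJM16}. Recall from Proposition~\ref{outer.soliton} that $\Nout(\lam)=\mathcal{N}^{\mathrm{sol}}(\lam\,\vert\,\calD_\xi)$ is the reflectionless solution for the \emph{full} data $\calD_\xi=\{(\lam_k,\widetilde{C_k})\}_{k=1}^N$, whose residue coefficients \eqref{Nrhp residue} carry the time factors $e^{\mp 2it\theta(\lam_k)}$. Writing $\lam_k=\eta_k+i\tau_k$ and using $\xi=-x/(4t)$, a direct computation gives $|e^{-2it\theta(\lam_k)}|=e^{-8t\tau_k(\eta_k-\xi)}$. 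The first step is to record the geometric separation: for $(x,t)\in\mathcal{S}(v_1,v_2,x_1,x_2)$ and $|t|$ large one has $\xi\in I$ up to $\bigo{1/t}$, so for every invisible soliton ($\Re\lam_k\notin I$) the bound $|\eta_k-\xi|\ge\dist(\eta_k,I)\ge\nu_0$ from \eqref{nu0} holds, while $\tau_k\ge\tfrac12\poledist$ by the remark following \eqref{distances}. Hence for invisible solitons in $\negint$ the factor $e^{-2it\theta(\lam_k)}$ (and, by the symmetry of RHP~\ref{outermodel}, its conjugate at $\lambar_k$) \emph{grows} at rate $e^{+8|t|\tau_k|\eta_k-\xi|}$, whereas for invisible solitons in $\posint$ it \emph{decays} at the rate $e^{-8|t|\tau_k|\eta_k-\xi|}\le e^{-4\poledist\nu_0|t|}$; the visible residues are unconstrained.

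Next I would remove the growing residues by the conjugation
\[
\beta(\lam)=\prod_{\Re\lam_k\in\negint\setminus I}\frac{\lam-\lam_k}{\lam-\lambar_k},
\qquad
\widetilde{N}(\lam)=\Nout(\lam)\,\beta(\lam)^{\sig}.
\]
Since $\beta$ is analytic and nonvanishing at every visible pole $\lam_k$, the residue relation there is preserved but its $t$-independent coefficient is multiplied by $\beta(\lam_k)^2=\prod_{\Re\lam_j\in\negint\setminus I}\big(\tfrac{\lam_k-\lam_j}{\lam_k-\lambar_j}\big)^2$; combined with $\widetilde{C_k}=C_k\delta(\lam_k)^{-2}$ this yields exactly the modulated coefficient $\widehat{C_k}$ of Theorem~\ref{thm:long-time}. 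At each growing pole $\lam_j$ the simple zero of $\beta$ cancels the pole in the first column of $\Nout$, while the simple pole of $\beta^{-1}$ creates a pole in the second column; tracking this residue (and its symmetric partner at $\lambar_j$) shows that the swapped coefficient is of reciprocal type and therefore \emph{decays} at rate $e^{-4\poledist\nu_0|t|}$. Thus $\widetilde N$ solves a reflectionless problem carrying the visible data $\calD_I$ together with extra simple poles at the invisible solitons, all of whose residue coefficients are $\bigo{e^{-4\poledist\nu_0|t|}}$.

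To conclude \eqref{Nout.to.Nsol} I would set $\mathcal E(\lam)=\widetilde N(\lam)\,[\mathcal{N}^{\mathrm{sol}}(\lam\,\vert\,\calD_I)]^{-1}$. By the previous step $\mathcal E$ is analytic across $\Sk{2}$ and regular at the visible poles (the residues cancel), is normalized to $I$ at infinity, and has only simple poles with residue coefficients $\bigo{e^{-4\poledist\nu_0|t|}}$ located at the invisible solitons, which are separated from the visible ones by at least $\poledist$. Using the $L^\infty$ bounds of Lemma~\ref{lem:outer.bound} (equivalently Lemma~\ref{lem:sol.bound}) on $\Nout$ and $\mathcal{N}^{\mathrm{sol}}(\calD_I)$, the associated residue problem has exponentially small data, so the Neumann-series/small-norm argument used for Lemma~\ref{lem:Err} gives $\mathcal E(\lam)=I+\bigo{e^{-4\poledist\nu_0|t|}}$ uniformly on sets avoiding $\poles$. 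Since $\beta^{-\sig}=\prod_{\Re\lam_k\in\negint\setminus I}\big(\tfrac{\lam-\lambar_k}{\lam-\lam_k}\big)^{\sig}$ is bounded there, one gets $\Nout=\mathcal E\,\mathcal{N}^{\mathrm{sol}}(\calD_I)\,\beta^{-\sig}=\mathcal{N}^{\mathrm{sol}}(\calD_I)\,\beta^{-\sig}+\bigo{e^{-4\poledist\nu_0|t|}}$, which is \eqref{Nout.to.Nsol}.

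The two displayed corollaries then follow by evaluating \eqref{Nout.to.Nsol} at $\lam=\infty$ and $\lam=0$. Expanding the diagonal Blaschke product as $\lam\to\infty$ and using \eqref{q.lam} with Proposition~\ref{outer.soliton} applied to $\calD_I$ gives \eqref{N12.out.inf}, because right multiplication by a diagonal matrix $I+\bigo{1/\lam}$ leaves the $\bigo{1/\lam}$ coefficient of the $(1,2)$ entry unchanged. For \eqref{N.out.zero} I would set $\lam=0$: the Blaschke factor contributes $\prod(\lambar_k/\lam_k)^{\sig}=\exp\big(-2i\sig\sum_{\Re\lam_k\in\negint\setminus I}\arg\lam_k\big)$, while $\mathcal{N}^{\mathrm{sol}}(0;x,t\,\vert\,\calD_I)$ is computed from the $\lam=0$ reduction of the Lax problem \eqref{LS} together with the gauge identity \eqref{usol.1}, producing the remaining two factors on the right-hand side of \eqref{N.out.zero}. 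The main obstacle is the pole-swapping bookkeeping in the second step: verifying that each growing residue at $\lam_j$ (and at $\lambar_j$) is turned into a genuinely decaying simple-pole residue without creating higher-order poles, and that the resulting data is small in precisely the norm required by the small-norm theory. The $\lam=0$ evaluation in \eqref{N.out.zero} is a second, independent ingredient, since it requires \emph{identifying} $\mathcal{N}^{\mathrm{sol}}(0\,\vert\,\calD_I)$ with the gauge factor rather than merely bounding it.
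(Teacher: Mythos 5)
Your proposal is correct and follows essentially the same route as the paper: the paper's own proof of this proposition simply invokes Lemma~\ref{lem:Nsol.asymp} and Corollary~\ref{cor:Nsol.asymp} of Appendix~B together with the observation that $\calD_I = \widehat{\calD_\xi}$, and the proof of that lemma is precisely your argument---conjugation by the diagonal Blaschke factor (the $\Delta$-normalization \eqref{Blaschke}, which swaps the growing lower-triangular residues at the invisible poles into decaying upper-triangular ones with reciprocal coefficients) followed by a small-norm comparison with $\mathcal{N}^{\mathrm{sol},\emptyset}(\,\cdot\,\vert\,\widehat{\calD})$, and the evaluation at $\lam=0$ via the Jost-function expansion \eqref{N1.near.0}--\eqref{N1.at.0}. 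The one step you elide---how the small-norm machinery of Lemma~\ref{lem:Err}, which is formulated for jump conditions, applies to a problem whose small data sits in residue conditions---is exactly what the paper makes rigorous through the interpolation \eqref{sol.interpolation}, trading the exponentially small residues for near-identity jumps on small circles $\partial D_k$ around the invisible poles before forming the error function.
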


\begin{proof}
\sidenote{50}Apply Lemma~\ref{lem:Nsol.asymp} and \revisedtext{Corollary~\ref{cor:Nsol.asymp} of Appendix B} to $\Nout$--as described by Proposition~\ref{outer.soliton}--and observe that $\calD_I = \widehat{\calD_\xi}$. 
\end{proof}

\subsection{Large-time asymptotic for solution \texorpdfstring{$q$}{q}  of  (1.3) }
 \label{subsec:sol-q}

Inverting the sequence of transformations \eqref{n1}, \eqref{n2 def}, \eqref{n3 def} we construct the solution $\nn$  as 
\begin{align} \label{n.soln}
	\nn(\lam) 
	= \nk{3}(\lam) \Nrhp(\lam) \mathcal{R}^{(2)}(\lam)^{-1}  \delta(\lam)^{\sig}.
\end{align}

It follows from \eqref{Texpand} and Corollary~\ref{cor:R2.bd} 
that as $\lam \to \infty$ non-tangentially to the real axis, 
\begin{align*}
	 \delta(\lambda)^{\sig} = I + \lp \delta_1/\lam \rp \sigma_3 + \bigo{ \lam^{-2} },
	\qquad
	\mathcal{R}^{(2)} = I + \bigo{ e^{-c|t|} }. 
\end{align*}
From 	\eqref{error def}, we have 
\begin{equation*}
\Nrhp(\lam) = \error(\lam)\Nout(\lam) 
\end{equation*}
and  the large-$\lambda$ behavior of  $\error(\lam)$ and $\Nout(\lam)$ are given in Proposition~\ref{outer.soliton} and Lemma~\ref{lem:Err} as 
\begin{gather*}
    \mathcal{E}(\lam) = 
    \mathcal{E}_0 + \lam^{-1} \mathcal{E}_1 + \bigo{ \lam^{-2} }, \quad 
    \mathcal{E}_0 = \tril{ \bar q_\mathcal{E} },
    \\
    \Nout(\lam) = \Nout[0] + \lam^{-1} \Nout[1] + \bigo{ \lam^{-2} }, \quad 
    \Nout[0] = \tril{  \eps \overline{ (\Nout[1])_{12} } }.
\end{gather*}

\begin{proof}[Proof of Theorem~\ref{thm:long-time}]
Inserting the above expansions into \eqref{n.soln} and using Proposition~\ref{prop:N3.est}
the reconstruction formula \eqref{q.lam}  gives
\begin{gather*}
	q(x,y) = 2i (\Nout[1])_{12} + 2i (\error_1)_{12} + \bigo{|t|^{-3/4}}.
\end{gather*}
Using \eqref{N12.out.inf} for the first term on the right-hand-side and \eqref{E1.12} to identify $2i (\error_1)_{12}$ with the correction factor $f(x,t)$ in Theorem~\ref{thm:long-time} gives \eqref{q.long-time}. 
\end{proof}

\subsection{Large-time asymptotic for solution \texorpdfstring{$u$}{u}  of  (1.1) }
 \label{subsec:sol-u}

To prove Theorem~\ref{thm:long-time-gauge} we construct the solution $u(x,t)$ of the DNLS equation \eqref{DNLS1} with initial data $u_0$ by means of the inverse gauge transformation
\begin{equation}\label{u.gauge}
	u(x,t) = \calG^{-1}(q)(x,t) 
	:= q(x,t) \exp \left( -i \varepsilon \int_x^{\infty} |q(y,t)|^2 dy \right)
\end{equation}
As we have the large-time behavior of $q(x,t)$ in hand, to compute the large-time behavior of $u(x,t)$ it will suffice to evaluate the large-time asymptotics of the expression
\begin{equation}\label{u.gauge.fact}
	\exp \left( -i \varepsilon \int_x^{\infty} |q(y,t)|^2 dy \right).
\end{equation}
\begin{proposition}\label{prop:u.gauge.expansion}
	Suppose that $q_0 \in H^{2,2}(\R)$ and that $q(x,t)$ solves \eqref{DNLS2} with initial data $q_0$.
	Let $\{ \rho, \{(\lam_k, c_k)\}_{k=1}^N \}$ be the scattering data associated to $q_0$. {Fix $\xi = -x/(4t)$ and $M>0$}. 
Fix real constants $v_1 \leq v_2$ and $x_1 \leq x_2$. Define {$\mathcal{S}$}, $I$ and $\mathcal{D}_I$ as described in Theorem~\ref{thm:long-time} and take $\alpha_0$ as in Theorem~\ref{thm:long-time-gauge}. Then as $|t| \to \infty$ {with $(x,t) \in \mathcal{S}(v_1,v_2,x_1,x_2)$:} \\

\noindent
For {$\xi \geq M|t|^{1/8}$}, we have
\begin{align}
\label{u.gauge.out}
	\exp & \lp -i \eps \int_x^{\infty}  |q(y,t)|^2 dy \rp =  \\ 
	\nonumber
	& \lb
	1
	+ \frac{2i\eps}{|2t|^{1/2}} 
	\real \lb 
	  A_{12}(\xi,\sgnt)  { {\mathcal N}_{11}^{\mathrm sol}}  
	  (\xi\,\vert\, \calD_I) 
	  \overline{   { {\mathcal N}_{12}^{\mathrm sol}}      
	  (\xi\,\vert\, \calD_I)} 
	  e^{i\phi(\xi)}
	\rb
	+ \bigo{|t|^{-3/4}}
	\rb \\
	\nonumber
	&
	\times \exp 
	\lp 
		-i \eps \int_x^{\infty} |\qsol(y,t;\mathcal{D}_I)|^2 dy 
	\rp
	e^{i \alpha_0(\xi, \sgnt)},
\end{align}
while for {$\xi \leq M|t|^{-1/8}$},
\begin{multline}\label{u.gauge.in}
	\exp \lp -i \eps \int_x^{\infty} |q(y,t)|^2 dy \rp =    
	\\
	F(\xi,t,\sgnt)
	\Bigg[
	  1  
	  + \frac{i\eps}{|2t|^{1/2}} 
	  \bigg\{
		2\real \lb 
		  A_{12}(\xi,\sgnt) { {\mathcal N}_{11}^{\mathrm sol}}  
		  (\xi;x,t \, \vert \, \calD_I)
		  \overline{ { {\mathcal N}_{12}^{\mathrm sol}}  
		  (\xi;x,t \, \vert \, \calD_I) }
		  e^{i \phi(\xi) }
		\rb  
	    \\
	    + \overline{A_{12}(\xi,\sgnt)} 
	    \lp 1 - G(\xi,t,\sgnt) \rp 
	    \exp \Big( 
	      4i\sum\limits_{\mathclap{\Re \lam_k \in \negint \setminus I}} \arg \lam_k 
	    \Big)
	    \int_x^\infty \usol(y,t;\mathcal{D}_I) dy
	  \bigg\} 
	  +\bigo{|t|^{-3/4}} 
	\Bigg] 
	\\
	\times \exp
	\lp -i \eps \int_x^{\infty} |\qsol(y,t;\mathcal{D}_I)|^2 dy \rp 
	e^{i \alpha_0(\xi,\sgnt) }
\end{multline}
where, with $p := e^{\frac{i\sgnt \pi}{4}} |8t\xi^2|^{1/2}$ 
\begin{equation}\label{FandG}
	F(\xi,t,\sgnt) =  \lb e^{p^2/4} p^{-i \sgnt \kappa(\xi)}
	   D_{i\eta \kappa(\xi)} \lp p \rp \rb^{-2}, 
	\qquad
	G(\xi,t,\sgnt) = 
	  \frac{ p D_{i\eta \kappa(\xi) - 1} \lp p \rp}
	  {D_{i\eta \kappa(\xi)} \lp p \rp},
\end{equation}
and $\usol$ is defined by \eqref{usol.1}.
\end{proposition}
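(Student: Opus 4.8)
The plan is to reduce the gauge factor \eqref{u.gauge.fact} to the value of the row--vector solution of RHP~\ref{RHP2.row} at the single point $\lam=0$, and then to read off that value from the explicit factorization \eqref{n.soln}. The first ingredient is the identity
\[
	\exp\lp -i\eps \int_x^\infty |q(y,t)|^2 \, dy \rp = \lb n_{11}(0;x,t) \rb^{-2},
\]
which I would establish directly from the Lax operator \eqref{LS}: at $\lam=0$ the matrix $L$ is upper triangular with diagonal $\mp \tfrac{i\eps}{2}|q|^2$, so the $(1,1)$ entry of the Jost solution normalized at $+\infty$ equals $\exp\lp \tfrac{i\eps}{2}\int_x^\infty |q|^2\rp$; tracing this through the definition of $n$ and the reconstruction \eqref{q.lam} yields the displayed relation (this is the role of the referenced formula \eqref{n11.at.0.a}). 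Everything then reduces to the large-$t$ asymptotics of $n_{11}(0;x,t)$, which I obtain by evaluating \eqref{n.soln} at $\lam=0$.

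The first simplification is that $\mathcal{R}^{(2)}(0)=I$. Indeed the $(1,1)$ entry of the jump matrix $v(\lam)$ in RHP~\ref{RHP2.row} equals $1$ at $\lam=0$, so $n_{11}$ is continuous across the real axis there and may be computed by letting $\lam\to 0$ from the region adjacent to the real axis in which the off-diagonal extension carries a factor of $\lam$ --- namely $\Omega_1$ (when $\xi<0$) or $\Omega_4$ (when $\xi>0$), where $R_1$, resp. $R_4$, vanishes at the origin because $m_k=1$ for $k=1,4$ (cf. Lemma~\ref{lem:extensions} and Corollary~\ref{cor:R2.bd}). Next, $\delta(0)^{\sig}=\operatorname{diag}(\delta(0),\delta(0)^{-1})$ with $\delta(0)=\exp\lp i\int_{\negint}\kappa(z)z^{-1}\,dz\rp$ by \eqref{T}, which will supply the integral part of the phase $\alpha_0$. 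Finally Proposition~\ref{prop:n3 at 0} gives $\nk[11]{3}(0)=1+\bigo{t^{-3/4}}$, and the integral representation \eqref{DNLS.dbar.int} together with the $L^1$ bound \eqref{W.L1.est} controls $\nk[12]{3}(0)$ by the same $\bigo{t^{-3/4}}$; since $\Nrhp$ is bounded by Proposition~\ref{prop:Nrhp.bound}, it follows that $n_{11}(0)=\Nrhp[11](0)\,\delta(0)\lp 1+\bigo{t^{-3/4}}\rp$.

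It remains to evaluate $\Nrhp[11](0)$ through \eqref{error def}. When $0\notin\Uxi$ one has $\Nrhp(0)=\error(0)\Nout(0)$, and I would combine the values $\error_{11}(0)$, $\error_{12}(0)$ from Proposition~\ref{prop:E} (equations \eqref{e11.at0.out}, \eqref{e12.at0.in}) with $\Nout(0)$ from \eqref{N.out.zero}; squaring the reciprocal and collecting the phases $\delta(0)^{-2}$, $\exp(\tfrac{i\eps}{2}\int_x^\infty|\qsol|^2)$ and $\exp(-2i\sum\arg\lam_k)$ reproduces the factor $\exp(-i\eps\int_x^\infty|\qsol|^2)\,e^{i\alpha_0(\xi,\sgnt)}$ together with the $\real[\cdots]$ correction in \eqref{u.gauge.out}. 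When $0\in\Uxi$ one must instead use $\Nrhp(0)=\error(0)\Nout(0)\NPC(0)$ with the explicit first column \eqref{NPC1.at.0}; this introduces the parabolic-cylinder factor $F(\xi,t,\sgnt)$ as the leading contribution and $G(\xi,t,\sgnt)$ in the order-$t^{-1/2}$ correction, while Lemma~\ref{lem:NPC21} bounds $\NPC[21](0)$ by $t^{-1/2}$ so the cross terms are controlled; with the interior formula \eqref{e11.at0.in} for $\error_{11}(0)$ this yields \eqref{u.gauge.in}. The two regimes are separated at $|\xi|=M|t|^{-1/8}$ precisely because the parabolic-cylinder asymptotics give $F-1=\bigo{(t\xi^2)^{-1}}$ and $1-G=\bigo{(t\xi^2)^{-1}}$, which are $\bigo{t^{-3/4}}$ exactly when $|\xi|\geq M|t|^{-1/8}$; in that range the factors $F$ and $G$ may be absorbed into the remainder, so \eqref{u.gauge.out} holds whether or not $0\in\Uxi$.

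I expect the main obstacle to be the bookkeeping of the order-$t^{-1/2}$ corrections in the interior case \eqref{u.gauge.in}: one must expand the product $\error(0)\Nout(0)\NPC(0)$ to first subleading order, retaining both the off-diagonal $\int_x^\infty \usol$ term produced by $\Nout(0)$ in \eqref{N.out.zero} and the $(1-G)$ contribution of the local model, and then verify that every remaining term is genuinely $\bigo{t^{-3/4}}$ uniformly for $|\xi|\leq M|t|^{-1/8}$ (using $\kappa(\xi)/\xi\to \tfrac{\eps}{2\pi}|\rho(0)|^2$ to keep $A_{12}$ bounded as $\xi\to 0$). A secondary technical point is the rigorous justification of the gauge identity above, including the continuity of $n_{11}$ at $\lam=0$ and the control of $\int_x^\infty|q|^2$ in terms of spectral data, for which the weak Plancherel formula of Lemma~\ref{lem.Plancherel} is the key input.
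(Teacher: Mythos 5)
Your proposal is correct and follows essentially the same route as the paper's own proof: reduce the gauge factor to $n_{11}(0;x,t)^{-2}$, evaluate it through the factorization \eqref{n.soln} at $\lam=0$ using Proposition~\ref{prop:n3 at 0}, the explicit value $\delta(0)$, and the triviality/triangularity of $\mathcal{R}^{(2)}(0)$, then split into the cases $0\notin\Uxi$ (Proposition~\ref{prop:E} with \eqref{N.out.zero}) and $0\in\Uxi$ (adding \eqref{NPC1.at.0} and Lemma~\ref{lem:NPC21}), and finally reconcile the two regimes at $|\xi|\sim M|t|^{-1/8}$ via the parabolic-cylinder asymptotics $F,G=1+\bigO{|t|^{-1}\xi^{-2}}$. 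The only cosmetic differences are that the paper avoids your bound on $\nk[12]{3}(0)$ by noting $\Nrhp[21](0)=0$ from the symmetry of RHP~\ref{rhp.Nrhp} (and that $\mathcal{R}^{(2)}(0)$ is unit upper triangular from every adjacent sector, so no choice of side is needed—note your $\Omega_1$/$\Omega_4$ labels swap when $t<0$).
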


\begin{proof}[Proof of Proposition~\ref{prop:u.gauge.expansion}]
Using \eqref{N1.near.0}-\eqref{N1.at.0}, the gauge factor \eqref{u.gauge.fact} can be expressed in terms of spectral functions. Starting from \eqref{n.soln}, observe that 
$\mathcal{R}^{(2)}(0) = \striu{*}$ 
is upper triangular, see  Figure~\ref{fig:n2def} and equation~\eqref{R_k};
similarly, the symmetry in condition 1 of RHP~\ref{rhp.Nrhp} guarantees that $\Nrhp[21](0) = 0$. So we have 
\begin{multline}
\label{n11.at.0.a}
	\exp \lp -i \eps \int_{x}^\infty |q(y,t)|^2 dy \rp
	= \nn_{11}(0;x,t)^{-2}
	=  [ \nk[11]{3}(0) \Nrhp[11](0) \delta(0) ]^{-2} \\
	= \Nrhp[11](0)^{-2} \exp \bigg( 
		{\frac{i}{\pi}} \int_{\negint} 
		\frac{ \log( 1 - \eps \lam |\rho(\lam)^2|)}{\lam} d\lam 
	  \bigg) + \bigo{|t|^{-3/4}},
\end{multline}
where we've used \eqref{T} and Proposition~\ref{prop:n3 at 0} in the last equality.

The value of $\Nrhp[11](0)$ depends on the location of the point $\xi$ in the spectral plane. \\

If $|\xi| > \poledist/3$ then $0 \not\in \Uxi$ (cf. \eqref{xi disk}), so it follows from \eqref{error def} and \eqref{N.out.zero} that  
\begin{align}
	\Nrhp[11](0)^{-2} 
		&= \error_{11}(0)^{-2} 
		\exp \bigg(  
		  {-i\eps} \int_x^\infty |\qsol(y,t,\mathcal{D}_I)|^2 dy 
		  +4i \sum_{\mathclap{\Re \lam_k \in \negint\setminus I}} \arg \lam_k
        \bigg) 
 \label{nrhp.at.0.a}
        \\
 \nonumber
 		&\quad
        + \bigo{ e^{-4\poledist \nu_0 |t|} }. 
\end{align}
Plugging  \eqref{nrhp.at.0.a} into \eqref{n11.at.0.a} 
and using \eqref{e11.at0.out} and \eqref{Nout.to.Nsol} to evaluate $\error_{11}(0)$ gives \eqref{u.gauge.out}. 

If $|\xi| < \poledist/3$ then $0 \in \Uxi$. We expand \eqref{error def}, using Lemma~\ref{lem:NPC21} and \eqref{e12.at0.in} to drop terms of order $t^{-1}$: 
\begin{equation*}
	\Nrhp[11](0) 
		 =  \error_{11}(0)  \NPC[11](0) \Nout[11](0)
		 \lp  1 + \frac{\Nout[12](0)}{\Nout[11](0)} \frac{\NPC[21](0)}{\NPC[11](0)} \rp
		 + \bigo{|t|^{-1}}. 
\end{equation*}
Using \eqref{N.out.zero} this becomes
\begin{multline}
	\Nrhp[11](0)^{-2} =
	\error_{11}(0)^{-2} \NPC[11](0)^{-2} 
	\exp \bigg( 
	  {-i\eps} \int_x^\infty |\qsol(y,t,\mathcal{D}_I)|^2 dy
	  +4i \sum_{\ssstyle \mathclap{\Re \lam_k \in \negint \setminus I}} \arg \lam_k  
	\bigg) 
\label{nrhp.at.0.b}
	\\
	\times \Bigg[ 1  - \frac{\NPC[21](0)}{\NPC[11](0)} 
	\exp \big( 
	  4i \sum_{\ssstyle \mathclap{\Re \lam_k \in \negint \setminus I}} \arg \lam_k 
	\big) 
	\int_{x}^\infty u_{\mathrm{sol}}(y,t;D_I) dy
	+ \bigo{|t|^{-1}}
	\Bigg]^{-2} .
\end{multline}

Expanding $\error_{11}(0)^{-2}$ using \eqref{e11.at0.in} and Proposition~\ref{prop:sol separation}
\begin{multline*}
\error_{11}(0)^{-2} = 
	  1  
	  + \frac{i\eps}{|2t|^{1/2}} 
	  \bigg\{ 2\real 
	  \lb 
	    A_{12}(\xi,\sgnt) \Nsol[11]{\emptyset}(\xi\,\vert\, \calD_I) 
	    \overline{\Nsol[12]{\emptyset}(\xi\,\vert\, \calD_I)}
	    e^{i \phi(\xi)} 
	  \rb
	 \\
	 + \overline{A_{12}(\xi,\sgnt)} 
	 \exp \big( 
	   4i\sum_{\mathclap{\ssstyle \Re\lam_k \in \negint \setminus I}} \arg\lam_k 
	 \big) 
	 \int_x^\infty \usol(y,t;\mathcal{D}_I) dy
	\bigg\} 
	+\bigo{|t|^{-1}}. 
\end{multline*}

By introducing the notation $p := e^{\frac{i\sgnt \pi}{4}} |8t\xi^2|^{1/2} $, we use \eqref{NPC1.at.0} and the symmetry $A_{21} = \eps \xi \overline{A_{12}}$ (see \eqref{mod beta}) to write
\begin{equation}
\label{NPC.at.0.p}
\begin{gathered}  
	\NPC[11](0)^{-2} = 
	  \lb e^{p^2/4} p^{-i\eta \kappa(\xi)} D_{i \sgnt \kappa(\xi)}(p) \rb^{-2} 
	  = F(\xi,t,\sgnt) \\
	\frac{\NPC[21](0)}{\NPC[11](0)} 
     = -i\eps |\xi|  e^{i\eta \pi/4} \overline{A_{12}} 
        \frac{  D_{i\eta \kappa(\xi) - 1} (p)}{D_{i\eta \kappa(\xi)} (p)} 
     =  -\overline{A_{12}} i\eps |8 t|^{-1/2}  
        p \frac{  D_{i\eta \kappa(\xi) - 1} (p)}{D_{i\eta \kappa(\xi)} (p)}.
\end{gathered}
\end{equation}
Combining \eqref{nrhp.at.0.b}-\eqref{NPC.at.0.p} with \eqref{n11.at.0.a} gives \eqref{u.gauge.in}.

Finally, we observe \cite[Eq.~12.9.1]{DLMF} that inserting the expansion $$D_\nu(p) = e^{-p^2/4} p^{\nu} \lb 1 + \bigo{p^{-2}} \rb$$ into \eqref{FandG} gives
\[
	F(p) = 1 + \bigo{|t|^{-1}\xi^{-2}}
	\qquad 
	G(p) = 1 + \bigo{|t|^{-1}\xi^{-2}}
\]	
so that the inner expansion \eqref{u.gauge.in} for $|\xi| \leq \poledist/3$ agrees with the outer expansion \eqref{u.gauge.out} for $|\xi| > M |t|^{-1/8}$.

\end{proof}

\section*{Acknowledgements} 

\revisedtext{The authors thank the referee for a very careful reading of the manuscript and a number of suggestions which helped us make the revised version more readable.}

PAP and JL would like to thank the University of Toronto and the Fields Institute for hospitality during part of the time that this work was done. Conversely, RJ and CS would like to thank the department of Mathematics at the University of Kentucky for their hospitality during  part of the time that this work was done.  

PAP was supported in part by Simons Foundation Research and Travel Grant 359431, and CS was supported in part by Grant 46179-13 from the Natural Sciences and Engineering Research Council of Canada.

\appendix			   			


%
%

\section{The Weak Plancherel Formula}
\label{app:Weak}

We establish 
relations between the transmission coefficients  $\balpha$ and $\alpha$ and the scattering data $ \left( \rho, \{\lambda_k\}_{k=1}^N \right)$. {Recall that $\Lam^+ = \{ \lam_k \}_{k=1}^N \subset \C^+$.}

\begin{lemma}
The following relations 
\begin{equation}
\label{transmission+}
\balpha(\lambda)=\prod_{k=1}^N \frac{\lambda-\lambda_k}{\lambda-\overline{\lambda}_k}  \exp \left(  
-\int_{-\infty}^{+\infty}\frac{\log(1-\eps\xi |\rho(\xi)|^2)}{\xi-\lambda}\frac{d\xi}{2\pi i}
\right)
\end{equation}
\begin{equation}
\label{transmission-}
\alpha(\lambda)=\prod_{k=1}^N \frac{\lambda-\overline{\lambda}_k}{\lambda-{\lambda}_k}   \exp \left(  
\int_{-\infty}^{+\infty}\frac{\log(1-\eps\xi |\rho(\xi)|^2)}{\xi-\lambda}\frac{d\xi}{2\pi i}\right)
\end{equation}
hold.
\end{lemma}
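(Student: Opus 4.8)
The plan is to recast the two identities as a single scalar Riemann--Hilbert problem and solve it by the Plemelj formula. First I would assemble the ingredients already available in the introduction. From the Wronskian representation of $\balpha$ and the analyticity of $\psi^-_{(1)},\psi^+_{(2)}$ in $\C^+$, the function $\balpha$ extends analytically to $\C^+$, tends to $1$ at infinity, and vanishes precisely at the simple zeros $\lam_1,\dots,\lam_N$. The Schwarz-reflection symmetry $\balpha(\lam)=\overline{\alpha(\overline{\lam})}$, which on $\R$ reduces to $\balpha=\overline{\alpha}$ by \eqref{alpha.beta.sym}, shows that $\alpha$ extends analytically to $\C^-$ with simple zeros at the points $\overline{\lam}_k$. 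Finally, combining $\alpha\balpha-\beta\bbeta=1$ with $\rho=\beta/\alpha$, $\bbeta=\eps\lam\overline{\beta}$ and $\balpha=\overline{\alpha}$ yields the boundary identity $\alpha(\lam)\balpha(\lam)=(1-\eps\lam|\rho(\lam)|^2)^{-1}$ on $\R$.

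Next I would remove the zeros using Blaschke factors. Setting $B(\lam)=\prod_{k=1}^N\frac{\lam-\lam_k}{\lam-\overline{\lam}_k}$, which is analytic and zero-free in $\C^+$ with $|B|=1$ on $\R$, I define the piecewise function
\[
	\Phi(\lam)=\begin{cases}
		\balpha(\lam)\,B(\lam)^{-1}, & \lam\in\C^+,\\[1mm]
		\big(\alpha(\lam)\,B(\lam)\big)^{-1}, & \lam\in\C^-.
	\end{cases}
\]
By construction the Blaschke products cancel the zeros of $\balpha$ in $\C^+$ and of $\alpha$ in $\C^-$, so $\Phi$ is analytic and zero-free in $\C\setminus\R$ and $\Phi\to1$ at infinity. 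Taking boundary values and using $|B|=1$ on $\R$, the Blaschke factors drop out of the ratio and leave
\[
	\frac{\Phi_+(\lam)}{\Phi_-(\lam)}=\balpha(\lam)\,\alpha(\lam)=\frac{1}{1-\eps\lam|\rho(\lam)|^2}.
\]

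Finally I would solve the resulting additive scalar problem for $\log\Phi$, which is analytic in $\C\setminus\R$, decays at infinity, and has additive jump $\log\Phi_+-\log\Phi_-=-\log(1-\eps\lam|\rho(\lam)|^2)$. By a Liouville argument the unique such solution is the Cauchy transform of its jump,
\[
	\log\Phi(\lam)=-\frac{1}{2\pi i}\int_{-\infty}^{\infty}\frac{\log(1-\eps\xi|\rho(\xi)|^2)}{\xi-\lam}\,d\xi,
\]
using the Plemelj relation $C_+-C_-=\mathrm{id}$. Exponentiating and reading off the $\C^+$ branch (where $\log\Phi=\log\balpha-\log B$) produces \eqref{transmission+}, while the $\C^-$ branch (where $\log\Phi=-\log\alpha-\log B$) produces \eqref{transmission-}. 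The only genuinely technical point, and the main obstacle, is justifying the Cauchy-integral representation: one must verify that $\log\Phi$ decays at infinity and that $\log(1-\eps\xi|\rho(\xi)|^2)$ is integrable on $\R$. Both follow from the uniform lower bound $1-\eps\lam|\rho|^2\geq c>0$ together with the weighted decay of $\rho\in H^{2,2}(\R)$, which gives $\lam|\rho(\lam)|^2\in L^1(\R)$ and hence $|\log(1-\eps\xi|\rho(\xi)|^2)|\lesssim 1+|\xi|\,|\rho(\xi)|^2\in L^1(\R)$; the remainder of the argument is the routine bookkeeping of signs and contour orientations.
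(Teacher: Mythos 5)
Your proof is correct and follows essentially the same route as the paper: both arguments remove the zeros of $\balpha$ and $\alpha$ with Blaschke products (your $\Phi$ is exactly the paper's $\bgamma$ in $\C^+$ and $1/\gamma$ in $\C^-$) and then apply the Plemelj/Cauchy representation together with the boundary identity $\alpha\balpha=(1-\eps\lam|\rho(\lam)|^2)^{-1}$ on $\R$. One cosmetic slip: the integrability bound should read $|\log(1-\eps\xi|\rho(\xi)|^2)|\lesssim|\xi|\,|\rho(\xi)|^2\in L^1(\R)$ rather than $\lesssim 1+|\xi|\,|\rho(\xi)|^2\in L^1(\R)$ (the constant $1$ is not integrable); with that correction, which uses only the uniform bounds $0<c\leq 1-\eps\xi|\rho(\xi)|^2\leq C$, your argument stands as written.
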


\begin{proof}
The functions $\balpha(\lambda)$ and $\alpha(\lambda)$ have  simple zeros 
{in $\Lam^+$ and $\overline{\Lam^+}$ respectively}.
Defining
\begin{equation}
\label{trace}
\bgamma(\lambda)=\prod_{k=1}^N\frac{\lambda-\overline{\lambda}_k}{\lambda-\lambda_k}\balpha(\lambda), \,\,  ~~
\gamma(\lambda)=\prod_{k=1}^N\frac{\lambda-\lambda_k}{\lambda-\overline{\lambda}_k}\alpha(\lambda),
\end{equation}
$\bgamma(\lambda)$  is analytic in the upper half plane where it has no zeros,  while $\gamma$ is analytic in the lower half plane where it has no zeros. Also $\bgamma$ and $\gamma$ $\rightarrow$ 1 as $|\lambda|\rightarrow\infty$ in the respective half planes.
Therefore,
\begin{align*}
\log \bgamma(\lambda)=\int_{-\infty}^{+\infty}\frac{\log\bgamma(\xi)}{\xi-\lambda}\frac{d\xi}{2\pi i}\, ,
 \quad\quad 
 \int_{-\infty}^{+\infty}\frac{\log\gamma(\xi)}{\xi-\lambda}\frac{d\xi}{2\pi i}=0 \quad\Im(\lambda)>0 \\
\log \gamma(\lambda)=-\int_{-\infty}^{+\infty}\frac{\log\gamma(\xi)}{\xi-\lambda}\frac{d\xi}{2\pi i}\, ,
 \quad\quad 
 \int_{-\infty}^{+\infty}\frac{\log\bgamma(\xi)}{\xi-\lambda}\frac{d\xi}{2\pi i}=0 \quad\Im(\lambda)<0. 
 \end{align*}
Using \eqref{trace}, as well as the identity $\balpha(\xi)\alpha(\xi)=\bgamma(\xi)\gamma(\xi) = \left(1-\xi|\rho(\xi)|^2\right)^{-1}$, we deduce
\begin{align*}
&\log\balpha(\lambda)=\sum_{k=1}^N\log\left(\frac{\lambda-\lambda_k}{\lambda-\overline{\lambda}_k}   \right)-\int_{-\infty}^{+\infty}\frac{\log(1-\eps\xi |\rho(\xi)|^2)}{\xi-\lambda}\frac{d\xi}{2\pi i},\quad \Im(\lambda)>0, \\
&\log\alpha(\lambda)=\sum_{k=1}^N\log\left(\frac{\lambda-\overline{\lambda}_k}{\lambda-{\lambda}_k}   \right)+\int_{-\infty}^{+\infty}\frac{\log(1-\eps\xi|\rho(\xi)|^2)}{\xi-\lambda}\frac{d\xi}{2\pi i},\quad \Im(\lambda)>0.
\end{align*}
from which the identities \eqref{transmission+} and \eqref{transmission-} are obtained.
\end{proof}
\sidenote{51}\revisedtext{The next lemma can be seen as a weak version of a Plancherel identity for the scattering transform. One should compare it to the following identity for the AKNS system associated the defocussing cubic NLS equation:
$$ -\int_{\R} \log(1-|r(k)|^2) \, dk = \pi \norm[L^2]{q}^2 $$
where $q$ is the potential and $r$ is its scattering transform (see, for example, the discussion in \cite{MTT03}, Appendix A).  For small data this reduces to the linear Plancherel formula. 
In our ``weak Plancherel formula,'' we only obtain equality modulo $2\pi$ because of the exponentials. 
}
\begin{lemma}\label{lem.Plancherel}
Suppose that $q(x,t)$ is the solution of \eqref{DNLS2} for initial data $q_0 \in H^{2,2}(\R)$ and let $\{ \rho, \{(\lam_k, c_k)\}_{k=1}^N \}$ be the scattering data associated to $q_0$. Then the identity
\begin{equation*} 
	\exp \lb i \eps \int_\R |q(y,t)|^2 \rb = 
		\exp 
		\lb 
			-4i \lp \sum_{k=1}^N \arg \lam_k \rp 
			- \frac{i}{\pi} \int_\R 
			\frac{ \log(1 - \eps \lam | \rho(\lam)|^2) } {\lam} d\lam 
		\rb
\end{equation*}
holds.	
\end{lemma}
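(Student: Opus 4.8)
The plan is to evaluate the transmission coefficient $\balpha$ at the edge point $\lam=0$ in two independent ways; comparing the two expressions proves the identity. The first evaluation is immediate from the trace formula \eqref{transmission+}: I would observe that the right-hand side of the asserted identity is exactly $\balpha(0)^{-2}$. Setting $\lam=0$ in \eqref{transmission+} and using $(-\lam_k)/(-\overline{\lam}_k)=\lam_k/\overline{\lam}_k = e^{2i\arg\lam_k}$ together with $-1/(2\pi i)=i/(2\pi)$ gives
\[
	\balpha(0) = e^{2i\sum_{k=1}^N \arg \lam_k}
	\exp\left( \frac{i}{2\pi} \int_\R \frac{\log(1-\eps\xi|\rho(\xi)|^2)}{\xi}\, d\xi \right),
\]
so that $\balpha(0)^{-2}$ reproduces precisely the right-hand side of the Lemma. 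Here I use that $q\in U$ excludes spectral singularities, so $0$ is not a zero of $\balpha$ and these expressions are well defined; since $|\rho(\xi,t)|$ and $\arg\lam_k$ are invariant under the flow \eqref{sd.ev}, it suffices to run the argument at a fixed time.

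The second evaluation is the crux, and I would obtain it directly from the Wronskian representation
\[
	\balpha(\lam) = \psi_{11}^-(x,\lam)\psi_{22}^+(x,\lam) - \psi_{21}^-(x,\lam)\psi_{12}^+(x,\lam),
\]
exploiting the degeneration of the Lax operator at $\lam=0$. Because $Q_\lam$ loses its lower-left entry when $\lam=0$ and $\sigma_3 Q^2 = \eps|q|^2\sigma_3$ is diagonal, the matrix $L|_{\lam=0}$ is upper triangular. Consequently the lower component of the column $\psi^-_{(1)}$ solves the decoupled homogeneous equation $\partial_x\psi_{21}^- = \tfrac{i\eps}{2}|q|^2\,\psi_{21}^-$ with vanishing limit as $x\to-\infty$, which forces $\psi_{21}^-(x,0)\equiv 0$; the surviving scalar equations then integrate explicitly to
\[
	\psi_{11}^-(x,0) = \exp\!\left( -\frac{i\eps}{2}\int_{-\infty}^x |q(y)|^2\, dy \right),
	\qquad
	\psi_{22}^+(x,0) = \exp\!\left( -\frac{i\eps}{2}\int_x^{+\infty} |q(y)|^2\, dy \right).
\]
Substituting into the Wronskian formula (the $\psi_{21}^-\psi_{12}^+$ term drops out) collapses the two exponentials into a single integral over $\R$, yielding $\balpha(0)=\exp\!\left(-\tfrac{i\eps}{2}\int_\R|q|^2\right)$ and hence $\balpha(0)^{-2}=\exp\!\left(i\eps\int_\R|q|^2\right)$, which is exactly the left-hand side of the Lemma.

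Equating the two evaluations of $\balpha(0)^{-2}$ then gives the result. The main obstacle — indeed the only point requiring genuine care — is the rigorous justification of the $\lam=0$ Jost computation: one must verify that $\psi^-_{(1)}(\cdot,0)$ and $\psi^+_{(2)}(\cdot,0)$ are genuine Jost solutions at the boundary point $\lam=0$ with the stated limiting normalizations, that the Wronskian formula extends continuously down to $\lam=0$, and that $\psi_{21}^-\equiv 0$ is legitimate rather than spurious. All of these follow from $q(\cdot,t)\in H^{2,2}(\R)\subset L^1(\R)\cap L^2(\R)$, which guarantees existence and continuity (up to $\lam=0$) of the Jost solutions and convergence of $\int_\R|q|^2$. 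Finally, the ``modulo $2\pi$'' feature flagged before the statement is automatic here: each evaluation determines only the exponential $\balpha(0)$, never its logarithm, so no ambiguity beyond a multiple of $2\pi$ can arise.
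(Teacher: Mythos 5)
Your proposal is correct and takes essentially the same approach as the paper: both evaluate the transmission coefficient at $\lam=0$ in two ways, once from the trace formula \eqref{transmission+}--\eqref{transmission-} and once by explicitly integrating the spectral problem at $\lam=0$, where the Lax matrix becomes upper triangular so that the off-diagonal Jost entry vanishes and the diagonal entries are explicit phase factors. The only cosmetic difference is that you compute $\balpha(0)^{-2}$ via the Wronskian formula at fixed $x$, whereas the paper computes the identical quantity $\alpha(0)^2=\balpha(0)^{-2}$ as the $x\to-\infty$ limit of the normalized Jost function $N^{+}_{11}$ via \eqref{alpha.jost} and \eqref{N1.near.0}.
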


\begin{proof}
We can express the transmission coefficient \eqref{transmission-} in terms of the normalized Jost function matrices $N^{\pm}(x,t;\lam)$ defined by
%
%
\sidenote{18}
\begin{equation}
\label{direct.n}
\begin{aligned}
\frac{dN^\pm}{dx}	&=	-i\lam [\sigma_3, N^\pm] + Q_\lam N^\pm -\frac{i}{2}\sigma_3 Q^2 N^\pm\\
\lim_{x \rarr \pm \infty} N^\pm(x,\lam)	&=	\revisedtext{I}
\end{aligned}
\end{equation}
(see \eqref{LS}) 
where $q(x)$ is replaced by $q(x,t)$ in the definitions of $Q$ and $Q_\lam$.
Recall that
\begin{equation}
\label{direct.n.jc}
N^+(x,\lam) = N^-(x,\lam) 
		e^{-i\lam x \ad(\sigma_3)} T(\lambda).
\end{equation}
where $T$
%
%
Using the definition \eqref{Jost.T} of $T$ and  \eqref{direct.n.jc} and taking the limit as $x \to -\infty$ gives 
\begin{equation}\label{alpha.jost}
	\alpha(\lam) = \lim_{x \to -\infty} \NN_{11}^+(x,t;\lambda). 
\end{equation}	

Consider \eqref{direct.n}  for $\lam \approx 0$
\sidenote{18}
\begin{equation*}
	\begin{aligned}
	&\od{\NN^+}{x} = 
	  \begin{pmatrix}
	    -\tfrac{i\eps}{2} |q(x,t)|^2 & q(x,t) \\ 
	    0 & \tfrac{i\eps}{2} |q(x,t)|^2
      \end{pmatrix} \NN^+
	  + \lambda 
	  \left[ 
	    \begin{pmatrix} -i & 0 \\ \eps \overline{q(x,t)} & i \end{pmatrix} N^+     
	    + N^+ \begin{pmatrix} i & 0 \\ 0 & -i \end{pmatrix} 
	  \right]
	\\
	&\lim_{x\to \infty} \NN^+(x,t;\lam) = \revisedtext{I}
	\end{aligned} .
\end{equation*}
As $\lam = 0$ is a regular point of this system of equations, one can easily show that 
\begin{equation}\label{N1.near.0}
   \NN^+(x,t;\lam) = 
   e^{ \frac{i \eps \sig}{2} \int_x^\infty |q(y,t)|^2 dy }
   \begin{pmatrix} 
     1 &&& -\dint_{x}^\infty q(y,t) e^{-i\eps \int_{y}^\infty |q(w,t)|^2 dw} dy \\ 
     0 &&& 1 
   \end{pmatrix} + \bigo{\lam}. 
\end{equation}
Writing $\NN_1^+$ for the first column of the Jost function we have
\begin{equation}\label{N1.at.0}
	 \NN^+_1(x,t;0) = \NN_{1-}(0;x,t) = \NN_1(x,t,0),
\end{equation}
where we have used the fact that the Jost function $\NN_{1}^+(x,t;0)$
gives the first column of the Beals-Coifman solution $\NN_-(0;x,t)$ of RHP~\ref{RHP2} (see, for example, 
\cite[Section 4.1]{Liu17} for discussion). We can drop the minus-boundary value because the jump relation in Problem \ref{RHP2}(iii) gives ${\NN_{11}}_+(0;x,t) = {\NN_{11}}_-(0;x,t)$ so that $\NN_{11}(\lam;x,t)$ is continuous at the origin.
Evaluating $\alpha(0)^2$ two ways: by 
\sidenote{53}\revisedtext{combining}
\eqref{N1.at.0} with \eqref{alpha.jost}; and evaluating \eqref{transmission-} at $\lambda = 0$, gives the result.
\end{proof}

%
%

\section{Solutions of RHP~\ref{RHP2} for reflectionless scattering data}
\label{app:solitons}

The bright soliton solutions of \eqref{DNLS2} can be characterized as the potentials $q(x,t)$ for which the associated scattering data are reflectionless: $\lp  \rho \equiv 0, \{ (\lam_k ,C_k) \}_{k=1}^N \rp$, and $(\lam_k , C_k) \in \C^+ \times \C^\times$ for  $k=1,\ldots,N$. 
If $N=1$, with scattering data $(\lam = u+iv, C)$, the single soliton solution of \eqref{DNLS2} is\sidenote{14}
\begin{multline}\label{1sol}
	\mathcal{Q}_{\mathrm{sol}}(x,t) = 
	\mathcal{Q}(x-x_0 + 4 u t\revisedtext{,\lam}) \\
	\times \exp i \left\{  { 4 }|\lam|^2 t -2{u}(x+4ut) - \frac{\eps}{4}
	\int_{-\infty}^{x-x_0+4u t} \mathcal{Q}(\eta\revisedtext{,\lam})^2 d\eta
	- \varphi_0
	\right\}
\end{multline}
where \sidenote{14}
\begin{gather*}
	\mathcal{Q}(y\revisedtext{,\lam}) = 
	\sqrt{\frac{8 v^2}{|\lam| \cosh(4 v y) - \eps u}}, 
\\[5pt]
	x_0 = \frac{1}{4 v} \log \frac{ |\lam| |C|^2 }{4 v^2},
	\qquad
	\varphi_0 = \arg(\lambda) + \arg(C) + \pi/2,
\end{gather*}
which describes a solitary wave with amplitude envelope $\mathcal{Q}$ traveling at speed $c = -4 \Re \lambda$. For $N > 1$, the solution formulae become ungainly, but we expect, generically, that for $|t| \gg 1$, the solution will resemble $N$ independent 1-solitons each traveling at its unique speed $-4 \Re \lam_k$\footnote{The non-generic case occurs when $\Re \lam_j = \Re \lam_k$ for one or more pairs $j \neq k$. In this case the solution possesses localized, quasi-periodic traveling waves known as breather solitons.}. For this reason, these solutions are called $N$-solitons of \eqref{DNLS2}. 


\newcommand{\NsolD}[1][]{{{\mathcal{N}}_{#1}^{\,\mathrm{sol,\Delta}} }}
\newcommand{\vD}[1][]{ {{v}_{#1}^{\Delta}} }

\begin{problem}\label{outmodel2a}
Given $(x,t) \in \R^2$ and data $\calD = \{ (\lam_k ,C_k) \}_{k=1}^N \subseteq \C^+ \times \C^\times$ fix $\Delta \subset \{1,\dots,N\}$. Find an analytic function $\NsolD(\,\cdot\,; x,t \, \vert\, \mathcal{D}): (\C \setminus \poles) \to SL_2(\C)$ such that
\begin{enumerate}
	\item[(i)] $\NsolD$ satisfies the symmetry relation 
	$$
		\NsolD(\lam; x,t \, \vert\, \mathcal{D}) = 
		\lam^{-\sig/2} \sigma_\eps^{-1} \overline{\NsolD(\overline{\lam}; x,t \, \vert\, \mathcal{D})} 
		\sigma_\eps \lam^{\sig/2}
	$$
	\item[(ii)] $\NsolD(\lam; x,t \, \vert\, \mathcal{D}) = \tril{\alpha(x,t)} + \bigo{\lam^{-1}}$ as $\lam \to \infty$.
	\item[(iii)] $\NsolD$ has simple poles at each point in $\poles$. For each $\lam_k \in \poles_+$ 
	\begin{gather}
	\nonumber
			\begin{aligned}
				\res_{\lam = \lam_k} \NsolD(\lam; x,t \, \vert\, \mathcal{D}) 
				&= \lim_{\lam \to \lam_k} 
				  \NsolD(\lam; x,t \, \vert\, \mathcal{D})
				  \vD(\lam_k) \smallskip \\
				\res_{\lam = \overline{\lam_k}} \NsolD(\lam; x,t \, \vert\, \mathcal{D}) 
				&= \lim_{\lam \to \overline{\lam_k}} 
				  \NsolD(\lam; x,t \, \vert\, \mathcal{D}) 
				  \vD(\overline{\lam_k})
			\end{aligned}
\shortintertext{where}
\begin{aligned}
\label{out2a residue matrices}
			\vD(\lambda_k) &= 
			\begin{dcases}
				\tril[0]{ \lam_k \gamma_k^\Delta(x,t) } 
					& k \not\in \Delta 	\\		
				\triu[0]{ \lam_k^{-1} \gamma_k^\Delta(x,t) } 
					& k \in \Delta, 	
			\end{dcases} 
\\[5pt]
	        \vD(\overline{\lam_k}) &= 
	        \overline{
	          \lam^{-\sig/2} \sigma_\eps^{-1} \vD(\lam_k) \sigma_\eps \lam^{\sig/2}
	        },
\end{aligned}
\shortintertext{and}
\nonumber
	\gamma_k^\Delta(x,t) = 
	   \begin{cases}
	      C_k \mathfrak{B}^\Delta(\lam_k)^{-2} e^{-2it\theta(\lam_k)}
	      & k \not\in \Delta \medskip \\
		  C_k^{-1} (1/\mathfrak{B}^\Delta)'(\lam_k)^{-2} e^{2it\theta(\overline{\lam_k})} 
		  & k \in \Delta .
	   \end{cases}
\\[5pt]
\nonumber
	\mathfrak{B}^\Delta(\lam) = \prod_{k \in \Delta} 
	   \lp \frac{\lam - \overline{\lam}_k}{ \lam - \lam_k} \rp   
\end{gather}
\end{enumerate}
\end{problem}
\begin{remark}
When the context is clear we will omit the dependence of $\NsolD$ on $x$,$t$, and/or $\calD$ so that $\NsolD(\lam) = \NsolD(\lam,\ \vert\, \calD) = \NsolD(\lam;x,t\, \vert\, \calD)$ are all equivalent representations of the same function.
\end{remark}

When $\Delta = \emptyset$ Problem~\ref{outmodel2a} is identical to RHP~\ref{RHP2} with scattering data $\{ \rho \equiv 0, \calD=\{ (\lam_k, C_k) \}_{k=1}^N \}$. For any other choice of $\Delta$ the relation between these problems is\footnote{When $\Delta = \emptyset$ we set $\mathfrak{B}^\emptyset(\lam) = I$ for consistency.}
\begin{equation}\label{Blaschke}	
	\mathcal{N}^{\mathrm{sol},\emptyset}(\lam) =  
	\NsolD(\lam) \mathfrak{B}^\Delta(\lam)^{\sig}.
\end{equation}
Each choice of $\Delta$ represents a different normalization of the RHP. The idea, which is common in the literature \cite{BJM16,BKMM,DKKZ}, is to choose normalizations which prepare the problem for asymptotic analysis. 

For convenience, order the spectrum (possibly non-uniquely) such that 
\begin{equation*}
	\Re \lam_1 \leq \Re \lam_2 \leq \dots \leq \Re \lam_N,
\end{equation*} 
and let $\Re \lam_{N+1} = -\Re \lam_{0} = +\infty$. 
For $j=0,1,\dots,N$, define the sets
\begin{equation*}
	\begin{gathered}
	S_j^\pm = \{ (x,t) \in \R^2 \,:\, \xi \in [\Re z_j, \Re z_{j+1}), \ \pm t \geq 0 \}, \\
	\Delta_j^+ = \{ \ell \in \N \,:\, 1 \leq \ell \leq j \} , \qquad
	\Delta_j^- = \{1, \dots, N\} \setminus \Delta_j^+ 
	\end{gathered}
\end{equation*}	
As $\eta t \to \infty$ with $(x,t) \in S_j^\eta$, the set of $\lambda_k$ which have exponentially growing residue coefficients in $\mathcal{N}^{\textrm{sol},\emptyset}$ are indexed by $\Delta^\eta_j$---the remaining poles have bounded residues. The transformation \eqref{Blaschke} results in a new problem for $\mathcal{N}^{\textrm{sol},\Delta_j^\eta}$ which has, in light of \eqref{phase.lambda}, and \eqref{out2a residue matrices}, residue coefficients $\gamma_k^{\Delta_j^\eta}(x,t)$ which are uniformly bounded as $(x,t)$ vary over $S_j^\eta$:
\begin{equation}\label{gamma.bounds}
	\begin{gathered}
	|\gamma_k^{\Delta_j^\eta}(x,t)|  \leq 
	K_\poles
	\left.
	\begin{cases}  
		e^{-8|t| \Im \lam_k |\Re \lam_k - \xi | }
		& |\xi| \leq \Xi_0 \\
		e^{-2|x| \Im \lam_k |1 - \xi^{-1} \Re \lam_k| }
		& |\xi| \geq \Xi_0 
	\end{cases}
	\right\}
	\leq K_\poles, 
	\\
	\forall k, \ (x,t) \in S_j^\eta
	\end{gathered}
\end{equation}
where $\Xi_0>0$ is any fixed constant and $K_\poles$ is a fixed constant depending only on the scattering data.

\begin{lemma}
\label{lem:sol.bound}
\sidenote{54}Given data 
$\revisedtext{\calD = \{ (\lam_k, C_k) \}_{k=1}^N} \subset \C^+ \times \C^\times$ such that $\lam_j \neq \lam_k$ for $j \neq k$ there exists a unique solution of Problem~\ref{outmodel2a} for each $(x,t) \in \R^2$. Moreover, the solution satisfies 
\begin{equation*} 
	\| \lp \mathcal{N}^{\,\mathrm{sol},\Delta} \rp^{-1} \|_{L^\infty(\C \setminus \mathcal{B}_\poles) }
	\lesssim 1
\end{equation*}
where $\mathcal{B}_\poles$ is any open neighborhood of the poles $\Lambda = \{ \lam_k, \overline{\lam_k} \}_{k=1}^N$, and the implied constant depends only on $\mathcal{B}_\poles$ and the  scattering data; it is independent of $x,t$ and $\Delta$. 
\end{lemma}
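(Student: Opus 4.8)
The plan is to exploit the fact that Problem~\ref{outmodel2a} carries no jump contour, so that any solution $\NsolD$ is a \emph{rational} matrix function whose only singularities are the prescribed simple poles in $\poles$. I would first record the pole structure implied by \eqref{out2a residue matrices}: each residue matrix $\vD(\lam_k)$, $\vD(\overline{\lam_k})$ is nilpotent and strictly triangular, so the residue conditions express the residues of each column of $\NsolD$ linearly in terms of the \emph{values} of the opposite column at the poles. Together with the normalization $\NsolD(\lam)=\stril{\alpha}+\bigo{\lam^{-1}}$, this lets me write every entry of $\NsolD$ as a constant plus a finite sum of simple partial fractions with unknown residue coefficients; the residue relations \eqref{out2a residue matrices} then close into a finite linear system of size $2N$ whose coefficient matrix is a Cauchy-type matrix assembled from the differences $\{\lam_k-\overline{\lam_j}\}$ and the scalars $\gamma_k^\Delta(x,t)$. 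Existence and uniqueness of $\NsolD$ are thereby equivalent to invertibility of this linear system.

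Invertibility I would obtain from a vanishing lemma for the associated homogeneous problem (normalization replaced by $0$ at infinity). The symmetry relation in Problem~\ref{outmodel2a}(i) is the key tool: for a homogeneous solution the combination $\mathbf{H}(\lam)=\NsolD(\lam)\,\sigma_\eps^{-1}\lam^{\sig}\,\overline{\NsolD(\lambar)}^{\,\intercal}$ is entire (the nilpotency of the residue matrices makes the apparent poles removable), while its $\bigo{\lam^{-1}}$ decay forces $\mathbf{H}\equiv 0$ by Liouville; unwinding the resulting positive--definiteness identity shows the homogeneous solution vanishes. Equivalently, one checks directly that the Cauchy determinant of the linear system is nonzero precisely because the $\lam_k$ are distinct and lie in $\C^+$ and the $C_k\in\C^\times$. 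The same nilpotency argument gives $\det\NsolD\equiv 1$ (the determinant is entire and tends to $1$ at infinity), so that $(\NsolD)^{-1}=\sigma_2(\NsolD)^{\intercal}\sigma_2$ and the bound on the inverse reduces to an entrywise bound on $\NsolD$ itself.

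For the uniform estimate I would observe that $(x,t)$ enter the linear system only through the residue coefficients $\gamma_k^\Delta(x,t)$. Given $(x,t)$, choose the adapted normalization $\Delta=\Delta_j^\eta$ with $(x,t)\in S_j^\eta$; then \eqref{gamma.bounds} confines $(\gamma_k^{\Delta_j^\eta}(x,t))_k$ to the fixed compact set $\{|\gamma_k|\le K_\poles\}$, independent of $(x,t)$. Since the coefficient matrix is invertible for every parameter value in the compact closure of this set---including the degenerate limits in which some $\gamma_k\to 0$, which correspond to reflectionless data with fewer solitons and remain uniquely solvable by the same vanishing lemma---Cramer's rule and continuity yield a bound on the partial-fraction coefficients, hence on $\NsolD(\lam)$ and on $(\NsolD(\lam))^{-1}$, uniform on $\C\setminus\mathcal{B}_\poles$ and independent of $(x,t)$. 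Finally, the relation \eqref{Blaschke} transfers the estimate from $\Delta_j^\eta$ to an arbitrary $\Delta$: the finite Blaschke product $\mathfrak{B}^\Delta(\lam)^{\sig}$ and its inverse are bounded above and below on $\C\setminus\mathcal{B}_\poles$ with constants depending only on $\poles$ and $\mathcal{B}_\poles$, so the bound persists for all $\Delta$ with an $(x,t)$-independent implied constant.

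The main obstacle is making the invertibility of the linear system \emph{uniform} in $(x,t)$: a priori the Cauchy determinant, though nonzero for each fixed parameter value, could degenerate along a sequence of $(x,t)$ unless the parameters stay in a compact set on which the determinant is bounded away from zero. This is exactly what the adapted choice $\Delta=\Delta_j^\eta$ together with the exponential bounds \eqref{gamma.bounds} are designed to secure; the only remaining care is to verify that the limiting fewer-soliton problems obtained as $\gamma_k\to 0$ are themselves nondegenerate, so that the determinant is bounded below on the full compact closure and the resulting constant depends only on $\mathcal{B}_\poles$ and the scattering data.
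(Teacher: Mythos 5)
Your architecture is sound, and on the second half of the lemma it coincides with the paper's: both arguments bound the inverse by noting $\det \mathcal{N}^{\,\mathrm{sol},\Delta} \equiv 1$, both exploit that the solution depends on $(x,t)$ only through the residue coefficients $\gamma_k^\Delta$, both fix the adapted normalization $\Delta = \Delta_j^\eta$ on each set $S_j^\eta$ so that \eqref{gamma.bounds} confines these coefficients to a compact set, and both transfer to arbitrary $\Delta$ through the Blaschke relation \eqref{Blaschke}; your Cramer's-rule phrasing and explicit treatment of the degenerate limits $\gamma_k \to 0$ merely make the paper's ``continuity plus compactness'' step quantitative. Where you genuinely diverge is the existence/uniqueness half: the paper does not solve any linear system, but instead reduces via \eqref{Blaschke} to $\Delta = \emptyset$, observes that Problem~\ref{outmodel2a} is then exactly RHP~\ref{RHP2} with reflectionless data, and cites Theorem 4.3 of \cite{JLPS17b} together with the equivalence of RHP~\ref{RHP2} and RHP~\ref{RHP2.row}. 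Your partial-fraction reduction to a $2N\times 2N$ Cauchy-type system plus a vanishing lemma reconstructs, self-containedly, what that citation hides; the price is that you must actually carry out the symmetry computations, and two of them need repair.

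First, the homogeneous problem relevant to uniqueness is not obtained by ``replacing the normalization by $0$'': the constant $\alpha$ in condition (ii) of Problem~\ref{outmodel2a} is solution-dependent, so the difference of two solutions tends to the constant nilpotent matrix with lower-left entry $\alpha_1 - \alpha_2$, not to $0$. The fix is to argue row-wise: first rows are canonically normalized to $(1\ 0)$, so their difference does decay and your Liouville argument applies to it, and the symmetry (i) then reconstructs the second row from the first; this is precisely the role of the ``equivalence of Problems'' invoked by the paper. Second, your pairing $\mathbf{H}(\lam)=\mathcal{N}^{\,\mathrm{sol},\Delta}(\lam)\,\sigma_\eps^{-1}\lam^{\sig}\,\overline{\mathcal{N}^{\,\mathrm{sol},\Delta}(\lambar)}^{\,\intercal}$ is not the right one: with the residue conventions of RHP~\ref{RHP2.row}, cancellation of the poles at $\lam_k$ and at $\overline{\lam_k}$ forces a diagonal weight whose entries are in the ratio $w_2(\lam)/w_1(\lam) = -\eps\lam$, e.g.\ $\mathrm{diag}(1,-\eps\lam)$, whereas a constant conjugation times $\lam^{\sig}$ produces a ratio proportional to $\lam^{-2}$ and cannot cancel all $2N$ poles. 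With the correct weight, Liouville gives on the real axis the identity $|n_1(\lam)|^2 - \eps\lam\,|n_2(\lam)|^2 \equiv 0$, which is \emph{not} a positive-definite statement: it is definite only on the half-line where $\eps\lam < 0$. The conclusion survives because a rational function vanishing on a half-line vanishes identically, but it is rationality, not positivity, that closes the argument, and your write-up should say so. With these two corrections your route is a valid, and more self-contained, alternative to the paper's.
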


\begin{proof}
Since $\det \mathcal{N}^{\,\textrm{sol},\Delta}(\lam) = 1$ we only need to consider $\|\mathcal{N}^{\textrm{sol},\Delta}\|$.
The relation \eqref{Blaschke} is bounded and invertible away from $\Lambda$, so it is sufficient to work with whichever choice of $\Delta$ is convenient for any given $(x,t)$. As noted previously, taking $\Delta = \emptyset$, Problem~\ref{outmodel2a} is exactly RHP~\ref{RHP2} for scattering data $\{ \rho \equiv 0, \calD \}$.
Existence and uniqueness in the case $\eps=-1$ follow from Theorem 4.3 of 
\sidenote{55}\revisedtext{\cite{JLPS17b}}
and the equivalence of Problems \ref{RHP2} and \ref{RHP2.row}.
To prove boundedness, observe that the solution $\NsolD$ of Problem~\ref{outmodel2a} is a continuous function of the parameters $\gamma_k^\Delta$. For $(x,t) \in S_j^\eta $, fix $\Delta = \Delta_j^\eta$, then \eqref{gamma.bounds} shows that the parameters $\gamma_k^{\Delta_j^\eta}$ vary over compact sets. This establishes boundedness on each $S_j^\eta$, and since $\bigcup_{j=0}^N (S_j^+\cup S_j^-) = \R^2$, this completes the proof. 
\end{proof}

Let us now consider the asymptotic behavior of soliton solutions which is needed for Theorems~\ref{thm:long-time}-\ref{thm:long-time-gauge}. Recall the notation established by \eqref{distances}, \eqref{S.cone}-\eqref{posint}, and \eqref{nu0}.
\begin{lemma}\label{lem:Nsol.asymp}
Fix reflectionless data $\calD = \{ (\lam_k, C_k) \}_{k=1}^N$; parameters $v_1\leq v_2$, $x_1\leq x_2$, and a cone $\mathcal{S}(v_1,v_2,x_1,x_2)$ as in Theorem~\ref{thm:long-time}. Let $I = [-v_2/4, -v_1/4]$, $\eta = \sgn t$, and take 
\sidenote{12}$\revisedtext{\Lambda^+(I)}$ and $N(I)\leq N$ as in \eqref{LamI}--\eqref{NI}. Then, as $|t| \to \infty$ with $(x,t) \in \mathcal{S}(v_1,v_2,x_1,x_2)$,
\begin{equation*}
	\mathcal{N}^{\mathrm{sol}, \emptyset}(\lam \,\vert\, \mathcal{D}) =
	\lb I + \bigo{e^{-4\poledist \nu_0 |t| }} \rb
	\mathcal{N}^{\mathrm{sol},\emptyset}(\lam \,\vert\, \widehat{\mathcal{D}})
	\ \prod_{\mathclap{\Re \lam_k \in \negint \setminus I }}
	 \quad \  
	  \lp \frac{\lam - \lambar_k}{\lam-\lam_k} \rp^{\sig} 
\end{equation*}
where
\[
	\widehat{\mathcal{D}} = 
	\Bigg\{ 
	  (\lam_k, \widehat{C}_k) \,:\, \lam_k \in \poles(I), \quad 
	  \widehat{C}_k = C_k \ \prod_{\mathclap{\Re \lam_j \in \negint \setminus I }} 
	  \quad \ 
	  \lp \frac{\lam_k - \lam_j}{\lam_k-\lambar_j} \rp^2 
	\Bigg\}.
\]
\end{lemma}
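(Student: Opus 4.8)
The plan is to split the discrete spectrum into three groups according to the position of $\Re\lam_k$ relative to the window $I$ and the stationary point $\xi$, and to eliminate the two `invisible' groups. Write $\Delta = \{k : \Re\lam_k \in \negint\setminus I\}$ for the fast poles lying strictly on the $\negint$ side of $I$; in the normalization $\mathcal{N}^{\mathrm{sol},\emptyset}$ these are precisely the poles whose residue coefficients grow exponentially as $|t|\to\infty$ in $\mathcal{S}$, whereas the poles with $\Re\lam_k\in\posint\setminus I$ have exponentially decaying residues and the visible poles ($\Re\lam_k\in I$) are $\bigo{1}$. First I would apply the renormalization \eqref{Blaschke} with this $\Delta$, which exactly produces the Blaschke product $\prod_{\Re\lam_k\in\negint\setminus I}((\lam-\lambar_k)/(\lam-\lam_k))^{\sig}$ in the statement and reduces the claim to
\[
	\NsolD(\lam\,\vert\,\mathcal{D}) = \lb I + \bigo{e^{-4\poledist\,\nu_0|t|}} \rb\,\mathcal{N}^{\mathrm{sol},\emptyset}(\lam\,\vert\,\widehat{\mathcal{D}}), \qquad \lam \text{ bounded away from } \poles.
\]

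Next I would match and estimate residues. For a visible pole $\lam_k$ (so $k\notin\Delta$) the residue coefficient of $\NsolD(\cdot\,\vert\,\mathcal{D})$ is $\gamma_k^\Delta = C_k\,\mathfrak{B}^\Delta(\lam_k)^{-2}e^{-2it\theta(\lam_k)}$ by \eqref{out2a residue matrices}; since $\mathfrak{B}^\Delta(\lam_k)^{-2} = \prod_{\Re\lam_j\in\negint\setminus I}((\lam_k-\lam_j)/(\lam_k-\lambar_j))^2$, this equals $\widehat{C}_k\,e^{-2it\theta(\lam_k)}$, i.e.\ the residue data of $\mathcal{N}^{\mathrm{sol},\emptyset}(\cdot\,\vert\,\widehat{\mathcal{D}})$ at $\lam_k$ (the residue at $\lambar_k$ being fixed by the symmetry relation), so the two models carry identical residue data at every visible pole. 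For the poles in $\Delta$ and in $\posint\setminus I$ I would instead show $\gamma_k^\Delta$ is exponentially small: within $\mathcal{S}$ one has $\xi=-v/4+\bigo{|t|^{-1}}$ with $-v/4\in I$, so $\dist(\xi,I)\lesssim|t|^{-1}$ and hence $|\Re\lam_k-\xi|\geq\nu_0-\bigo{|t|^{-1}}$ for every such $k$. Combined with $\Im\lam_k\geq\tfrac12\poledist$ (from \eqref{distances}) and the uniform bound \eqref{gamma.bounds}, whose flip assignment agrees with $\Delta$ on these poles (the two differing only by bounded rational $\mathfrak{B}$-prefactors), this yields $|\gamma_k^\Delta|\lesssim e^{-8|t|\Im\lam_k|\Re\lam_k-\xi|}\lesssim e^{-4\poledist\,\nu_0|t|}$.

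Finally I would discard the small poles by a finite-dimensional perturbation argument applied to $E(\lam) := \NsolD(\lam\,\vert\,\mathcal{D})\,\mathcal{N}^{\mathrm{sol},\emptyset}(\lam\,\vert\,\widehat{\mathcal{D}})^{-1}$. Since these soliton models carry no jump, $E$ is meromorphic; the matching residue data at the visible poles makes those singularities removable (the inverse factor being analytic and, by Lemma~\ref{lem:sol.bound}, uniformly bounded there), so $E$ is rational with simple poles only at the finitely many $\{\lam_k,\lambar_k\}$ with $k\in\Delta$ or $\Re\lam_k\in\posint\setminus I$, and $E\to E_\infty$ at infinity. Writing $E(\lam)=E_\infty+\sum_p A_p/(\lam-p)$ and using that each residue relation expresses $A_p$ as the regular part of $E$ at $p$ times a nilpotent matrix whose only nonzero entry is the coefficient bounded above, I get $|A_p|\lesssim(|E_\infty|+\poledist^{-1}\sum_{p'}|A_{p'}|)\,e^{-4\poledist\,\nu_0|t|}$; summing over the finitely many poles and absorbing the small coupling term for large $|t|$ gives $\sum_p|A_p|\lesssim e^{-4\poledist\,\nu_0|t|}$. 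The common lower-triangular normalization forces $E_\infty=I+\bigo{e^{-4\poledist\,\nu_0|t|}}$, its off-diagonal entry being the difference of the two leading coefficients, which depends continuously on the discarded residue data and so is exponentially small. Hence $E=I+\bigo{e^{-4\poledist\,\nu_0|t|}}$ uniformly away from $\poles$, which is the reduced claim; substituting into \eqref{Blaschke} completes the proof.

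I expect the main obstacle to be this last step: verifying that deleting the exponentially small poles yields a uniform \emph{multiplicative} correction $I+\bigo{e^{-4\poledist\,\nu_0|t|}}$ rather than merely a pointwise one, uniformly in $\xi$ and in $(x,t)\in\mathcal{S}$. This hinges on the uniform boundedness and invertibility of the reduced model $\mathcal{N}^{\mathrm{sol},\emptyset}(\cdot\,\vert\,\widehat{\mathcal{D}})$ from Lemma~\ref{lem:sol.bound}, together with careful bookkeeping of the paired $(\lam_k,\lambar_k)$ residue structure so that the finite linear system for the $A_p$ remains uniformly nondegenerate as the window and the spectrum vary.
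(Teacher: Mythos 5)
Your proposal follows the paper's proof through its first two steps --- the renormalization \eqref{Blaschke} with $\Delta=\Delta_I^\eta=\{k:\Re\lam_k\in\negint\setminus I\}$, which is exactly what produces the Blaschke factor; the identification of the visible residue coefficients with $\widehat{C}_k e^{-2it\theta(\lam_k)}$; and the bound $\|v^{\Delta}(p)\|=\bigo{e^{-4\poledist\nu_0|t|}}$ at every pole outside $\Lambda(I)$ (the paper's \eqref{asymp.residue.bound}) --- but your last step is genuinely different. The paper trades the exponentially small residues for exponentially small jumps on small disks around the discarded poles via \eqref{sol.interpolation} and then applies small-norm Riemann--Hilbert theory to the ratio \eqref{sol.error.def}. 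You instead exploit the fact that for reflectionless data there are no jump contours at all, so the error ratio $E=\NsolD(\cdot\,\vert\,\calD)\,\mathcal{N}^{\mathrm{sol},\emptyset}(\cdot\,\vert\,\widehat{\calD})^{-1}$ is a rational function with poles only at the discarded points, and you close a finite self-consistent linear system for its residues $A_p$. This is a legitimate and more elementary route (finite-dimensional linear algebra instead of Cauchy operators and Neumann series), resting on the same essential input, the uniform bounds of Lemma~\ref{lem:sol.bound}.

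Two sub-steps need repair. First, removability of $E$ at the visible poles is not because ``the inverse factor is analytic'' there: since $\det\mathcal{N}^{\mathrm{sol},\emptyset}=1$, the inverse $\sigma_2\,\mathcal{N}^{\mathrm{sol},\emptyset}(\cdot\,\vert\,\widehat{\calD})^{\intercal}\sigma_2$ has poles at exactly those points. Removability follows from the matching nilpotent residue structure: both factors have Laurent expansions of the form $(\mathrm{const})\lb v(p)(\lam-p)^{-1}+I\rb+\bigo{\lam-p}$ with $v(p)^2=0$, and the identity $\lb v(p)(\lam-p)^{-1}+I\rb\lb -v(p)(\lam-p)^{-1}+I\rb=I$ kills the singular part --- the same computation the paper uses to show $n^{(3)}$ has removable singularities. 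Second, and this is the real gap (which you flag yourself): your linear system only yields $E=E_\infty+\bigo{e^{-4\poledist\nu_0|t|}}$ with $E_\infty$ lower triangular and $|E_\infty|\lesssim 1$; the claim $E_\infty=I+\bigo{e^{-4\poledist\nu_0|t|}}$ is justified by ``depends continuously on the discarded residue data,'' which is not a proof --- continuity gives no rate. The clean fix is that $E$ inherits the symmetry $E_{21}(\lam)=\eps\lam\overline{E_{12}(\lambar)}$ from the two models, so letting $\lam\to\infty$ in the partial-fraction representation $E=E_\infty+\sum_p A_p(\lam-p)^{-1}$ gives $(E_\infty)_{21}=\eps\sum_p\overline{(A_p)_{12}}$, which your residue bounds make exponentially small. (The paper is equally terse at this normalization point --- its small-norm argument is closed by the same symmetry --- and with these two repairs your proof is complete.)
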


Using \eqref{q.lam} and \eqref{N1.near.0}-\eqref{N1.at.0} applied to $\mathcal{N}^{\textrm{sol},\emptyset}(\lam \, \vert \, \calD)$ we have the immediate corollary
\begin{corollary}
\label{cor:Nsol.asymp} 
	Under the assumption of Lemma~\ref{lem:Nsol.asymp} it follows that as $|t| \to \infty$ with $(x,t) \in \mathcal{S}(v_1,v_2,x_1,x_2)$ we have:
\begin{align*}
	&\lim_{\lam \to \infty} 2i\lam \Nsol[12]{\emptyset}(\lam;x,t \,|\, \calD) 
	   = \qsol(x,t;\calD) 
	   = \qsol(x,t;\widehat{\calD}) \lp 1 + e^{-4\poledist \nu_0 |t| } \rp \\
    &\begin{aligned}[t]
	  \Nsol[11]{\emptyset}(0;x,t \,|\, \calD)^{-2}
	  &= \exp \Big( -i\eps \int_{x}^\infty | \qsol(y,t;\calD)|^2 dy \Big) \\
	  &= \exp \Big( -i\eps \int_{x}^\infty | \qsol(y,t; \widehat{\calD})|^2 dy  
	  + 4i \sum_{\mathclap{\Re \lam_k \in \negint \setminus I}} \arg \lam_k \Big)
\\
	 &\quad \times  \lp 1 + e^{-4\poledist \nu_0 |t| } \rp. 
   \end{aligned}  
\end{align*}
\end{corollary}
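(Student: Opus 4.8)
The plan is to obtain Corollary~\ref{cor:Nsol.asymp} as a direct consequence of the factorization in Lemma~\ref{lem:Nsol.asymp}, by feeding that factorization into the two reconstruction formulas: the large-$\lam$ behavior \eqref{q.lam}, which produces $\qsol$, and the value at $\lam=0$ via \eqref{N1.near.0}--\eqref{N1.at.0}, which produces the gauge factor $\exp\big(-i\eps\int_x^\infty |\qsol|^2\,dy\big)$. Write the Blaschke factor appearing in Lemma~\ref{lem:Nsol.asymp} as $\mathfrak{B}(\lam) = \prod_{\Re\lam_k\in\negint\setminus I}\frac{\lam-\lambar_k}{\lam-\lam_k}$, so that the Lemma reads $\mathcal{N}^{\mathrm{sol},\emptyset}(\lam\,\vert\,\calD) = \mathcal{E}(\lam)\,\mathcal{N}^{\mathrm{sol},\emptyset}(\lam\,\vert\,\widehat{\calD})\,\mathfrak{B}(\lam)^{\sig}$ with $\mathcal{E}(\lam)=I+\bigo{e^{-4\poledist\nu_0|t|}}$, and record the two elementary values $\mathfrak{B}(\infty)=1$ and $\mathfrak{B}(0)^{-2}=\prod_{\Re\lam_k\in\negint\setminus I}(\lambar_k/\lam_k)^{-2}=\exp\big(4i\sum_{\Re\lam_k\in\negint\setminus I}\arg\lam_k\big)$, using $\lambar_k/\lam_k=e^{-2i\arg\lam_k}$.

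For the first line, I would extract the coefficient of $\lam^{-1}$ in the $(1,2)$ entry as $\lam\to\infty$. Since $\mathfrak{B}(\lam)^{\sig}=\mathrm{diag}(\mathfrak{B},\mathfrak{B}^{-1})=I+\bigo{\lam^{-1}}$, the $(1,2)$ entry of $\mathcal{N}^{\mathrm{sol},\emptyset}(\widehat{\calD})\mathfrak{B}^{\sig}$ is $\Nsol[12]{\emptyset}(\widehat{\calD})\,\mathfrak{B}^{-1}$, whose $2i\lam$-normalized limit is $\qsol(x,t;\widehat{\calD})$ by \eqref{q.lam}. The normalization $\mathcal{N}^{\mathrm{sol},\emptyset}\to$ (lower-triangular constant) forces $\mathcal{E}(\lam)$ to tend to a lower-triangular constant as well, so $\mathcal{E}_{11}(\lam)\to1$ and $\mathcal{E}_{12}(\lam)=\bigo{e^{-4\poledist\nu_0|t|}\lam^{-1}}$; consequently the cross term $2i\lam\,\mathcal{E}_{12}(\lam)\,[\mathcal{N}^{\mathrm{sol},\emptyset}(\widehat{\calD})\mathfrak{B}^{\sig}]_{22}$ contributes only $\bigo{e^{-4\poledist\nu_0|t|}}$ in the limit. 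Collecting terms yields $\qsol(x,t;\calD)=\qsol(x,t;\widehat{\calD})\big(1+\bigo{e^{-4\poledist\nu_0|t|}}\big)$, which is the first assertion.

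For the second line I would evaluate the factorization at $\lam=0$. Formulas \eqref{N1.near.0}--\eqref{N1.at.0}, applied to the reflectionless problem, identify $\Nsol[11]{\emptyset}(0;x,t\,\vert\,\calD)^{-2}$ with $\exp\big(-i\eps\int_x^\infty|\qsol(y,t;\calD)|^2\,dy\big)$, giving the first displayed equality. Taking the $(1,1)$ entry of the factorization at $\lam=0$, together with $\mathcal{E}_{11}(0)=1+\bigo{e^{-4\poledist\nu_0|t|}}$, $\mathcal{E}_{12}(0)=\bigo{e^{-4\poledist\nu_0|t|}}$, and the uniform boundedness of $\Nsol[21]{\emptyset}(0\,\vert\,\widehat{\calD})$ from Lemma~\ref{lem:sol.bound}, yields $\Nsol[11]{\emptyset}(0\,\vert\,\calD)=\Nsol[11]{\emptyset}(0\,\vert\,\widehat{\calD})\,\mathfrak{B}(0)\,\big(1+\bigo{e^{-4\poledist\nu_0|t|}}\big)$. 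Squaring the reciprocal, substituting the gauge-factor identity for $\widehat{\calD}$ and the value $\mathfrak{B}(0)^{-2}=\exp(4i\sum_{\negint\setminus I}\arg\lam_k)$, produces the second displayed equality.

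The one point that demands care, and the step I expect to be the genuine obstacle, is confirming that the left factor $\mathcal{E}(\lam)$ enters each reconstruction \emph{multiplicatively} as $1+\bigo{e^{-4\poledist\nu_0|t|}}$ rather than corrupting the leading term. This rests on the two structural facts used above: that $\mathcal{E}$ inherits the lower-triangular normalization at infinity, so its upper-right entry decays like $\lam^{-1}$ and cannot survive the $2i\lam$ scaling except at the claimed order; and that the companion entries $\Nsol[21]{\emptyset}(0\,\vert\,\widehat{\calD})$ are uniformly bounded (Lemma~\ref{lem:sol.bound}), so the off-diagonal error at $\lam=0$ stays subordinate. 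Everything else is algebra, and the uniform exponential rate $e^{-4\poledist\nu_0|t|}$ is simply carried over from Lemma~\ref{lem:Nsol.asymp}, where it is produced by the bounds $|\Re\lam_k-\xi|\ge\nu_0$ and $\Im\lam_k\ge\tfrac12\poledist$ coming from \eqref{nu0} and \eqref{distances}.
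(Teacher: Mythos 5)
Your proposal is correct and is essentially the paper's own argument: the paper obtains the corollary ``immediately'' by applying the reconstruction formula \eqref{q.lam} and the evaluation at $\lam=0$ from \eqref{N1.near.0}--\eqref{N1.at.0} to the factorization of Lemma~\ref{lem:Nsol.asymp}, exactly as you do. Your additional bookkeeping --- the values $\mathfrak{B}(\infty)=1$ and $\mathfrak{B}(0)^{-2}=\exp\big(4i\sum\arg\lam_k\big)$, and the $\bigo{\lam^{-1}}$ decay of the off-diagonal entry of the error prefactor inherited from its normalization at infinity --- simply spells out the steps the paper leaves implicit.
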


\begin{proof}[Proof of Lemma~\ref{lem:Nsol.asymp}]
Let 
$
	\Delta_I^+ = \{ k \,:\, \Re \lam_k < -v_2/4 \} 
$
and
$
	\Delta_I^- = \{ k \,:\, \Re \lam_k > -v_1/4 \}. 
$
Consider Problem~\ref{outmodel2a} with $\Delta = \Delta_I^\eta$. For $\lam_j \in \poles \setminus \poles(I)$ and $(x,t) \in \mathcal{S}$ the residue coefficients \eqref{out2a residue matrices} satisfy
\begin{equation}\label{asymp.residue.bound}
	\| v^{\Delta_I^\pm}(\lam_j) \| 
	\sim e^{-8t \Im \lam_j \Re(\lam_j-\xi)}
	= \bigo{e^{-4\poledist \nu_0 |t|}}, \qquad 
	\pm t \to +\infty.
\end{equation}
Introduce small disks $D_k$ around each $\lam_k \in \poles^+ \setminus \poles(I)$ whose radii are chosen sufficiently small that they are non-overlapping and do not intersect the real axis. Orient their boundaries, $\partial D_k$, positively.
We trade these residues for near-identity jumps by the change of variables
\begin{equation}\label{sol.interpolation}
	\widetilde{\mathcal{N}}^{\, \mathrm{sol},\Delta_I^\eta}(\lam \,\vert\, \calD) =
	\begin{cases}
	\mathcal{N}^{\, \mathrm{sol},\Delta_I^\eta}(\lam \,\vert\, \calD) 
	\lp I - \frac{ v^{\Delta_I^\eta}(\lam_k) }{\lam - \lam_k} \rp	
	& z \in D_k  \\
	\mathcal{N}^{\, \mathrm{sol},\Delta_I^\eta}(\lam \,\vert\, \calD) 
	\lp I - \frac{ v^{\Delta_I^\eta}(\overline{\lam_k}) }{\lam - \overline{\lam_k}} \rp
	& z \in \overline{D_k} \\
	\mathcal{N}^{\, \mathrm{sol},\Delta_I^\eta}(\lam \,\vert\, \calD) .
	& \text{elsewhere}	
	\end{cases}
\end{equation}
The new unknown $\widetilde{\mathcal{N}}^{\, \mathrm{sol},\Delta_I^\eta}(\lam \,\vert\, \calD)$ has jumps across each disk boundary 
$$
	\widetilde{\mathcal{N}}^{\, \mathrm{sol},\Delta_I^\eta}_+(\lam \,\vert\, \calD) =
	\widetilde{\mathcal{N}}^{\, \mathrm{sol},\Delta_I^\eta}_-(\lam \,\vert\, \calD) 
	\widetilde{v}(\lam)
$$
where $\widetilde{v}(\lam)$ is given on $\partial D_k$ (resp. $\overline{\partial D_k}$) by the  last factor in the first (resp. second) line of \eqref{sol.interpolation}. 
By virtue of \eqref{asymp.residue.bound} these jumps satisfy
\begin{equation}\label{sol.jump.small}
	\|  \widetilde{v}(\lam) - I  \|_{L^{\infty}(\widetilde{\Sigma}) } 
	= \bigo{ e^{-4\poledist \nu_0 |t|}},
	\qquad
	\widetilde{\Sigma} = 
	\bigcup_{\mathclap{\lam_k \in \poles^+ \setminus \poles(I)}} \, 
	( \partial D_k \cup \overline{\partial{D}}_k ).
\end{equation}

Next we observe that $\mathcal{N}^{\, \mathrm{sol},\emptyset}(\lam \,\vert\, \widehat{\calD})$ has the same poles as $\widetilde{\mathcal{N}}^{\, \mathrm{sol},\Delta_I^\eta}_+(\lam \,\vert\, \calD)$ with exactly the same residue conditions. A simple calculation shows that the quantity
\begin{equation}\label{sol.error.def}
	e(\lam) = \widetilde{\mathcal{N}}^{\, \mathrm{sol},\Delta_I^\eta}_+(\lam \,\vert\, \calD)
	\lb \mathcal{N}^{\, \mathrm{sol},\emptyset}(\lam \,\vert\, \widehat{\calD}) \rb^{-1} 
\end{equation}
has no poles with jump $e_+(\lam) = e_-(\lam) v_e(\lam)$. Here
\begin{equation*}
	v_e(\lam) = 
	  \lb \mathcal{N}^{\, \mathrm{sol},\emptyset}(\lam \,\vert\, \widehat{\calD}) \rb 
	  \widetilde{v}(\lam)
	  \lb \mathcal{N}^{\, \mathrm{sol},\emptyset}(\lam \,\vert\, \widehat{\calD}) \rb^{-1}, 
	  \quad
	  \lam \in \widetilde{\Sigma}
\end{equation*}
satisfies an estimate identical to \eqref{sol.jump.small} by  Lemma~\ref{lem:sol.bound} applied to $\mathcal{N}^{\, \mathrm{sol},\emptyset}(\lam \,\vert\, \widehat{\calD})$. Using the theory of small norm RHPs \cite{Zhou89,TO16}, one shows that $e$ exists and that 
$e(\lam) = I + \bigo{ e^{-4\poledist \nu_0 |t|}}$ for all sufficiently large $|t|$. The result then follows from \eqref{sol.interpolation}, \eqref{sol.error.def} and \eqref{Blaschke}.
\end{proof}

%
%


\begin{thebibliography}{00}


\bibitem{BJM16}
Borghese, M., Jenkins, R., McLaughlin, K. T.-R.
Long-time asymptotic behavior of the focusing nonlinear Schr\"{o}dinger
equation.
Annales Inst. H. Poincar\'e, Analyse nonlin\'eaire, 
\href{https://doi.org/10.1016/j.anihpc.2017.08.006}{https://doi.org/10.1016/j.anihpc.2017.08.006}.

\bibitem{BKMM}
Baik, J., Kriecherbauer, T., McLaughlin, K., Miller, P. 
Discrete orthogonal polynomials. Vol. 164. Annals of Mathematics Studies. Asymptotics and applications. Princeton University Press, Princeton, NJ, 2007.

\bibitem{CO06}
Colin, M., Ohta, M. Stability of solitary waves for derivative nonlinear Schr\"{o}dinger equation,
\emph{Ann. Inst. H. Poincar\', Analyse Non Lin\'eaire}, \textbf{23} (2006), 753--764.

\bibitem{CJ16}
Cuccagna, S., Jenkins, R. On the asymptotic stability of N-soliton solutions of the defocusing nonlinear Schrödinger equation. \emph{Comm.\ Math.\ Phys.} \textbf{343} (2016), no. 3, 921--969.

\bibitem{DKKZ}
Deift, P., Kamvissis, S., Kriecherbauer, T., Zhou, X. The Toda rarefaction problem.
\emph{Comm. Pure. Appl. Math.}, \textbf{49}, no.1 (1996), 35-83. 

\bibitem{DIZ93}
Deift, P. A.; Its, A. R.; Zhou, X. Long-time asymptotics for integrable nonlinear wave equations. \emph{Important developments in soliton theory}, 181--204, Springer Ser. Nonlinear Dynam., Springer, Berlin, 1993.

\bibitem{DZ93}
Deift, P., Zhou, X.  A steepest descent method for oscillatory Riemann-Hilbert problems. Asymptotics for the MKdV equation.  \emph{Ann.\ of Math.\ } (2) \textbf{137}, no. 2
(1993), 295--368.

\bibitem{DZ03}
 Deift, P.,  Zhou, X. Long-time asymptotics for solutions of the NLS equation with initial data in a weighted Sobolev space. Dedicated to the memory of J\"urgen K. Moser. \emph{Comm. Pure Appl. Math.} 
 \textbf{56} (2003), 1029--1077.

\bibitem{DM08}
Dieng, M., McLaughlin, K D.-T.  Long-time Asymptotics for the NLS equation via dbar methods. 
Preprint,
\href{http://arXiv.org/pdf/0805.2807.pdf}{arXiv:0805.2807},
2008. 

\bibitem{DLMF}
NIST Digital Library of Mathematical Functions. 
\url{http://dlmf.nist.gov/}, Release 1.0.11 of 2016-06-08. 
Online companion to \cite{OLBC10}.

\bibitem{DKM13}
Duyckaerts, T., Kenig, C., F. Merle, F. 
 Classification of radial solutions of the focusing, energy- critical wave equation, \emph{Cambridge J. of Math.}, \textbf{1}, no. 1 (2013), 75--144.

\bibitem{DJKM16}
 Duyckaerts, T.,   Jia, H.,  Kenig, C., Merle, F. 
Soliton resolution along a sequence of times for the focusing energy critical wave equation. 
Preprint, \href{https://arxiv.org/abs/1601.01871v2}{arXiv:1601.01871v2}, 2016.

\bibitem{DKM15}
Duyckaerts, T.,  Kenig, C., Merle, F. 
Profiles for bounded solutions of dispersive equations, with applications to energy-critical wave and Schr\"odinger equations,
\emph{Commun. Pure Appl. Anal.}, \textbf{14}, no. 4 (2015), 1275--1326.
  
\bibitem{ES83}
 Eckhaus, W., P. Schuur, P. 
The emergence of solitons of the Korteweg-de Vries equation from arbitrary initial conditions,
\emph{Math. Methods Appl. Sci.},  \textbf{5} , no. 1 (1983), 97--116.


\bibitem{Its81}
Its, A. R. Asymptotic behavior of the solutions to the nonlinear Schr\"odinger equation, and isomonodromic deformations of systems of linear differential equations. 
\emph{Dokl.\ Akad.\ Nauk SSSR} \textbf{261}, no.1 (1981),  14--18.

\bibitem{JLPS17a}
Jenkins, R., Liu, J., Perry, P., and Sulem. C. Global well-posedness and soliton resolution for the derivative nonlinear Schr\"{o}dinger equation. Preprint, \href{https://arxiv.org/abs/1706.06252}{arXiv:1706.06252}, 2017.

\bibitem{JLPS17b}
Jenkins, R., Liu, J., Perry, P., and Sulem. C. Global well-posedness for the derivative nonlinear Schr\"{o}dinger equation. Preprint, \href{https://arxiv.org/abs/1710.03810}{arXiv:1710.03810}, submitted to \emph{Comm.\ P.\ D.\ E.}

\bibitem{KN78}
Kaup, D. J., Newell, A. C. (1978).  An exact solution for a derivative nonlinear Schr\"{o}dinger equation. 
\emph{J. Math. Phys.}, \textbf{19} (1978), 798--801.
  
\bibitem{KV97}
Kitaev, A. V., Vartanian, A. H. Leading-order temporal asymptotics of the modified nonlinear Schr\"{o}dinger equation: solitonless sector. \emph{Inverse Problems} \textbf{13},
no. 5 (1997), 1311--1339.
 
\bibitem{KV99}
Kitaev, A. V., Vartanian, A. H.
Asymptotics of solutions to the modified nonlinear Schr\"odinger equation: solitons on a nonvanishing continuous background. 
\emph{SIAM J. Math. Anal.} \textbf{30}, no. 4  (1999),  787--832. 

\bibitem{Liu17}
Liu, J. Global  well-posedness for the derivative nonlinear Schr\"{o}dinger equation through inverse scattering. Thesis, University of Kentucky, 2017. \href{https://uknowledge.uky.edu/math_etds/50/}{https://uknowledge.uky.edu/math\_etds/50}.

\bibitem{LPS15}
Liu, J.,  Perry, P.,   Sulem, C.  Global existence for the derivative nonlinear Schr\"{o}dinger equation by the method of inverse scattering. \emph{Comm.\ Part. Diff. Eqs} \textbf{41}  no. 11 (2016), 1692--1760. 

\bibitem{LPS16}
Liu, J.,  Perry, P.,   Sulem, C.  
\revisedtext{Long-time behavior of solutions to the derivative nonlinear Schrödinger equation for soliton-free initial data.}
\revisedtext{
\emph{Ann. Inst. H. Poincaré Anal. Non Lin\'{e}aire} {\textbf{35}} (2018), no. 1, 217--265.}
\href{https://doi.org/10.1016/j.anihpc.2017.04.002}{https://doi.org/10.1016/j.anihpc.2017.04.002}.

\bibitem{MM08}
McLaughlin, K. T.-R.; Miller, P. D. 
The $\dbar$ steepest descent method and the asymptotic behavior of polynomials orthogonal on the unit circle with fixed and exponentially varying nonanalytic weights. 
\emph{IMRP Int. Math. Res. Pap. }
(2006), Art.\ ID 48673, 1--77.

\bibitem{MTT03}
Muscalu, Camil; Tao, Terence; Thiele, Christoph. A Carleson theorem for a Cantor group model of the scattering transform. \emph{Nonlinearity} 
\textbf{16} (2003), no. 1, 219--246. 

\bibitem{OLBC10}
Olver, F.,  Lozier,  D., Boisvert, R., and Clark, C., editors. 
NIST Handbook of Mathematical Functions. 
Cambridge University Press, New York, NY, 2010. 
Print companion to \cite{DLMF}.

\bibitem{S86}
Schuur, P. 
Asymptotic analysis of soliton problems.
An inverse scattering approach. Lecture Notes in Mathematics, 1232, Springer-Verlag, Berlin, 1986. 

\bibitem{Tao08}
Tao, T. 
A global compact attractor for high-dimensional defocusing nonlinear Schr\"odinger equations with potential.
\emph{Dyn. Part. Diff. Eqs.}, \textbf{5} no. 2 (2008), 101--116.

\bibitem{TO16}
Trogdon, T., Olver, S.
Riemann-Hilbert problems, their numerical solution, and the computation of nonlinear special functions. 
Society for Industrial and Applied Mathematics (SIAM), Philadelphia, PA, 2016. 

\bibitem{ZS72}  Zakharov, V.E., Shabat, A.B.
Exact theory of two-dimensional self-focusing and one-dimensional self-modulation of waves in nonlinear media.
\emph{Soviet Physics JETP}, \textbf{34}, no. 1 (1972), 62--69.
 
\bibitem{Zhou89}
Zhou, X. The Riemann-Hilbert problem and inverse scattering. 
\emph{SIAM J.\ Math. Anal.\  } \textbf{20}, no. 4 (1989), 966--986.

\bibitem{Zhou89-b}
Zhou, X. 
Direct and inverse scattering transforms with arbitrary spectral singularities. \emph{Comm.\ Pure Appl.\ Math.} \textbf{42} (1989), no. 7, 895--938.

\end{thebibliography}
\end{document}